\documentclass[12pt]{amsart}
\usepackage[active]{srcltx}
\usepackage{calc,amssymb,amsthm,amsmath,amscd, eucal,ulem,stmaryrd}
\usepackage{alltt}
\usepackage[left=1.35in,top=1.25in,right=1.35in,bottom=1.25in]{geometry}
\usepackage{chngcntr}

\RequirePackage[dvipsnames,usenames]{color}

\normalem
\input{kmacros3.sty}
\input{xy}
\xyoption{all}
\usepackage{tikz}
\usepackage{mabliautoref}

\DeclareSymbolFont{rsfs}{OMS}{rsfs}{m}{n}
\DeclareSymbolFontAlphabet{\scr}{rsfs}

\renewcommand{\m}{\mathfrak{m}}
\renewcommand{\n}{\mathfrak{n}}
\renewcommand{\fram}{\mathfrak{m}}
\numberwithin{equation}{theorem}

\usepackage{fullpage}

\usepackage{setspace}
\usepackage{hyperref}

\usepackage{enumerate}
\usepackage{graphicx}
\usepackage[all,cmtip]{xy}

\usepackage{upgreek}
\newcommand{\mytau}{{\uptau}}

\usepackage{verbatim}


\renewcommand{\O}{\mathcal O}
\newcommand{\BCMReg}[1]{{$\textnormal{BCM}_{#1}$-regular}}
\newcommand{\BCMRat}[1]{{$\textnormal{BCM}_{#1}$-rational}}
\newcommand{\BCM}{\textnormal{BCM}{}}
\newcommand{\perf}{\textrm{perf}}
\newcommand{\perfd}{\textnormal{perfd}}

\begin{document}

\title{An analog of adjoint ideals and PLT singularities in mixed characteristic}
\author{Linquan Ma, Karl Schwede, Kevin Tucker, Joe Waldron, Jakub Witaszek}
\address{Department of Mathematics, Purdue University, West Lafayette, IN 47907}
\email{ma326@purdue.edu}
\address{Department of Mathematics, University of Utah, Salt Lake City, UT 84112}
\email{schwede@math.utah.edu}
\address{Department of Mathematics, University of Illinois at Chicago, Chicago, IL 60607}
\email{kftucker@uic.edu}
\address{Department of Mathematics, Michigan State University, East Lansing, MI 48824}
\email{waldro51@msu.edu}
\address{Department of Mathematics, University of Michigan, Ann Arbor, MI 48109, USA}
\email{jakubw@umich.edu}

\thanks{This material is partially based upon work supported by the National Science Foundation under Grant No. DMS-1440140 while the authors were in residence at the Mathematical Sciences Research Institute in Berkeley California during the Spring 2019 semester.  The authors also worked on this while attending an AIM SQUARE in June 2019.}
\thanks{Ma was supported by NSF Grant DMS \#1901672, NSF FRG Grant \#1952366, and a fellowship from the Sloan Foundation.}
\thanks{Schwede was supported by NSF CAREER Grant DMS \#1252860/1501102, NSF FRG Grant \#1952522 and NSF Grant \#1801849 and \#2101800.}
\thanks{Tucker was supported by NSF Grant DMS \#1602070 and \#1707661, and by a fellowship from the Sloan Foundation.}
\thanks{Witaszek was supported by the National Science Foundation under Grant No.\ DMS-1638352 at the Institute for Advanced Study in Princeton.  Witaszek was additionally supported by NSF grant DMS-2101897.}
\thanks{Waldron was supported by the Simons Foundation under Grant No. 850684}
\maketitle

\begin{abstract}
	We use the framework of perfectoid big Cohen-Macaulay algebras to define a class of singularities for pairs in mixed characteristic, which we call purely BCM-regular singularities, and a corresponding adjoint ideal.   We prove that these satisfy adjunction and inversion of adjunction with respect to the notion of BCM-regularity and the BCM test ideal defined by the first two authors.  We compare them with the existing equal characteristic PLT and purely $F$-regular singularities and adjoint ideals. As an application, we obtain a uniform version of the Brian\c{c}on-Skoda theorem in mixed characteristic. We also use our theory to prove that two-dimensional KLT singularities are BCM-regular if the residue characteristic $p>5$, 
which implies an inversion of adjunction for three-dimensional PLT pairs of residue characteristic $p>5$. In particular, divisorial centers of PLT pairs in dimension three are normal when $p > 5$. Furthermore, in the appendix we provide a streamlined construction of perfectoid big Cohen-Macaulay algebras and show new functoriality properties for them using the perfectoidization functor of Bhatt and Scholze.
\end{abstract}

\setcounter{tocdepth}{1}
\tableofcontents


\section{Introduction}

In algebraic geometry one can often understand the singularities of a variety $X$ by studying the singularities of its hyperplane sections. For example, if $D \subseteq X$ is a regular Cartier divisor, then $X$ turns out to be regular along $D$ as well. More generally, one can ask if the singularities of $X$ are mild provided that those of $D$ are. This question has a fairly satisfactory answer both in characteristic zero and characteristic $p>0$, and  the goal of our article is to address this problem in mixed characteristic.

To be more formal, consider a variety $X$, an integral Weil divisor $D$ and a $\bQ$-divisor $\Delta\geq 0$ not containing $D$ in its support such that $K_X+D+\Delta$ is $\bQ$-Cartier, which we collect into a pair $(X,D+\Delta)$.  We wish to compare $X$ with the normalization $D^N$ of $D$.   Given this data, there is canonically defined divisor $\diff_{D^N}(\Delta+D)$ on $D^N$ called the different \cite{KollarFlipsAndAbundance,ShokurovThreeDimensionalLogFlips}, which satisfies $$(K_X+D+\Delta)|_{D^N}=K_{D^N}+\diff_{D^N}(D+\Delta).$$


In characteristic zero, the log terminal variant of inversion of adjunction (see \cite[Theorem 4.9 (1)]{KollarKovacsSingularitiesBook}) says that $(X, D+\Delta)$ is purely log terminal (PLT) if and only if $(D^N, \diff_{{D^N}}(D+\Delta))$ is Kawamata log terminal (KLT).  In positive characteristic, a similar result is proved in  \cite{TakagiAdjointIdealsAndACorrespondence} and \cite{Das_different_different_different}, where one replaces KLT with strongly $F$-regular and  PLT with purely $F$-regular.
In this paper, we prove an analogous result in mixed characteristic, in which KLT is replaced by BCM-regular  (defined in \cite{MaSchwedeSingularitiesMixedCharBCM}), and PLT is replaced by a new definition which we call purely BCM-regular.

The BCM-regularity of $R$ is defined in terms of a big Cohen-Macaulay (BCM) algebra $B$, which plays the role of a resolution of singularities in characteristic zero, and $R^+$ (the colimit of all finite domain extensions of $R$) in positive characteristic. In what follows, we will usually fix big Cohen-Macaulay $R$- and $R/I_D$-algebras, $B$ and $C$ respectively, which fit into a diagram as follows:
\[
\xymatrix{
R\ar[r]\ar[d]& R^+\ar[r]\ar[d]& B\ar[d]\\
R/I_D\ar[r] & (R/I_D)^+\ar[r]& C.
}
\]

We call such $B\to C$ compatibly chosen, and such a choice always exists by \cite[Theorem 1.2.1]{AndreWeaklyFunctorialBigCM}.
In fact, Andr\'{e} proved that if we start with a perfectoid big Cohen-Macaulay $R^+$-algebra $B$, then we can always find a perfectoid big Cohen-Macaulay $(R/I_D)^+$-algebra $C$ with which it is compatibly chosen. For this reason, when our results are applied, it will usually be with perfectoid big Cohen-Macaulay algebras, but most of the results themselves hold more generally.  Furthermore, if we are given that $R^+$ is a big Cohen-Macaulay algebra (as has been announced by Bhatt), then we can simply let $B$ and $C$ be the completion of $R^+$ and $(R/I_D)^+$ respectively.  Using $R^+$ as a big Cohen-Macauay algebra would simplify some of the functoriality arguments used in particular in \autoref{sec.birational}.  See \autoref{rem.R+CM} for additional discussion about when this particular big Cohen-Macaulay algebra is sufficient.

As in the equal characteristic settings, we obtain a more precise restriction theorem for a new adjoint-like ideal, which we denote by $\adj_{B\shortrightarrow C}^D(R, \Delta+D)$, and the BCM test ideals $\mytau_C(R,\Delta)$ introduced by the first two authors \cite{MaSchwedeSingularitiesMixedCharBCM} from which the above mentioned inversion of adjunction result follows immediately.

\begin{theoremA*}\textnormal{(\autoref{thm.AdjointMultiplierInversionOfAdjunction}, \autoref{cor:singularities}, and \autoref{cor:normal_centers})}
Let $(R, \fram)$ be a complete normal local domain of residual characteristic $p > 0$.
Let $D$ be a prime Weil divisor on $\Spec(R)$ with ideal $I_D$,  denote the normalization of $R/I_D$ by $R_{D^{\nm}}$,  and let $B$ and $C$ be compatibly chosen 
big Cohen-Macaulay algebras for $R$ and $R/I_D$.  Finally let  $\Delta\geq 0$ be a $\bQ$-divisor such that $D$ is not contained in the support of $\Delta$ and $K_R+D+\Delta$ is $\bQ$-Cartier.
Then:
\[
\adj_{B\shortrightarrow C}^D(R, D+\Delta) \cdot R_{D^{\nm}} = \mytau_C\big( R_{D^{\nm}}, \diff_{R_{D^{\nm}}}(D+\Delta) \big).
\]
In particular, $(R,D+\Delta)$ is purely $\mathrm{BCM}_{B\shortrightarrow C}$-regular if and only if $(R_{D^N},\diff_{R_{D^N}}(D+\Delta))$ is $\mathrm{BCM}_C$-regular.  Furthermore, these conditions imply that $D$ is normal.
\end{theoremA*}

We prove compatibilities of our new adjoint ideal with those already in use in equal characteristic.
First, that for appropriately chosen big Cohen-Macaulay algebras $B$ and $C$, the ideal $\adj_{B\shortrightarrow C}^D(R, \Delta+D)$  is contained in the valuatively defined adjoint ideal  originating in characteristic zero birational algebraic geometry:

\begin{theoremB*} \textnormal{(\autoref{thm:birational_adjoint_comparison})}
	Let $(R,\fram)$ be a complete normal local domain of residual characteristic $p > 0$, $\Delta\geq0$ a $\bQ$-divisor, and $D$ a prime Weil divisor such that $K_R+D+\Delta$ is $\bQ$-Cartier and $D$ is not contained in the support of $\Delta$.
	Then for any proper birational map $\mu \colon Y\to \Spec(R)$ from a normal variety $Y$ there exists a compatibly chosen perfectoid big Cohen-Macaulay $R$- and $R/I_D$-algebras, $B$ and $C$ such that
	\[
	\adj_{B\shortrightarrow C}^D(R,{D+\Delta})\subseteq \mu_*\O_Y(\lceil K_Y-\mu^*(K_R+D+\Delta)+D'\rceil),
	\]
	{where $D'$ is the strict transform of $D$.}
\end{theoremB*}

In fact, there is a single choice of $B\to C$ such that the above containment holds for all $\mu: Y\to \Spec(R)$ by \autoref{thm:uniformBC}. Note that, if log resolutions exist and the variety $Y$ is taken to be a resolution, then the right hand side coincides with the adjoint ideal from birational geometry.

We then show that in characteristic $p > 0$ the big Cohen-Macaulay algebras $B$ and $C$ can be chosen to ensure the ideal $\adj_{B\shortrightarrow C}^D(R, D+\Delta)$ coincides with the  adjoint-like ideal of Takagi \cite{TakagiPLTAdjoint, TakagiHigherDimensionalAdjoint, TakagiAdjointIdealsAndACorrespondence}.

\begin{theoremC*}
	\textnormal{(\autoref{thm.TauEqualityCharP})}
	Suppose that $(R, \fram)$ is a complete local $F$-finite normal domain of positive characteristic $p > 0$,
	$\Delta\geq 0$ a $\bQ$-divisor, and $D$ a prime Weil divisor such that $K_R+D+\Delta$ is $\bQ$-Cartier and $D$ is not contained in the support of $\Delta$.  Then there exists a map $B \to C$ of big Cohen-Macaulay $R^+$- and $(R/I_D)^+$-algebras, such that
	\[
	\mytau_{I_D}(R, D+\Delta) = \adj_{B\shortrightarrow C}^D(R,{D+\Delta}).
	\]
\end{theoremC*}

The most frustrating limitation of the BCM test ideal as defined in \cite{MaSchwedeSingularitiesMixedCharBCM} is that it is not clear if its formation commutes with localization.  As a consequence of our inversion of adjunction result, we obtain the following partial verification of localization.

\begin{theoremD*}\textnormal{(\autoref{thm.taucontainsSingularLocus})}
Suppose $(R,\fram)$ is a complete normal local domain of residual characteristic $p > 0$ and that $\Delta\geq 0$ is a $\mathbb{Q}$-divisor such that $K_R + \Delta$ is $\bQ$-Cartier.  Further suppose that $Q \in \Spec R$ is a point such that the localization $(R_Q, \Delta_Q)$ is simple normal crossing with $\lfloor \Delta_Q \rfloor = 0$ (in particular, $R_Q$ is regular).  Then for any perfectoid big Cohen-Macaulay $R^+$-algebra $B$,
\[
(\mytau_B(R, \Delta))_Q = R_Q.
\]
\end{theoremD*}

As a corollary, we get the following Brian\c{c}on--Skoda type result that is new in mixed characteristic.

\begin{theoremE*}\textnormal{(\autoref{cor:briancon-skoda})}
Let $(R,\fram)$ be a complete normal local domain of residue characteristic $p>0$ and of dimension $d$. Let $J$ be the defining ideal of the singular locus of $R$. Then there exists an integer $N$ such that $J^N\overline{I^h}\subseteq I$ for all $I\subseteq R$ where $h$ is the analytic spread of $I$. In particular, $J^N\overline{I^d}\subseteq I$ for all $I\subseteq R$.
\end{theoremE*}

By passing to an argument involving the inversion of adjunction on the extended Rees algebra, we also obtain the following result.
\begin{theoremF*}\textnormal{(\autoref{lem.GlobFRegExceptionalImpliesBCMReg})}
\label{theoremGlobImpliesBCMReg}
Suppose $(R, \fram, k)$ is a complete normal $\bQ$-Gorenstein local domain of dimension $\geq 2$ with residual characteristic $p > 0$ and $R/p$ $F$-finite.  Let $X = \Spec R$ and suppose that $\pi : Y \to X$ is the blowup of some $\fram$-primary ideal $I$ that $Y$ is normal.  Further suppose that $I \cdot \O_Y = \O_Y(-mE)$ where $E$ is a prime exceptional divisor. {
Let $\Delta$ be a $\Q$-divisor such that $K_X+\Delta$ is $\Q$-Cartier.}

Let $\Delta_E$ denote the different of $K_Y + E + \pi^{-1}_* \Delta$ along $E$.  Suppose that $(E, \Delta_E)$ is globally $F$-regular, then $(R, \Delta)$ is \BCMReg{} and in particular, $R$ itself is \BCMReg{}.
\end{theoremF*}

This implies that many simple singularities are \BCMReg{}.  Furthermore it shows that KLT surface singularities $(R, \Delta)$ are \BCMReg{} as long as the residual characteristic $p > 5$,
see \autoref{thm.KLTSurfaceImpliesBCMReg}.  In fact, the main results of \cite{CasciniGongyoSchwedeUniformBounds} also hold in mixed characteristic, see \autoref{thm.CasciniGongyoSchwedeUniformBounds}.  {
As a corollary, we obtain the inversion of adjunction for PLT pairs of dimension three in mixed characteristic with $p > 5$, see \autoref{cor.Dimension2To3InversionOfAdjunction}.  This then implies that divisorial centers of mixed characteristic three-dimensional PLT pairs with $p > 5$ are normal.

\begin{theoremG*}
\textnormal{(\autoref{cor.Dimension2To3InversionOfAdjunction})}
Suppose that $(R, \fram, k)$ is a normal 3-dimensional local  ring, essentially of finite type over an excellent DVR 	
with $F$-finite residue field of characteristic $p > 5$, and let $X = \Spec R$.
Suppose that $D$ is a prime divisor on $X$ and that $\Delta \geq 0$ is a $\bQ$-divisor such that $K_X + D + \Delta$ is $\bQ$-Cartier {and $\Delta$ has standard coefficients}.  Suppose that $(D^{\mathrm{N}}, \diff_{D^N}(\Delta+D))$ is KLT, then $(\widehat{R}, \widehat{D}+\widehat{\Delta})$ is purely \BCMReg{}.  In particular, $(X, D+\Delta)$ is PLT and {$D$ is normal}.
\end{theoremG*}

Finally, in \cite{MaSchwedeSingularitiesMixedCharBCM} it was proved that in a proper family over a Dedekind domain, if one fiber is strongly $F$-regular then the fibers over an open set are strongly $F$-regular and the generic fiber is KLT.  We remove a technical assumption on the index from that theorem in \autoref{sec:reduction_mod_p}, as well as prove the corresponding result for purely $F$-regular and PLT singularities.

\begin{theoremH*}\textnormal{(\autoref{thm:reduction_mod_p_plt} and \autoref{prop:SFR_on_fiber})}
	Let $\phi\colon X\to U=\Spec(A)$ be a proper flat family where $A$ is a localization of a finite extension of $\bZ$, with fraction field $K$.  Let $D$ be an integral Weil divisor which is horizontal over $U$ (resp. $D=0$), and $\Delta\geq0$ which does not contain $D$ in its support, such that $K_X+D+\Delta$ is $\bQ$-Cartier.  Suppose $(X_p,D_p+\Delta_p)$ is purely $F$-regular (resp. $(X_p,\Delta_p)$ is strongly $F$-regular) for some $p\in A$.  Then $(X_K,\Delta_K+D_K)$ is PLT (resp. $(X_p,\Delta_p)$ is KLT) and there is an open subset $V$ of $U$ such that $(X_q,D_q+\Delta_q)$ is purely $F$-regular (resp. $(X_q,\Delta_q)$ is strongly $F$-regular) for all $q\in V$.
	
\end{theoremH*}

Since pure and strong $F$-regularity can be checked (and is easily computable) via Fedder-type criterion, this theorem provides an efficient and computable algorithm to test whether a variety in characteristic zero is PLT or KLT.

\subsection*{Acknowledgements:} The authors thank Yves Andr\'{e}, Bhargav Bhatt, Kestutis Cesnavicius, Ray Heitmann, Mel Hochster, Craig Huneke, Srikanth Iyengar and Zsolt Patakfalvi for valuable conversations related to this project.  We thank Zsolt Patakfalvi for comments on previous drafts of this paper.  They would also like to thank the referees for their careful reading of the paper and many helpful comments and suggestions.  This material is partially based upon work done while the authors were in residence at the Mathematical Sciences Research Institute in Berkeley California during the Spring 2019 semester.  The authors also worked on this while attending an AIM SQUARE in June 2019. 

\section{Definitions and basic properties}
\label{sec.Definitions}

Throughout this paper, we will use perfectoid algebras as in \cite{BhattMorrowScholzeIHES}, which is recalled in \autoref{sec.perfectoidBCM}. We note that when the ring is $p$-torsion free and contains a compatible system of $p$-power roots of $p$, then this is the same as the definition given in \cite[Section 2]{AndreWeaklyFunctorialBigCM} and \cite[Subsection 2.1]{MaSchwedeSingularitiesMixedCharBCM}. Following the tradition of commutative algebra, when we call a ring {\it local}, we implicitly mean it is a Noetherian ring. A (not necessarily Noetherian) algebra $B$ over a local ring $(R,\m)$ is called {\it big Cohen-Macaulay} if every system of parameters of $R$ is a regular sequence on $B$. {
In particular, $\m B \neq B$.}

We now recall the definition of BCM test ideal from \cite{MaSchwedeSingularitiesMixedCharBCM}. Let $(R,\fram)$ be a complete normal local domain of residue characteristic $p>0$.  Fix an effective canonical divisor $K_R$, and let $\Delta\geq 0$ be a $\bQ$-divisor such that $K_R+\Delta$ is $\bQ$-Cartier.  Then there exists $n\in\bN$ and $f\in R$ such that $n(K_R+\Delta)=\Div_R(f)$.

\begin{definition}\cite[Definition 6.2]{MaSchwedeSingularitiesMixedCharBCM}\label{def:BCM_test_ideal}
	In the above set-up, let $B$ be a big Cohen-Macaulay $R^+$-algebra.  Then we set
	$$0^{B,K_R+\Delta}_{H^d_{\fram}(R)}=\ker\left(H^d_{\fram}(R)\xrightarrow{f^{1/n}}H^d_{\fram}(B)\right).$$

	\noindent The corresponding \emph{BCM test ideal} is defined as:
	\[ \mytau_B(R,\Delta)=\Ann_{\omega_R}0^{B,K_R+\Delta}_{H^d_{\fram}(R)}.
	\]	
	\end{definition}
{Equivalently, we can define $\mytau_B(R,\Delta)$ to be the Matlis dual of $\im(H^d_{\fram}(R)\xrightarrow{f^{1/n}}H^d_{\fram}(B))$. }It is proved in \cite[Proposition 6.10]{MaSchwedeSingularitiesMixedCharBCM} that we can take  a sufficiently large perfectoid $B$ such that $\mytau_B(R,\Delta)\subseteq \mytau_{B'}(R,\Delta)$ for any other perfectoid big Cohen-Macaulay $R^+$-algebra $B'$.  We say that $(R,\Delta)$ is $\mathrm{BCM}_B$-regular if $\mytau_B(R,\Delta)=R$, and that it is $\mathrm{BCM}$-regular\footnote{More accurately, it is called {\it perfectoid} $\mathrm{BCM}$-regular in \cite{MaSchwedeSingularitiesMixedCharBCM} to emphasize that we only consider perfectoid algebras. In this paper we suppress this notion for simplicity.} if it is $\mathrm{BCM}_{B}$-regular for all (and hence for a single sufficiently large) perfectoid big Cohen-Macaulay $B$. It is known that $\mathrm{BCM}$-regularity coincides with strong $F$-regularity if $R$ has positive characteristic $p>0$ (and $K_R+\Delta$ is $\bQ$-Cartier), see \cite[Corollary 6.23]{MaSchwedeSingularitiesMixedCharBCM}

Now suppose that $(R, \fram)$ is a complete normal local domain of mixed characteristic $(0, p)$ or characteristic $p > 0$.  Fix a prime Weil divisor $D$ on $\Spec R$ with $I_D = R(-D)$ the defining ideal.  For every diagram
\begin{equation}
\label{eq.BCCompatibleDiagram}
\xymatrix{
0 \ar[r] & I_D \ar@{.>}[d] \ar[r] & R \ar[d] \ar[r] & R/I_D \ar[d] \ar[r] & 0 \\
0 \ar[r] & (I_D)^+ \ar@{.>}[d] \ar[r] & R^+ \ar[d] \ar[r] & (R/I_D)^+ \ar[d] \ar[r] & 0 \\
& I_{B \shortrightarrow C} \ar[r] & B \ar[r] & C \ar[r]_{+1} &
}
\end{equation}
where $B$ and $C$ are big Cohen-Macaulay $R^+$ (respectively $(R/I_D)^+$) algebras, we can construct an object in $I_{B \shortrightarrow C} \in D^b(R)$ as pictured above.

Here, $(I_D)^+$ is some minimal prime ideal over $I_DR^+$. With this choice $R^+/(I_D)^+$ may be identified with $(R/I_D)^+$ (see \cite[p.\ 27]{HochsterFoundations}).

\begin{remark}
\label{rem.R+CM}
Recently, Bhatt \cite{BhattCMabsoluteintegralclosure} proved that the $p$-adic completion of $R^+$ is a (perfectoid) big Cohen-Macaulay algebra, thus in \autoref{eq.BCCompatibleDiagram}, we may simply let $B=\widehat{R^+}$ and $C=\widehat{(R/I_D)^+}$. In this case, $I_{B\shortrightarrow C}$ is an ideal of $B=\widehat{R^+}$, and we don't need to work in $D^b(R)$. This suffices for many applications in this paper (and it strengthens some of our results, see \autoref{rmk:BhattR^+}). On the other hand, it doesn't seem allow us to build in ``perturbation elements''.  In particular, it is not enough for our proof of a Brian\c{c}on--Skoda type result, see \autoref{cor:briancon-skoda}.
\end{remark}


\begin{definition}\label{def:adjoint}
With notation as above, fix a $\bQ$-divisor $\Delta \geq 0$ such that $K_R + D + \Delta$ is $\bQ$-Cartier and no component of $\Delta$ is equal to $D$.  Select $K_R = -D + G$ with $G \geq 0$ and choose $f \in R$ such that $\Div_R(f) = n(K_R + D + \Delta)$.  By the commutative diagram above, we have a map
\[
\psi_{f^{1/n}} \colon I_D \to I_{B\shortrightarrow C} \xrightarrow{\cdot f^{1/n}} I_{B\shortrightarrow C}.
\]
 We define
 the \emph{BCM adjoint ideal} with respect to $B, C$, denoted $\adj_{B\shortrightarrow C}^D(R, D + \Delta)$,
to be the Matlis dual of \[\Image\Big(H^d_{\fram}(I_D) \xrightarrow{H^d_{\fram}(\psi_{f^{1/n}})} H^d_{\fram}(I_{B\shortrightarrow C})\Big).\]
\end{definition}

\begin{remark}
We have the factorization:
\[
R \to R(K_R + D) \xrightarrow{\cdot f^{1/n}} R^+ \to B.
\]
If one chooses a different representative of $K_R$, say $K_R' = K_R + \Div(h)$, even such that $K_R'$ has components in common with $D$ or is non-effective, and so obtains $f' = h^n f \in K(R)$,
 one still has a map
$R(K_R' + D) \xrightarrow{\cdot f'^{1/n}} B$ which induces $H^d_{\fram}(R(K_R' + D)) \to H^d_{\fram}(B)$, the image of which is still Matlis dual to $\adj_{B\shortrightarrow C}^D(R, D + \Delta)$, as we shall see in the proof of \autoref{lem.AdjBasic}\autoref{lem.adjIsIndep} below.  \end{remark}

We prove two basic properties.
\begin{lemma}
\label{lem.AdjBasic}
With notation as in \autoref{def:adjoint}:
\begin{enumerate}
\item $\adj_{B\shortrightarrow C}^D(R, D + \Delta)$ is an ideal in $R$.
\label{lem.adjIsIdeal}
\item The ideal $\adj_{B\shortrightarrow C}^D(R, D + \Delta)$ is independent of the choice of $K_R$, $f$ and $f^{1/n} \in R^+$.
\label{lem.adjIsIndep}
\end{enumerate}
\end{lemma}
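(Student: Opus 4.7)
The plan is to unwind the definition and apply standard Matlis/local duality together with some bookkeeping of how the relevant maps change under the choices made.

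For (1), observe first that $\adj_{B\shortrightarrow C}^D(R, D+\Delta)$ is by construction the Matlis dual of an $R$-submodule of $H^d_{\fram}(I_{B\shortrightarrow C})$, so it is automatically an $R$-module. Applying Matlis duality to the surjection $H^d_{\fram}(I_D) \twoheadrightarrow \Image(H^d_{\fram}(\psi_{f^{1/n}}))$ exhibits it as an $R$-submodule of $H^d_{\fram}(I_D)^\vee$. Since $R$ is a complete normal local domain of dimension $d$, local duality identifies this dual with $\Hom_R(I_D, \omega_R)$, and the divisorial/reflexive correspondence realizes this as the rank-one reflexive fractional ideal $R(K_R + D) = R(G) \subseteq K(R)$ (using $K_R = -D + G$ with $G \geq 0$). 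Thus $\adj_{B\shortrightarrow C}^D(R, D+\Delta)$ sits canonically as a submodule of $R(G)$, which is an ``ideal'' in the usual convention of test-ideal theory, just as the BCM test ideal of \cite{MaSchwedeSingularitiesMixedCharBCM} is an ``ideal'' inside $\omega_R$.

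For (2), suppose $K_R' = K_R + \Div(h)$ for some $h \in K(R)^\times$, so $f' = h^n f$ and (compatibly) $f'^{1/n} = h \cdot f^{1/n}$ in $R^+$ (or more generally in $K(R^+)$). Then $\psi_{f'^{1/n}}$ agrees with $(\cdot h) \circ \psi_{f^{1/n}}$ as maps into $I_{B\shortrightarrow C}$. Since $h$ is a nonzerodivisor acting as an automorphism on $H^d_{\fram}(I_{B\shortrightarrow C})$ (being a unit in the total ring of fractions), the two images $\Image(H^d_{\fram}(\psi_{f^{1/n}}))$ and $\Image(H^d_{\fram}(\psi_{f'^{1/n}}))$ are identified by an isomorphism and their Matlis duals are naturally the same $R$-module. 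Tracking identifications, under the natural isomorphism $R(K_R + D) \xrightarrow{\cdot h^{-1}} R(K_R' + D)$ of submodules of $K(R)$ that identifies the two incarnations of $\Hom_R(I_D, \omega_R)$, the two realizations of $\adj_{B\shortrightarrow C}^D(R, D+\Delta)$ coincide. Finally, a different choice of $n$-th root $f^{1/n}$ differs by an $n$-th root of unity in $R^+$, which acts as a unit and therefore does not alter the image.

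The only real subtlety lies in the bookkeeping inside $D^b(R)$, where $I_{B\shortrightarrow C}$ is defined as the fiber of $B \to C$: one must verify that multiplication by $h$ and by $f^{1/n}$ is compatible with the fiber sequence $I_{B\shortrightarrow C} \to B \to C$, which is immediate because these elements act $R$-linearly on both $B$ and $C$ in a way commuting with the map $B \to C$. I do not expect any deeper obstacle beyond carefully tracking these identifications.
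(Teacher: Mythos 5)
The proposal has a genuine gap in part (1). You embed $\adj_{B\shortrightarrow C}^D(R, D+\Delta)$ into $H^d_{\fram}(I_D)^{\vee} \cong \Hom_R(I_D, \omega_R) \cong R(K_R+D) = R(G)$, and then observe that this makes it a rank-one fractional ideal. But since $K_R = -D+G$ with $G \geq 0$, the module $R(G)$ properly contains $R$ whenever $G \neq 0$, so this establishes only that $\adj$ is a fractional ideal, not that it is an ideal \emph{in $R$} as the lemma asserts. The analogy you draw with the BCM test ideal being ``an ideal inside $\omega_R$'' overlooks that $\mytau_B(R,\Delta)$ really does land inside $R$, and for the same reason you are missing here. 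The key step is the factorization $I_D \hookrightarrow R(K_R) \xrightarrow{\cdot f^{1/n}} I_{B\shortrightarrow C}$: because $I_D \hookrightarrow R(K_R)$ is a generic isomorphism of rank-one modules, the induced map $H^d_{\fram}(I_D) \to H^d_{\fram}(R(K_R))$ is surjective, so the image in $H^d_{\fram}(I_{B\shortrightarrow C})$ is unchanged when the source is replaced by $R(K_R) \cong \omega_R$. Dualizing the surjection $H^d_{\fram}(R(K_R)) \twoheadrightarrow \Image$ then yields $\adj \hookrightarrow H^d_{\fram}(\omega_R)^{\vee} \cong R$. Without this factorization you only get the weaker containment $\adj \subseteq R(G)$.

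Part (2) has a related problem. You claim that $h$ acts ``as an automorphism on $H^d_{\fram}(I_{B\shortrightarrow C})$ (being a unit in the total ring of fractions).'' This is false: $H^d_{\fram}(I_{B\shortrightarrow C})$ sits in an exact sequence with $H^{d-1}_{\fram}(C)$ and $H^d_{\fram}(B)$, which are top local cohomology modules of big Cohen--Macaulay algebras and hence are $\fram^{\infty}$-torsion; multiplication by a nonunit of $R$ on such a module is surjective but far from injective. (Incidentally, you should also restrict to $h$ with $\Div(h) \geq 0$ so that $\cdot h$ is even defined on $I_{B\shortrightarrow C}$.) The correct comparison, as in the paper, pre-composes with the \emph{module isomorphism} $R(K_R') \xrightarrow{\,\cdot g\,} R(K_R)$ rather than post-composing with a non-injective endomorphism of the target: the resulting commutative square has the identity on $H^d_{\fram}(I_{B\shortrightarrow C})$, so the two images literally coincide, and the Matlis dual of $H^d_{\fram}(R(K_R')) \xrightarrow{\cdot g} H^d_{\fram}(R(K_R))$ is an isomorphism $R \to R$, i.e.\ multiplication by a unit of $R$. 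Together with the embedding from (1) this gives equality of ideals of $R$. Your closing sentence, that ``the two realizations coincide'' under the isomorphism $R(K_R+D) \to R(K_R'+D)$, only shows that the two submodules are \emph{carried to each other} by a rescaling of $K(R)$, not that they are equal as ideals of $R$ — and indeed that conclusion cannot even be formulated correctly until (1) is fixed.
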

\begin{proof}
First we prove \autoref{lem.adjIsIdeal}.  The map $I_D \xrightarrow{\cdot f^{1/n}} I_{B\shortrightarrow C}$ factors as $I_D \to R(K_R) \xrightarrow{\cdot f^{1/n}} I_{B\shortrightarrow C}$ by construction.  Since $I_D \to R(K_R)$ is generically an isomorphism, $H^d_{\fram}(I_D) \to H^d_{\fram}(R(K_R))$ is surjective and so both $H^d_{\fram}(I_D)$ and $H^d_{\fram}(R(K_R))$ have the same image in $H^d_{\fram}(I_{B\shortrightarrow C})$.  Thus, by Matlis duality, we have an injection
\[
\adj_{B\shortrightarrow C}^D(R, D + \Delta) \to \big(H^d_{\fram}(R(K_R)))^{\vee} \cong R.
\]
This makes $\adj_{B\shortrightarrow C}^D(R, D + \Delta)$ an ideal and proves \autoref{lem.adjIsIdeal}.  Additionally, it is worth noting that we could just as well have defined
\begin{equation}
\label{eq.AdjAsKernel}
\adj_{B\shortrightarrow C}^D(R, D + \Delta) := \Ann_R{\ker}\Big(H^d_{\fram}(R(K_R)) \xrightarrow{\cdot f^{1/n}} H^d_{\fram}(I_{B\shortrightarrow C})\Big).
\end{equation}

To prove \autoref{lem.adjIsIndep}, first note that the diagram defining $I_{B\shortrightarrow C}$ was independent of all choices of $K_R$, $f$ and $n$.
It is clear that the ideal $\adj_{B\shortrightarrow C}^D(R, D + \Delta)$ is independent of $n$ if it is independent of $f$, for if we choose $n(K_X+D+\Delta)=\Div(f)$ and $kn(K_X+D+\Delta)=\Div(f^k)$, and choose the same $n$th root of $f$, the definitions read the same.
Next, for fixed $K_R$, two choices of $f$ or of $f^{1/n}$ differ only by units of $R^+$, which certainly does not change \autoref{eq.AdjAsKernel}.

Finally, to show independence on $K_R$, suppose we have $n$ and $f$ such that $n(K_R+D+\Delta)=\Div(f)$ and $n(K_R'+D+\Delta)=\Div(fg^n)$.  That is, $(K_R'-K_R)=\Div(g)$, and we may assume that $\Div(g)\geq 0$.  Notice that $R(K_R') \xrightarrow{\cdot g} R(K_R)$ is an isomorphism.
Consider the commutative diagram:
\[
    \xymatrix@C=36pt{
        H^d_{\fram}(R(K_R')) \ar[d]_{\cdot g}^{\sim} \ar[r]^-{\cdot f^{1/n}g} &  H^d_{\fram}(I_{B\shortrightarrow C})\ar@{=}[d]\\
        H^d_{\fram}(R(K_R))  \ar[r]_-{\cdot f^{1/n}} & H^d_{\fram}(I_{B\shortrightarrow C})
    }
\]
By the proof of \autoref{lem.adjIsIdeal}, the Matlis dual of the image of the top row is $J' = \adj_{B\shortrightarrow C}^D(R, D + \Delta)$; the adjoint ideal computed with respect to $K_R'$.  The Matlis dual of the bottom row is $J = \adj_{B\shortrightarrow C}^D(R, D + \Delta)$; the adjoint ideal computed with respect to $K_R$.  From the diagram, we see that the two horizontal images are the same.  Hence the diagram induces an isomorphism $J \to J'$.  Now, the Matlis dual of the map
\[
    H^d_{\fram}(R(K_R')) \xrightarrow{\cdot g} H^d_{\fram}(R(K_R))
\]
is just an $R$-module isomorphism between rank-1 free $R$-modules, hence it can be identified with multiplication by a unit.  This multiplication by a unit also induces our isomorphism $J \to J'$ and so $J = J'$.
\end{proof}

\begin{remark}[Non-effective $\Delta$]
\label{rmk.NonEffectiveDeltaForMixedChar}
With notation as above, suppose that $d \in R$ does not vanish at the generic point of $D$.  It then easily follows by the arguments above that
\[
d \cdot \adj_{B\shortrightarrow C}^D(R, D + \Delta) = \adj_{B\shortrightarrow C}^D(R, D + \Delta + \Div(d)).
\]
Hence if $\Delta$ is non-effective, we can choose such a $d$ so that $\Delta + \Div(d) \geq 0$.  Thus we can define
\[
\adj_{B\shortrightarrow C}^D(R, D + \Delta) = {1 \over d} \cdot \adj_{B\shortrightarrow C}^D(R, D + \Delta + \Div(d)).
\]
Many of the results of this paper easily generalize to this setting by the formula above, but we leave details to the reader.
\end{remark}




\begin{definition}\label{def:purely_BCM_regular}
	Let $R$ be a complete normal local domain of residue characteristic $p>0$. Let $\Delta\geq0$ be a $\bQ$-divisor and $D$ a prime Weil divisor such that $(R, \Delta+D)$ is $\bQ$-Cartier and no component of $\Delta$ is equal to $D$.  Let $B$ and $C$ be compatibly chosen big Cohen-Macaulay $R^+$ and $(R/I_D)^+$-algebras respectively. We say that $(R,\Delta+D)$ is \emph{purely $\mathrm{BCM}_{B \shortrightarrow C}$-regular} 
	if
	\[
        \adj_{B\shortrightarrow C}^D(R,D+\Delta)=R.
	\]
\end{definition}

We can then give a definition of purely $\mathrm{BCM}$-regular that is independent of choices:

\begin{definition}
	With notation as in \autoref{def:purely_BCM_regular}, we say that $(R,\Delta+D)$ is \emph{purely $\mathrm{BCM}$-regular} if it is purely $\mathrm{BCM}_{B\shortrightarrow C}$-regular for every compatible choice of
perfectoid big Cohen-Macaulay algebras
$B$ and $C$ over $R^+$ and $(R/I_D)^+$ respectively.
	\end{definition}

\begin{remark}
It will follow from \autoref{thm:uniformBC} that, in fact, $(R,\Delta+D)$ is \emph{purely $\mathrm{BCM}$-regular} if and only if it is purely $\mathrm{BCM}_{B\shortrightarrow C}$-regular for one single sufficiently large compatible choice of perfectoid big Cohen Macaulay $R^+$- and $(R/I_D)^+$- algebras $B\to C$.
\end{remark}

We now compare the ideal $\adj_{B\shortrightarrow C}^D(R, D + \Delta)$ with the BCM test ideal $\mytau_B(R,\Delta)$, defined in \cite{MaSchwedeSingularitiesMixedCharBCM} and recalled in \autoref{def:BCM_test_ideal}.  This is an analog of the fact that if $(X,D+\Delta)$ is PLT then $(X,(1-\epsilon)D+\Delta)$ is KLT.

\begin{proposition}
\label{prop.AdjIsSmallerThanTau}
Let $(R,\m)$ be a complete normal local domain of residue characteristic $p>0$ and let $D$ be a prime divisor. Suppose that $\Delta, \Delta' \geq 0$ are two $\bQ$-divisors with no common components with $D$ such that $K_R + \Delta'$ is $\bQ$-Cartier. Further suppose that $D + \Delta$ is $\bQ$-Cartier. Then
\[
\adj_{B\shortrightarrow C}^D(R, \Delta' + D + \Delta ) \subseteq \mytau_B(R, \Delta' + (1-\epsilon) (D + \Delta)) \subseteq \mytau_B(R, \Delta')
\]
for any {rational number} $1 \geq \varepsilon > 0$.
\end{proposition}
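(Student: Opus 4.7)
The plan is to establish the two inclusions in sequence.

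The right-hand inclusion is an instance of the standard monotonicity of the BCM test ideal: for $\Delta_1 \leq \Delta_2$ with $K_R + \Delta_i$ both $\bQ$-Cartier, one has $\mytau_B(R,\Delta_2) \subseteq \mytau_B(R,\Delta_1)$. Choosing a common $n$ and $f_i$ with $\Div(f_i) = n(K_R + \Delta_i)$, we can write $f_2 = f_1 h$ with $h \in R$, $\Div(h) = n(\Delta_2 - \Delta_1) \geq 0$. Then $f_2^{1/n} = f_1^{1/n} h^{1/n}$, so any $\xi \in H^d_{\fram}(R)$ with $f_1^{1/n}\xi = 0$ in $H^d_{\fram}(B)$ satisfies $f_2^{1/n}\xi = 0$ as well, and the containment of kernels reverses to the containment of annihilators. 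Applying this with $\Delta_1 = \Delta'$ and $\Delta_2 = \Delta' + (1-\varepsilon)(D+\Delta)$ gives the right-hand inclusion.

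For the left-hand inclusion I would first treat the case $0 < \varepsilon < 1$. Set $\Delta_0 = \Delta' + D + \Delta$ and $\Delta'' = \Delta' + (1-\varepsilon)(D+\Delta)$, so $\Delta_0 - \Delta'' = \varepsilon(D+\Delta)$. Fix a canonical divisor $K_R$ and a sufficiently divisible $n$ with $n\varepsilon \geq 1$, and pick $f, g$ in the fraction field of $R$ with $\Div(f) = n(K_R + \Delta_0)$ and $\Div(g) = n(K_R + \Delta'')$. Setting $h := f/g$, we have $h \in R$ with $\Div(h) = n\varepsilon(D+\Delta) \geq D$, so $h \in I_D$, and after fixing compatible $n$-th roots in $R^+$ one obtains $f^{1/n} = g^{1/n}h^{1/n}$. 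The central observation is that $h \in I_D$ forces $h^{1/n} \in I_{B\shortrightarrow C}$: the image of $h$ in $R/I_D$ is zero, so the image of $h^{1/n}$ in the domain $(R/I_D)^+$ vanishes (its $n$-th power is zero), whence its image in $C$ is also zero. Consequently multiplication by $h^{1/n}$ on $C$ is the zero map, and from the long exact sequence attached to $0 \to I_{B\shortrightarrow C} \to B \to C \to 0$ a commutative-diagram argument shows that multiplication by $h^{1/n}$ annihilates $\Image\bigl(H^{d-1}_{\fram}(C) \to H^d_{\fram}(I_{B\shortrightarrow C})\bigr) = \ker\bigl(H^d_{\fram}(I_{B\shortrightarrow C}) \to H^d_{\fram}(B)\bigr)$.

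To conclude, I would compare the two ideals as annihilators of submodules of $H^d_{\fram}(R(K_R))$. Since $1 - \varepsilon > 0$, a divisor computation shows that multiplication by $g^{1/n}$ gives a well-defined map $R(K_R) \to I_{B\shortrightarrow C}$, and one obtains a factorization $\cdot f^{1/n} = (\cdot h^{1/n}) \circ (\cdot g^{1/n})$ as maps $R(K_R) \to I_{B\shortrightarrow C}$. Now take $\xi \in H^d_{\fram}(R(K_R))$ in the kernel of $\cdot g^{1/n}\colon H^d_{\fram}(R(K_R)) \to H^d_{\fram}(B)$ (the kernel whose annihilator gives $\mytau_B(R,\Delta'')$ viewed as an ideal of $R$ via local duality). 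Then $g^{1/n}\xi \in H^d_{\fram}(I_{B\shortrightarrow C})$ maps to zero in $H^d_{\fram}(B)$, so it lies in the kernel we just saw is annihilated by $h^{1/n}$; therefore $f^{1/n}\xi = h^{1/n}(g^{1/n}\xi) = 0$ in $H^d_{\fram}(I_{B\shortrightarrow C})$, whence $\xi$ also lies in the kernel defining $\adj_{B\shortrightarrow C}^D(R,\Delta_0)$. Reversing the containment of kernels under Matlis duality yields $\adj_{B\shortrightarrow C}^D(R,\Delta_0) \subseteq \mytau_B(R,\Delta'')$. The boundary case $\varepsilon = 1$ follows from this applied for any $\varepsilon' \in (0,1)$, together with the monotonicity of the first step. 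The main obstacle I anticipate is the careful bookkeeping needed to verify that each multiplication map lands in the correct target --- in particular, that $\cdot g^{1/n}$ lands in $I_{B\shortrightarrow C}$ rather than merely $B$, which is precisely where the strict inequality $\varepsilon < 1$ is used.
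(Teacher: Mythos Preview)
Your proof is correct and rests on the same key observation as the paper's: since $h \in I_D$ and $(I_D)^+$ is prime, one has $h^{1/n} \in (I_D)^+$, so $h^{1/n}$ acts by zero on $C$. The paper, however, organizes the argument slightly differently. Rather than showing that $g^{1/n}$ already sends $R(K_R)$ into $I_{B\shortrightarrow C}$ and then comparing kernels, it builds a map $\nu \colon h^{1/n}B \to I_{B\shortrightarrow C}$ from the map of triangles
\[
\bigl(h^{1/n}B \to B \to B/h^{1/n}B\bigr) \longrightarrow \bigl(I_{B\shortrightarrow C} \to B \to C\bigr),
\]
factors $H^d_{\fram}(R(K_R)) \xrightarrow{\cdot g^{1/n}h^{1/n}} H^d_{\fram}(h^{1/n}B) \xrightarrow{\nu} H^d_{\fram}(I_{B\shortrightarrow C})$, and compares \emph{images} via Matlis duality (identifying $h^{1/n}B \cong B$ to recognize the intermediate image as the one computing $\mytau_B$). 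This buys a small amount of uniformity: the paper's factorization works verbatim for all $0 < \varepsilon \leq 1$, whereas your route genuinely needs $\varepsilon < 1$ to ensure $g^{1/n}R(K_R) \subseteq (I_D)^+$ and then handles $\varepsilon = 1$ by a separate reduction. Both arguments are equally valid; yours is perhaps more transparent about where the derived-category object $I_{B\shortrightarrow C}$ is actually used.
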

\begin{proof}
The second containment is clear (see \cite[Lemma 6.11]{MaSchwedeSingularitiesMixedCharBCM}),  so we need only prove the first one.
Choose $g \in K(R)$ such that $\Div g = n(K_R + \Delta' + (1-\varepsilon)(D+\Delta))$ (note $g$ is only in the fraction field of $R$) and choose $h \in R$ such that $\Div h = n\varepsilon (D+ \Delta)$.  We have the diagram:
\[
\xymatrix{
0 \ar[r] & I_D \ar[r] \ar[d] & R \ar[r] \ar[d] &  R/I_D \ar[d] \ar[r] & 0\\
0 \ar[r] & (I_D)^+ \ar[r]\ar[d]  & R^+ \ar[r]\ar[d]  & (R/I_D)^+ \ar[r]\ar[d]  & 0\\
  \ar[r] & I_{B\shortrightarrow C} \ar[r] & B \ar[r] & C \ar[r]^{+1} &
}
\]
Since $h \in I_D$ we have $h^{1/n} \in (I_D)^+$, since $(I_D)^+$ is prime, and hence from the map of triangles
\[
\xymatrix{
0 \ar[r] & h^{1/n} B \ar@{.>}[d]_{\nu} \ar[r] & B \ar[r] \ar[d] & B/(h^{1/n}B) \ar[r] \ar[d] & 0\\
& I_{B\shortrightarrow C} \ar[r] & B \ar[r] & C \ar[r]_{+1} &
}
\]
we obtain a map $\nu : h^{1/n} B \to I_{B\shortrightarrow C}$.  Consider the factorization:
\[
\xymatrix{
                        &                                                     & \mathrm{im}_1 \ar@{.>}[dd] \ar@{^{(}->}[dr]\\
H^d_{\fram}(I_D) \ar@{->>}[drr] \ar@{->>}[r] & H^d_{\fram}(R(K_R)) \ar@{->>}[ur] \ar[rr]^{\cdot g^{1/n} h^{1/n}} & & H^d_{\fram}(h^{1/n} B) \ar[r]^-{\nu} & H^d_{\fram}(I_{B\shortrightarrow C}).\\
                        &                                                     & \mathrm{im}_2 \ar@{^{(}->}[urr] \\
}
\]
The vertical dotted arrow is surjective by the diagram.
On the other hand, via the isomorphism of $h^{1/n} B \cong B$, we may view the middle map (defining $\mathrm{im}_1$) to be $H^d_{\fram}(R(K_R)) \xrightarrow{\cdot g^{1/n}} H^d_{\fram}(B)$, and hence we see that the Matlis dual of $\mathrm{im}_1$ is just $\mytau_B(R, \Delta' + (1-\epsilon) (D + \Delta))$ {
(note that $K_R$ is not effective, but there is still a well defined map $R(K_R) \xrightarrow{\cdot g^{1/n}} B$)}.

Taking Matlis duals $(\bullet)^{\vee}$ of the entire diagram, we have
\[
\xymatrix{
\big(H^d_{\fram}(K_R)\big)^{\vee} \ar[d]_{\sim} &\ar@{_{(}->}[l]  (\mathrm{im}_1)^{\vee} \ar[d]_{\sim}& \ar@{_{(}->}[l] (\mathrm{im}_2)^{\vee} \ar[d]_{\sim}\\
R & \ar@{_{(}->}[l] \mytau_B(R, \Delta' + (1-\epsilon) (D + \Delta)) & \ar@{_{(}->}[l] \adj_{B\shortrightarrow C}^D(R, D+ \Delta' + \Delta )
}
\]
which completes the proof.
\end{proof}

\begin{corollary}
\label{cor.PBCMImpliesBCM}
	With notation as in \autoref{prop.AdjIsSmallerThanTau}, if $(R,D+\Delta+\Delta')$ is  purely $\mathrm{BCM}$-regular then $(R,\Delta'+(1-\epsilon)(D+\Delta))$ is $\mathrm{BCM}$-regular.
	\end{corollary}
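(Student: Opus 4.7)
The proof should be a direct consequence of the containment chain established in \autoref{prop.AdjIsSmallerThanTau}. The plan is to unwind the definitions of purely $\mathrm{BCM}$-regularity and $\mathrm{BCM}$-regularity so that the hypothesis on the adjoint ideal forces the BCM test ideal to equal $R$ for every perfectoid big Cohen-Macaulay $R^+$-algebra.

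More precisely, let $B$ be an arbitrary perfectoid big Cohen-Macaulay $R^+$-algebra. By the theorem of Andr\'e cited in the introduction (\cite[Theorem 1.2.1]{AndreWeaklyFunctorialBigCM}), there exists a perfectoid big Cohen-Macaulay $(R/I_D)^+$-algebra $C$ compatibly chosen with $B$, i.e.\ fitting into a diagram of the form \autoref{eq.BCCompatibleDiagram}. Since $(R, D+\Delta+\Delta')$ is purely $\mathrm{BCM}$-regular, \autoref{def:purely_BCM_regular} gives
\[
\adj_{B\shortrightarrow C}^D(R, D+\Delta+\Delta') = R.
\]
Applying the first containment of \autoref{prop.AdjIsSmallerThanTau} then yields
\[
R = \adj_{B\shortrightarrow C}^D(R, \Delta' + D + \Delta) \subseteq \mytau_B\bigl(R, \Delta' + (1-\epsilon)(D+\Delta)\bigr) \subseteq R,
\]
so equality holds throughout. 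Since $B$ was an arbitrary perfectoid big Cohen-Macaulay $R^+$-algebra, this shows that $\mytau_B(R, \Delta'+(1-\epsilon)(D+\Delta)) = R$ for all such $B$, which is exactly the definition of $(R, \Delta'+(1-\epsilon)(D+\Delta))$ being $\mathrm{BCM}$-regular.

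There is no substantial obstacle here; the only point requiring a little care is to note that the hypothesis of pure $\mathrm{BCM}$-regularity is stated for \emph{every} compatible pair $B \to C$, while the conclusion of $\mathrm{BCM}$-regularity quantifies only over the single algebra $B$. This is reconciled by invoking Andr\'e's existence result to produce the required compatible $C$ for each given $B$.
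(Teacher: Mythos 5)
Your proof is correct and takes essentially the same approach as the paper: both rely on Andr\'e's existence result to produce a compatible $C$ for a given $B$, and then apply \autoref{prop.AdjIsSmallerThanTau}. The only cosmetic difference is that the paper invokes the ``single sufficiently large $B$'' characterization of $\mathrm{BCM}$-regularity from \cite[Proposition 6.10]{MaSchwedeSingularitiesMixedCharBCM} and works with that one $B$, whereas you verify $\mytau_B = R$ directly for an arbitrary $B$; both are standard rephrasings of the same argument.
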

\begin{proof}
	Choose a sufficiently large perfectoid big Cohen-Macaulay algebra $B$ such that $\mytau_B(R,\Delta'+(1-\epsilon)(D+\Delta))=R$ if and only if $(R,\Delta'+(1-\epsilon)(D+\Delta))$ is $\mathrm{BCM}$-regular by \cite[Proposition 6.10]{MaSchwedeSingularitiesMixedCharBCM}.  Let $C$ be an perfectoid big Cohen-Macaulay $R/I_D$-algebra such that $B\shortrightarrow C$ is a compatible choice, which exists by \cite[Theorem 1.2.1]{AndreWeaklyFunctorialBigCM}.  The result follows by applying \autoref{prop.AdjIsSmallerThanTau} to $B\shortrightarrow C$.
\end{proof}

\begin{remark}[Non-prime $D$]
    \label{rem.NonPrimeD}
    For non-prime $D$, there are several potential definitions and we work exclusively in the case that $D$ is prime in this paper.  Nevertheless, let us suggest a more general definition inspired by \cite[Lemma 4.22]{BMPSTWW-MMP}, also see \cite{TY-MMP}.
    If $D = \sum_{i = 1}^t D_i$, we choose a $C_i$, a perfectoid big Cohen-Macaulay $(R/I_{D_i})^+$-algebra for each $i$ with a map $B_i \to C_i$, for $B_i$ a perfectoid big Cohen-Macaulay $R^+$ algebra, satisfying the diagram \autoref{eq.BCCompatibleDiagram}.  
    We define
    \[
        \adj^D_{\oplus B_i\to \oplus C_i}(R, \Delta + D) = \sum_{i = 1}^t \adj^{D_i}_{B_i \to C_i}(R, \Delta+D).
    \]
    Several properties of this object are effectively proven in \cite{BMPSTWW-MMP} in the special case that, for all $i$, $B_i$ is the $\fram$-adic completion of $R^+$ and $C_i$ is the $\fram$-adic completion of $(R/I_{D_i})^+$ for all $i$.  Simply set $X = \Spec R$.
\end{remark}

\subsection{The different vs the different}
\label{sec.Different}
Let $R$ be a Noetherian normal domain, $X = \Spec R$, $D$ a prime Weil divisor, and $\Delta \geq 0$ a $\bQ$-divisor  with no components of $\Delta$ equal to $D$ and such that $K_R + D + \Delta$ is $\bQ$-Cartier of index $n$. Let $\pi \colon D^{N} \to D$ be the normalization of $D$. By abuse of notation, we also denote the composition $D^N \twoheadrightarrow D \hookrightarrow X = \Spec R$ by $\pi$. Following \cite[Section 4.1]{KollarKovacsSingularitiesBook}, we can define a $\bQ$-divisor $\diff_{R_{D^N}}(D + \Delta)$ on $D^N$ so that
\[
\pi^*(K_R + D + \Delta) \sim_{\bQ} K_{D^N} +  \diff_{R_{D^N}}(D + \Delta).
\]

This is done as follows. Since $R$ is normal, we can pick $U \subseteq \Spec R$ to be a regular open subset with complement of codimension at least two and such that $D\cap U$ is regular. 
The residue map induces an isomorphism $\omega_R(D)|_{D\cap U} \simeq \omega_{D}|_{D \cap U}$ (cf.\ \cite[Definition 4.1]{KollarKovacsSingularitiesBook}) which in turn yields a rational section $s$ of the rank-1-reflexive sheaf
\[
\sHom_{\cO_{D^{\nm}}}\big(\pi^* \cO_X(n(K_R + D + \Delta)), \cO_{D^{\nm}}(nK_{D^{\nm}})\big) \cong \cO_{D^{\nm}}(nK_{D^{\nm}}) \otimes \pi^* (\cO_X(n(K_R + D + \Delta)))^{-1}.
\]
We define $\diff_{R_{D^{\nm}}}(D + \Delta) = -\frac{1}{n} \mathrm{div}(s)$.\\

Equivalently, we can choose $K_R$ (using prime avoidance)  to be equal to $-D + G$ for some Weil divisor $G$ with no common components with $D$.
Consider the exact sequence
\[
0 \to \omega_R \to \omega_R(D) \to \omega_D \to \myH^{-d+1}(\omega_R^{\mydot}) \to \dots
\]
coming from applying Grothendieck duality to $0 \to \cO(-D) \to \cO \to \cO_D \to 0$ and taking cohomology.
Since $R$ is S2, it is Cohen-Macaulay in codimension 2.  Thus, $\myH^{-d+1}(\omega_R^{\mydot})$ is zero in codimension 2 on $R$ and hence in codimension 1 on $R/I_D$.  In particular, the S2-ification of $\coker(\omega_R \to \omega_R(D))$ is isomorphic to $\omega_D$, using \cite[0AWE]{stacks-project}.  Now then, since $K_R + D \geq 0$, we have a chosen section $1 \in \omega_R(D)$ and its image in $\omega_D$.  On the other hand, we have the generic isomorphism $\omega_{D^{N}} \to \omega_D$, and so we obtain a rational section of $\omega_{D^{N}}$, and hence a possibly non-effective
divisor $K_{D^{\nm}}$ (if $D$ is normal, it is effective).  We can now define
\[
\diff_{R_{D^{\nm}}}(D + \Delta) = \frac{1}{n}{\mathrm{div}_{D^{\nm}}}\, \pi^*f - K_{D^{\nm}}
\]
for $f$ such that $\mathrm{div}_R(f) = n(K_R + D + \Delta)$.
It is easy to see that the above two constructions, in fact, coincide.

\section{Adjunction and inversion of adjunction}

We prove the first main result of this article.

\begin{theorem}
\label{thm.AdjointMultiplierInversionOfAdjunction}
Suppose $(R, \fram)$ is a complete normal local domain of residue characteristic $p > 0$.  Fix a prime Weil divisor $D$ on $\Spec R$ with $I_D = R(-D)$ the defining ideal and set $R_D = R/I_D$ as well as $R_{D^{\nm}}$ to be the normalization of $R/I_D$.  Suppose that $\Delta \geq 0$ is a $\bQ$-divisor such that no component of $\Delta$ is equal to $D$ and such that $K_R + D + \Delta$ is $\bQ$-Cartier.  Then for compatibly chosen $B, C$ as in \autoref{eq.BCCompatibleDiagram}, we have
\[
\adj_{B\shortrightarrow C}^D(R, D+\Delta) \cdot R_{D^{\nm}} = \mytau_C\big( R_{D^{\nm}}, \diff_{R_{D^{\nm}}}(D+\Delta)) \big).
\]
\end{theorem}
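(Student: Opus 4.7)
The plan is to map the short exact sequence $0 \to R(K_R) \to R(K_R+D) \to \omega_{R_D} \to 0$ into the triangle $I_{B\shortrightarrow C} \to B \to C$ via multiplication by $f^{1/n}$, take local cohomology, and then read off the statement using Matlis duality together with the Grothendieck trace for the normalization. First I would choose $K_R = -D + G$ with $G \geq 0$ sharing no components with $D$ (possible by prime avoidance) and $f \in R$ with $\Div(f) = n(K_R + D + \Delta)$; fixing $f^{1/n} \in R^+$ and letting $g = f|_{R_{D^\nm}}$, $g^{1/n} \in (R_{D^\nm})^+$ its image under the compatibility diagram~\eqref{eq.BCCompatibleDiagram}, Section~2.1 ensures $\Div_{R_{D^\nm}}(g) = n(K_{R_{D^\nm}} + \diff_{R_{D^\nm}}(D+\Delta))$. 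The crucial algebraic identification is
\[
\omega_{R_D} \;=\; \Ext^1_R(R_D, \omega_R) \;\cong\; \Hom_R(I_D, \omega_R)/\omega_R \;\cong\; R(K_R+D)/R(K_R),
\]
which makes the top row above a short exact sequence of $R$-modules. Since $f^{1/n}\cdot R(K_R+D) \subseteq R(-\Delta) \subseteq R$ and $f^{1/n}\cdot R(K_R) \subseteq R(-D) = I_D \subseteq I_{B\shortrightarrow C}$, multiplication by $f^{1/n}$ yields the commutative diagram
\[
\xymatrix@C=18pt@R=15pt{
0 \ar[r] & R(K_R) \ar[r] \ar[d]^{\cdot f^{1/n}} & R(K_R+D) \ar[r] \ar[d]^{\cdot f^{1/n}} & \omega_{R_D} \ar[r] \ar[d]^{\bar f} & 0 \\
 & I_{B\shortrightarrow C} \ar[r] & B \ar[r] & C \ar[r]_{+1} &
}
\]

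Applying $H^\bullet_{\fram}(-)$, dimension gives $H^d_{\fram}(\omega_{R_D}) = H^d_{\fram}(C) = 0$, while big Cohen-Macaulayness yields $H^{d-1}_{\fram}(B) = 0$; the latter forces the connecting map $\beta \colon H^{d-1}_{\fram}(C) \hookrightarrow H^d_{\fram}(I_{B\shortrightarrow C})$ of the bottom triangle to be injective, so its Matlis dual $\beta^\vee$ is surjective. To identify the right-hand vertical $\bar f$ in terms of the BCM test ideal, I would precompose with the Grothendieck trace $\omega_{R_{D^\nm}} \to \omega_{R_D}$ coming from the finite normalization $R_D \hookrightarrow R_{D^\nm}$. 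On the regular locus of $R_D$ the trace is an isomorphism and $\bar f$ is literally multiplication by $g^{1/n}$; since both $\bar f \circ \mathrm{trace}$ and the map $\omega_{R_{D^\nm}} \xrightarrow{\cdot g^{1/n}} R_{D^\nm} \to C$ are $R_D$-linear morphisms between reflexive modules over the S2-ring $R_{D^\nm}$ and agree generically, they coincide. Hence the image of $\bar f \circ \mathrm{trace}$ on $H^{d-1}_{\fram}$ is exactly the map whose Matlis dual defines $\mytau_C(R_{D^\nm}, \diff_{R_{D^\nm}}(D+\Delta))$.

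To close, let $b := H^d_{\fram}(\cdot f^{1/n})$ and $\alpha$, $\gamma$ denote the horizontal connecting map and the right vertical on $H^{d-1}_{\fram}$; commutativity gives $\alpha^\vee \circ b^\vee = \gamma^\vee \circ \beta^\vee$. Matlis-dualizing the long exact sequence of the top row identifies $\alpha^\vee \colon R = H^d_{\fram}(R(K_R))^\vee \to \Ext^1_R(\omega_{R_D}, \omega_R)$ as the natural map with kernel $I_D$, landing in the $R_D$-part of $\Ext^1$. Further composing with the dual of the trace, $\Ext^1_R(\omega_{R_D}, \omega_R) \to \Ext^1_R(\omega_{R_{D^\nm}}, \omega_R) = R_{D^\nm}$ (which kills all subquotients of dimension $\leq d-2$), identifies the composite with the canonical map $R \twoheadrightarrow R_D \hookrightarrow R_{D^\nm}$. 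Since $\beta^\vee$ is surjective, the image of $\alpha^\vee \circ b^\vee$ further pushed to $R_{D^\nm}$ equals the image of $\gamma^\vee \circ \beta^\vee$ pushed to $R_{D^\nm}$, namely $\mytau_C(R_{D^\nm}, \diff_{R_{D^\nm}}(D+\Delta))$. But this pushed image is also the image of $\adj_{B\shortrightarrow C}^D(R, D+\Delta) = \im(b^\vee)$ inside $R_{D^\nm}$. The latter is \emph{a priori} only an $R$-submodule, but its equality with the $R_{D^\nm}$-ideal $\mytau_C$ upgrades it to the extended ideal $\adj_{B\shortrightarrow C}^D(R, D+\Delta) \cdot R_{D^\nm}$, giving the theorem.

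The main obstacle I anticipate is the identification of $\bar f \circ \mathrm{trace}$ with $\cdot g^{1/n}$ in Step~2: while the generic statement is easy, upgrading it to a global equality requires the correct compatibility of the chosen $n$-th roots under $R^+ \to (R_{D^\nm})^+$ and uses the Section~2.1 description of the different in a crucial way. The second subtlety is the computation of $\alpha^\vee$ in Step~3, which requires a careful application of local duality and the observation that $\Ext^1_R(\omega_{R_D}, \omega_R)$ differs from $R_{D^\nm}$ only by subquotients of dimension $\leq d-2$ (which are invisible to the $R_{D^\nm}$-ideal extension). Once both identifications are in hand, the rest is formal diagram chasing.
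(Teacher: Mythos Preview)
Your proposal is correct and follows essentially the same approach as the paper: map the short exact sequence $0 \to R(K_R) \to R(K_R+D) \to (\coker) \to 0$ into the triangle $I_{B\shortrightarrow C} \to B \to C$ by $f^{1/n}$, take local cohomology, use $H^{d-1}_{\fram}(B)=0$ to get the injection $H^{d-1}_{\fram}(C)\hookrightarrow H^d_{\fram}(I_{B\shortrightarrow C})$, bring in the trace $\omega_{R_{D^{\nm}}}\to\omega_{R_D}$ to identify $\mathrm{im}_1$ with the BCM test ideal side, and Matlis-dualize to obtain the square
\[
\xymatrix{
\adj^D_{B\shortrightarrow C}(R,D+\Delta) \ar@{^{(}->}[d] \ar@{->>}[r] & \mytau_C(R_{D^{\nm}},\diff_{R_{D^{\nm}}}(D+\Delta)) \ar@{^{(}->}[d]\\
R \ar[r] & R_{D^{\nm}}.
}
\]
One small correction: your identification $\omega_{R_D}=\Ext^1_R(R_D,\omega_R)\cong R(K_R+D)/R(K_R)$ is only valid when $R$ is Cohen--Macaulay; in general the cokernel merely injects into $\omega_{R_D}$ with cokernel supported in dimension $\leq d-3$ (since $R$ is S2), so the two agree after S2-ification. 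The paper phrases it this way. This does not affect your argument, since you only use $H^{d-1}_{\fram}$ of these modules, which coincide, and you correctly flag the analogous dimension-$\leq d-2$ discrepancies later.
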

\begin{proof}
Fix a canonical divisor $K_R = -D + G$, where $G \geq 0$ does not contain $D$ as a component.  Following the notation as in \autoref{sec.Definitions}, choose $f \in R$ such that $\Div(f) = n(K_R + D + \Delta)$.  Consider the diagram:
\[
\xymatrix{
0 \ar[r] & I_D \ar[d]_{\cdot f^{1/n}} \ar[r] & R \ar[d]_{\cdot f^{1/n}} \ar[r] & R/I_D \ar[d]^{\cdot f^{1/n}} \ar[r] & 0 \\
0 \ar[r] & (I_D)^+ \ar[r]  & R^+ \ar[r] & (R/I_D)^+ \ar[r] & 0.
}
\]
Since in a normal finite extension $R \subseteq S \subseteq R^+$ with $f^{1/n} \in S$, we have that $\Div_S(f^{1/n}) \geq \pi^* (K_R + D)$, it follows that $f^{1/n} \cdot R(K_R+D) \subseteq f^{1/n} S(\pi^*(K_R + D)) \subseteq R^+$.  Hence we have the following diagram:
\[
\xymatrix{
0 \ar[r] & I_D \ar[d] \ar[r] & R \ar[d] \ar[r] & R/I_D \ar[d] \ar[r] & 0 \\
0 \ar[r] & R(K_R) \ar[d]_{\cdot f^{1/n}} \ar[r] & R(K_R + D) \ar[d]_{\cdot f^{1/n}} \ar[r] & (\coker) \ar@{.>}[d]^{\cdot f^{1/n}} \ar[r] & 0 \\
0 \ar[r] & (I_D)^+ \ar[r]  & R^+ \ar[r] & (R/I_D)^+ \ar[r] & 0
}
\]
As in \autoref{sec.Different}, the S2-ification of $(\coker)$ is isomorphic to $\omega_D$, and so $f^{1/n}$ multiplies $\omega_D$ into $(R/I_D)^+$.

We see by \autoref{sec.Different} that
\[
\diff_{R_{D^{\nm}}}(D+\Delta) = {1 \over n} \Div_{D^{\nm}} \overline{f} - K_{D^{\nm}}.
\]
for $\overline{f} = f|_{D^N}$. Consider the following diagram:
\[
\xymatrix{
0 \ar[r] & R(K_R) \ar[dd]_{\cdot f^{1/n}} \ar[r] & R(K_R + D) \ar[dd]_{\cdot f^{1/n}} \ar[r] & (\coker) \ar[dd]^{\cdot f^{1/n}} \ar[drr] \ar[r] & 0 & \\
         &                                        &                                          & & & \omega_{D} \ar[dll]_{\cdot f^{1/n}} & \omega_{D^{\nm}} \ar[l]^{\Tr}  \\
0 \ar[r] & (I_D)^+ \ar[r] \ar[d]  & R^+ \ar[r] \ar[d]  & (R/I_D)^+ \ar[r] \ar[d] & 0 &\\
& I_{B \shortrightarrow C} \ar[r] & B \ar[r] & C \ar[r]_{+1} & &
}
\]
Since $K_{D^{\nm}}+\diff_{R_{D^{\nm}}}(D+\Delta) = {1 \over n} \Div_{D^{\nm}} \overline{f}$, by construction in \cite{MaSchwedeSingularitiesMixedCharBCM}, the Matlis dual of
\[
\Image\big( H^{d-1}_{\fram}(\omega_{D^{\nm}} ) \to H^{d-1}_{\fram}(C) \big)
\]
is $\mytau_C\big( R_{D^{\nm}}, \diff_{R_{D^{\nm}}}(D+\Delta)) \big)$. On the other hand, the maps $H^{d-1}_{\fram}(\omega_{D^{\nm}} ) \to H^{d-1}_{\fram}(\omega_D)$ and $H^{d-1}_{\fram}(\coker) \to H^{d-1}_{\fram}(\omega_D)$ are surjective, since the modules agree generically.  Therefore, the Matlis dual of
\[
\Image\big( H^{d-1}_{\fram}(\coker) \xrightarrow{\cdot{f^{1/n}}} H^{d-1}_{\fram}(C)\big)
\]
 is also $\mytau_C\big( R_{D^{\nm}}, \diff_{R_{D^{\nm}}}(D+\Delta)) \big)$.

 Taking local cohomology, we obtain the following diagram with images of maps in the middle row:
 \[
 \xymatrix{
&  H^{d-1}_{\fram}(\coker) \ar[r] \ar@{->>}[d] & H^d_{\fram}(R(K_R)) \ar@{->>}[d] \ar[r] & H^d_{\fram}(R(K_R + D)) \ar[dd] \ar[r] & 0 \\
0 \ar[r] & \mathrm{im}_1 \ar@{_{(}->}[d] \ar@{^{(}->}[r] & \mathrm{im}_2 \ar@{^{(}->}[d]\\
H^{d-1}_{\fram}(B) = 0 \ar[r] & H^{d-1}_{\fram}(C) \ar[r] & H^d_{\fram}(I_{B\shortrightarrow C}) \ar[r] & H^d_{\fram}(B)
 }
 \]
Recall that the S2-ification of $(\mathrm{coker})$ is $\omega_D$, and hence $H^{d-1}_{\fram}(\coker) = H^{d-1}_{\fram}(\omega_D)$.
 Taking Matlis dual of the top and middle row yields the diagram
 \[
 \xymatrix{
 \adj_{B\shortrightarrow C}^D(R, \Delta+D) \ar@{_{(}->}[d] \ar@{->>}[r] & \mytau_C\big( R_{D^{\nm}}, \diff_{R_{D^{\nm}}}(D+\Delta)) \ar@{_{(}->}[d] \\
 R \ar[r] & (R/I_D)^{\nm}=R_{D^{\nm}}.
 }
 \]
 This completes the proof.
\end{proof}

\begin{remark}[Adjunction for non-prime $D$]
    Using the notation of \autoref{rem.NonPrimeD}, if $D = \sum_{i = 1}^t D_i$ is only reduced but not prime, then notice that $\adj^{D_i}_{B_i \to C_i}(X, D_i + \Delta + \sum_{j \neq i} D_j)$ is contained in the Matlis dual of $\Image\big(H^d_{\fram}(R(K_R)) \to H^d_{\fram}(R(K_R + D_j))\big) = I_{D_j}^{\vee}$ for any $j \neq i$.  Hence $\adj^{D_i}_{B_i \to C_i}(X, D_i + \Delta + \sum_{j \neq i} D_j) \subseteq I_{D_j}$.  In particular,
    \[
        \begin{array}{rl}
            \big(\adj^D_{\oplus B_i\to \oplus C_i}(R, \Delta + D)\big) \cdot R_{{D_j}}^{\nm}
            = & \Big(\sum_{i = 1}^t \adj^{D_i}_{B_i \to C_i}(R, \Delta+D) \big) \cdot R_{{D_j}}^{\nm}\\
            = & \big(\adj^{D_j}_{B_j \to C_j}(R, \Delta + D)\big) \cdot R_{{D_j}}^{\nm}\\
            = & \mytau_{C_j}\big(R_{D_j}^{\nm}, \diff_{R_{D_j^{\nm}}(D+ \Delta)}\big).
        \end{array}
    \]
    Therefore, since $R_D^{\nm} := (R/I_D)^{\nm} = \prod_{j = 1}^t (R/I_{D_j})^{\nm} = \prod_{j = 1}^t R_{D_j}^{\nm}$ we obtain that
    \[
        \big(\adj^D_{\oplus B_i\to \oplus C_i}(R, \Delta + D)\big) \cdot R_{{D}}^{\nm} = \mytau_{\oplus C_j}\big(R_{D}^{\nm}, \diff_{R_{D^{\nm}}(D+ \Delta)}\big)
    \]
    where we define the right side to be the product of the $\mytau_{C_j}\big(R_{D_j}^{\nm}, \diff_{R_{D_j^{\nm}}(D+ \Delta)}\big)$ since $R_{D}^{\nm}$ is not local.
    This generalizes \autoref{thm.AdjointMultiplierInversionOfAdjunction} to the case of a non-prime $D$.
\end{remark}

\begin{corollary}\label{cor:normal_centers}
    With notations as in \autoref{thm.AdjointMultiplierInversionOfAdjunction}, if $(D^{\nm}, \diff_{R_{D^{\nm}}}(D+\Delta))$ is \BCMReg{C}, then $D$ is normal.
    \end{corollary}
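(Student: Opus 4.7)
My plan is to deduce this directly from the inversion of adjunction formula established in \autoref{thm.AdjointMultiplierInversionOfAdjunction}. The hypothesis that $(D^{\nm},\diff_{R_{D^{\nm}}}(D+\Delta))$ is \BCMReg{C} is, by definition, the statement
$\mytau_C\big(R_{D^{\nm}},\diff_{R_{D^{\nm}}}(D+\Delta)\big)=R_{D^{\nm}}$, so the theorem gives
\[
\adj_{B\shortrightarrow C}^D(R,D+\Delta)\cdot R_{D^{\nm}}=R_{D^{\nm}}.
\]
Thus the image of the adjoint ideal in $R_{D^{\nm}}$ generates the unit ideal there, and the whole question becomes how to contract this back to $R$.

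The first step is to observe that $R_{D^{\nm}}$ is itself a local ring. Since $I_D$ is prime and $R$ is complete, $R/I_D$ is a complete Noetherian local domain and hence Henselian; a finite integral extension of a Henselian local ring that happens to be a domain is again local, so $R_{D^{\nm}}$ has a unique maximal ideal $\fram'$. Because the composite $R\twoheadrightarrow R/I_D\hookrightarrow R_{D^{\nm}}$ is local, the preimage of $\fram'$ in $R$ is exactly $\fram$.

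The equality $\adj_{B\shortrightarrow C}^D(R,D+\Delta)\cdot R_{D^{\nm}}=R_{D^{\nm}}$ together with the locality of $R_{D^{\nm}}$ implies that the image of $\adj_{B\shortrightarrow C}^D(R,D+\Delta)$ in $R_{D^{\nm}}$ is not contained in $\fram'$, so by the contraction statement above the adjoint ideal itself is not contained in $\fram$; since $R$ is local, this forces $\adj_{B\shortrightarrow C}^D(R,D+\Delta)=R$. Revisiting the commutative square at the end of the proof of \autoref{thm.AdjointMultiplierInversionOfAdjunction},
\[
\xymatrix{
 \adj_{B\shortrightarrow C}^D(R,D+\Delta) \ar@{_{(}->}[d] \ar@{->>}[r] & \mytau_C\big(R_{D^{\nm}},\diff_{R_{D^{\nm}}}(D+\Delta)\big) \ar@{_{(}->}[d] \\
 R \ar[r] & R_{D^{\nm}},
}
\]
the top row now reads as a surjection $R\twoheadrightarrow R_{D^{\nm}}$, and this surjection factors through the injection $R/I_D\hookrightarrow R_{D^{\nm}}$. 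The inclusion $R/I_D\hookrightarrow R_{D^{\nm}}$ is therefore also surjective, so $R/I_D=R_{D^{\nm}}$ is normal, which is exactly the statement that $D$ is normal.

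There is no substantive obstacle in this plan; the heavy lifting is already absorbed by \autoref{thm.AdjointMultiplierInversionOfAdjunction}. The only point that requires a moment of care, and is mildly the ``main'' step, is verifying that $R_{D^{\nm}}$ is local, since that is what allows one to pass from ``the extended ideal is the unit ideal'' to ``the contracted ideal is all of $R$.''
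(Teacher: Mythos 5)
Your argument is correct and in the end rests on exactly the same ingredient the paper uses: the commutative square at the end of the proof of \autoref{thm.AdjointMultiplierInversionOfAdjunction}. The difference is that you insert two intermediate steps—verifying that $R_{D^{\nm}}$ is local, and then deducing $\adj_{B\shortrightarrow C}^D(R,D+\Delta)=R$—before returning to that diagram. Both steps are valid (and the second one is essentially the content of \autoref{cor:singularities}, where the paper invokes Nakayama; note you do not really need locality of $R_{D^{\nm}}$ for this, only that $R_{D^{\nm}}$ is a nonzero finite $R/I_D$-module so that $\fram R_{D^{\nm}}\neq R_{D^{\nm}}$). However, they are a detour. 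The paper's proof goes straight from the diagram: the top horizontal arrow there is a surjection $\adj_{B\shortrightarrow C}^D(R,D+\Delta)\twoheadrightarrow\mytau_C(R_{D^{\nm}},\diff_{R_{D^{\nm}}}(D+\Delta))$ which, by commutativity, is precisely the restriction of $R\to R_{D^{\nm}}$. So if $\mytau_C(\cdot)=R_{D^{\nm}}$, the image of the adjoint ideal in $R_{D^{\nm}}$ is already all of $R_{D^{\nm}}$, and a fortiori $R\to R_{D^{\nm}}$ is onto—one does not need to first establish $\adj_{B\shortrightarrow C}^D(R,D+\Delta)=R$. The final step (factoring $R\to R_{D^{\nm}}$ through $R/I_D\hookrightarrow R_{D^{\nm}}$ to conclude $D=D^{\nm}$) is the same as in the paper.
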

    \begin{proof}
    \autoref{thm.AdjointMultiplierInversionOfAdjunction} tells us that $\adj_{B\shortrightarrow C}^D(R, D+\Delta)\cdot R_{D^{\nm}}=R_{D^{\nm}}$. But $\adj_{B\shortrightarrow C}^D(R, D+\Delta)\subseteq R$. So $R\to R_{D^{\nm}}=(R/I_D)^{\nm}$ is surjective (see the last diagram in the proof of \autoref{thm.AdjointMultiplierInversionOfAdjunction}) and hence $D=D^{\nm}$.
    \end{proof}

	



\begin{corollary}\label{cor:singularities}
	With notation as in \autoref{thm.AdjointMultiplierInversionOfAdjunction}, $(R,D+\Delta)$ is purely $\mathrm{BCM}_{B\shortrightarrow C}$-regular  if and only if $((R/I_D)^{\nm},\diff_{D^{\nm}}(D+\Delta))$ is $\mathrm{BCM}_{C}$-regular.  In either case $D = D^{\nm}$.
	\end{corollary}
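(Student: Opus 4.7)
The plan is to deduce the equivalence directly from the identity
$\adj_{B\shortrightarrow C}^D(R,D+\Delta)\cdot R_{D^{\nm}} = \mytau_C\big(R_{D^{\nm}}, \diff_{R_{D^{\nm}}}(D+\Delta)\big)$
established in \autoref{thm.AdjointMultiplierInversionOfAdjunction}, together with \autoref{cor:normal_centers} for the normality conclusion on $D$.

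For the forward direction, if $(R,D+\Delta)$ is purely $\BCM_{B\shortrightarrow C}$-regular then by \autoref{def:purely_BCM_regular} we have $\adj_{B\shortrightarrow C}^D(R,D+\Delta)=R$, so the displayed identity yields $\mytau_C\big(R_{D^{\nm}}, \diff_{R_{D^{\nm}}}(D+\Delta)\big) = R\cdot R_{D^{\nm}} = R_{D^{\nm}}$, which is exactly $\BCM_C$-regularity of the different pair; that $D=D^{\nm}$ is then immediate from \autoref{cor:normal_centers}.

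For the reverse direction, starting from $\mytau_C\big(R_{D^{\nm}}, \diff_{R_{D^{\nm}}}(D+\Delta)\big) = R_{D^{\nm}}$, the same identity yields $\adj_{B\shortrightarrow C}^D(R,D+\Delta)\cdot R_{D^{\nm}} = R_{D^{\nm}}$. Applying \autoref{cor:normal_centers} now shows $D=D^{\nm}$, so the natural map $R\to R_{D^{\nm}}$ factors through the surjection $R \twoheadrightarrow R/I_D = R_{D^{\nm}}$; pulling the equation of ideals back to $R$ then becomes $\adj_{B\shortrightarrow C}^D(R,D+\Delta)+I_D=R$.

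The main — and in fact only nontrivial — step is to upgrade this to $\adj_{B\shortrightarrow C}^D(R,D+\Delta) = R$. Here I would invoke that $R$ is local: every proper ideal is contained in the unique maximal ideal $\fram$. Since $D$ is a nonzero prime Weil divisor, $I_D=R(-D)$ is proper and hence $I_D\subseteq\fram$; if $\adj_{B\shortrightarrow C}^D(R,D+\Delta)$ were also proper it would likewise lie in $\fram$, forcing $\adj_{B\shortrightarrow C}^D(R,D+\Delta)+I_D\subseteq\fram\subsetneq R$ and contradicting what was just shown. Therefore $\adj_{B\shortrightarrow C}^D(R,D+\Delta)=R$, which completes the equivalence.
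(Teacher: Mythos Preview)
Your proof is correct and matches the paper's approach in spirit. The paper's proof is two lines: ``The first statement follows from Nakayama's lemma and \autoref{thm.AdjointMultiplierInversionOfAdjunction}. The second statement follows from \autoref{cor:normal_centers}.'' Your argument unpacks exactly this, with one minor detour: in the reverse direction you first invoke \autoref{cor:normal_centers} to identify $R_{D^{\nm}}=R/I_D$ before concluding $\adj+I_D=R$ and using locality. The paper instead applies Nakayama (equivalently, locality) directly to $\adj\cdot R_{D^{\nm}}=R_{D^{\nm}}$, using only that $R_{D^{\nm}}$ is a nonzero finite $R$-module, so normality of $D$ is not needed as an intermediate step for the equivalence. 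Both routes are valid and amount to the same observation that the sum of two proper ideals in a local ring is proper.
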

\begin{proof}
    The first statement follows from Nakayama's lemma and \autoref{thm.AdjointMultiplierInversionOfAdjunction}.  The second statement follows from \autoref{cor:normal_centers}.
\end{proof}

\begin{corollary}\label{cor:universal_singularities}
	With notation as in \autoref{thm.AdjointMultiplierInversionOfAdjunction}, $(R,D+\Delta)$ is purely $\mathrm{BCM}$-regular  if and only if $((R/I_D)^{\nm},\diff_{D^{\nm}}(D+\Delta))$ is $\mathrm{BCM}$-regular.
\end{corollary}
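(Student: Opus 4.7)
The plan is to match the universal quantifiers in the two notions of regularity by applying \autoref{cor:singularities} pairwise, and then to ensure that every perfectoid big Cohen-Macaulay $(R/I_D)^+$-algebra $C$ actually participates in some compatible pair $B \shortrightarrow C$.

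For the ``if'' direction I would simply unpack the hypothesis: if $(R_{D^{\nm}}, \diff_{R_{D^{\nm}}}(D+\Delta))$ is $\mathrm{BCM}$-regular, then by definition $\mytau_C(R_{D^{\nm}}, \diff_{R_{D^{\nm}}}(D+\Delta)) = R_{D^{\nm}}$ for every perfectoid big Cohen-Macaulay $(R/I_D)^+$-algebra $C$. Applying \autoref{cor:singularities} to each compatible pair $B \shortrightarrow C$ as in \autoref{eq.BCCompatibleDiagram} then gives that $(R, D+\Delta)$ is purely $\mathrm{BCM}_{B\shortrightarrow C}$-regular, so $(R,D+\Delta)$ is purely $\mathrm{BCM}$-regular by definition.

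For the ``only if'' direction I would fix an arbitrary perfectoid BCM $(R/I_D)^+$-algebra $C$ and manufacture a perfectoid BCM $R^+$-algebra $B$ together with a compatible map $B \to C$; once this is done, purely $\mathrm{BCM}$-regularity and \autoref{cor:singularities} immediately give $\mytau_C(R_{D^{\nm}}, \diff_{R_{D^{\nm}}}(D+\Delta)) = R_{D^{\nm}}$. The cleanest implementation uses Bhatt's theorem from \autoref{rem.R+CM}, which declares $\widehat{R^+}$ to be a perfectoid BCM $R^+$-algebra: one takes $B = \widehat{R^+}$, with compatibility provided by the composition $\widehat{R^+} \to \widehat{(R/I_D)^+} \to C$, whose second arrow exists because $C$ is $p$-adically complete (being perfectoid) and receives the canonical map from $(R/I_D)^+$.

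The main obstacle, and essentially the only nontrivial ingredient, is the existence of a compatible $B$ for each prescribed $C$: Andr\'e's theorem \cite[Theorem 1.2.1]{AndreWeaklyFunctorialBigCM} as stated produces $C$ from $B$ rather than the reverse. Bhatt's theorem resolves this cleanly. An alternative that avoids it is to invoke the perfectoidization-based functoriality developed in the appendix to produce, given $C$, an enlargement $C \to C'$ fitting into a compatible pair $B' \shortrightarrow C'$, and then use the monotonicity of the BCM test ideal under maps of big Cohen-Macaulay algebras (a map $C \to C'$ enlarges the kernel $0^{C}_{H^{d-1}_{\fram}(R_{D^{\nm}})} \subseteq 0^{C'}_{H^{d-1}_{\fram}(R_{D^{\nm}})}$, hence $\mytau_{C'} \subseteq \mytau_C$) to transfer the equality $\mytau_{C'} = R_{D^{\nm}}$ back to $\mytau_C = R_{D^{\nm}}$.
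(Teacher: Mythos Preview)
Your proof is correct. The ``if'' direction is identical to the paper's. For the ``only if'' direction, your primary route via Bhatt's theorem (taking $B = \widehat{R^+}$ so that every perfectoid big Cohen--Macaulay $(R/I_D)^+$-algebra $C$ automatically receives a compatible map) is cleaner than what the paper does, but it relies on a result the paper deliberately avoids depending on. The paper instead fixes a single sufficiently large $C$ (via \cite[Proposition~6.10]{MaSchwedeSingularitiesMixedCharBCM}), uses Andr\'e's theorem to produce \emph{some} compatible pair $B \shortrightarrow C'$, and then invokes \cite[Lemma~4.5]{MaSchwedeSingularitiesMixedCharBCM} to dominate $C$ and $C'$ by a common $\tilde{C}$; applying \autoref{cor:singularities} to $B \shortrightarrow \tilde{C}$ and using monotonicity $\mytau_{\tilde{C}} \subseteq \mytau_C$ finishes. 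Your alternative sketch is exactly this argument, stated a bit vaguely: the ``perfectoidization-based functoriality'' you need is precisely Andr\'e's theorem plus the domination lemma, not a single result from the appendix. What your Bhatt-based approach buys is brevity and the ability to handle every $C$ directly rather than reducing to a sufficiently large one; what the paper's approach buys is independence from \cite{BhattCMabsoluteintegralclosure}.
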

\begin{proof}
	Firstly, if $((R/I_D)^{\nm},\diff_{D^{\nm}}(D+\Delta))$ is $\mathrm{BCM}$-regular, then given any compatible choice $B\shortrightarrow C$, $((R/I_D)^{\nm},\diff_{D^{\nm}}(D+\Delta))$ is $\mathrm{BCM}_C$-regular and so by \autoref{cor:singularities} $(R,D+\Delta)$ is purely $\mathrm{BCM}_{B\shortrightarrow C}$-regular.
	
	Conversely, suppose $(R,D+\Delta)$ is purely $\mathrm{BCM}$-regular, in which case we may assume that $D$ is normal, and let $C$ be a perfectoid big Cohen-Macaulay $(R/I_D)^+$ algebra large enough to ensure $\mathrm{BCM}$-regularity.  Then choose a compatible $B\shortrightarrow C'$ by \cite[Theorem 1.2.1]{AndreWeaklyFunctorialBigCM}.  By \cite[Lemma 4.5]{MaSchwedeSingularitiesMixedCharBCM} there exists $\tilde{C}$ which comes with maps $C'\to \tilde{C}$ and $C\to \tilde{C}$, so that by \autoref{cor:singularities} applied to $B\to\tilde{C}$, $(R/I_D,\diff_D(D+\Delta))$ is $\mathrm{BCM}_{\tilde{C}}$-regular, and hence $\mathrm{BCM}_{C}$-regular, and hence $\mathrm{BCM}$-regular by our choice of $C$.
\end{proof}

\section{Application to BCM test ideal and the Brian\c{c}on--Skoda theorem}

We prove the following result which substantially generalizes \cite[Theorem 6.27 and Proposition 6.31]{MaSchwedeSingularitiesMixedCharBCM}.

\begin{theorem}
\label{thm.taucontainsSingularLocus}
Suppose $(R,\fram)$ is a complete normal local domain of residual characteristic $p > 0$ and that $\Delta\geq 0$ is a $\mathbb{Q}$-divisor such that $K_R + \Delta$ is $\bQ$-Cartier.  Further suppose that $Q \in \Spec R$ is a point such that the localization $(R_Q, \Delta_Q)$ is simple normal crossing with $\lfloor \Delta_Q \rfloor = 0$ (in particular, $R_Q$ is regular).  Then for any perfectoid big Cohen-Macaulay $R^+$-algebra $B$,
\[
(\mytau_B(R, \Delta))_Q = R_Q.
\]
\end{theorem}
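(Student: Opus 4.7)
The plan is to induct on $\height(Q)$, reducing each step via \autoref{thm.AdjointMultiplierInversionOfAdjunction} and \autoref{prop.AdjIsSmallerThanTau} to a smaller-dimensional problem on a prime divisor through $Q$. When $\height(Q) = 0$ we have $Q = (0)$ and $R_Q = \Frac(R)$; since $\mytau_B(R,\Delta)$ is a nonzero ideal of $R$, it extends to all of $R_Q$.

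For the inductive step, suppose $\height(Q) = d \geq 1$. By the hypothesis $R_Q$ is regular, and we pick a prime Weil divisor $D$ through $Q$ which is either one of the SNC components of $\Delta$ at $Q$ (if any exists) or else a ``free'' divisor through $Q$ chosen so that $D$ is not in $\Supp(\Delta)$. Writing $\Delta = aD + \Delta''$ with $a \in [0,1)$ and $D \not\subseteq \Supp(\Delta'')$, we apply \autoref{prop.AdjIsSmallerThanTau} with $\varepsilon = 1 - a$, the proposition's $\Delta'$ taken to be $\Delta''$, and the proposition's second $\bQ$-divisor taken to be zero, yielding
\[
\adj_{B\shortrightarrow C}^D(R, D + \Delta'') \subseteq \mytau_B\big(R, \Delta'' + (1-\varepsilon)D\big) = \mytau_B(R, \Delta).
\]
It suffices to show $\adj_{B\shortrightarrow C}^D(R, D + \Delta'') \not\subseteq Q$. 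By \autoref{thm.AdjointMultiplierInversionOfAdjunction}, the image of this adjoint ideal in $R_{D^{\nm}}$ equals $\mytau_C\big(R_{D^{\nm}}, \diff_{R_{D^{\nm}}}(D + \Delta'')\big)$. The SNC hypothesis at $Q$ forces this different to be simple normal crossing with $\lfloor \cdot \rfloor = 0$ at the prime $Q' \subseteq R_{D^{\nm}}$ dominating $Q$, on a regular local ring of dimension $d-1$. Hence the inductive hypothesis, applied to a complete local factor of $R_{D^{\nm}}$ containing $Q'$, gives $\mytau_C \not\subseteq Q'$ and therefore $\adj_{B\shortrightarrow C}^D(R, D + \Delta'') \not\subseteq Q$, completing the induction.

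The principal technical obstacle is ensuring the $\bQ$-Cartier hypotheses needed in \autoref{prop.AdjIsSmallerThanTau} and \autoref{thm.AdjointMultiplierInversionOfAdjunction}. Given that $K_R + \Delta$ is $\bQ$-Cartier, both hypotheses reduce to the requirement that $D$ be $\bQ$-Cartier globally on $\Spec R$, although $R_Q$ being regular only guarantees $D$ is Cartier at $Q$. Overcoming this requires the flexibility of ``perturbation elements'' in the choice of the compatible pair $B \to C$, as discussed in \autoref{rem.R+CM} and used critically in the subsequent Brian\c{c}on--Skoda application: one effectively enlarges $B$ and $C$ so that the containment and inversion-of-adjunction statements above remain valid after multiplying by an element that is a unit at $Q$. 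A secondary subtlety is that $R_{D^{\nm}}$ is complete semilocal rather than local, so the inductive hypothesis must be applied at the local factor dominating $Q'$, with a verification that the SNC and floor-$0$ properties descend to that factor.
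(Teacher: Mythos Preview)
Your overall strategy matches the paper's: induct, choose a prime divisor $D$ through $Q$ (either a component of $\Delta$ or a fresh one), sandwich $\mytau_B(R,\Delta)$ above an adjoint ideal via \autoref{prop.AdjIsSmallerThanTau}, and then descend to $R_{D^{\nm}}$ via \autoref{thm.AdjointMultiplierInversionOfAdjunction}. The induction variable (you use $\height Q$, the paper uses $\dim R$) is immaterial, and the semilocal issue with $R_{D^{\nm}}$ is harmless since the normalization of a complete local domain is a product of complete local domains.

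The genuine gap is your handling of the $\bQ$-Cartier obstruction. You correctly observe that applying \autoref{prop.AdjIsSmallerThanTau} as you set it up requires $D$ to be $\bQ$-Cartier on all of $\Spec R$, not just at $Q$. But your proposed fix via ``perturbation elements'' in $B\to C$ is not the right mechanism and is not what \autoref{rem.R+CM} is about: Gabber's construction there absorbs small powers $c^{1/p^e}$ into the big Cohen--Macaulay algebra, which is a different phenomenon and does not produce the $\bQ$-Cartier hypotheses needed to even state the adjoint ideal or invoke \autoref{prop.AdjIsSmallerThanTau}.

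The paper's fix is purely geometric and does not touch $B$ or $C$ at all. Since $R_Q$ is regular, $D$ is Cartier at $Q$, so one can choose an effective $\bQ$-divisor $\Theta\geq 0$ supported away from $Q$ with $D+\Theta$ $\bQ$-Cartier; similarly one chooses $\Theta'\geq \Delta_D$ (where $\Delta_D=\Delta-(\Delta\wedge D)$) with $\Theta'_Q=(\Delta_D)_Q$ and $K_R+\Theta'$ $\bQ$-Cartier. One then runs \autoref{prop.AdjIsSmallerThanTau} with the pair $(\Theta',\,D+\Theta)$ rather than $(\Delta'',\,D)$. Because $\Theta$ and the excess of $\Theta'$ over $\Delta_D$ avoid $Q$, the SNC-with-floor-zero condition at $Q$ survives after restriction to $D$, and the induction goes through. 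This auxiliary-divisor trick is the missing idea in your argument.
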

\begin{proof}
We proceed by induction on the dimension of $R_Q$.  Consider first the case where $\dim R_Q = 0$.  Then $R_Q$ is a field and the statement is obvious since $\mytau_B(R, \Delta)$ is nonzero.

Now suppose we know the statement for dimension $< d$ and $\dim R = d$.  There are two cases.  First suppose that the localization $\Delta_Q = 0$.  In this case choose $D$ to be a prime divisor on $\Spec R$ passing through $Q$ such that $D_Q$ is nonsingular.  Further we may choose an effective $\bQ$-divisor $\Theta' \geq \Delta$ and $\Theta \geq 0$, both not containing $Q$ in their support, such that $K_R + \Theta'$ and $D + \Theta$ are $\bQ$-Cartier (in fact, we can take $\Theta'=\Delta$ in this case).

Otherwise if $\Delta_Q\neq 0$, choose $D \in \Supp \Delta$ to be some prime divisor passing through $Q$.  Now, $K_R + \Delta \vee D$ need not be $\bQ$-Cartier.  Fix $\Delta_D = \Delta - (\Delta \wedge D)$ and choose $\Theta' \geq \Delta_D$ such that $\Theta'_Q = (\Delta_D)_Q$ and $K_R + \Theta'$ is $\bQ$-Cartier.  Further choose $\Theta \geq 0$, not containing $Q$ in its support, so that $D + \Theta$ is $\bQ$-Cartier.

Under either assumption, we have $D + \Theta +\Theta' \geq \Delta$ while $K_R + \Theta' + D + \Theta$ is $\bQ$-Cartier.  Furthermore $(R_Q, D_Q + \Theta_Q+\Theta'_Q)$ is SNC and $\lfloor D_Q + \Theta_Q + \Theta'_Q \rfloor = D_Q$. Choosing a suitably compatible perfectoid big Cohen-Macaulay $(R/I_D)^+$-algebra $C$, we know that
\[
\mytau_B(R, \Delta) \supseteq \mytau_B(R, \Theta' + (1-\epsilon)(D + \Theta) ) \supseteq \adj_{B\shortrightarrow C}^D(R, \Theta'+\Theta+D)
\]
where the first inequality is simply because $\Theta' + (1-\epsilon)(D + \Theta) \geq \Delta$ and the second is by \autoref{prop.AdjIsSmallerThanTau}.  Multiplying by $(R/I_D)^{\nm} = R_{D^{\nm}}$ we use \autoref{thm.AdjointMultiplierInversionOfAdjunction} to obtain that
\[
\adj_{B\shortrightarrow C}^D(R, \Theta'+\Theta+D)\cdot R_{D^{\nm}} = \mytau_C\big( R_{D^{\nm}}, \diff_{R_{D^{\nm}}}(\Theta'+\Theta+D)) \big).
\]
By the induction hypothesis, the right side localized at $Q$ is $(R_{D^{\nm}})_Q$.  Hence by Nakayama's lemma, so is the left side.  The result follows.
\end{proof}

As an application, we obtain the following Brian\c{c}on--Skoda type result in mixed characteristic. To the best of our knowledge this version of the Brian\c{c}on--Skoda theorem was not known before in mixed characteristic.

\begin{corollary}[Brian\c{c}on--Skoda Theorem]\label{cor:briancon-skoda}
Let $(R,\fram)$ be a complete normal local domain of residue characteristic $p>0$ and of dimension $d$. Let $J$ be the defining ideal of the singular locus of $R$. Then there exists an integer $N$ such that $J^N\overline{I^h}\subseteq I$ for all $I\subseteq R$ where $h$ is the analytic spread of $I$. In particular, $J^N\overline{I^d}\subseteq I$ for all $I\subseteq R$.
\end{corollary}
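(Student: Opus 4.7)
The plan is to leverage \autoref{thm.taucontainsSingularLocus} together with a Brian\c{c}on--Skoda style containment for the BCM test ideal itself. Explicitly, I would seek a single perfectoid big Cohen-Macaulay $R^+$-algebra $B$ such that both of the following hold: first, $J^N \subseteq \mytau_B(R)$ for some integer $N>0$, and second, $\mytau_B(R) \cdot \overline{I^h} \subseteq I$ for every ideal $I \subseteq R$ of analytic spread $h$. Chaining these two containments then yields $J^N \overline{I^h} \subseteq I$ uniformly in $I$.

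For the first containment I would apply \autoref{thm.taucontainsSingularLocus} with $\Delta=0$: at any prime $Q$ with $R_Q$ regular (i.e.\ $Q \notin V(J)$) the theorem gives $(\mytau_B(R))_Q=R_Q$, so $V(\mytau_B(R)) \subseteq V(J)$. Consequently $J \subseteq \sqrt{\mytau_B(R)}$, and Noetherianity of $R$ produces $N>0$ with $J^N \subseteq \mytau_B(R)$, where $N$ depends only on $R$ and $B$, not on the ideal $I$.

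For the second containment one invokes the BCM analog of the Hochster--Huneke tight-closure Brian\c{c}on--Skoda theorem. The argument proceeds as follows: given $x \in \overline{I^h}$, choose $B$ to contain suitable compatible $p$-power roots, the ``perturbation elements'' flagged in \autoref{rem.R+CM}, of the elements appearing in an equation of integral dependence for $x$ over $I^h$, so that $x \in IB$; the defining property of $\mytau_B(R)$ as the Matlis dual of the image of the map into $H^d_\fram(B)$, combined with Cohen-Macaulayness of $B$, then forces $\mytau_B(R)\cdot x \subseteq I$. The main obstacle is precisely that this must hold with a single $B$ uniformly over all ideals $I\subseteq R$, which is the reason one cannot simply take $B = \widehat{R^+}$ as highlighted in \autoref{rem.R+CM}, and must construct $B$ absorbing enough perturbations at the outset. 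Once both containments are in hand, the ``in particular'' statement is immediate, since the analytic spread of any ideal is at most $d=\dim R$, so $\overline{I^d}\subseteq \overline{I^h}$ and therefore $J^N\overline{I^d}\subseteq I$ as well.
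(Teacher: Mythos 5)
Your two-step scaffold is in the right spirit, but as written it has two genuine gaps.

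First, you anchor the argument on $\mytau_B(R)$ with $\Delta = 0$, but $\mytau_B(R,\Delta)$ (and hence \autoref{thm.taucontainsSingularLocus}) requires $K_R + \Delta$ to be $\bQ$-Cartier, and the corollary makes no $\bQ$-Gorenstein hypothesis on $R$. Taking $\Delta = 0$ silently restricts to the $\bQ$-Gorenstein case. The paper sidesteps this by never fixing a single $\Delta$: instead, for each prime $Q$ with $R_Q$ regular it chooses a $\Delta \geq 0$ depending on $Q$ with $K_R+\Delta$ $\bQ$-Cartier and $\Delta_Q = 0$, then uses the containment $\mytau_B(R,\Delta) \subseteq \Image\bigl(\Hom_R(B,R) \to R\bigr)$. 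The object $\Image\bigl(\Hom_R(B,R)\to R\bigr)$ is defined with no index hypothesis and is what actually replaces your $\mytau_B(R)$ throughout; it is the ideal one shows contains $J^N$.

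Second, your ``second containment'' $\mytau_B(R)\cdot\overline{I^h}\subseteq I$ is invoked as a black box (``the BCM analog of the Hochster--Huneke tight-closure Brian\c{c}on--Skoda theorem''), but no such off-the-shelf statement is cited or proved, and the containment is exactly where the real content lies. The crucial missing ingredient is Heitmann--Ma's theorem that $\overline{I^h} \subseteq I^{epf}$ (extended plus closure), after first replacing $I$ by a minimal reduction generated by $h$ elements. That result, not an equation of integral dependence alone, is what moves you from integral closure into $(I,p^n)B'$ up to a multiplier $c^{1/p^\infty}$. Gabber's construction $B = S_c^{-1}(B')^\diamond$ then absorbs the multiplier, and taking a direct limit over all $c \neq 0$ produces a single $B$ independent of $I$, which is precisely what makes the exponent $N$ uniform. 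You correctly flag this uniformity as the obstacle but leave it unresolved; it is not merely a packaging issue but the step that makes the theorem nontrivial. Finally, even granting all this, what the argument actually delivers is that for $r \in J^N \subseteq \Image(\Hom_R(B,R)\to R)$ and $\phi\in\Hom_R(B,R)$ with $\phi(1)=r$, applying $\phi$ to $\overline{I^h}\subseteq (I,p^n)B$ gives $r\overline{I^h}\subseteq (I,p^n)$ for all $n$, and one must still invoke Krull's intersection theorem to conclude $r\overline{I^h}\subseteq I$.
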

\begin{proof}
First note that we can replace $I$ by its minimal reduction, so without loss of generality we may assume that $I$ is generated by $h$ elements. Now by \cite[Theorem 2.7]{HeitmannMaExtendedPlusClosure}, $\overline{I^h}\subseteq I^{epf}$ where $I^{epf}$ denotes the extended plus closure of $I$:
$$I^{epf}:=\{z\in R \;| \text{ there exists $c\neq 0$ such that } c^{1/p^e}z\in (I,p^n)R^+ \text{ for all $e$ and all $n$} \}.$$
Next we fix a perfectoid $R^+$-algebra $B'$, it is clear that
$$I^{epf}\subseteq \{z\in R \;| \text{ there exists $c\neq 0$ such that } c^{1/p^\infty}z\in (I,p^n)B' \text{ for all $n$} \}.$$

For a fixed $c$, we now apply Gabber's construction (see \cite[page 3]{GabberMSRINotes}) by setting $B=S_c^{-1}B'^\diamond$, where $B'^\diamond=(\prod^{\mathbb{N}} B')/(\bigoplus^{\mathbb{N}} B')$ and $S_c$ is the multiplicative system consisting of $(c^{\varepsilon_0}, c^{\varepsilon_1},\dots)\in B'^\diamond$ such that $\varepsilon_i\in \mathbb{N}[1/p]$ and $\varepsilon_i\to 0$. It is straightforward to check that $B$ is a big Cohen-Macaulay algebra of $R$ (see \cite{GabberMSRINotes}) and that if $c^{1/p^\infty}z\in \mathfrak{a}B'$ for some ideal $\mathfrak{a}\subseteq R$, then $z\in \mathfrak{a}B$. Moreover, we can replace $B$ by its $\m$-adic completion to assume that it is a perfectoid big Cohen-Macaulay $R^+$-algebra (first use \cite[Example 3.8 (7)]{BhattIyengarMaRegularRingsPerfectoid}, the $p$-adic completion of $B$ is perfectoid, and then note that $\m$-adic completion is the same as $(p,x_2,\dots,x_d)$-adic completion where $p,x_2,\dots,x_d$ is a regular sequence since $B$ is big Cohen-Macaulay, so \cite[Proposition 2.2.1]{AndreWeaklyFunctorialBigCM} applies). We can now either take a direct limit of this construction for all $c\neq 0$, or pick $c$ that works for every generator of $I^{epf}$ (note that $I^{epf}$ is a finitely generated ideal since it is inside $R$) to assume that we have a perfectoid big Cohen-Macaulay $R^+$-algebra $B$ such that:
\[
I^{epf}\subseteq \{z\in R \;|\; z\in (I,p^n)B \text{ for all $n$} \}.
\]
Thus $\overline{I^h}\subseteq (I,p^n)B\cap R$ for some fixed perfectoid big Cohen-Macaulay $R^+$-algebra $B$ and every $n$. For all $Q\in\Spec R$ such that $R_Q$ is regular, we can pick $\Delta\geq0$ such that $K_R+\Delta$ is $\mathbb{Q}$-Cartier and $\Delta_Q=0$ (since $K_R$ is principal at $Q$). If we pick $f$ such that $\Div_R(f)=n(K_R+\Delta)$, then $R\xrightarrow{\cdot f^{1/n}}B$ factors through $\omega_R\cong R(K_R)$ so we have induced maps
$$\cdot f^{1/n}: H_\m^d(\omega_R)\to H_\m^d(B\otimes\omega_R)\to H_\m^d(B).$$
Applying  Matlis duality, we have
\[
\xymatrix{
H_\m^d(\omega_R)^\vee \ar[d]^\cong & H_\m^d(B\otimes\omega_R)^\vee \ar[l] \ar[d]^\cong & H_\m^d(B)^\vee\ar[d]^\cong\ar[l] \\
R & \Hom_R(B, R) \ar[l] & \Hom_R(B, \omega_R) \ar[l]
}
\]
By construction we know that $\Image( H_\m^d(B)^\vee\to H_\m^d(\omega_R)^\vee)$ is $\mytau_B(R,\Delta)$ (see \cite[Proof of Theorem 6.12]{MaSchwedeSingularitiesMixedCharBCM}). Therefore by the commutative diagram, we know that
$$\Image(\Hom_R(B, R)\to R)\supseteq \mytau_B(R, \Delta).$$
By \autoref{thm.taucontainsSingularLocus}, $\mytau_B(R,\Delta)_Q=R_Q$. Thus we know there exists $\phi\in\Hom_R(B, R)$ such that $\phi(1)=x$ for some $x\notin Q$. Since this is true for every $Q$ such that $R_Q$ is regular, we see that $$J\subseteq \sqrt{\Image(\Hom_R(B, R)\to R)}.$$
Thus there exists $N$ such that $J^N\subseteq \Image(\Hom_R(B, R)\to R)$, that is, for every $r\in J^N$, there exists $\phi\in \Hom_R(B, R)$ such that $\phi(1)=r$. Finally, since $\overline{I^h}\subseteq (I,p^n)B\cap R$, applying $\phi$ we see that $r\overline{I^h}\subseteq (I,p^n)$. As this is true for all $r\in J^N$ and every $n$, we have that $J^N\overline{I^h}\subseteq \cap_n(I, p^n)=I$.
\end{proof}



\section{Comparison with the adjoint ideal from birational geometry}\label{sec.birational}

The main result of this section, \autoref{thm:birational_adjoint_comparison}, is a variant of \cite[Theorem 6.21]{MaSchwedeSingularitiesMixedCharBCM} stating that the BCM-test ideal is contained in the multiplier ideal sheaf.  Before moving on to the main result, we first give a slightly different proof of \cite[Theorem 6.21]{MaSchwedeSingularitiesMixedCharBCM} based on \cite[Proposition 3.7]{MaSchwedeSingularitiesMixedCharBCM}. We will apply the same strategy in the proof of \autoref{thm:birational_adjoint_comparison}.

 \begin{theorem}\cite[Theorem 6.21]{MaSchwedeSingularitiesMixedCharBCM}\label{thm:test_comparison}
 Given a complete normal local domain $(R, \mathfrak{m}, k)$ of dimension $d$ of residue characteristic $p > 0$, a $\bQ$-divisor $\Delta \geq 0$ for which $K_R + \Delta$ is $\bQ$-Cartier, and a proper birational map $\mu \colon Y \to \Spec R$ with $Y$ normal, there exists a big Cohen-Macaulay $R^+$-algebra $B$ such that
	\[
	\mytau_B(R,\Delta) \subseteq \mu_*\mathcal{O}_Y(\lceil K_Y - \mu^*(K_R+\Delta) \rceil).
	\]
\end{theorem}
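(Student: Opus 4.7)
The plan is to construct a perfectoid big Cohen-Macaulay $R^+$-algebra $B$ tailored to $\mu\colon Y\to\Spec R$ so that the map computing $\mytau_B(R,\Delta)$ factors (dually) through $J:=\mu_*\O_Y(\lceil K_Y-\mu^*(K_R+\Delta)\rceil)$. As setup, I would fix an effective canonical divisor $K_R$, an integer $n$, and $f\in R$ with $\Div_R(f)=n(K_R+\Delta)$, together with a choice of $n$-th root $f^{1/n}\in R^+$. By definition, $\mytau_B(R,\Delta)$ is the Matlis dual of the image of
\[H^d_\fram(R(K_R))\xrightarrow{\cdot f^{1/n}}H^d_\fram(B),\]
which, under local duality (and the identification $H^d_\fram(R(K_R))^\vee=R$), is the image of the evaluation map $\Hom_R(B,\omega_R)\to R$, $\phi\mapsto\phi(f^{1/n})$. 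Thus it suffices to arrange that $\phi(f^{1/n})\in J$ for every $\phi\in\Hom_R(B,\omega_R)$.

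Next I would produce the key sheaf-level factorization on a birational cover. I would pick a finite normal extension $R\subseteq S\subseteq R^+$ with $f^{1/n}\in S$, and let $\tilde Y$ denote the normalization of $Y\times_{\Spec R}\Spec S$, with induced maps $\nu\colon\tilde Y\to Y$ and $\tilde\mu\colon\tilde Y\to\Spec R$. The crucial observation is that on $\tilde Y$ the divisor of $\tilde\mu^* f^{1/n}$ is the \emph{integral} divisor $\tilde\mu^*(K_R+\Delta)$, so multiplication by $\tilde\mu^* f^{1/n}$ defines an injective morphism of reflexive sheaves $\O_{\tilde Y}(K_{\tilde Y})\to \O_{\tilde Y}(K_{\tilde Y}-\tilde\mu^*(K_R+\Delta))$. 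Pushing forward by $\nu$ and then $\mu$, and rounding down the ramification contributions of $\nu$, the image (reflexively) lands inside $\mu_*\O_Y(\lceil K_Y-\mu^*(K_R+\Delta)\rceil)=J$, which is the source of the ceiling appearing on the right-hand side.

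To lift this sheaf-level picture to a big Cohen-Macaulay algebra, I would apply \cite[Proposition 3.7]{MaSchwedeSingularitiesMixedCharBCM} to $\tilde\mu\colon\tilde Y\to\Spec R$. This produces a perfectoid big Cohen-Macaulay $R^+$-algebra $B$ compatibly containing $f^{1/n}$ and through which every $\phi\in\Hom_R(B,\omega_R)$ factors, via the push-forward from $\tilde Y$, in a way matching the Step~1 factorization. Consequently $\phi(f^{1/n})$ inherits the divisor bound and lies in $J$, giving $\mytau_B(R,\Delta)\subseteq J$. The main obstacle is this final step: producing $B$ simultaneously containing $f^{1/n}$, compatible with the cover $\tilde Y$, and large enough that the valuative control on $\phi(f^{1/n})$ holds uniformly across all $\phi$. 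Matching the ramification of $f^{1/n}$ in $S/R$ with the rounding $\lfloor\mu^*(K_R+\Delta)\rfloor$ on $Y$ is the technical heart of the argument, and is precisely what the functoriality of Proposition 3.7 is designed to guarantee.
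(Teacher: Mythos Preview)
Your dual description of $\mytau_B(R,\Delta)$ as the image of $\phi\mapsto\phi(f^{1/n})$ is fine, and the sheaf-level observation on a finite cover $\tilde Y$ (that $f^{1/n}$ has integral divisor there, so multiplication lands in the right reflexive sheaf after pushing down and rounding) is also correct in spirit. The gap is entirely in your final step: you assert that \cite[Proposition~3.7]{MaSchwedeSingularitiesMixedCharBCM} produces a $B$ ``through which every $\phi\in\Hom_R(B,\omega_R)$ factors, via the push-forward from $\tilde Y$,'' but this is not what that proposition says, and you have not explained what such a factorization would even mean. The difficulty is precisely that $B$ is an abstract big Cohen--Macaulay algebra with no a priori relation to the geometry of $Y$ or $\tilde Y$; getting the map $H^d_\fram(R)\xrightarrow{f^{1/n}}H^d_\fram(B)$ to factor through something computed on $Y$ is the entire content of the theorem.

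The paper's mechanism for doing this is different from your sketch and is worth internalizing. One does not use a finite cover $\tilde Y$ at all. Instead one writes $Y=\Proj R[Jt]$ for a suitable ideal $J$, and applies Proposition~3.7 to the map $R[Jt]\to R$ to get compatible big Cohen--Macaulay algebras $B'$ over the $(d{+}1)$-dimensional completed Rees algebra and $B$ over $R^+$. The Sancho-de-Salas exact sequence
\[
H^d_{\frn+Jt}(R[Jt])_0 \to H^d_\fram(R) \to H^d_E(Y,\cO_Y) \to 0
\]
together with the vanishing $H^d_{\frn+Jt}(B')=0$ (pure dimension reasons: $B'$ is big Cohen--Macaulay over a ring of dimension $d{+}1$) forces the map $H^d_\fram(R)\to H^d_\fram(B)$ to factor through $H^d_E(Y,\cO_Y)$. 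Twisting by $f^{1/n}$ then gives a factorization through $H^d_E(Y,\cO_Y(\lfloor\mu^*(K_R+\Delta)\rfloor))$, whose Matlis dual is exactly $\mu_*\cO_Y(\lceil K_Y-\mu^*(K_R+\Delta)\rceil)$. The Rees-algebra-plus-Sancho-de-Salas step is the substitute for your missing ``factors via push-forward from $\tilde Y$,'' and without it (or a genuine equivalent) the argument does not close.
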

\begin{proof}
	
	First, we may assume that $\mu$ is projective, and so it is the blow up of some ideal sheaf $J\subseteq R$, that is $Y = \Proj R[Jt]$. Second, by replacing $R[Jt]$ by its integral closure, we may assume $R[Jt]$ is normal.  Let $E$ be the reduced pre-image of $\{\m\}$. Arguing as in \cite[Proposition 3.7]{MaSchwedeSingularitiesMixedCharBCM}, we can find a commutative diagram
	\[
	\xymatrix{
		B' \ar[r] & B \\
		R[Jt] \ar[u] \ar[r] & R \ar[u],
	}
	\]
	with $B$ and $B'$ being big Cohen-Macaulay algebras over $R^+$ and $(\widehat{R[Jt]_{\mathfrak{n}+Jt}})^+$ respectively. This combined with the Sancho-de-Salas sequence \cite{SanchodeSalasBlowingupmorphismswithCohenMacaulayassociatedgradedrings} fits in the following diagram:
	\[
	\xymatrix{
		0=H^d_{\mathfrak{n}+Jt}(B') \ar[r] & H^d_{\fram}(B) \\
		H^d_{\mathfrak{n}+Jt}(R[Jt])_0 \ar[u] \ar[r] & H^d_{\mathfrak{m}}(R) \ar@{->>}[r] \ar[u] & H^d_E(Y, \mathcal{O}_Y),
	}
	\]
	where the bottom row is exact. Here $H^d_{\mathfrak{n}+Jt}(B')=0$, as $R[Jt]$ is of dimension $d+1$ and $B'$ is a big Cohen-Macaulay algebra, while $H^d_{\mathfrak{m}}(R) \to H^d_E(Y, \mathcal{O}_Y)$ is surjective, because it is Matlis dual to the injective morphism $\mu_* \omega_Y \to \omega_R$.
	
	In particular, we get an induced map $H^d_E(Y, \mathcal{O}_Y) \xrightarrow{\alpha} H^d_{\fram}(B)$. Let $f \in R$ be as in \autoref{def:BCM_test_ideal}, assuming $K_R \geq 0$. The following commutative diagram
	\[
	\xymatrix{
		H^d_{\fram}(B) & H^d_E(Y, \mathcal{O}_Y(\lfloor \mu^*(K_R+\Delta)\rfloor)) \ar[l]_(0.7){f^{\frac{1}{n}}\alpha}\\
		H^d_{\mathfrak{m}}(R) \ar@{->>}[r] \ar[u]_{f^{\frac{1}{n}}} & H^d_E(Y, \mathcal{O}_Y) \ar@{^{(}->}[u].
	}
	\]
	shows that the image of $H^d_{\mathfrak{m}}(R) \to H^d_E(Y, \mathcal{O}_Y(\lfloor \mu^*(K_R+\Delta) \rfloor)$ surjects onto the image of $H^d_{\mathfrak{m}}(R) \xrightarrow{f^{\frac{1}{n}}} H^d_{\fram}(B).$ Thus by Matlis duality
	\[
	\mytau_B(R,\Delta) \subseteq \mu_*\mathcal{O}_Y(\lceil K_Y - \mu^*(K_R+\Delta) \rceil). \qedhere
	\]
\end{proof}

\numberwithin{subremark}{theorem}
\begin{subremark}
    The proof above goes through in characteristic zero except that we do not know of a reference for the existence of weakly functorial BCM $R^+$-algebras, but only weakly functorial $R$-algebras \cite[Theorem 3.9]{HochsterHunekeApplicationsOfTheExistenceOfBCM}, see also the discussion at the end of \cite{HochsterHomologicalConjecturesAndLimCMSequences}.  
    
    However, in the proof of \autoref{thm:test_comparison} we really only needed that $f^{1/n} \in B$, which we can arrange again letting $S$ be the normalization of $R[f^{1/n}]$ and applying \cite[Theorem 3.9]{HochsterHunekeApplicationsOfTheExistenceOfBCM} to $S[JSt] \to S$.  Thus the result also holds in characteristic zero as long as one chooses a large enough big Cohen-Macaulay $R$-algebra.
\end{subremark}

We now recall the definition of the adjoint ideal  from characteristic zero birational geometry.

\begin{definition}
	Let $R$ be a normal local domain, $\Delta$ a $\bQ$-divisor and $D$ a prime Weil divisor such that $K_R+D+\Delta$ is $\bQ$-Cartier.  We define the birational adjoint to be
	$$\adj_{\mathrm{bir}}^D(R,D+\Delta)=\bigcap_{\mu: Y\to \Spec(R)}\mu_*\O_Y(\lceil K_Y-\mu^*(K_R+D+\Delta)+D'\rceil)$$ where the intersection runs over all proper birational morphisms, and $D'$ is the strict transform of $D$.
	If log resolutions exist in dimension $\dim(R)$ then the intersection stabilizes and can be computed on any  log resolution of $(R,D+\Delta)$ such that the strict transform of $D$ is nonsingular.
\end{definition}

\begin{remark}[Nonprime $D$]
    If $R$ is finite type over a field of characteristic zero and if $D = \sum_{i = 1}^t D_i$ where the $D_i$ are prime divisors, then one still defines $\adj_{\mathrm{bir}}^D(R, D+\Delta)$ with the same formula.  In that case, we believe it is well known to experts that
    \begin{equation}
        \label{eq.BirAdjointIsSumInChar0}
        \adj^D_{\mathrm{bir}}(R, D+ \Delta) = \sum_{i = 1}^t \adj^{D_i}_{\mathrm{bir}}(R, D+ \Delta).
    \end{equation}
    The $\supseteq$ containment is straightforward from the definitions.
    To show $\subseteq$, suppose $\pi : Y \to \Spec R$ is a log resolution separating the components of $D$ and $D_{i}'$ (respectively $D'$) is the strict transform of $D_i$ (respectively $D$).  Then, setting $M = \mu^* (K_R + D + \Delta)$ we have the diagram:
    \[
        {\small
            \xymatrix@C=12pt{
                0 \ar[r] & \bigoplus_{i = 1}^t \mu_*\O_Y(\lceil K_Y-M\rceil) \ar[d] \ar[r] & \bigoplus_{i = 1}^t \mu_*\O_Y(\lceil K_Y-M+D'_i\rceil) \ar[r] \ar[d] & \bigoplus_{i = 1}^t \mu_* \O_{D_i'}(\lceil K_{D_i'} - M|_{D'}\rceil) \ar[r] \ar[d] & 0 \\
                0 \ar[r] & \mu_*\O_Y(\lceil K_Y-M\rceil) \ar[r] & \mu_*\O_Y(\lceil K_Y-M+D'\rceil) \ar[r] & \mu_* \O_{D'}(\lceil K_{D'} - M|_{D'}\rceil) \ar[r] & 0 \\
            }
        }
    \]
    where the zeros on the right are due to local vanishing for multiplier ideals (in other words, relative Kawamata-Viehweg vanishing).  The maps on the left and right are surjective, hence so is the map in the middle.  As a consequence the containment $\subseteq$ holds in \autoref{eq.BirAdjointIsSumInChar0}.

    In view of this, we suggest that in positive characteristic it may be better to define $\adj^D_{\mathrm{bir}}(R, D+ \Delta)$ as a sum as in \autoref{eq.BirAdjointIsSumInChar0} as well.  We continue to work with prime $D$ however.
\end{remark}

Now we state and prove our main comparison result.

\begin{theorem} \label{thm:birational_adjoint_comparison}
	Let $(R,\m)$ be a complete local normal domain of residue characteristic $p>0$, $\Delta\geq 0$ a $\bQ$-divisor, and $D$ a prime Weil divisor such that $K_R+D+\Delta$ is $\bQ$-Cartier.
	Then for any proper birational map $\mu \colon Y\to \Spec(R)$ from a normal variety $Y$, there exists a compatibly chosen perfectoid big Cohen-Macaulay $R^+$- and $(R/I_D)^+$-algebras $B$ and $C$ such that
	\[
	\adj_{B\shortrightarrow C}^D(R,D+\Delta)\subseteq \mu_*\O_Y(\lceil K_Y-\mu^*(K_R+D+\Delta)+D'\rceil).
	\]
\end{theorem}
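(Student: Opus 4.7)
The plan is to generalize the strategy used in the proof of \autoref{thm:test_comparison} to the adjoint setting. Using prime avoidance, pick $K_R = -D + G$ with $G \geq 0$ having no component equal to $D$, so that $M := \mu^*(K_R + D + \Delta) = \mu^*(G + \Delta) \geq 0$. Choose $f \in R$ with $\Div_R(f) = n(K_R + D + \Delta)$ and a choice of $f^{1/n} \in R^+$. The goal is to factor the multiplication map
\[
H^d_\fram(I_D) \xrightarrow{\ \cdot f^{1/n}\ } H^d_\fram(I_{B\shortrightarrow C})
\]
(whose image has Matlis dual $\adj^D_{B\shortrightarrow C}(R, D+\Delta)$) through $H^d_E(Y, \O_Y(\lfloor M \rfloor - D'))$, whose Matlis dual via relative Grothendieck--Serre duality is $\mu_*\O_Y(\lceil K_Y - M + D'\rceil)$. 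The asserted containment will then follow by Matlis duality.

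Exactly as in the proof of \autoref{thm:test_comparison}, we reduce to the case where $\mu$ is the normalized projective blowup of an ideal $J \subseteq R$, so $Y = \Proj R[Jt]$ with $R[Jt]$ normal, and $E$ is the reduced preimage of $\fram$. Let $P \subseteq R[Jt]$ be the homogeneous prime cutting out the strict transform $D' \subseteq Y$; then $\mu_*\O_Y(-D') = I_D$, and after further normalization $R[Jt]/P$ is a Rees-type algebra for the induced blowup of $R/I_D$. By iterated application of \cite[Proposition 3.7]{MaSchwedeSingularitiesMixedCharBCM} together with \cite[Theorem 1.2.1]{AndreWeaklyFunctorialBigCM}, we construct a commutative square of perfectoid big Cohen-Macaulay algebras
\[
\xymatrix{
B' \ar[r] & C' \\
B \ar[u] \ar[r] & C \ar[u]
}
\]
where $B', C'$ are BCM over the local completions of $R[Jt]$ and $R[Jt]/P$ at their respective homogeneous maximal ideals, while $B, C$ are BCM over $R$ and $R/I_D$ and realize the compatibility diagram \autoref{eq.BCCompatibleDiagram}. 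Applying Sancho-de-Salas to $\O_Y$, to $\O_Y(-D') = \widetilde{P}$, and to $\O_{D'} = \widetilde{R[Jt]/P}$, combined with the long exact local cohomology sequence of $0 \to I_D \to R \to R/I_D \to 0$, yields surjections $H^d_\fram(R) \twoheadrightarrow H^d_E(Y, \O_Y)$ and $H^d_\fram(I_D) \twoheadrightarrow H^d_E(Y, \O_Y(-D'))$ with kernels arising from terms of the form $H^{\bullet}_{\fram+Jt}(-)_0$. The second surjection can alternatively be verified by Matlis-dualizing the injection $\mu_*\O_Y(K_Y + D') \hookrightarrow \omega_R(D)$, which holds because $\mu_*(K_Y + D') = K_R + D$.

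The dimension counts $\dim R[Jt]_{\fram+Jt} = d+1$ and $\dim (R[Jt]/P)_{\fram+Jt} = d$, combined with the big Cohen-Macaulay properties of $B'$ and $C'$, force these Sancho-de-Salas kernels to map to zero in the corresponding local cohomology groups of $B$ and $C$, and hence also in $H^d_\fram(I_{B\shortrightarrow C})$. A diagram chase analogous to the one in the proof of \autoref{thm:test_comparison} then shows that the map $\cdot f^{1/n}\colon H^d_\fram(I_D) \to H^d_\fram(I_{B\shortrightarrow C})$ factors through $H^d_E(Y, \O_Y(\lfloor M \rfloor - D'))$, using that $\cdot f^{1/n}$ defines a well-defined sheaf map $\O_Y(\lfloor M \rfloor - D') \to (I_D)^+$ (because $\Div(f^{1/n}) = M$ and $\lfloor M \rfloor \geq 0$). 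Matlis duality then converts the containment of images inside $H^d_\fram(I_{B\shortrightarrow C})$ into the desired containment of ideals $\adj^D_{B\shortrightarrow C}(R, D+\Delta) \subseteq \mu_*\O_Y(\lceil K_Y - M + D'\rceil)$, both viewed as submodules of $\omega_R(D)$. The main technical obstacle is the careful formulation of Sancho-de-Salas for the ideal $P$ (rather than for $R[Jt]$ itself) together with the vanishing argument on the BCM algebras, as well as tracking the choice of minimal prime $(I_D)^+$ over $I_D R^+$ used to build $C$ from $(R/I_D)^+$ via Andr\'e's theorem.
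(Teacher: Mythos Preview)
Your overall strategy is the same as the paper's: reduce to a normalized Rees blowup, run Sancho-de-Salas for the ideal sheaf $\O_Y(-D')$, use vanishing on the big Cohen--Macaulay side to factor $H^d_\fram(I_D)\xrightarrow{f^{1/n}} H^d_\fram(I_{B\shortrightarrow C})$ through $H^d_E(Y,\O_Y(\lfloor M\rfloor - D'))$, and finish by Matlis duality. The paper carries this out slightly more cleanly by working directly with the homotopy kernel $\mathcal{I}_{B'\shortrightarrow C'}$ and observing $H^d_{\frn+Jt}(\mathcal{I}_{B'\shortrightarrow C'})=0$ from the triangle $H^{d-1}(C')\to H^d(\mathcal{I}_{B'\shortrightarrow C'})\to H^d(B')$, rather than running Sancho-de-Salas on $\O_Y$, $\O_Y(-D')$, and $\O_{D'}$ separately, but this is cosmetic.

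There is, however, a genuine gap in your construction of the BCM algebras. You need a \emph{commutative square}
\[
\xymatrix{ B' \ar[r]\ar[d] & C' \ar[d] \\ B \ar[r] & C }
\]
of perfectoid BCM algebras over the completions of $R[Jt]$, $(R/I_D)[\bar Jt]$, $R$, $R/I_D$ respectively (note your arrows are reversed: the maps go from the $(d{+}1)$-dimensional side to the $d$-dimensional side). Iterating \cite[Proposition 3.7]{MaSchwedeSingularitiesMixedCharBCM} and \cite[Theorem 1.2.1]{AndreWeaklyFunctorialBigCM} gives you the individual edges, but not the commutativity of the square: applying Andr\'e to $B$ gives some $C_1$ with $B\to C_1$, applying \cite[3.7]{MaSchwedeSingularitiesMixedCharBCM} to $C'$ gives some $C_2$ with $C'\to C_2$, and even after dominating $C_1,C_2$ by a common $C$ there is no reason the two composites $B'\to B\to C$ and $B'\to C'\to C$ should agree. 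This commutativity is exactly what you need for the Sancho-de-Salas kernel of $H^d_\fram(I_D)\twoheadrightarrow H^d_E(Y,\O_Y(-D'))$ to die in $H^d_\fram(I_{B\shortrightarrow C})$.

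The paper resolves this by proving a dedicated functoriality result for \emph{diagrams}, \autoref{thm:weaklyfunctorialdiagram}, which produces the full commutative cube provided the two relevant height-one primes $P_1=(Jt)$ and $P_2=\ker(R[Jt]\to (R/I_D)[\bar Jt])$ of $R[Jt]$ have height-two sum with regular localization. The footnote in the paper's proof checks exactly this hypothesis (the localization is $R_{I_D}[(JR_{I_D})t]$, regular because $R_{I_D}$ is a DVR). You should either invoke this result or indicate the additional argument needed to force commutativity of the square; ``iterated application'' of the linear weak-functoriality statements is not enough.
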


\begin{proof}
As in \autoref{thm:test_comparison}, we may assume that $Y = \Proj R[Jt]$ for some ideal $J \subseteq R$ such that $R[Jt]$ is normal and $E$ the reduced pre-image of $\{\m\}$.
Let $\bar{J}$ be the image of $J$ in $R/I_D$. By \autoref{thm:weaklyfunctorialdiagram},\footnote{Note that the assumption of \autoref{thm:weaklyfunctorialdiagram} is satisfied: the Rees algebra $R[Jt]$ (completed at the maximal ideal $\m+Jt$) is normal, $P_1=(Jt)$ and $P_2=\ker( R[Jt]\to (R/I_D)[\bar{J}t])$ are height one primes and their sum is a height two prime in $R[Jt]$ whose localization is regular {(as it is isomorphic to $R_{I_D}[(J R_{I_D})t]$)}.} we can find perfectoid big Cohen-Macaulay algebras $B'$ and $C'$ of the completions of $R[Jt]$ and $(R/I_D)[\bar{J}t]$ respectively, as well as perfectoid big Cohen-Macaulay $R^+$- and $(R/I_D)^+$-algebras $B$ and $C$, sitting inside the following commutative diagram:
\[
    \xymatrix{
        & \ar[rr]& & \mathcal{I}_{B'\shortrightarrow C'} \ar[rr] \ar[dd]|\hole   & &  B' \ar[rr] \ar[dd]|\hole & & C' \ar[dd]|\hole \ar[rr]^{+1} & & \\
       0 \ar[rr] && \mathcal{I}_D \ar[ru]  \ar[rr]\ar[dd] & & R[Jt] \ar[ru]  \ar[rr]\ar[dd] & & (R/I_D)[\bar{J}t]  \ar[ru]  \ar[dd] \ar[rr] & & 0 & \\
        & \ar[rr]|(0.43)\hole & & I_{B\shortrightarrow C} \ar[rr]|(0.53)\hole   & & B  \ar[rr]|(0.5)\hole & & C \ar[rr]^{+1} & & \\
      0 \ar[rr] && I_D \ar[ru]    \ar[rr] & &  R \ar[ru]  \ar[rr] & & R/I_D \ar[ru] \ar[rr] & & 0  ,
    }
\]
where $\mathcal{I}_D$, $I_{B\shortrightarrow C}$, and $\mathcal{I}_{B'\shortrightarrow C'}$ are appropriate kernels and homotopy kernels.

Pick $f$ as in \autoref{def:adjoint}. Analogously to \autoref{thm:test_comparison}, we can apply the Sancho-de-Salas sequence (see \cite[page 150]{LipmanCohenMacaulaynessInGradedAlgebras}) to obtain the following diagram with exact bottom row
\[
\xymatrix{
0=H^d_{\mathfrak{n}+Jt}(\mathcal{I}_{B'\shortrightarrow C'})_0 \ar[r] & H^d_{\fram}(I_{B\shortrightarrow C}) \\
H^d_{\mathfrak{n}+Jt}(\mathcal{I}_D)_0 \ar[u] \ar[r] & H^d_{\mathfrak{m}}(I_D) \ar@{->>}[r] \ar[u] & H^d_E(Y, \mathcal{O}_Y(-D')) \ar@{.>}[lu],
}
\]
Here $H^d_{\mathfrak{n}+Jt}(\mathcal{I}_{B'\shortrightarrow C'})=0$ for dimension reasons,
because it sits in an exact sequence
\[0= H^{d-1}_{\mathfrak{n}+Jt}(C') \to H^d_{\mathfrak{n}+Jt}(\mathcal{I}_{B'\shortrightarrow C'})\to H_{\frak{n} +Jt}^d(B')=0.
\]

 The bottom right arrow in the diagram is surjective as it is Matlis dual to $\mu_*\omega_Y(D') \hookrightarrow \omega_X(D)$. In particular, the morphism
\[
H^d_{\mathfrak{m}}(I_D) \xrightarrow{f^{\frac{1}{n}}} H^d_{\fram}(I_{B\shortrightarrow C})
\]
factors through
 $H^d_{\mathfrak{m}}(I_D) \to H^d_E(Y, \mathcal{O}_Y(\lfloor \mu^*(K_R+D+\Delta) - D'\rfloor)$, and so the image of the latter surjects onto the image of the former. Thus, by Matlis duality
 \[
\adj_{B\shortrightarrow C}^D(R,D+\Delta) \subseteq \mu_*\mathcal{O}_Y(\lceil K_Y - \mu^*(K_R+D+\Delta)+D' \rceil). \qedhere
\]
\end{proof}

\begin{remark}
In the above theorem, we can in fact pick a compatible choice of perfectoid big Cohen-Macaulay $R^+$- and $(R/I_D)^+$-algebras $B$ and $C$ that works for all possible birational maps $\mu$. This follows from \autoref{thm:uniformBC}. An alternative method might be possible by using the perfectoid nature of $B$ and $C$, and such comparisons should follow from \cite[Section 5]{MaSchwedePerfectoidTestideal}. We leave the interested reader to carry out the details of the alternative approach.
\end{remark}

\begin{remark}
\label{rmk:BhattR^+}
Recently, Bhatt \cite{BhattCMabsoluteintegralclosure} proved that the $p$-adic completion of $R^+$ is a (perfectoid) big Cohen-Macaulay algebra. Therefore, in \autoref{thm:test_comparison} and \autoref{thm:birational_adjoint_comparison}, we can take $B=\widehat{R^+}$ and $C=\widehat{(R/I_D)^+}$ and they work for all possible $\mu$ already. As a consequence of this fact, \autoref{thm:test_comparison} (resp., \autoref{thm:birational_adjoint_comparison}) holds for arbitrary big Cohen-Macaulay $\widehat{R^+}$-algebra $B$ (resp., arbitrary compatibly chosen big Cohen-Macaulay $\widehat{R^+}$ and $\widehat{(R/I_D)^+}$-algebras) because $\mytau_B(R, \Delta)\subseteq \mytau_{\widehat{R^+}}(R, \Delta)$ and $\adj_{B\shortrightarrow C}^D(R,D+\Delta)\subseteq \adj_{\widehat{R^+}\shortrightarrow \widehat{(R/I_D)^+}}^D(R,D+\Delta)$. We will not need this stronger result in the sequel though.
\end{remark}

\section{Comparison with the test ideal in characteristic $p>0$}
{The notions of pure BCM-regularity and pure F-regularity agree in positive characteristic. Indeed, this follows by adjunction as BCM-regularity and strong F-regularity agree by \cite[Proposition 5.3]{MaSchwedeSingularitiesMixedCharBCM}. In this section we show that in fact our BCM adjoint ideal coincides with Takagi's adjoint ideal in positive characteristic.  Throughout this section all rings will be $F$-finite and of characteristic $p>0$.

	First, we recall the definition of the latter ideal, \cite{TakagiPLTAdjoint, TakagiHigherDimensionalAdjoint,  TakagiAdjointIdealsAndACorrespondence}.}


\begin{definition}
    \label{def.TakagiCharPAdjoint}
Suppose that $R$ is a normal $F$-finite complete local domain of characteristic $p > 0$ and that $(R, D+\Delta)$ is a pair with $D$ reduced and $\Delta \geq 0$ a $\bQ$-divisor with no components in common with $D$.  Let $E = H^d_{\fram}(R(K_R))$ be the injective hull of the residue field.  Set $R^{\circ, D}$ to be the elements of $R$ not in any minimal prime of $I_D$ (in other words, the elements that do not vanish on any component of $D$).  We define
\[
    0_{E}^{*_D(D+\Delta)} = \{z \in E \;|\; \exists c \in R^{\circ, D} \text{ such that } 0 = (F^e_* c) \otimes z \in (F^e_* R((p^e-1)D + \lfloor p^e \Delta \rfloor) \otimes_R E, \forall e > 0 \}.
\]
We define
\[
    \tau_{I_D}(R, D+\Delta) = \Ann_R(0_{E}^{*_D(D+\Delta)}).
\]
\end{definition}
The roundings in \cite{TakagiPLTAdjoint} are slightly different than the ones above, however any difference can be absorbed into $c$.
It turns out if a $c' \in R^{\circ, D}$ is chosen such that $\Supp(\Delta) \subseteq \Supp(\Div(c))$ and such that $R_{c'}$ and $(R_D)_{c'}$ are regular, then a fixed power\footnote{To see this, using \cite[Proposition 3.5(1)]{TakagiPLTAdjoint}, we must show that $c'^n \in \tau_{I_D}(R, D+\Delta)$ for some $n$.  But we know that the formation of $\tau_{I_D}(R, D+\Delta)$ commutes with localization by \cite[Corollary 3.4]{TakagiPLTAdjoint} and for such a $c'$, it is not difficult to see that $\tau_{I_D}(R, D+\Delta)_{c'} = R_{c'}$.} $c = c'^n$ of that $c'$ can be chosen that works in the definition of all $z \in 0_{E}^{*_D(D+\Delta)}$ (in other words, such a $c = c'^n$ is a \emph{divisorial test element}).


Note that, since $E \cong H^d_{\fram}(R(K_R))$, we have that $(F^e_* R( (p^e-1)D + \lfloor p^e \Delta\rfloor)) \otimes_R E \cong F^e_* H^d_{\fram}(R(p^e(K_R + D) - D + \lfloor p^e \Delta)\rfloor))$.

On the other hand, by \cite[Proposition 1.1]{TakagiAdjointIdealsAndACorrespondence}  $\tau_{I_D}(R, D+\Delta)$ is the smallest ideal $J$ not contained in any minimal prime of $I_D$, such that for every
\[
\phi \in \Hom_R\big( (F^e_* R(\lceil (p^e-1)(D + \Delta)\rceil)), R \big) \subseteq \Hom_R\big( F^e_* R, R \big)
\]
we have $\phi(F^e_* J) \subseteq J$.  

\begin{remark}
Even if $D + \Delta$ is not effective we can still make sense of this definition.  The point is that $\tau_{I_D}(R, D+\Delta + \Div(c)) = c \tau_{I_D}(R, D+\Delta)$ for $c \in R^{\circ, D}$ as can be easily checked.  Therefore if $D + \Delta$ is not effective, choose $c \in R^{\circ, D}$ so that $D + \Delta + \Div(c)$ is effective, and define the fractional ideal:
\[
\tau_{I_D}(R, D+\Delta) = {1 \over c} \tau_{I_D}(R, D+\Delta + \Div(c)).
\]
Thus for what follows we will reduce without mention to the case that $D + \Delta \geq 0$.
\end{remark}

\begin{remark}[Non-prime $D$]
    Suppose $D = \sum_{i = 1}^t D_i$ with each $D_i$ prime.  It follows from the definition that $\tau_{I_{D_i}}(R, D + \Delta) \subseteq \tau_{I_{D}}(R, D + \Delta)$
    for every $i$.  Thus
    \[
        \tau_{I_{D}}(R, D + \Delta) \supseteq \sum_{i = 1}^t \tau_{I_{D_i}}(R, D + \Delta).
    \]
    However, the right side is also compatible with every $\phi$ as above and so by the minimality of $\tau_{I_{D}}(R, D + \Delta)$ we see that $\tau_{I_{D}}(R, D + \Delta) = \sum_{i = 1}^t  \tau_{I_{D_i}}(R, D + \Delta).$  Hence, if one makes the definition of $\adj_{\oplus B_i \shortrightarrow C_i}(R, D+\Delta)$ as suggested in \autoref{rem.NonPrimeD}, then the  main result of this section, \autoref{thm.TauEqualityCharP}, immediately generalizes to the case of non-prime $D$ since we can work one $D_i$ at a time. 
\end{remark}

We now prove a containment relating Takagi's characteristic $p > 0$ adjoint ideal with ours.  The strategy is similar to \cite{SmithFRatImpliesRat}.

\begin{proposition}
\label{prop.TauEasyCharPContainment}
Suppose that $(R, \fram)$ is an $F$-finite complete local ring of characteristic $p > 0$ and $(R, D + \Delta)$ is as in \autoref{def.TakagiCharPAdjoint}.  Further assume that $D$ is prime.  Then
\[
    \tau_{I_D}(R, D+\Delta) \subseteq \adj_{B\shortrightarrow C}^D(R, D+\Delta).
\]
\end{proposition}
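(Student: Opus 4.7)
The plan is to translate the containment via Matlis duality into an inclusion of submodules of $E := H^d_\fram(R(K_R))$. With $K := \ker\!\big(H^d_\fram(R(K_R)) \xrightarrow{\cdot f^{1/n}} H^d_\fram(I_{B\shortrightarrow C})\big)$ we have $\adj_{B\shortrightarrow C}^D(R, D+\Delta) = \Ann_R K$ by \autoref{eq.AdjAsKernel}, while $\tau_{I_D}(R, D+\Delta) = \Ann_R 0^{*_D(D+\Delta)}_E$ by definition. Hence it suffices to show the inclusion $K \subseteq 0^{*_D(D+\Delta)}_E$.

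Since $\dim R/I_D = d-1$ and $C$ is a big Cohen-Macaulay algebra over $(R/I_D)^+$, one has $H^d_\fram(C) = 0$; the long exact sequence attached to the exact triangle $I_{B\shortrightarrow C}\to B\to C$ then shows that $H^d_\fram(I_{B\shortrightarrow C}) \twoheadrightarrow H^d_\fram(B)$ is surjective. Consequently every $z \in K$ already satisfies $f^{1/n}z = 0$ in $H^d_\fram(B)$, and it suffices to prove that, for every big Cohen-Macaulay $R^+$-algebra $B$,
\[
\ker\!\big(H^d_\fram(R(K_R)) \xrightarrow{\cdot f^{1/n}} H^d_\fram(B)\big) \;\subseteq\; 0^{*_D(D+\Delta)}_E.
\]

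Fix such a $z$ and apply the $e$-fold Frobenius on $B$, a ring endomorphism raising to the $p^e$-th power. This yields the vanishing $f^{p^e/n}\cdot z^{p^e} = 0$ in $F^e_* H^d_\fram(B)$, where $z^{p^e}$ denotes the natural Frobenius image of $z$ in the twisted module. Writing $p^e = qn+r$ with $0 \le r < n$, one decomposes $f^{p^e/n} = f^q\cdot (f^{1/n})^r$, so that $f^q \in R$ while $(f^{1/n})^r \in R^+$ carries only a bounded (in $e$) divisor twist of size at most $(n-1)(K_R+D+\Delta)$. A fixed divisorial test element $c \in R^{\circ, D}$ from Takagi's theory, chosen both to absorb these bounded denominators and to tame the conductor of a suitable module-finite normal subextension $R \subseteq S \subseteq R^+$ carrying the relation, converts this into
\[
c \cdot z^{p^e} = 0 \quad \textnormal{in } F^e_* R((p^e-1)D + \lfloor p^e\Delta\rfloor) \otimes_R E, \quad \forall e \ge 1,
\]
which is exactly Takagi's defining condition for $z \in 0^{*_D(D+\Delta)}_E$.

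The central difficulty is producing the single divisorial test element $c$ that works uniformly in $e$: the decomposition $f^{p^e/n} = f^q (f^{1/n})^r$ introduces bounded ``junk'' from the fractional exponent $r/n$ which must be absorbed simultaneously with the denominators coming from descending the Frobenius-powered relation from $B$ back to $R$, with all of this tracking producing exactly the divisor twist $(p^e-1)D + \lfloor p^e\Delta\rfloor$ required by \autoref{def.TakagiCharPAdjoint}. This bookkeeping is parallel to—and uses the same ideas as—the analogous comparison between the BCM test ideal and the Frobenius test ideal worked out in \cite[\S 6]{MaSchwedeSingularitiesMixedCharBCM}.
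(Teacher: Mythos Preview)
Your reduction in the second paragraph is fatal. You correctly observe that $K \subseteq K' := \ker\big(E \xrightarrow{\cdot f^{1/n}} H^d_\fram(B)\big)$, and then say it suffices to prove $K' \subseteq 0^{*_D(D+\Delta)}_E$. But that stronger containment is simply false. Dualizing, it would say $\tau_{I_D}(R,D+\Delta) \subseteq \tau_B(R,D+\Delta) = \tau(R,D+\Delta)$, whereas the opposite containment $\tau(R,D+\Delta) \subseteq \tau_{I_D}(R,D+\Delta)$ is the one that holds. For a concrete counterexample take $R$ regular, $D$ a regular divisor, $\Delta = 0$, $n=1$, $f$ a local equation for $D$: then $(R,D)$ is purely $F$-regular so $0^{*_D(D)}_E = 0$, yet $K' = \ker(E \xrightarrow{\cdot f} H^d_\fram(B))$ contains all of $\Ann_E(f) \neq 0$ since $R \to B$ is pure. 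The passage from $H^d_\fram(I_{B\shortrightarrow C})$ to $H^d_\fram(B)$ discards exactly the information about $D$ that distinguishes the adjoint ideal from the ordinary test ideal; you cannot recover it later.

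The same issue surfaces in your third paragraph: nothing in your argument produces a test element $c$ lying in $R^{\circ,D}$. The conductor of the finite extension $R \subseteq S$ you invoke, and the elements needed to absorb the fractional part $(f^{1/n})^r$, have no reason to avoid $D$. Your outline would at best reprove $K' \subseteq 0^{*(D+\Delta)}_E$ (ordinary tight closure, $c \in R^\circ$), which is the Ma--Schwede comparison for $\tau_B$ versus $\tau$, not the statement at hand. The paper's proof proceeds quite differently: it does not show a containment of kernels at all, but instead uses the characterization of $\tau_{I_D}(R,D+\Delta)$ as the smallest ideal, not contained in any minimal prime of $I_D$, compatible with all $\phi \in \Hom_R\big(F^e_* R(\lceil (p^e-1)(D+\Delta)\rceil), R\big)$. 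It verifies directly that $\adj^D_{B\shortrightarrow C}(R,D+\Delta)$ is such a compatible ideal (via a Matlis-dual Frobenius diagram on $H^d_\fram(I_{B\shortrightarrow C})$, crucially keeping the $C$ part), and uses \autoref{thm.AdjointMultiplierInversionOfAdjunction} to see it is not contained in $I_D$.
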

\begin{proof}
Recall first that 
\[
    \adj_{B\shortrightarrow C}^D(R, D+\Delta) \cdot (R/I_D)^{\nm} = \mytau_C\big( R_{D^{\nm}}, \diff_{R_{D^{\nm}}}(D+\Delta)) \big)
\] 
by \autoref{thm.AdjointMultiplierInversionOfAdjunction}.  Hence since the right side is nonzero by \autoref{thm.taucontainsSingularLocus} (\cf \cite{MaSchwedePerfectoidTestideal}) the left side is also nonzero.  Thus, to prove the proposition, we must show that $\adj_{B\shortrightarrow C}^D(R, D+\Delta) \cdot (R/I_D)^{\nm}$ is compatible with the maps $\phi \in \Hom_R\big( (F^e_* R(\lceil (p^e-1)(D + \Delta)\rceil)), R \big) \subseteq \Hom_R(F^e_* R, R)$.

Let $f \in R$ be such that $\Div_R(f) = n(K_R + D + \Delta)$ as in \autoref{sec.Definitions}.  Fix a finite normal local extension $\eta \colon \Spec S \to \Spec R$ with $f^{1/n} \in S$.  We have the map
$R(K_R) \subseteq S(\eta^* K_R) \xrightarrow{\cdot f^{1/n}} I_{B \shortrightarrow C}$.
The Matlis dual of the image of $H^d_{\fram}(S(\eta^* K_R)) \xrightarrow{\cdot f^{1/n}} H^d_{\fram}(I_{B \shortrightarrow C})$ is a submodule $\kappa \subseteq \Hom_R(S, R)\cong S(K_S - \eta^*K_R)$ whose image under the evaluation-at-1 map to $R$ is exactly $\adj_{B\shortrightarrow C}^D(R, D+\Delta)$.  

Any $\phi \in \Hom_R\big( (F^e_* R(\lceil (p^e-1)(D + \Delta)\rceil)), R \big) \subseteq \Hom_R(F^e_* R, R)$ induces a map
\[
\varphi : \Hom_{F^e_* R}(F^e_* S, F^e_* R) \xrightarrow{\text{restrict domain}} \Hom_R(S, F^e_* R) \xrightarrow{\Hom(S, \phi)} \Hom(S, R).
\]
It would suffice to show that $\varphi(F^e_* \kappa) \subseteq \kappa$ since the following diagram commutes:
\[
\xymatrix{
F^e_* \Hom_R(S, R) \ar@{<->}[d]_{\sim} \\
\Hom_{F^e_* R}(F^e_* S, F^e_* R)  \ar[d]_{\varphi} \ar[r]^-{\mathrm{ev}@1} & F^e_* R \ar[d]^{\phi} \\
\Hom_{R}(S, R) \ar[r]^-{\mathrm{ev}@1} & R.
}
\]
Notice that $\varphi$ factors through
\[
\Hom_{F^e_* R}(F^e_* S, F^e_* R(\lceil (p^e-1)(D + \Delta)\rceil)) \cong F^e_* S(K_S - \eta^*K_R + \eta^* \lceil (p^e-1)(D + \Delta)\rceil)
\]
and hence it also factors through the smaller module
\begin{equation}
\label{eq.WhatItFactorsThrough}
F^e_* S(K_S - \eta^*K_R + \lceil \eta^* (p^e-1)(D + \Delta)\rceil).
\end{equation}
Now, we have the following commutative diagram
\[
\xymatrix{
H^d_{\fram}(S(\eta^*K_R)) \ar[d]_{\cdot F^e_* f^{(p^e-1)/n}} \ar[r]^-{\cdot f^{1/n}} 
& H^d_{\fram}(I_{B \shortrightarrow C}) \ar[d]^{F^e}\\
H^d_{\fram}(F^e_* S(\eta^*K_R)) \ar[r]_-{\cdot F^e_* f^{1/n}} 
& H^d_{\fram}(I_{B \shortrightarrow C}).
}
\]
The Matlis dual of the image of the first row is $\kappa$.  The Matlis dual of the image of the second row is $F^e_* \kappa$.
Therefore by Matlis duality, the dual to the left vertical map, $\psi \colon F^e_*S(K_S - \eta^*K_R) \cong F^e_* \Hom_R(S, R) \to \Hom_R(S, R) \cong S(K_S - \eta^*K_R)$
induced by $\cdot F^e_* f^{(p^e-1)/n}$ sends $F^e_* \kappa$ to $\kappa$.  It is not difficult to see that any map which factors through $F^e_* S(K_S - \eta^*K_R + \eta^* (p^e-1)(D + \Delta))$, is an $F^e_*S$-pre-multiple of $\psi$.  Indeed, this follows since the dual to $H^d_{\fram}(S) \to H^d_{\fram}(F^e_* S)$ generates $\Hom_S(F^e_* \omega_S, \omega_S)$ as an $S$-module (since both are dual in various ways to the Frobenius map $S \to F^e_* S$ which generates the Frobenius structures as an $F^e_* S$-module on $S$).  Thus $\varphi$ sends $F^e_* \kappa$ to $\kappa$ as well as desired.
\end{proof}

Before showing the reverse containment for appropriately chosen $B,C$, we first record the following analog of a transformation rule for $\tau_{I_D}(R, D+\Delta)$ under certain finite morphisms.  Note that recently Carvajal-Rojas and St\"abler \cite[Theorem 6.12]{CarvajalRojasStablerFiniteMapsSplittingPrimes} have proven a similar result, which for the most part holds much more generally.  Unfortunately, we do not believe that the implicit Cartier algebra we are using on $S$ is exactly the one pulled back in the  sense of Carvajal-Rojas and St\"abler (although the test ideal may be the same).  For this reason, we provide a direct proof.
\begin{proposition}
\label{prop.TransformationRuleForTakagiAdjoint}
 Let $R$ be an $F$-finite normal local domain of characteristic $p>0$, $\Delta$ a $\mathbb{Q}$-divisor and $D$ a reduced Weil divisor. Suppose that $R \subseteq S$ is a module finite normal extension and consider $\pi \colon \Spec(S) \to \Spec(R)$. Assume $\pi$ is {\`e}tale in a neighborhood of the generic point of any component of $D$.  Let $D'$ be a reduced divisor on $\Spec S$ with $D' \leq \pi^* D$ and such that for each prime component of $D$, there exists at least one component of $D'$ lying over it.
 Then the trace map $\Tr : S(K_S) \to R(K_R)$ induces
 \begin{equation}
 \label{takagiunderfinitemap}
\Tr(\tau_{I_{D'}}(S, \pi^*D+\pi^*\Delta - \Ram_\pi)) = \Tr(\tau_{I_{\pi^*D}}(S, \pi^*D+\pi^*\Delta - \Ram_\pi)) = \tau_{I_D}(R, D+\Delta)
 \end{equation}
\end{proposition}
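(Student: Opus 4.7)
Plan:

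I plan to prove the single equality $\Tr(\tau_{I_{D'}}(S, \pi^*D + \pi^*\Delta - \Ram_\pi)) = \tau_{I_D}(R, D+\Delta)$, from which the full statement follows by applying the argument a second time with $D'$ replaced by $\pi^*D$; since $\pi$ is étale at the generic points of the components of $D$, the divisor $\pi^*D$ is already reduced along these components, and the hypothesis that every component of $D$ has a component of $D'$ above it is trivially satisfied in that case.

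The first step is to set up a correspondence between twisted Cartier-type maps downstairs and upstairs. Using the ramification formula $K_S = \pi^*K_R + \Ram_\pi$ and Grothendieck duality in the form $\Hom_R(S,R) \cong S(\Ram_\pi)$ for the finite map $\pi$, I would construct, for each $\phi \in \Hom_R(F^e_*R(\lceil (p^e-1)(D+\Delta)\rceil), R)$, a natural $\tilde\phi \in \Hom_S(F^e_*S(\lceil (p^e-1)(\pi^*D+\pi^*\Delta-\Ram_\pi)\rceil), S)$ fitting into a commutative square with vertical trace maps, so that $\Tr \circ \tilde\phi = \phi \circ F^e_* \Tr$. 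The verification that $\tilde\phi$ has the correct twist is a divisor computation that hinges on $\Ram_\pi$ having integer coefficients; the relevant identity is $(1-p^e)K_S - \lceil(p^e-1)(\pi^*D+\pi^*\Delta-\Ram_\pi)\rceil = \lfloor (1-p^e)\pi^*(K_R + D+\Delta)\rfloor$.

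For the containment $\tau_{I_D}(R, D+\Delta) \subseteq \Tr(\tau_{I_{D'}}(S, \cdot))$, I would set $J := \Tr(\tau_{I_{D'}}(S, \cdot))$ and verify the two defining properties of $\tau_{I_D}$ for $J$. Compatibility of $J$ with each Cartier-type $\phi$ downstairs follows from $\phi(F^e_* J) = \Tr(\tilde\phi(F^e_* \tau_{I_{D'}})) \subseteq \Tr(\tau_{I_{D'}}) = J$, using compatibility of $\tau_{I_{D'}}$ with $\tilde\phi$ upstairs. Non-containment of $J$ in any minimal prime $\bp$ of $I_D$ follows because some component of $D'$ lies over the corresponding component of $D$, and $\pi$ is étale at its generic point, so the trace on residue fields is surjective there. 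Minimality of $\tau_{I_D}$ then yields the containment.

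For the reverse containment, I would consider the $S$-submodule $N := \{ s \in S : \phi'(s) \in \tau_{I_D}(R, D+\Delta) \text{ for all } \phi' \in \Hom_R(S,R) \}$, verify that $N$ is stable under every $\tilde\phi$ from Step 1 and is not contained in any minimal prime of $I_{D'}$, and then use minimality of $\tau_{I_{D'}}$ to conclude $\tau_{I_{D'}} \subseteq N$; evaluating at $\Tr \in \Hom_R(S,R)$ gives the required containment. The main obstacle is verifying stability of $N$: one must factor an arbitrary composition $\phi' \circ \tilde\phi : F^e_* S \to R$ through some Cartier-type map on $R$ with the correct $(p^e-1)(D+\Delta)$-twist, which rests on careful bookkeeping with the Grothendieck-duality identifications of Step 1 and the integrality of $\Ram_\pi$.
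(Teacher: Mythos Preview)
Your Step~3, the containment $\tau_{I_D}(R,D+\Delta) \subseteq \Tr(\tau_{I_{D'}}(S,\cdot))$, is exactly the paper's argument for that direction: one checks that $J = \Tr(\tau_{I_{D'}}(S,\cdot))$ is $\phi$-compatible via $\Tr \circ \tilde\phi = \phi \circ F^e_*\Tr$, verifies that $J$ is not contained in any minimal prime of $I_D$ using \'etaleness over the generic points of $D$, and invokes the minimality of $\tau_{I_D}$.

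Your Step~4 is where the proposal diverges from the paper, and here there is a genuine gap. To conclude $\tau_{I_{D'}}(S,\Gamma) \subseteq N$ from the minimality characterization, you must show that $N$ is stable under \emph{every} $\psi \in \Hom_S(F^e_*S(\lceil(p^e-1)\Gamma\rceil),S)$, not merely under the lifts $\tilde\phi$ from Step~1. These lifts do not span the full Cartier algebra as an $F^e_*S$-module: in divisor terms the span corresponds to $S(\pi^*\lfloor(1-p^e)(K_R+D+\Delta)\rfloor)$, whereas the full algebra corresponds to $S(\lfloor(1-p^e)\pi^*(K_R+D+\Delta)\rfloor)$, and these differ whenever $\pi$ ramifies over a component carrying a fractional coefficient. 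The same rounding obstruction blocks the factoring you describe: writing $\phi' \circ \psi = \phi_0 \circ F^e_*\phi''$ with $\phi_0$ carrying the $(p^e-1)(D+\Delta)$-twist forces the inequality $\lceil(p^e-1)\pi^*(D+\Delta)\rceil - \pi^*\lceil(p^e-1)(D+\Delta)\rceil \geq (p^e-1)\Ram_\pi$, whose left side is $\leq 0$ and right side $\geq 0$. The paper avoids this entirely by arguing the containment $\Tr(\tau_{I_{\pi^*D}}(S,\Gamma)) \subseteq \tau_{I_D}(R,D+\Delta)$ directly from the explicit description $\tau_{I_D} = \sum_{e>0} \Tr_{F^e}\big(F^e_*R(\lceil(1-p^e)(K_R+D)-p^e\Delta-\Div_R(c)\rceil)\big)$ for a divisorial test element $c$, using $\Tr_{F^e}\circ\Tr = \Tr\circ\Tr_{F^e}$ together with the inequality $\lfloor p^e\pi^*\Delta\rfloor \geq \pi^*\lfloor p^e\Delta\rfloor$ (which goes the right way for this computation); the remaining equality $\Tr(\tau_{I_{D'}}) = \Tr(\tau_{I_{\pi^*D}})$ then follows from the trivial containment $\tau_{I_{D'}} \subseteq \tau_{I_{\pi^*D}}$ combined with Step~3.
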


\begin{proof}
First notice that
\[
\tau_{I_{D'}}(S, \pi^*D+\pi^*\Delta - \Ram_\pi) \subseteq \tau_{I_{\pi^*D}}(S, \pi^*D+\pi^*\Delta - \Ram_\pi).
\]
 Using the map-divisor correspondence \cite[Section 4]{BlickleSchwedeSurveyPMinusE}, an alternate formulation of $\tau_{I_D}(R, D+\Delta)$ is given by
 \[
  \begin{array}{l}
 \tau_{I_D}(R, D+\Delta) \smallskip \\
 = \sum_{e > 0} \Tr_{F^e} \left(
 F^e_*R(\lceil (1-p^e)(K_R + D) - p^e \Delta- \Div_R(c)\rceil )
 \right) \smallskip \\
 = \sum_{e > 0} \Tr_{F^e} \left(
 F^e_*R( (1-p^e)(K_R + D) - \lfloor p^e \Delta \rfloor - \Div_R(c) )
 \right)
 \end{array}
 \]
 where $c \in R$ is (an appropriate power of) an element of $R^{\circ, D}$ with $\Supp(\Delta) \subseteq \Supp(\Div_R(c))$ and such that $R_c$ and $(R_D)_c$ are regular. Assuming further that the ramification locus of $\pi$ is contained in $\Supp(\Div_R(c))$, applying the analogous formula on $S$, and using that the functoriality of the trace gives $\Tr_{F^e} \circ \Tr = \Tr_{F^e \circ \pi} = \Tr_{\pi \circ F^e} = \Tr \circ \Tr_{F^e}$, we have
 \[
 \begin{array}{l}
 \Tr\left( \tau_{I_{\pi^*D}}(S, \pi^*D+\pi^*\Delta - \Ram_\pi)\right) \smallskip \\ = \sum_{e > 0} \Tr \circ \Tr_{F^e} \left(
 F^e_*S(\lceil (1-p^e)(K_S + \pi^*D) - p^e(\pi^*\Delta - \Ram_\pi)- \Div_S(c)\rceil ) \right)
  \smallskip \\ = \sum_{e > 0} \Tr_{F^e} \left( F^e_*  \Tr \left(
 S(\lceil (1-p^e)(K_S + \pi^*D) - p^e(\pi^*\Delta - \Ram_\pi)- \Div_S(c)\rceil )
  \right) \right)
   \smallskip \\ = \sum_{e > 0} \Tr_{F^e} \left( F^e_* \Tr \left(
S(K_S + (1-p^e) \pi^*D - \lfloor p^e\pi^*\Delta \rfloor- p^e\pi^*K_R - \pi^*\Div_R(c) )
  \right)\right)
    \smallskip \\ \subseteq \sum_{e > 0} \Tr_{F^e} \left( F^e_* \Tr \left(
    S(K_S + (1-p^e) \pi^*D - \pi^*\lfloor p^e\Delta \rfloor- p^e\pi^*K_R - \pi^*\Div_R(c) )
  \right) \right)
   \smallskip \\ \subseteq \sum_{e > 0} \Tr_{F^e} \left(
    F^e_* R(K_R + (1-p^e) D - \lfloor p^e\Delta \rfloor- p^e K_R - \Div_R(c) )
  \right) \smallskip \\
  = \tau_{I_D}(R, D+\Delta).
  \end{array}
 \]

 The reverse inclusion for the equality in \eqref{takagiunderfinitemap} follows similarly, using the strategy of proof in \autoref{prop.TauEasyCharPContainment}  stemming from \cite{SmithFRatImpliesRat} to say that $\Tr\left( \tau_{I_{D'}}(S, \pi^*D+\pi^*\Delta - \Ram_\pi)\right)$ is appropriately uniformly compatible and not contained in any minimal prime of $I_D$.  By definition $\tau_{I_{D'}}(S, \pi^*D+\pi^*\Delta - \Ram_\pi)$ is not contained in any minimal prime of $I_{D'}$, and hence since $\pi$ is \'etale over the generic points of $D$, neither is its trace.  We have, for $b \in \tau_{I_{D'}}(S, \pi^*D+\pi^*\Delta - \Ram_\pi)$ but $b$ not in any minimal prime of $I_{D'}$ (in other words a ``divisorial test element''), the following chain of containments.
 Here the sum in the first line above is taken over all $\phi \in \Hom_R\big( (F^e_* R(\lceil (p^e-1)(D + \Delta)\rceil)), R \big).$ 
 \[
 \begin{array}{l}
 \sum_\phi \phi\big( \Tr\left( \tau_{I_{D'}}(S, \pi^*D+\pi^*\Delta - \Ram_\pi)\right) \big) \smallskip \\
 = \Tr_{F^e}
 \Big( F^e_* \big( R ((1-p^e)K_R -\lceil (p^e-1)(D + \Delta) \rceil) \cdot \Tr\left( \tau_{I_{D'}}(S, \pi^*D+\pi^*\Delta - \Ram_\pi)\right) \big) \Big)
 \smallskip \\ =
 \Tr_{F^e} \Big( F^e_* \Tr (S((1-p^e)\pi^*K_R -\pi^* \lceil (p^e-1)(D + \Delta) \rceil) \cdot  \tau_{I_{D'}}(S, \pi^*D+\pi^*\Delta - \Ram_\pi)) \Big)
  \smallskip \\ =
  \Tr\Big(
  \sum_{e'}
  \Tr_{F^e} ( F^e_*
  (
  S((1-p^e)\pi^*K_R -\pi^* \lceil (p^e-1)(D + \Delta) \rceil)  \smallskip \\
  \qquad \cdot
    \Tr_{F^{e'}} ( F^{e'}_*S(\lceil (1-p^{e'})(K_S + \pi^*D) - p^{e'}(\pi^*\Delta - \Ram_\pi)- \Div_S(b)\rceil)
  )))\Big)
    \smallskip \\ =
    \Tr\Big(\sum_{e'} \Tr_{F^{e+e'}}(F^{e+e'}_*S(p^{e'}(1-p^e)\pi^*K_R -p^{e'}\pi^* \lceil (p^e-1)(D + \Delta) \rceil
    \smallskip \\
  \qquad + \lceil (1-p^{e'})(K_S + \pi^*D) - p^{e'}(\pi^*\Delta - \Ram_\pi)- \Div_S(b) \rceil
    ))\Big)
      \smallskip \\ = \Tr\Big(\sum_{e'} \Tr_{F^{e+e'}}(F^{e+e'}_*S(K_S - p^{e+e'}\pi^*K_R + (1-p^{e+e'})\pi^*D  - \Div_S(b)
      \smallskip \\
  \qquad + p^{e'} \pi^* \lfloor  (1-p^e) \Delta\rfloor - \lfloor p^{e'}\pi^* \Delta \rfloor
      ))\Big)
      \smallskip \\ \subseteq \Tr\Big(\sum_{e'} \Tr_{F^{e+e'}}(F^{e+e'}_*S(K_S - p^{e+e'}\pi^*K_R + (1-p^{e+e'})\pi^*D  - \Div_S(b)
   - \lfloor p^{e+e'}\pi^* \Delta \rfloor
      ))\Big) \smallskip \\ =
      \Tr\Big(\sum_{e'} \Tr_{F^{e+e'}}(F^{e+e'}_*S(\lceil (1-p^{e+e'})(K_S + \pi^*D) - p^{e+e'}(\pi^*\Delta - \Ram_\pi)- \Div_S(b)\rceil
      ))\Big) \smallskip \\ \subseteq  \Tr\big( \tau_{I_{D'}}(S, \pi^*D+\pi^*\Delta - \Ram_\pi)\big).

 \end{array}
 \]
 This completes the proof of the equality in \eqref{takagiunderfinitemap}. 
\end{proof}

\begin{theorem}
\label{thm.TauEqualityCharP}
Suppose that $(R, \fram)$ is an $F$-finite complete local ring of positive characteristic $p > 0$ and $(R, D + \Delta)$ is a pair as in \autoref{def.TakagiCharPAdjoint}.  Additionally assume that $D$ is prime and that $K_R + D + \Delta$ is $\bQ$-Cartier.  Then for any map $B \to C$ of big Cohen-Macaulay $R^+$ and $(R/R(-D))^+$ modules (for instance $B = R^+$ and $C = (R/R(-D))^+$), we have that
\[
    \tau_{I_D}(R, D+\Delta) = \adj_{B\shortrightarrow C}^D(R, D+\Delta).
\]
\end{theorem}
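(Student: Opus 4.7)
The containment $\tau_{I_D}(R,D+\Delta) \subseteq \adj^D_{B\shortrightarrow C}(R,D+\Delta)$ for every compatible $B \to C$ of BCM algebras was established in Proposition~\ref{prop.TauEasyCharPContainment}. For the reverse, I would first observe the following monotonicity: if $(B_1 \to C_1) \to (B_2 \to C_2)$ is a morphism of compatible BCM pairs, then the kernels $K_i := \ker\big(H^d_\fram(I_D) \to H^d_\fram(I_{B_i \shortrightarrow C_i})\big)$ satisfy $K_1 \subseteq K_2$, so by Matlis duality $\adj^D_{B_2 \shortrightarrow C_2} \subseteq \adj^D_{B_1 \shortrightarrow C_1}$. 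Since every compatible $B \to C$ admits a map from $\widehat{R^+} \to \widehat{(R/I_D)^+}$ (where $\widehat{R^+}$ is big Cohen-Macaulay by Hochster--Huneke in equicharacteristic $p$), it suffices to prove $\adj^D_{\widehat{R^+}\shortrightarrow \widehat{(R/I_D)^+}}(R,D+\Delta) \subseteq \tau_{I_D}(R,D+\Delta)$, in which case the relevant homotopy kernel becomes simply $\widehat{(I_D)^+}$.

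By Matlis duality this reduces to showing the reverse containment of subobjects of $E = H^d_\fram(R(K_R))$:
\[
0^{*_D}_E \;\subseteq\; \ker\!\Big(H^d_\fram(R(K_R)) \xrightarrow{\;\cdot f^{1/n}\;} H^d_\fram\big(\widehat{(I_D)^+}\big)\Big),
\]
which is the divisorial/adjoint analog of Smith's theorem identifying tight closure with plus closure in local cohomology. My plan is to pass to a finite normal extension $\pi\colon \Spec S \to \Spec R$ containing $f^{1/n}$, where the key relation $K_S + \pi^*(D+\Delta) - \Ram_\pi = \Div_S(f^{1/n})$ moves the computation into a principal setting. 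After replacing $n$ with its prime-to-$p$ part we have, for each $e$ with $n \mid p^e-1$, that $(f^{1/n})^{p^e} = f^{(p^e-1)/n}\cdot f^{1/n}$ with $f^{(p^e-1)/n}\in R$. The defining vanishing condition $F^e_*(cz^{p^e})=0$ for $z\in 0^{*_D}_E$ (with $c\in R^{\circ,D}$ a divisorial test element) then translates, via the identification $F^e_*R(p^eK_R+M)\cong R^{1/p^e}(K_R+M/p^e)$ inside $R^+$, into the vanishing of $c^{1/p^e}\cdot f^{1/n}\cdot z$ in $H^d_\fram$ of an appropriate submodule of $(I_D)^+$. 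Taking the limit $e\to\infty$ in the $\fram$-adic topology yields $f^{1/n}z = 0$ in $H^d_\fram(\widehat{(I_D)^+})$.

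The hard part will be the bookkeeping of divisorial contributions: the fractional correction $\lfloor p^e\Delta\rfloor-(p^e-1)\Delta$ and ramification along the chosen minimal prime of $I_D R^+$ must both be absorbed into a single divisorial test element $c\in R^{\circ,D}$ that works for all $e$, without destroying the condition that $c$ avoid the minimal primes of $I_D$ --- this is the familiar difficulty of constructing divisorial test elements handled in \cite{TakagiPLTAdjoint,TakagiHigherDimensionalAdjoint}. An independent consistency check comes from the transformation rule of Proposition~\ref{prop.TransformationRuleForTakagiAdjoint} together with Theorem~\ref{thm.AdjointMultiplierInversionOfAdjunction} and Takagi's adjunction for test ideals: both $\adj^D_{B\shortrightarrow C}\cdot R_{D^{\nm}}$ and $\tau_{I_D}\cdot R_{D^{\nm}}$ equal the test ideal $\tau(R_{D^{\nm}},\diff_{R_{D^{\nm}}}(D+\Delta))$, so equality modulo $I_D$ is automatic and the remaining content of the theorem lies entirely in matching the $I_D$-primary parts, which is precisely what the plus-closure argument above supplies.
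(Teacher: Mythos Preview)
Your overall strategy is close to the paper's, but there are two genuine gaps.

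First, the step ``taking the limit $e\to\infty$ in the $\fram$-adic topology yields $f^{1/n}z = 0$'' is not a valid mechanism. Elements of $H^d_{\fram}$ are $\fram$-power torsion, not $\fram$-adically separated, and $c^{1/p^e}$ does not converge to $1$ in any useful sense. The actual argument (the paper's Claim in the proof) uses the exact sequence
\[
0 \to H^{d-1}_{\fram}\big((R/I_D)^+\big) \to H^d_{\fram}(I_D^+) \to H^d_{\fram}(R^+) \to 0
\]
and applies Smith's theorem twice: the image of $f^{1/n}z$ in $H^d_{\fram}(R^+)$ lies in the tight closure of zero (by the valuative criterion, since $c^{1/p^e}$ kills it for all $e$), hence vanishes; the preimage in $H^{d-1}_{\fram}((R/I_D)^+)$ then satisfies the same hypothesis with $c\in R^{\circ,D}$, and here it is essential that $c$ avoids the minimal prime of $I_D$ so that its image in $R/I_D$ is nonzero and Smith's theorem applies again. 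You correctly named this as ``the divisorial analog of Smith's theorem,'' but the proof is this two-step d\'evissage, not a topological limit.

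Second, ``replacing $n$ with its prime-to-$p$ part'' is not something you can simply do: the index of $K_R+D+\Delta$ is what it is. The paper handles this by using the transformation rule (your Proposition~\ref{prop.TransformationRuleForTakagiAdjoint}) not as a consistency check but as the main reduction: one passes to a finite \emph{separable} extension $R\subseteq S$, \'etale over the generic point of $D$, on which $\pi^*(K_R+D+\Delta)$ becomes honestly Cartier. Both $\tau_{I_D}$ and $\adj^D_{B\shortrightarrow C}$ satisfy the same trace-compatibility under such an extension, so one may assume $n=1$ from the outset. This completely eliminates the ``hard bookkeeping'' of fractional corrections and ramification that you anticipated, and the remaining argument is exactly the Smith-type claim above with $f$ in place of $f^{1/n}$.
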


\begin{proof}
We begin by supposing that $K_R = -D + G$ where $G$ has no components common with $D$.  Thus if $\Div(f) = n(K_R + D + \Delta)$, we see that $f$ is a unit at the generic point of $D$.  Since we are in characteristic $p > 0$, by \cite[Lemma 4.5]{BlickleSchwedeTuckerTestAlterations}, there exists a finite separable extension $R \subseteq S$ with $\pi : \Spec S \to \Spec R$ with $\pi^* (K_R + D + \Delta)$ Cartier.  If $n$ is not divisible by $p$, this is very easy, simply take the $n$th root of $f$.  If $n$ is divisible by $p$, then by construction in that proof, $\pi$ is ramified only where $f$ vanishes, and $f$ does not vanish at $D$.  Either way $\pi$ is \'etale over the generic points of $D$.  Choose the prime divisor $D'$ on $S$ lying over $D$ with $I_D$ contained in our choice of $I_D^+$.

In this case, we write $\pi^*(K_R + D + \Delta) = K_S + D' + (\pi^* D - D' + \pi^* \Delta - \Ram_{\pi}) = \Div(g)$.  Note that $(\pi^* D - D' + \pi^* \Delta - \Ram_{\pi})$ may not be effective, and so we implicitly use the idea of \autoref{rmk.NonEffectiveDeltaForMixedChar} to reduce to the case that it is.  Consider the factorization
\[
\alpha: H^d_{\fram}(R(K_R)) \to H^d_{\fram}(S(\pi^* K_R)) \xrightarrow{\beta} H^d_{\fram}(I_D^+)
\]
Since $H^d_{\fram}(S(K_S)) = H^d_{\fram}(S(\pi^* K_R - \Ram)) \twoheadrightarrow H^d_{\fram}(S(\pi^* K_R))$ surjects for dimension reasons, we see that the image of $\beta$ is the Matlis dual of $\adj^D_{B\shortrightarrow C}(S, \pi^* (D+\Delta) - \Ram_{\pi})$, and the image of $\alpha$ is $\adj^D_{B\shortrightarrow C}(R, D+\Delta)$.  Hence we have that
\[
\Tr\big(\adj^D_{B\shortrightarrow C}(S, \pi^* (D+\Delta) - \Ram_{\pi})\big) = \adj^D_{B\shortrightarrow C}(R, D+\Delta).
\]
Therefore, in view of \autoref{prop.TransformationRuleForTakagiAdjoint}, it suffices to prove the result on $S$.  Hence, we may now assume that $K_R + D + \Delta = \Div(f)$ is Cartier.

Suppose $z \in 0_{E}^{*_D(D+\Delta)}$.  This just means that there exists $c \in R^{\circ, D}$ such that
 \[c^{1/p^e} \otimes  z = 0 \in (R((p^e - 1)(D + \Delta)))^{1/p^e} \otimes E.\]
But \[(R((p^e - 1)(D + \Delta)))^{1/p^e} \otimes E \cong F^e_* H^d_{\fram}(R(K_R + (p^e-1)(K_R + D + \Delta)) \cong F^e_* E.\]
Define:
\[
\begin{array}{rcl}
I_D^{D + \Delta, \infty} & := & \bigcup_{e > 0} (R((p^e-1)(D +  \Delta)))^{1/p^e} \subseteq K(R)^{1/p^{\infty}} \subseteq K(R^+).
\end{array}
\]
\begin{claim} We have $f\cdot I_D^{D + \Delta, \infty} \otimes_R E$ maps into $H^d_{\fram}(I_D^+)$.
\label{clm.fMultipliesIDInfToIDPlus}
\end{claim}
\begin{proof}[Proof of claim \autoref{clm.fMultipliesIDInfToIDPlus}]
We work at a finite level.
\[
\begin{array}{rl}
& f\cdot (R((p^e-1)(D+\Delta) ))^{1/p^e} \otimes_R E \\
=  & (R((p^e-1)(D + \Delta) - p^e(K_R + D + \Delta)))^{1/p^e} \otimes_R H^d_{\fram}(K_R) \\
 = & H^d_{\fram}(R(-D - \Delta)^{1/p^e})
 \end{array}
\]
which certainly maps into $H^d_{\fram}(I_D^+)$ under the natural map.
\end{proof}
Since $c^{1/p^e} \otimes z = 0$ in one term in the limit, we see that $c^{1/p^e}$ annihilates $1 \otimes z$ in $I_D^{D + \Delta, \infty} \otimes_R E$.  Hence by \autoref{clm.fMultipliesIDInfToIDPlus}, the image of $z$ under the map
\[
H^d_{\fram}(R) \xrightarrow{\cdot f} H^d_{\fram}(I_D^+)
\]
is also annihilated by $c^{1/p^e}$.
\begin{claim}
\label{clm.TightClosureInIDPlus}
If $z \in H^d_{\fram}(I_D^+)$ is such that $c^{1/p^e} z = 0$ for all $e$ and some fixed $c \in R^{\circ, D}$, then $z = 0$.
\end{claim}
\begin{proof}[Proof of claim \autoref{clm.TightClosureInIDPlus}]
We have \[0 \to H^{d-1}_{\fram}(R_D^+) \to H^d_{\fram}(I_D^+) \to H^d_{\fram}(R^+) \to 0.\]  The image $z'$ of $z$ in $H^d_{\fram}(R^+)$ also has $c^{1/p^e}z = 0$.  But now, $z' \in H^d_{\m}(S)$ for some finite extension $S$ of $R$.  By the valuative criterion of tight closure, \cite[Theorem on page 194]{HochsterFoundations}, since $c^{1/p^e} \otimes z' = 0 \in S^+ \otimes_S H^d_{\fram}(S)$, we see that $z'$ is in the tight closure of $0$ in $H^d_{\fram}(S)$.  But the tight closure of zero is just the kernel of $H^d_{\fram}(S) \to H^d_{\fram}(R^+)$ by \cite{SmithTightClosureParameter}.  Hence $z' = 0$.

Thus there exists $y \in H^{d-1}_{\fram}(R_D^+)$ mapping to $z$ and by the fact $R^+$ is big Cohen-Macaulay, $c^{1/p^e} y = 0$ for all $e$.  Repeating the above argument, we see that $y = 0$.  Thus $z = 0$ as well, proving \autoref{clm.TightClosureInIDPlus}.
\end{proof}
We just showed that \[0_{E}^{*_D(D+\Delta)} \subseteq \ker\big(H^d_{\fram}(R(K_R)) \xrightarrow{\cdot f} H^d_{\fram}(I_D^+)\big),\] and so by duality, $\tau_{I_D}(R, D+\Delta) \supseteq \adj_{B\shortrightarrow C}(R, D+\Delta)$.
\end{proof}

\begin{corollary}
	In the notation of \autoref{thm.TauEqualityCharP}, $(R,D+\Delta)$ is purely $\mathrm{BCM}$-regular if and only if it is purely $F$-regular.
	\end{corollary}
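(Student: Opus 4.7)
The proof is essentially an unpacking of definitions once \autoref{thm.TauEqualityCharP} is in hand. My plan is as follows.

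First, I would recall that by definition pure $F$-regularity of $(R, D+\Delta)$ in characteristic $p > 0$ means exactly that Takagi's adjoint test ideal $\tau_{I_D}(R, D+\Delta)$ equals $R$, while by \autoref{def:purely_BCM_regular} pure $\mathrm{BCM}_{B \shortrightarrow C}$-regularity means $\adj_{B\shortrightarrow C}^D(R, D+\Delta) = R$, and pure $\mathrm{BCM}$-regularity is this condition for every compatibly chosen perfectoid big Cohen-Macaulay algebras $B$ and $C$ over $R^+$ and $(R/I_D)^+$ respectively.

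Next, I would invoke \autoref{thm.TauEqualityCharP}, which asserts that for any map $B \to C$ of big Cohen-Macaulay $R^+$- and $(R/I_D)^+$-algebras, one has the equality
\[
\tau_{I_D}(R, D+\Delta) = \adj_{B\shortrightarrow C}^D(R, D+\Delta).
\]
In particular, this holds for every compatibly chosen pair of perfectoid big Cohen-Macaulay algebras, and also for at least one such pair (which exists by \cite{AndreWeaklyFunctorialBigCM}).

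From here, the equivalence is immediate: if $(R, D+\Delta)$ is purely $F$-regular, then for every compatible choice $B \to C$ we get $\adj_{B\shortrightarrow C}^D(R, D+\Delta) = \tau_{I_D}(R, D+\Delta) = R$, so $(R, D+\Delta)$ is purely $\mathrm{BCM}$-regular. Conversely, if $(R, D+\Delta)$ is purely $\mathrm{BCM}$-regular, then picking any one compatible choice $B \to C$ yields $\tau_{I_D}(R, D+\Delta) = \adj_{B\shortrightarrow C}^D(R, D+\Delta) = R$, so $(R, D+\Delta)$ is purely $F$-regular. There is no genuine obstacle beyond the statement of \autoref{thm.TauEqualityCharP}; the only thing to be mindful of is that the theorem is stated for \emph{any} choice of $B \to C$, so both directions of the equivalence really are subsumed by it.
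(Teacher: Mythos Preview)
Your proof is correct and matches the paper's implicit argument: the corollary is stated without proof immediately after \autoref{thm.TauEqualityCharP}, and is meant to follow from it exactly as you describe. The paper does mention, at the start of the section, an alternative route via inversion of adjunction together with the equivalence of BCM-regularity and strong $F$-regularity on $D^{\nm}$, but the corollary as placed is the direct consequence you give.
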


\section{Application to surface and threefold log terminal singularities}
\label{sec.ApplicationToSurfaceAndThreefoldKLTSingularities}

Our goal in this section is to prove that a 2-dimensional KLT singularity is \BCMReg{} if the residual characteristic $p$ is bigger than 5, generalizing a result of \cite{CRMPSTCoversofRDP}. {
This implies the inversion of adjunction for three-dimensional PLT pairs with $p>5$ and normality of divisorial centres for such pairs.}

First we state a lemma that was essentially proven in \cite{MaSchwedeSingularitiesMixedCharBCM}.
\begin{lemma}
\label{lem.PureQGorensteinPair}
Suppose $(R, \fram)$ is a $\bQ$-Gorenstein complete normal local ring with residual characteristic $p > 0$.  Suppose that $\Delta = 1/n\Div_R(g)$ is a $\bQ$-Cartier divisor and $B$ is a big-Cohen-Macaulay $R^+$-algebra (or at least containing a fixed $g^{1/n}$).  Then $(R, \Delta)$ is \BCMReg{B} if and only if the map $R \xrightarrow{\cdot g^{1/n}} B$ is pure.
\end{lemma}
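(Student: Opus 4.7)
The idea is to translate both conditions into the injectivity of multiplication by $f^{1/N}$ on the injective hull $E=E_R(k)=H^d_\fram(\omega_R)$. Choose $N$ divisible by both $n$ and the $\bQ$-Gorenstein index of $R$, and take $K_R\geq 0$ so that $NK_R=\Div(h)$ for some $h\in R$; then $N(K_R+\Delta)=\Div(f)$ with $f:=hg^{N/n}$, and we fix compatible roots in $R^+\subseteq B$ giving $f^{1/N}=h^{1/N}g^{1/n}$.

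First, I would rewrite the $\BCM_B$-regularity condition. The map $R\xrightarrow{\cdot f^{1/N}}B$ extends along $R\hookrightarrow\omega_R=R(K_R)$ to $\omega_R\xrightarrow{\cdot f^{1/N}}B$, since for $x\in\omega_R$ one has $\Div(xf^{1/N})\geq\Delta\geq 0$, so $xf^{1/N}\in R^+\subseteq B$. Because $\omega_R/R$ is supported in codimension $\geq 1$, the map $H^d_\fram(R)\twoheadrightarrow H^d_\fram(\omega_R)=E$ is surjective, so the image in $H^d_\fram(B)$ is the same whether one starts from $H^d_\fram(R)$ or from $E$. Via the identification $E^\vee=R$, unwinding the definition of $\tau_B(R,\Delta)$ yields that $(R,\Delta)$ is $\BCM_B$-regular if and only if
\[
\phi\colon E\xrightarrow{\,\cdot f^{1/N}\,}H^d_\fram(B)
\]
is injective.

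Next, the purity side. For complete Noetherian local $R$, purity of $R\xrightarrow{\cdot g^{1/n}}B$ is equivalent to injectivity of $\psi\colon E\to B\otimes_R E$, $z\mapsto g^{1/n}\otimes z$. I identify $B\otimes_R E=H^d_\fram(B\otimes_R\omega_R)$ and introduce the comparison map $\alpha\colon B\otimes_R\omega_R\to B$, $b\otimes x\mapsto bxh^{1/N}$, induced by $\omega_R\xrightarrow{\cdot h^{1/N}}R^+\subseteq B$. The key technical step is to verify that $\alpha$ is an isomorphism at every point of $\Spec R$ of codimension $\leq 1$: at a height-$1$ prime $P$, a local generator $u$ of $\omega_{R_P}$ has $v_P(u)=-v_P(K_R)$ and $v_P(h^{1/N})=v_P(K_R)$, so $uh^{1/N}$ has valuation $0$ and acts invertibly after localizing. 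Therefore $\ker\alpha$ and $\coker\alpha$ have Krull dimension $\leq d-2$, making $H^{d-1}_\fram$ and $H^d_\fram$ of both vanish and so
\[
H^d_\fram(\alpha)\colon H^d_\fram(B\otimes_R\omega_R)\xrightarrow{\ \sim\ }H^d_\fram(B)
\]
an isomorphism.

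Finally, the composition $E\xrightarrow{\psi}H^d_\fram(B\otimes_R\omega_R)\xrightarrow{H^d_\fram(\alpha)}H^d_\fram(B)$ is multiplication by $g^{1/n}\cdot h^{1/N}=f^{1/N}$, i.e.\ exactly $\phi$. Since $H^d_\fram(\alpha)$ is an isomorphism, $\psi$ is injective iff $\phi$ is, which gives the desired equivalence. The main obstacle is the codimension claim for $\alpha$: one must check carefully that multiplication by $uh^{1/N}\in R^+$ acts invertibly on $B$ after localizing at a height-$1$ prime of $R$, using that $B$ is a big Cohen–Macaulay $R^+$-algebra and that every prime of $R^+$ over $P$ sees the element $uh^{1/N}$ with valuation $0$; once this is in place, the remainder is standard Matlis-duality and local-cohomology bookkeeping.
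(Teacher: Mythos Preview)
Your proposal is correct and follows precisely the approach the paper indicates (namely, the proofs of \cite[Theorem~6.12, Proposition~6.14]{MaSchwedeSingularitiesMixedCharBCM} with the map $\mu$ replaced by $g^{1/n}\cdot\mu$): translate both conditions into injectivity of a map from $E=H^d_\fram(\omega_R)$, and compare them via the isomorphism $H^d_\fram(B\otimes_R\omega_R)\xrightarrow{\cdot h^{1/N}}H^d_\fram(B)$. Your ``main obstacle'' is in fact easy and does not require the big Cohen--Macaulay property: since $(uh^{1/N})^N=u^Nh$ is a unit in $R_P$, the element $uh^{1/N}$ is a unit in $(R^+)_P$ and hence acts invertibly on $B_P$ for any $R^+$-algebra $B$.
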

\begin{proof}
The proof is essentially the same as \cite[Theorem 6.12]{MaSchwedeSingularitiesMixedCharBCM} and \cite[Proposition 6.14]{MaSchwedeSingularitiesMixedCharBCM}. Indeed, to modify those proofs write $K_R = {1 \over n}\Div(f)$ and $\Delta = {1 \over n} \Div(g)$.  Replace the map $\mu$ in the diagrams of those proofs by multiplication by $g^{1/n}$ followed by the map $\mu$.
\end{proof}

We now prove a crucial lemma which can be thought of as a special case of inversion of adjunction in higher codimension.

\begin{lemma}
\label{lem.GlobFRegExceptionalImpliesBCMReg}
Suppose $(R, \fram, k)$ is a complete normal $\bQ$-Gorenstein local domain of dimension $\geq 2$ with residual characteristic $p > 0$ and $R/\m$ $F$-finite.  Let $X = \Spec R$ and suppose that $\pi : Y \to X$ is the blowup of some $\fram$-primary ideal $I$ such that $Y$ is normal.  Further suppose that $I \cdot \O_Y = \O_Y(-mE)$ where $E$ is a prime exceptional divisor.
{
Let $\Delta$ be a $\Q$-divisor such that $K_X+\Delta$ is $\Q$-Cartier and} let $\Delta_E$ denote the different of $K_Y + E + \pi^{-1}_* \Delta$ along $E$.  Suppose that $(E, \Delta_E)$ is globally $F$-regular, then $(R, \Delta)$ is \BCMReg{} and in particular, $R$ itself is \BCMReg{}.
\end{lemma}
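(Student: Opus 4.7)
The strategy is to encode the exceptional divisor $E$ as a prime divisor on an auxiliary normal complete local ring obtained from the blowup, apply the inversion of adjunction proved in \autoref{cor:singularities}, and descend the resulting \BCMReg{}-ness back to $R$.

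Concretely, I would consider the (normalized) extended Rees algebra $S = R[It, t^{-1}]$ and form the completion $(\hat S, \hat\n)$ of $S$ at its homogeneous maximal ideal $\fram S + ISt + t^{-1}S$. Since $Y$ is normal so is $S$, and hence $\hat S$ is a complete normal local domain. The element $t^{-1}\in \hat S$ cuts out a prime Weil divisor $D = V(t^{-1})$, and $\hat S/(t^{-1})\hat S$ equals the completion of the associated graded ring $\text{gr}_I R$. Since $-E$ is $\pi$-ample and $I\cdot \O_Y=\O_Y(-mE)$, Serre vanishing applied to $0 \to \O_Y(-m(n+1)E) \to \O_Y(-mnE) \to \O_E(-mnE|_E) \to 0$ identifies $\widehat{\text{gr}_IR}$, up to a finite-colength modification irrelevant after completion at the irrelevant ideal, with the cone-point completion of the section ring $\bigoplus_{n\ge 0} H^0(E, \O_E(-mnE|_E))$ of the ample divisor $-mE|_E$ on $E$.

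Let $\widetilde\Delta$ be the pullback of $\Delta$ to $\Spec\hat S$. Since $R$ is $\bQ$-Gorenstein, $K_{\hat S}+D+\widetilde\Delta$ is $\bQ$-Cartier, and tracking the different through the section-ring identification shows that $\diff_{D^{\nm}}(D+\widetilde\Delta)$ coincides with the cone-pullback of $\Delta_E$. The standard cone/section-ring correspondence for strong $F$-regularity, together with the equivalence of strong $F$-regularity and \BCMReg{}-ness in characteristic $p>0$ from \cite[Corollary 6.23]{MaSchwedeSingularitiesMixedCharBCM}, then yields that global $F$-regularity of $(E,\Delta_E)$ implies $(D^{\nm},\diff_{D^{\nm}}(D+\widetilde\Delta))$ is \BCMReg{}. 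Applying \autoref{cor:singularities} gives that $(\hat S, D+\widetilde\Delta)$ is purely \BCMReg{}, and \autoref{cor.PBCMImpliesBCM} produces that $(\hat S, \widetilde\Delta+(1-\varepsilon)D)$, and in particular $(\hat S,\widetilde\Delta)$, is \BCMReg{}.

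Finally, to descend to $(R,\Delta)$, I would use \autoref{lem.PureQGorensteinPair}: picking $g\in R$ with $\Div g=n(K_R+\Delta)$, the map $\hat S\xrightarrow{\cdot g^{1/n}} B$ is pure for some perfectoid big Cohen-Macaulay $\hat S^+$-algebra $B$. Because $R$ is the degree zero summand of $S$ and $\hat S$ is a faithfully flat completion after localizing $S$ at $\hat\fram$, the inclusion $R\hookrightarrow\hat S$ is pure; moreover a system of parameters for $R$ extends to one for $\hat S$ by adjoining $t^{-1}$, so $B$ is a big Cohen-Macaulay $R$-algebra. Hence $R\xrightarrow{\cdot g^{1/n}} B$ is pure, showing $(R,\Delta)$ is \BCMReg{}. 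The main obstacle is the identification in the second paragraph: precisely matching $\diff_{D^{\nm}}(D+\widetilde\Delta)$ with the cone-pullback of $\Delta_E$ and verifying that the cone correspondence for strong $F$-regularity still gives \BCMReg{}-ness in our setting, where the degree zero piece $R/I$ of $\text{gr}_I R$ is an Artin local ring rather than a field.
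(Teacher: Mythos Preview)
Your approach is essentially the same as the paper's: pass to the extended Rees algebra, apply inversion of adjunction along the divisor corresponding to $E$, and descend.  There are, however, two real gaps.

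\textbf{The descent step proves the wrong statement.}  You conclude by taking a perfectoid big Cohen--Macaulay $\hat S^{+}$-algebra $B$, noting $\hat S\xrightarrow{\cdot g^{1/n}}B$ is pure, and composing with the pure map $R\hookrightarrow\hat S$.  This only shows $R\xrightarrow{\cdot g^{1/n}}B$ is pure for $B$ coming from $\hat S^{+}$, hence only that $(R,\Delta)$ is \BCMReg{B} for such $B$.  What must be shown is purity for \emph{every} perfectoid big Cohen--Macaulay $R^{+}$-algebra, and there is no reason these all arise from $\hat S^{+}$-algebras.  The paper fixes this via \autoref{thm:GeneralweaklyfunctorialBCM}: given an arbitrary perfectoid big Cohen--Macaulay $R^{+}$-algebra $B$, one constructs a compatible perfectoid big Cohen--Macaulay $\hat T^{+}$-algebra $C$ with an $R^{+}$-algebra map $B\to C$, so that purity of $R\to C$ forces purity of $R\to B$.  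Without this weak functoriality your argument cannot close.

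\textbf{The boundary on $\hat S$ has an $E$-component.}  Writing $\pi^{*}\Delta=\pi^{-1}_{*}\Delta+\lambda E$, your $\widetilde\Delta=(1/n)\Div_{\hat S}(g)$ corresponds to $(\pi^{-1}_{*}\Delta)_{\hat S}+\lambda E_{\hat S}$.  If $\lambda>0$ then $D+\widetilde\Delta$ has coefficient $1+\lambda$ along $E_{\hat S}$, so the pair cannot be purely \BCMReg{} and the hypotheses of \autoref{cor:singularities} (no common component) fail.  The paper handles this by setting $c=\lfloor\lambda\rfloor$ and working with $gt^{cn}$ instead of $g$: then ${1\over n}\Div_{\widehat T}(gt^{cn})=(\pi^{-1}_{*}\Delta)_{\widehat T}+\{\lambda\}E_{\widehat T}$ has no $E$-component in the integral part, and correspondingly the descent map $R\to\widehat T$ is $1\mapsto t^{-c}$ rather than the inclusion.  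This is why \autoref{clm.RToReesAlgebraIsPure} is stated for arbitrary $c\ge 0$.

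As you anticipated, the section-ring identification also needs care: one must take the \emph{normalization} of $\hat S/(t^{-1})\hat S$, and checking that the different on it matches $(\Delta_{E})_{S^{\mathrm N}}$ requires the divisor correspondence of \autoref{subsec.DiscrepComputationsForExtendedRees} (the paper in fact passes through the generalized extended Rees algebra $T=\bigoplus_{n\in\bZ}H^{0}(Y,\cO_{Y}(-nE))t^{n}$ and its $m$th Veronese to make this precise).
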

\begin{remark}
In the statement of \autoref{lem.GlobFRegExceptionalImpliesBCMReg}, suppose we write
\[
\pi^*(K_X + \Delta) = K_Y + \beta E + \pi^{-1}_* \Delta.
\]
Then $K_Y + E + \pi^{-1}_* \Delta = \pi^*(K_X + \Delta) + (1-\beta) E$.  Thus $-(K_E +\Delta_E) \sim_{\bQ} (\beta - 1)E|_E$.  Since global $F$-regularity forces $-(K_E + \Delta_E)$ to be big and we have that $-E$ is relatively ample, we thus see that $\beta < 1$.
\end{remark}
\begin{proof}

Let $T = \bigoplus_{n \in \bZ}H^0(Y, \cO_Y(-nE))t^n$ denote the  \emph{generalized extended} Rees algebra.  Notice that by replacing $m$ by a multiple $ml$ and $I$ by $\overline{I^l}$, we can assume that if we take the $m$th Veronese subring of $T$, then we obtain the usual extended Rees algebra $R[Is,s^{-1}]$ (which we can assume is normal, see \autoref{sec.ExtendedRees}). Also notice that $[T]_i = Rt^{i}$ when $i\leq 0$.

Notice that $T$ has a prime ideal $J_E$ corresponding to the prime exceptional divisor $E$, in fact it is easy to see by \autoref{subsec.DiscrepComputationsForExtendedRees} that $J_E = t^{-1}T$.  This ideal agrees with $T$ in negative degrees, and agrees with $\fram$ in degree zero since $\pi(E) = V(\fram)$.  On the other hand, $T$ has a maximal ideal $\frn = t^{-1}T + \fram T + T_{>0}$.  Let $\widehat{T}$ denote the completion of $T$ with respect to $\frn$.

\begin{claim}
\label{clm.RToReesAlgebraIsPure}
For every integer $c\geq 0$, the map $R \to \widehat{T}$ sending $1$ to $t^{-c}$ is pure.
\end{claim}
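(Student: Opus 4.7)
The plan is to construct an $R$-linear splitting $\widehat{\sigma}\colon \widehat{T} \to R$ of the map $\phi\colon R \to \widehat{T}$ sending $1$ to $t^{-c}$; since any split $R$-module map is pure, this will suffice.  Writing $J_n := H^0(Y, \cO_Y(-nE)) \subseteq R$ for $n > 0$, we have $T = \bigoplus_{n \in \bZ}[T]_n$ as a graded $R$-module with $[T]_n = R t^n$ for $n \leq 0$ and $[T]_n = J_n t^n$ for $n > 0$.  Projection onto the degree $-c$ component defines an $R$-linear map $\sigma\colon T \to R$, $\sigma(\sum_n a_n t^n) = a_{-c}$, satisfying $\sigma(t^{-c}) = 1$.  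The content of the proof is to verify that $\sigma$ is continuous for the $\mathfrak{n}$-adic topology on $T$ and the $\mathfrak{m}$-adic topology on $R$; granted this, $\sigma$ extends to $\widehat{\sigma}\colon \widehat{T} \to \widehat{R} = R$, still with $\widehat{\sigma}(t^{-c}) = 1$, yielding the splitting.

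To check continuity, observe that $\mathfrak{n} = t^{-1}T + \mathfrak{m} T + T_{>0}$ is homogeneous with $[\mathfrak{n}]_0 = \mathfrak{m}$, $[\mathfrak{n}]_j = R t^j$ for $j < 0$, and $[\mathfrak{n}]_j = J_j t^j$ for $j > 0$, so
\[
[\mathfrak{n}^k]_{-c} = \sum_{\substack{(i_1,\dots,i_k)\in \bZ^k\\ i_1+\cdots+i_k = -c}} [\mathfrak{n}]_{i_1}\cdots[\mathfrak{n}]_{i_k}.
\]
For a fixed tuple, with $r$, $q$, $z$ the numbers of positive, negative, and zero indices, and $p = \sum_{i_j > 0} i_j$, the resulting summand lies in $\mathfrak{m}^z J_p t^{-c}$ (using $J_a J_b \subseteq J_{a+b}$).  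The relations $p \geq r$ and $p \geq q - c$ (the latter because the negative entries sum to $-(p+c)$ and each is at most $-1$) give $p \geq \tfrac{1}{2}(r+q) - c$.  Thus, given $\ell$, if $z < \ell$ then $r + q > k - \ell$ forces $p > \tfrac{1}{2}(k-\ell) - c$; and since the $J_n$ form a descending chain of $\mathfrak{m}$-primary ideals with $\bigcap_n J_n = 0$ (any element in the intersection has infinite $v_E$-order, hence vanishes, as $R$ is a domain and $v_E$ is centered at $\mathfrak{m}$), Chevalley's theorem gives $J_p \subseteq \mathfrak{m}^\ell$ for $k$ sufficiently large.  Hence $[\mathfrak{n}^k]_{-c} \subseteq \mathfrak{m}^\ell t^{-c}$ for $k \gg \ell$, which is what we needed.

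The main obstacle is precisely this combinatorial-valuative continuity estimate; the individual ingredients ($J_a J_b \subseteq J_{a+b}$, Chevalley's theorem, and the bounds $p \geq r$, $p \geq q-c$) are elementary, but assembling them requires some care in bookkeeping the graded pieces of $\mathfrak{n}^k$.
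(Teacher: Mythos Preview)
Your argument is correct and in fact proves something slightly stronger than the paper does: you produce an honest $R$-linear splitting of $R \to \widehat{T}$, whereas the paper establishes only purity.  The paper's proof goes a different route.  It factors the map as $R \to T_{\frn} \to \widehat{T}$, reduces (via Hochster's criterion for approximately Gorenstein rings) to checking that $R/J \to T_{\frn}/JT_{\frn}$ is injective for every $\fram$-primary $J$, and then for each such $J$ chooses $l \gg c$ with $[T]_{l-c} \subseteq Jt^{l-c}$ and exhibits the explicit $\frn$-primary ideal $J' = t^{-l}T + T_{\geq l} + JT$, observing that the degree $-c$ piece of $T/J'$ is exactly $R/J$.

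Both proofs rest on the same underlying fact---that the graded pieces $J_n = H^0(Y,\cO_Y(-nE))$ form a descending $\fram$-primary filtration with $\bigcap_n J_n = 0$, so eventually land in any given power of $\fram$---but you access it through Chevalley's theorem and a direct continuity estimate on $[\frn^k]_{-c}$, while the paper invokes it once to pick $l$ and then works with a single auxiliary ideal $J'$.  Your approach is more explicit and yields the splitting; the paper's is softer, avoids the combinatorial bookkeeping on products of graded pieces, and leans on the approximately Gorenstein machinery instead.
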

\begin{proof}
Since the map factors as $R\to T_{\n}\to \widehat{T}$ and $T_{\n}\to \widehat{T}$ is faithfully flat, it is enough to show that $R\to T_\n$ sending $1$ to $t^{-c}$ is pure. Since $R$ is a complete local domain, it is approximately Gorenstein and thus it is enough to show that $R/J\to T_\n/JT_\n$ sending $1$ to $t^{-c}$ is injective for every $\m$-primary ideal $J\subseteq R$ by \cite{HochsterCyclicPurity}. Now we pick $l\gg c$ such that $[T]_{l-c}\subseteq Jt^{l-c}$. Consider the ideal $J':=t^{-l}T+T_{\geq l}+JT$ of $T$. Clearly $J'$ is an $\n$-primary ideal of $T$ that contains $JT$. Thus in order to show $R/J\to T_\n/JT_\n$ sending $1$ to $t^{-c}$ is injective, it is enough to show that $R/J\to T_\n/J'T_\n\cong T/J'$ sending $1$ to $t^{-c}$ is injective. But since $[T]_{l-c}\subseteq J t^{l-c}$, it is easy to check that the degree $-c$ piece of $T/J'$ can be identified with $R/J$. Thus the map $R/J\to T/J'$ sending $1$ to $t^{-c}$ is split.
\end{proof}

We now fix $g \in R$ such that $n \Delta = \Div_X(g)$. We notice that $\pi^* \Delta = \pi^{-1}_* \Delta + \{\lambda \}E + \lfloor \lambda \rfloor E$.  Let $c = \lfloor \lambda \rfloor$.

\begin{claim}
\label{clm.ReesBCMRegularSoIsR}
Fix a map $R^+ \to \widehat{T}^+$.  If $\widehat{T} \xrightarrow{\cdot g^{1/n}t^c} C$ is pure for every perfectoid big Cohen--Macaulay $\widehat{T}^+$-algebra $C$  (e.g., if $(\widehat{T}, {1 \over n}\Div_{\widehat{T}}(gt^{cn}) + \Gamma)$ is \BCMReg{} for some $\Gamma \geq 0$), then $(R, \Delta)$ is \BCMReg{}.
\end{claim}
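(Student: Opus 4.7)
The plan is to reinterpret \BCMReg{ity} of $(R,\Delta)$ as the purity of $R \xrightarrow{\cdot g^{1/n}} B$ for every perfectoid big Cohen--Macaulay $R^+$-algebra $B$ via \autoref{lem.PureQGorensteinPair}, and then to bootstrap this purity from the hypothesized purity on $\widehat{T}$ using \autoref{clm.RToReesAlgebraIsPure}.

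Concretely, fix a perfectoid big Cohen--Macaulay $R^+$-algebra $B$. First I would invoke weak functoriality of perfectoid big Cohen--Macaulay algebras (\cite[Theorem 1.2.1]{AndreWeaklyFunctorialBigCM}, or the refined version given in the appendix) applied to the fixed map $R^+ \to \widehat{T}^+$ and to $B$, in order to produce a perfectoid big Cohen--Macaulay $\widehat{T}^+$-algebra $C$ together with an $R$-algebra map $B \to C$ making the evident square commute. By the standing hypothesis, $\widehat{T} \xrightarrow{\cdot g^{1/n}t^c} C$ is pure; by \autoref{clm.RToReesAlgebraIsPure}, $R \xrightarrow{\cdot t^{-c}} \widehat{T}$ is pure. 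Composing these two pure maps yields the purity of $R \xrightarrow{\cdot g^{1/n}} C$.

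This composite factors through $B$ as $R \xrightarrow{\cdot g^{1/n}} B \to C$, and purity of the composition automatically forces purity of the first factor (for any $R$-module $M$, injectivity of $M \to M \otimes_R C$ implies injectivity of $M \to M \otimes_R B$ since the former factors through the latter). Hence $R \xrightarrow{\cdot g^{1/n}} B$ is pure, and by \autoref{lem.PureQGorensteinPair} the pair $(R,\Delta)$ is $\text{BCM}_B$-regular. Since $B$ was arbitrary, $(R,\Delta)$ is \BCMReg{}.

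The parenthetical sufficient condition---\BCMReg{ity} of $(\widehat{T}, \frac{1}{n}\Div_{\widehat{T}}(gt^{cn})+\Gamma)$ for some $\Gamma \geq 0$---produces the purity hypothesis by a direct application of \autoref{lem.PureQGorensteinPair} on $\widehat{T}$, once $\Gamma$ is chosen so that the lemma's $\bQ$-Gorenstein framework applies (in particular, if $\widehat{T}$ is already $\bQ$-Gorenstein, one may take $\Gamma = 0$). The main technical ingredient---supplied by Andr\'{e}'s weak functoriality theorem---is the construction of $C$ compatibly with $B$ and with the fixed map $R^+ \to \widehat{T}^+$; once that machinery is in place, the purity chasing is essentially formal.
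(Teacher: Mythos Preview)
Your proposal is correct and follows essentially the same argument as the paper: produce a compatible perfectoid big Cohen--Macaulay $\widehat{T}^+$-algebra $C$ receiving a map from $B$, compose the purity of $R \xrightarrow{\cdot t^{-c}} \widehat{T}$ (\autoref{clm.RToReesAlgebraIsPure}) with the hypothesized purity of $\widehat{T} \xrightarrow{\cdot g^{1/n}t^c} C$, and deduce purity of $R \xrightarrow{\cdot g^{1/n}} B$ by factoring. One small point: the map $R \to \widehat{T}$ is injective rather than surjective, so Andr\'{e}'s \cite[Theorem 1.2.1]{AndreWeaklyFunctorialBigCM} (which treats quotients) does not directly apply; the paper invokes its own \autoref{thm:GeneralweaklyfunctorialBCM} for this step, and your parenthetical ``or the refined version given in the appendix'' is exactly what is needed.
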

\begin{proof}[Proof of \autoref{clm.ReesBCMRegularSoIsR}]
Let $B$ be a perfectoid big Cohen--Macaulay $R^+$-algebra. By \autoref{thm:GeneralweaklyfunctorialBCM} we see that there exists a perfectoid big Cohen--Macaulay ${\widehat{T}}^+$-algebra $C$ such that the following diagram commutes:
\[
\xymatrix{
B \ar[r] & C \\
R \ar[r]^{\cdot t^{-c}} \ar[u]^{\cdot g^{1/n}} & \widehat{T}. \ar[u]_{\cdot g^{1/n}t^c}
}
\]
where the top row is a map of $R^+$-algebras. Notice that the bottom row $R \to T \to \widehat{T}$ is pure by \autoref{clm.RToReesAlgebraIsPure}.  Therefore since $\widehat{T} \xrightarrow{\cdot g^{1/n}t^c} C$ is pure, we also have that $R \xrightarrow{\cdot g^{1/n}} C$ is pure.  This implies that $R \xrightarrow{\cdot g^{1/n}} B$ is also pure.  Therefore by \autoref{lem.PureQGorensteinPair} $(R, \Delta)$ is \BCMReg{B}.  As this is true for every $B$, $(R, \Delta)$ is \BCMReg{}.
\end{proof}

Continuing the proof of \autoref{lem.GlobFRegExceptionalImpliesBCMReg}, 
it suffices to show that $(\widehat{T}, {1\over n}\Div_{\widehat{T}}(gt^{cn}))$ satisfies the condition of \autoref{clm.ReesBCMRegularSoIsR}.
%
We let $E_{\widehat T}$ and $(\pi^{-1}_* \Delta)_{\widehat{T}}$ correspond to $E$ and $\pi_*^{-1} \Delta$ respectively via \autoref{subsec.DiscrepComputationsForExtendedRees}.  Notice also that
\[
(\pi^{-1}_* \Delta)_{\widehat{T}} + \{\lambda\}E_{\widehat{T}} = {1\over n}\Div_{\widehat{T}}(gt^{cn})
\]

Let $T'$ denote the $m$th Veronese subalgebra of $T$ and let $J_{T'}$ denote the ideal of $E$ in $T'$.  We consider $T' \to T$ with induced map $\kappa : \Spec T \to \Spec T'$.  We know by \autoref{lem.HomTT'} that there is a $\Psi : T \to T'$ which generates $\Hom_{T'}(T, T')$ as a $T$-module and which sends the homogeneous maximal ideal of $T$ to the homogeneous maximal ideal of $T'$.  Following the argument of \cite[Lemma 5.1]{CRMPSTCoversofRDP}, we see that the completion of $(T', \Theta)$ is \BCMReg{} if and only if the completion of $(T, \kappa^*(\Theta))$ is \BCMReg{}.  In particular, by \autoref{lem.PullbackOfDOnExtendedRees}, it suffices to show that
\[
(\widehat{T'}, (\pi^{-1}_* \Delta)_{\widehat{T'}} + \{\lambda\}E_{\widehat{T'}})
\]
is \BCMReg{}.  Hence by \autoref{cor.PBCMImpliesBCM} it suffices to show that $(\widehat{T'}, (\pi^{-1}_* \Delta)_{\widehat{T'}} + E_{\widehat{T'}})$ is purely \BCMReg{}.

Let $J'_E$ denote the ideal defining $E_{T'}$ on $T'$.  The normalization $S^{\mathrm{N}}$ of $S = T'/J'_E$ is the section ring of $E$ with respect to the very ample divisor $-mE|_E$.  By \cite[Proposition 5.3]{SchwedeSmithLogFanoVsGloballyFRegular}, the pair $(S^{\mathrm{N}}, (\Delta_E)_{S^{\mathrm{N}}})$ is strongly $F$-regular and hence the completion is \BCMReg{} by \cite[Corollary 6.23]{MaSchwedeSingularitiesMixedCharBCM}.
\begin{claim}
The divisor $(\Delta_E)_{S^{\mathrm{N}}}$ agrees with the different of $K_{T'} + (\pi^{-1}_* \Delta)_{{T'}} + E_{T'}$ along the normalization of $E_{T'}$.
\end{claim}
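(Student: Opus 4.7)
The plan is to exploit the $\bG_m$-equivariant structure coming from the natural $\bZ$-grading on $T'$, and thereby reduce the equality to standard adjunction on $Y$ along $E$.  All of the divisors $K_{T'}$, $E_{T'}$, $(\pi^{-1}_*\Delta)_{T'}$, and hence their sum $K_{T'}+E_{T'}+(\pi^{-1}_*\Delta)_{T'}$, can be taken homogeneous, so the different along $\Spec S^{\mathrm{N}}$ is a $\bG_m$-invariant $\bQ$-divisor on $\Spec S^{\mathrm{N}}$.  Since the open complement of the vertex in $\Spec S^{\mathrm{N}}$ is a $\bG_m$-torsor over $E^{\nm}$, such an invariant $\bQ$-divisor is determined by, and determines, a $\bQ$-divisor on $E^{\nm}$, and by construction the divisor so associated to $\Delta_E$ is exactly $(\Delta_E)_{S^{\mathrm{N}}}$.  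Thus the problem reduces to identifying the divisor on $E^{\nm}$ underlying the different with $\Delta_E$.

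To carry out the identification, I would first pick a rational function $f$ on $Y$ with $\Div_Y(f)=n(K_Y+E+\pi^{-1}_*\Delta)$, so that by the definition of the different recalled in \autoref{sec.Different} its restriction $\bar f$ to $E^{\nm}$ satisfies $\Div_{E^{\nm}}(\bar f)=n(K_{E^{\nm}}+\Delta_E)$.  Second, using the identification of the graded pieces of $T'$ with sections of $\mathcal{O}_Y(-nmE)$ and the description of $K_{T'}$, $E_{T'}$, and $(\pi^{-1}_*\Delta)_{T'}$ coming from \autoref{subsec.DiscrepComputationsForExtendedRees}, I would promote $f$ to a homogeneous element $g\in T'$ with $\Div_{T'}(g)=n(K_{T'}+E_{T'}+(\pi^{-1}_*\Delta)_{T'})$ whose restriction to $\Spec S^{\mathrm{N}}$ recovers $\bar f$ after the identification of $S^{\mathrm{N}}$ with the section ring $\bigoplus_{i\geq 0} H^0(E^{\nm},\mathcal{O}_{E^{\nm}}(-imE|_{E^{\nm}}))$.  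The different on $\Spec S^{\mathrm{N}}$ is then $\tfrac{1}{n}\Div_{S^{\mathrm{N}}}(\bar g)-K_{S^{\mathrm{N}}}$, and after invoking the $\bG_m$-equivariant identification of $K_{S^{\mathrm{N}}}$ with the cone over $K_{E^{\nm}}$, this expression should reduce directly to $(\Delta_E)_{S^{\mathrm{N}}}$.

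The main obstacle is the careful bookkeeping of the $\bG_m$-weights and of the canonical divisor choices: I need $K_{T'}$ and $K_{S^{\mathrm{N}}}$ to be matched to $K_Y$ and $K_{E^{\nm}}$ together with the expected weight/line-bundle correction terms, so that these contributions cancel coherently under restriction to $\Spec S^{\mathrm{N}}$.  This is the sort of computation that should fall out of the general theory of adjunction for line-bundle total spaces and their affinizations underlying \autoref{subsec.DiscrepComputationsForExtendedRees}; once pinned down, the equality of the two divisors on $E^{\nm}$ will follow from the $\bG_m$-invariance reduction of the first paragraph together with agreement after restricting to a single generic slice over $E^{\nm}$.
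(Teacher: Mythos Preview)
Your strategy is essentially the same as the paper's: both arguments transport the different computation from $Y$ to $T'$ via the section-ring functor $\Gamma_{*,T'}(-)=\bigoplus_n H^0(Y,(-)\otimes\cO_Y(-nmE))$, using that divisors and the residue map on $Y$ correspond under this functor to divisors and the residue map on $T'$.  The paper phrases this by applying $\Gamma_{*,T'}$ directly to the rational sheaf map $\cO_Y(n(K_Y+\pi^{-1}_*\Delta+E))\dashrightarrow \cO_E(nK_E)$ that defines $\Delta_E$, obtaining the analogous rational map on $T'$ that defines the different there; your $\bG_m$-equivariance language is a repackaging of the same functoriality.

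There is, however, a concrete slip in your second paragraph: you propose to choose a rational function $f$ on $Y$ with $\Div_Y(f)=n(K_Y+E+\pi^{-1}_*\Delta)$.  No such $f$ exists in general, since $n(K_Y+E+\pi^{-1}_*\Delta)$ is Cartier but not principal on the projective scheme $Y$ (indeed $-(K_E+\Delta_E)$ is ample on $E$, so this line bundle has nonzero degree on $E$).  The fix is exactly what the paper does: work with the rational section of the line bundle $\cO_Y(n(K_Y+E+\pi^{-1}_*\Delta))$ furnished by the residue/Poincar\'e map, rather than with a rational function on $Y$.  Once you make that change, your promotion of $f$ to a homogeneous element of $T'$ becomes the application of $\Gamma_{*,T'}$ to that rational section, and your argument goes through and coincides with the paper's.
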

\begin{proof}
We are essentially computing the different in two different ways.  Note that $\Delta_E$ can be computed by considering the {rational sheaf map}
\[
\cO_Y(n(K_Y + \pi^{-1}_* \Delta + E)) \dashrightarrow \cO_E(nK_E),
\]
noting the left side is a line bundle for $n$ sufficiently divisible.
{
After tensoring this map by $\cO_Y(-n(K_Y + \pi^{-1}_* \Delta + E))$, we let $-D_E$ be the divisor corresponding to the image of $1$}. Then the different of $K_Y + \pi^{-1}_* \Delta + E$ along $E$ is simply ${1\over n}D_E$.  But we can apply our $\Gamma_{*, T'}$  functor (with respect to $-mE$) to that map of sheaves, and so obtain the map:
\[
T'(n(K_{T'} + (\pi^{-1}_* \Delta)_{{T'}} + E_{T'})) \dashrightarrow S^{\mathrm{N}}(nK_{S^{\mathrm{N}}}).
\]
Since the left side is (abstractly) isomorphic to $T'$, it also selects an effective divisor $D_S$ with ${1 \over n} D_S$ the different of $K_{T'} + (\pi^{-1}_* \Delta)_{{T'}} + E_{T'}$ along $E_{T'}$.  Note that $(D_E)_{S^{\mathrm{N}}} = D_{S}$ by construction.  The claim follows.
\end{proof}
But now by \autoref{cor:universal_singularities} we have that $(\widehat{T'}, E_{\widehat{T'}} + (\pi^{-1}_* \Delta)_{\widehat{T'}})$ is purely \BCMReg{}.  This completes the proof of \autoref{lem.GlobFRegExceptionalImpliesBCMReg}.
\end{proof}

\begin{remark}
The proof above shows that the map
$\widehat{T'} \to S^{\mathrm{N}}$
is surjective, since if $(\widehat{T'}, E_{T'} + \Delta_{T'})$ is purely \BCMReg{}, then $E_{T'}$ is normal by \autoref{cor:normal_centers}.  Taking graded pieces, this means that
\[
H^0(X, \cO_Y(-lmE)) \to H^0(E, \cO_E \otimes \cO_Y(-lmE))
\]
surjects for all $l$.  In particular, it surjects for $l = 0$.
\end{remark}

\begin{example}
Consider the ring $R = \bZ_p\llbracket x_2, \dots, x_d \rrbracket/(p^n + x_2^n + \dots + x_d^n)$ for $n < d$.  We blowup the origin and obtain $Y \to X = \Spec R$.  The exceptional divisor $E$ is isomorphic to $\Proj \bF_p[x_1, \dots, x_d]/(x_1^n + x_2^n + \dots + x_d^n)$.  We also have $K_Y = (d -1 - n)E$ which since $K_X = 0$, we have $\pi^*K_X = K_Y + \beta E$ with $\beta = (n-d+1) < 1$.  Thus $R$ is \BCMReg{} whenever $E$ is globally $F$-regular.

Note that in this case we can also argue directly as follows using only \cite{MaSchwedeSingularitiesMixedCharBCM}.  Indeed, consider the completion of the extended Rees algebra: $T=\widehat{R[\m t, t^{-1}]}$, where the completion is at $(t^{-1})+\m t$. If $T/t^{-1}T\cong \widehat{gr_\m R}$ is Gorenstein and $F$-rational (which happens precisely when $E$ is globally $F$-regular), then $T$ is Gorenstein and \BCMRat{} by \cite[Proposition 3.4 and 3.5]{MaSchwedeSingularitiesMixedCharBCM}, hence \BCMReg{} by \cite[Corollary 6.15]{MaSchwedeSingularitiesMixedCharBCM}. This implies $R$ is \BCMReg{} since $R\to T$ is pure by \autoref{clm.RToReesAlgebraIsPure}.
\end{example}

\begin{example}
Consider $R = \bZ_p\llbracket y \rrbracket$ and let $D = \Div(py(y-p))$.  Then one can define the \emph{\BCMReg{}-threshold} of $(R, tD)$, denoted $c(R, tD)$, to be the supremum of $t$ such that $(R, tD)$ is \BCMReg{}.  If we replace $\bZ_p$ by $\bF_p[x]$ and $D$ by $\Div(xy(y-x))$, then this is simply the $F$-pure threshold, which is well known to be $2 \over 3$ if $p \equiv_3 1$ and ${2 \over 3} - {1 \over 3p}$ if $p \equiv_3 2$.  Therefore by switching between a graded ring and the Proj as in \cite{SchwedeSmithLogFanoVsGloballyFRegular}, we see $(\bP^1, t P_1 + tP_2 + tP_3)$ is globally $F$-regular for $t < {2 \over 3}$ if $p \equiv_3 1$ and $t < {2 \over 3} - {1 \over 3p}$ if $p \equiv_3 2$.  But if we blowup the origin in $\Spec R$, we have the same $\bP^1$ and same boundary divisors restricted to the boundary divisor.  Thus we also obtain that $(R, tD)$ is \BCMReg{} for the same values of $t$.  Since $t = 2/3$ is certainly an upper limit on the \BCMReg{}-threshold (since \BCMReg{} implies KLT), we therefore obtain that
\[
c(R, tD) = {2 \over 3} \text{ if $p \equiv_3 1$}
\]
and that
\[
{2 \over 3} - {1 \over 3p} \leq c(R, tD) \leq {2 \over 3} \text{ if $p \equiv_3 2$}.
\]
\end{example}

\begin{example}
Consider the ring $R = \bZ_p\llbracket y \rrbracket$ and let $D = \Div(y^2 - p^3)$ (nothing essential changes if let $D = \Div(p^2 - y^3)$).  The log resolution of singularities of $(R, D)$ is obtained in the same way it is in equal characteristic.  We again compute the \BCMReg{}-threshold $c(R, D)$.  Let $\pi : Y \to X = \Spec R$ denote the blowup of $(y^2, yp^2, p^3)$.  It is not difficult to check that this blowup has a single prime exceptional divisor $E \cong \bP^1_k$.  There are two singular points on this blowup, and the strict transform $\widetilde{D}$ of $D$ is a Cartier divisor that meets $E$ transversally.  By understanding this blowup in view of a log resolution of $(R, D)$, or by doing direct computations on local charts, one can see that the different of $K_Y + E + t \widetilde{D}$ along $E$ is ${1 \over 2}P_1 + {2 \over 3}P_2 + t P_3$.  Straightforward computation then gives that if
\[
c = c(R, tD)
\]
is the \BCMReg{}-threshold of $(R, tD)$, then we have by \autoref{lem.GlobFRegExceptionalImpliesBCMReg} that
\[
\begin{array}{ccccc}
{1 \over 2} & \leq & c & \leq 5/6 & \text{ if $p = 2$.}\\
{2 \over 3} & \leq & c & \leq 5/6 & \text{ if $p = 3$.}\\
{5 \over 6} - {1 \over 6p} & \leq & c & \leq 5/6 & \text{ if $p \equiv_6 5$.}\\
{5 \over 6} & = & c & & \text{ if $p \equiv_6 1$}.
\end{array}
\]
\end{example}

We can generalize these examples and obtain the following generalization of a result of \cite{CRMPSTCoversofRDP}.  In particular, it follows from their work that if $R$ is 2-dimensional KLT with perfect residue field and that the index of $K_R$ is not divisible by $p > 0$, then $R$ is \BCM{}-regular.  We remove the index not divisible by $p$ hypothesis.  We also add a boundary divisor with standard coefficients.

\begin{theorem}
\label{thm.KLTSurfaceImpliesBCMReg}
	Let $(R, \fram, k)$ be a normal $2$-dimensional excellent local ring with a dualizing complex, and  
	with $F$-finite residue field of characteristic $p > 5$.
If $(R, \Delta)$ is KLT and $\Delta$ has standard coefficients, then $(\widehat{R}, \widehat{\Delta})$ is \BCMReg{}.
\end{theorem}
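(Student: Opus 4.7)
The plan is to reduce the theorem to the criterion provided by \autoref{lem.GlobFRegExceptionalImpliesBCMReg} by extracting a single Koll\'ar-type exceptional divisor from the KLT singularity. After replacing $R$ by its completion (which preserves normality, excellence, having a dualizing complex, as well as the KLT and standard-coefficient hypotheses on $\Delta$), I would first build a projective birational morphism $\pi : Y \to X = \Spec \widehat R$ from a normal surface $Y$ whose exceptional locus is a single prime divisor $E$, such that $(Y, E + \pi^{-1}_* \widehat\Delta)$ is plt and $-E$ is $\pi$-ample. Such a plt blowup of a two-dimensional KLT pair can be constructed by taking a log resolution (available for excellent surfaces via Lipman's theorem), selecting any exceptional divisor of smallest log discrepancy, and contracting all remaining exceptional divisors via the relative surface MMP over $X$. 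The resulting $\pi$ is automatically the normalized blowup of an $\fram$-primary ideal whose extension to $Y$ cuts out a positive integer multiple of $E$, placing us exactly in the setup of \autoref{lem.GlobFRegExceptionalImpliesBCMReg}.

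By that lemma, the theorem reduces to showing that the different pair $(E, \Delta_E)$ of $K_Y + E + \pi^{-1}_* \widehat\Delta$ along $E$ is globally $F$-regular. The plt hypothesis together with surface adjunction forces $E$ to be normal and $(E, \Delta_E)$ to be KLT, while the $\pi$-ampleness of $-E$, combined with $E$ being exceptional over a closed point, forces $-(K_E + \Delta_E)$ to be ample. Thus $E$ is a smooth geometrically rational curve over its residue field and $(E, \Delta_E)$ is a KLT log Fano pair on a curve. A direct computation of the codimension-one different, due in essence to Shokurov, then shows that because $\widehat\Delta$ has standard coefficients, so does $\Delta_E$.

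The remaining step is to prove that every KLT log Fano pair $(\bP^1, \Delta_E)$ with standard coefficients is globally $F$-regular whenever $p > 5$. The KLT log Fano condition constrains the support of $\Delta_E$ to at most three points whose denominators form a (possibly degenerate) platonic triple, and the bound $p > 5$ is precisely what rules out pathologies in small characteristic arising from the icosahedral triple $(2,3,5)$; this is the curve case of \autoref{thm.CasciniGongyoSchwedeUniformBounds}, and is also the key input used in the equal-characteristic argument of \cite{CRMPSTCoversofRDP}. Feeding this back into \autoref{lem.GlobFRegExceptionalImpliesBCMReg} then yields that $(\widehat R, \widehat\Delta)$ is \BCMReg{}.

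I expect the main obstacle to be producing the plt blowup $\pi$ with the required normality, plt property, and $\pi$-ampleness of $-E$ in this mixed-characteristic excellent setting. However, because two-dimensional MMP and resolution of singularities are available over excellent bases, this step should proceed essentially as in equal characteristic; once the Koll\'ar component $E$ is in hand, the rest of the proof is a largely self-contained one-dimensional application of adjunction together with the characteristic-bound global $F$-regularity result for log Fano curves with standard coefficients.
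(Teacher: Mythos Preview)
Your proposal is correct and follows essentially the same route as the paper: extract a Koll\'ar-type component $E$ via a plt blowup (using surface resolution and the excellent-base MMP of Tanaka), observe that adjunction yields a KLT log Fano curve $(E,\Delta_E)$ with standard coefficients, invoke the $p>5$ global $F$-regularity result for such curves, and feed this into \autoref{lem.GlobFRegExceptionalImpliesBCMReg}. The paper phrases the ampleness condition as $-(K_Y+E+\Delta_Y)$ ample rather than $-E$ ample, and cites \cite{SatoTakagiGeneralHyperplaneSections} for both the plt blowup construction and the curve global $F$-regularity, but the substance is the same; note also that the paper records $\bQ$-factoriality of $R$ (from Lipman/Tanaka) to ensure the $\bQ$-Gorenstein hypothesis of \autoref{lem.GlobFRegExceptionalImpliesBCMReg} is met, a point you should make explicit.
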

\begin{proof}
Note that $R$ is $\mathbb{Q}$-factorial by \cite{LipmanRationalSingularities} or \cite[Corollary 4.11]{tanaka_mmp_excellent_surfaces} {(note the latter's results hold when the regular base is replaced by a base that is excellent, finite-dimensional, and has a dualizing complex)}. Set $X = \Spec R$. By the same argument as in \cite[Proposition 2.13]{CasciniGongyoSchwedeUniformBounds} (cf.\ \cite[Lemma 5.6]{SatoTakagiGeneralHyperplaneSections}), we can construct a projective morphism $\pi \colon Y \to X$ such that $\pi$ is an isomorphism over $X \setminus \{\fram\}$, the exceptional divisor $E$ is irreducible, $-(K_Y+E+\Delta_Y)$ is ample where $\Delta_Y$ is the strict transform of $\Delta$, and $(Y,E+\Delta_Y)$ is purely log terminal. The proof uses log resolutions of singularities which are valid for quasi-excellent Noetherian rings and the Minimal Model Program for quasi-projective surfaces over $S$ (see \cite{tanaka_mmp_excellent_surfaces}). By adjunction, write
\[
K_E + \Delta_E = (K_Y + E + \Delta_Y)|_E.
\]
Then $\Delta_E$ has standard coefficients and $-(K_E+\Delta_E)$ is ample. Thus, $(E,\Delta_E)$ is globally F-regular by \cite[Proposition 5.5]{SatoTakagiGeneralHyperplaneSections}. Therefore $(\widehat{R}, \widehat{\Delta})$ satisfies the hypotheses of \autoref{lem.GlobFRegExceptionalImpliesBCMReg} and hence $(\widehat{R}, \widehat{\Delta})$ is \BCMReg{}.
\end{proof}

By taking canonical covers, we obtain the following generalization of \cite[Theorem A]{CRMPSTCoversofRDP}, which handled the Gorenstein case.

\begin{corollary}
Let $(R, \fram, k = \overline{k})$ be a local 2-dimensional KLT singularity of mixed characteristic $(0,p > 5)$ that is essentially of finite type over an excellent DVR $S$. Then there exists a finite split extension $\widehat{R} \subseteq S$ to a regular ring.
\end{corollary}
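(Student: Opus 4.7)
The plan is to reduce to the Gorenstein case treated in \cite{CRMPSTCoversofRDP} via a canonical cover, and then use the BCM-regularity of $\widehat{R}$ established in \autoref{thm.KLTSurfaceImpliesBCMReg} to force the resulting finite module-finite extension to split.

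First, I would apply \autoref{thm.KLTSurfaceImpliesBCMReg} with $\Delta = 0$ to conclude that $\widehat{R}$ is \BCMReg{}. The hypotheses are met: $R$ is a normal $2$-dimensional excellent local ring essentially of finite type over an excellent DVR (so it has a dualizing complex and $F$-finite residue field), and the residue characteristic satisfies $p > 5$.

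Second, I would form the index-one (canonical) cover $\widehat{R} \hookrightarrow R'$, a cyclic cover of order equal to the Cartier index $n$ of $K_{\widehat{R}}$. This is a finite extension, \'etale in codimension one, $R'$ is a normal local domain, and $K_{R'}$ is Cartier. Since $\widehat{R}$ is KLT, the pair $(R',0)$ is also KLT; being Gorenstein and KLT in dimension two, $R'$ is a rational double point. By \cite[Theorem A]{CRMPSTCoversofRDP}, there exists a finite split extension $R' \hookrightarrow T$ with $T$ regular.

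It remains to verify that the composite $\widehat{R} \hookrightarrow T$ is split as $\widehat{R}$-modules. Since this is a module-finite extension between Noetherian local rings, splitting is equivalent to purity. Purity follows from BCM-regularity: choose a perfectoid big Cohen--Macaulay $\widehat{R}^+$-algebra $B$ for which $\widehat{R} \to B$ is pure, which exists because $\widehat{R}$ is \BCMReg{} (\cf \autoref{lem.PureQGorensteinPair} in the $\Q$-Gorenstein case, or directly from the definition via Matlis duality). Since $T$ is a finite normal extension of $\widehat{R}$ inside a fixed algebraic closure of its fraction field, we have $T \subseteq \widehat{R}^+ \to B$, so the composition $\widehat{R} \to T \to B$ equals the pure map $\widehat{R} \to B$. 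This forces $\widehat{R} \to T$ to be pure, and hence split, as desired.

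\textbf{Main obstacle.} The delicate point is ensuring the canonical cover $R'$ is again KLT (hence an RDP) when the index $n$ is divisible by $p$; the standard discrepancy computation using \'etaleness in codimension one is what makes this work, but in mixed characteristic one should double-check the cyclic cover construction is well-defined in that case. Once this reduction is in place, the BCM-regularity $\Rightarrow$ purity $\Rightarrow$ splitting step is clean.
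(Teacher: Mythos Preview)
Your strategy matches the paper's: establish that $\widehat{R}$ is \BCMReg{} via \autoref{thm.KLTSurfaceImpliesBCMReg}, pass to the canonical cover $\widehat{R}\subseteq R'$, and invoke \cite[Theorem~A]{CRMPSTCoversofRDP} on the Gorenstein ring $R'$. Two differences are worth noting.

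First, instead of arguing that $R'$ is KLT via an \'etale-in-codimension-one discrepancy computation---where, as you correctly flag, the case $p\mid n$ is delicate (indeed, over height-one primes containing $p$ the cyclic cover $R_\mathfrak{p}[t]/(t^n-u)$ need not be \'etale)---the paper transfers BCM-regularity directly from $\widehat{R}$ to $R'$ using \cite[Lemma~5.1]{CRMPSTCoversofRDP}. This is a purity/splitting argument for cyclic covers that does not go through discrepancies at all, so it sidesteps your obstacle entirely and is what actually verifies the hypothesis needed for Theorem~A.

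Second, your final paragraph is unnecessary: the canonical cover $\widehat{R}\hookrightarrow R'=\bigoplus_{i=0}^{n-1}\widehat{R}(iK_{\widehat{R}})$ is already split as an $\widehat{R}$-module via projection onto the degree-zero summand, so once $R'\hookrightarrow T$ splits, the composite $\widehat{R}\hookrightarrow T$ splits automatically. Your purity argument through a big Cohen--Macaulay algebra is correct but redundant.
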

\begin{proof}
By \autoref{thm.KLTSurfaceImpliesBCMReg} we know that $\widehat{R}$ is \BCMReg{}. Since $\widehat{R}$ is KLT it is $\bQ$-Gorenstein and we may take a finite canonical cover $\widehat{R} \subseteq R'$.  Now by \cite[Lemma 5.1]{CRMPSTCoversofRDP}, $R'$ is \BCMReg{} and it is Gorenstein by construction.  Now applying \cite[Theorem A]{CRMPSTCoversofRDP} to $R'$ completes the proof.
\end{proof}

\autoref{thm.KLTSurfaceImpliesBCMReg} also implies that we have the KLT to PLT inversion of adjunction in dimension 3 if the residual characteristic is $> 5$.

\begin{corollary}
\label{cor.Dimension2To3InversionOfAdjunction}
Suppose that $(R, \fram, k)$ is a normal 3-dimensional {excellent} local  ring {with a dualizing complex},
and with $F$-finite residue field of characteristic $p > 5$.  Set $X = \Spec R$.
Suppose that $D$ is a prime divisor on $X$ and that $\Delta \geq 0$ is a $\bQ$-divisor such that $K_X + D + \Delta$ is $\bQ$-Cartier {and $\Delta$ has standard coefficients}.  Suppose that $(D^{\mathrm{N}}, \diff_{D^N}(\Delta+D))$ is KLT, then $(\widehat{R}, \widehat{D}+\widehat{\Delta})$ is purely \BCMReg{}.  In particular, $(X, D+\Delta)$ is PLT and {$D$ is normal}.
\end{corollary}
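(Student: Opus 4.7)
The plan is to pass to the $\fram$-adic completion and reduce to the two-dimensional surface result \autoref{thm.KLTSurfaceImpliesBCMReg} via the inversion of adjunction \autoref{cor:universal_singularities} (extended to the non-prime setting as in \autoref{rem.NonPrimeD}). Since $R$ is excellent and normal with a dualizing complex, $\widehat{R}$ is normal, and the normalization $\widehat{R_{D^N}} := R_{D^N} \otimes_R \widehat{R}$ of $\widehat{R/I_D}$ is a finite product of complete normal local rings, one factor per point of $D^N$ over $\fram$. Writing $\widehat{D} = \sum_i \widehat{D}_i$ for the decomposition into prime Weil divisors on $\Spec \widehat{R}$, each $\widehat{D}_i^N$ is one factor of $\widehat{R_{D^N}}$, and the formation of the different commutes with this flat base change, so $\diff_{\widehat{R_{D^N}}}(\widehat{D}+\widehat{\Delta})$ is the pullback of $\diff_{R_{D^N}}(D+\Delta)$; in particular the KLT hypothesis descends to each factor.

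The key classical input is that $\diff_{R_{D^N}}(D+\Delta)$ has standard coefficients whenever $\Delta$ does, by Shokurov's formula for the different (see \cite[\S 4.1]{KollarKovacsSingularitiesBook}). Thus $(\widehat{R_{D^N}}, \diff_{\widehat{R_{D^N}}}(\widehat{D}+\widehat{\Delta}))$ is, factor by factor, a two-dimensional excellent local KLT pair with standard coefficients and residual characteristic $p>5$. By \autoref{thm.KLTSurfaceImpliesBCMReg} each factor is $\BCM$-regular, and applying the natural non-prime extension of \autoref{cor:universal_singularities} indicated in \autoref{rem.NonPrimeD} (one $\widehat{D}_i$ at a time), we conclude that $(\widehat{R}, \widehat{D}+\widehat{\Delta})$ is purely $\BCM$-regular.

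For the ``in particular'' claims, normality of each $\widehat{D}_i$ is immediate from \autoref{cor:normal_centers}, and normality of $D$ itself follows because a local ring whose completion is a finite product of normal local rings is normal. For PLT of $(X, D+\Delta)$, I would invoke \autoref{thm:birational_adjoint_comparison}: purely $\BCM$-regularity forces the BCM adjoint ideal to equal $\widehat{R}$, and its containment in $\mu_*\O_Y(\lceil K_Y - \mu^*(K_{\widehat{R}}+\widehat{D}+\widehat{\Delta})+\widehat{D}'\rceil)$ for every proper birational $\mu\colon Y\to \Spec \widehat{R}$ translates into positivity of every log discrepancy other than along $\widehat{D}$, which is by definition PLT. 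This descends to $(X, D+\Delta)$ because discrepancies over $X$ are computed on birational models which can be base-changed to $\widehat{X}$.

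The main obstacle will be the bookkeeping around the completion step: handling the possibly reducible $\widehat{D}$, tracking that the different, the standard-coefficient property, and the KLT hypothesis all transfer to each component, and invoking the non-prime generalization of inversion of adjunction sketched in \autoref{rem.NonPrimeD}. A secondary nuisance is extracting the classical PLT conclusion from purely $\BCM$-regularity of the completion via \autoref{thm:birational_adjoint_comparison}; this works cleanly in our three-dimensional residue-characteristic-$>5$ regime where enough birational modifications are available, but the translation of ``BCM adjoint ideal equals the unit ideal'' into positivity of all relevant log discrepancies deserves careful attention.
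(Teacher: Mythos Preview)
Your proposal is correct and follows essentially the same route as the paper: apply \autoref{thm.KLTSurfaceImpliesBCMReg} to the completed two-dimensional pair, then lift via inversion of adjunction (\autoref{cor:singularities}/\autoref{cor:universal_singularities}), and finally deduce PLT from \autoref{thm:birational_adjoint_comparison}. Your treatment is in fact more careful than the paper's in handling the possibly non-prime $\widehat{D}$ and in making explicit that the different inherits standard coefficients; the one point the paper adds that you leave vague is an explicit citation of resolution of singularities in dimension three (Cossart--Piltant) to make the PLT comparison go through.
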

\begin{proof}
We see that $(\widehat{D^{\mathrm{N}}}, \widehat{\diff}_{D^N}(\Delta+D))$ is \BCMReg{} by \autoref{thm.KLTSurfaceImpliesBCMReg}.  Thus by \autoref{cor:singularities} we see that $(\widehat{R}, \widehat{D}+\widehat{\Delta})$ is purely \BCMReg{}.  Finally, as being PLT is unaffected by passing to completion, we see that $(X, D+\Delta)$ is PLT by \autoref{thm:birational_adjoint_comparison}. Notice that here resolutions of singularities exist by \cite{CP19}.
\end{proof}

Finally, we also observe that the main results of \cite{CasciniGongyoSchwedeUniformBounds} essentially holds in mixed characteristic, replacing strongly $F$-regular with \BCMReg{}.  In particular:

\begin{theorem}
\label{thm.CasciniGongyoSchwedeUniformBounds}
Following the notation of \cite{CasciniGongyoSchwedeUniformBounds}, suppose $I\subseteq (0,1)\cap \bQ$ ia a finite set and let $\Gamma=D(I)$.
Then there exists a positive constant $p_0$, depending only on $I$, such that if
\begin{itemize}
\item{}  $(R,\Delta)$ is a KLT pair,
\item{} $R$ is a mixed characteristic 2-dimensional complete local ring,
\item{} $R$ has {$F$-finite}
residue field of characteristic $p > p_0$,
\item{} and the coefficients of $\Delta$ belong to $\Gamma$,
\end{itemize}
then $(R,\Delta)$ is \BCMReg{}.
\end{theorem}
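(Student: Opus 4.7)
The plan is to mirror the proof of \autoref{thm.KLTSurfaceImpliesBCMReg}, but invoke the uniform global $F$-regularity statement of \cite{CasciniGongyoSchwedeUniformBounds} in place of \cite[Proposition 5.5]{SatoTakagiGeneralHyperplaneSections}, while continuing to pass from the exceptional divisor to $R$ via \autoref{lem.GlobFRegExceptionalImpliesBCMReg}. Since the only place mixed characteristic has to enter the whole strategy is the final step of converting global $F$-regularity of the exceptional curve into singularity information on $R$, and \autoref{lem.GlobFRegExceptionalImpliesBCMReg} does exactly this, the proof should be essentially a transcription of \cite{CasciniGongyoSchwedeUniformBounds} with this one substitution.

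First, I would repeat the PLT modification step: using \cite{tanaka_mmp_excellent_surfaces} for the MMP on excellent quasi-projective surfaces and \cite{CP19} for log resolutions, produce a projective birational morphism $\pi\colon Y\to X=\Spec R$ with a single prime exceptional divisor $E$ such that $(Y,E+\Delta_Y)$ is PLT and $-(K_Y+E+\Delta_Y)$ is $\pi$-ample, where $\Delta_Y$ is the strict transform of $\Delta$; this is \cite[Proposition 2.13]{CasciniGongyoSchwedeUniformBounds}. By adjunction write $K_E+\Delta_E=(K_Y+E+\Delta_Y)|_E$, which exhibits $(E,\Delta_E)$ as a KLT log Fano pair over a finite extension $k'/k$, with $E$ a proper regular curve (hence isomorphic to $\mathbb{P}^1_{k'}$ by the log Fano adjunction genus calculation). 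Note that $R$ is automatically $\Q$-Gorenstein, as two-dimensional KLT singularities are $\Q$-factorial by \cite{LipmanRationalSingularities} or \cite[Corollary 4.11]{tanaka_mmp_excellent_surfaces}, so the hypothesis of \autoref{lem.GlobFRegExceptionalImpliesBCMReg} is satisfied.

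Second, I would track the coefficients of the different: by the formula for $\Delta_E$ (see \cite[Proposition 3.9]{ShokurovThreeDimensionalLogFlips} or \cite[Section 3]{KollarKovacsSingularitiesBook}), the coefficients of $\Delta_E$ are expressible in terms of the coefficients of $\Delta\in\Gamma=D(I)$ and positive integers arising from the local indices of $K_Y+E+\Delta_Y$ along $E$. The bookkeeping in \cite[Section 2]{CasciniGongyoSchwedeUniformBounds} shows that, as $\pi$ and $R$ vary, these coefficients remain in a single DCC set $\Gamma'$ determined only by $\Gamma$, and hence only by $I$.

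Third, I would invoke the main uniform global $F$-regularity result of \cite{CasciniGongyoSchwedeUniformBounds} applied on the curve $E$: there is a constant $p_0=p_0(\Gamma')$ such that whenever $p>p_0$ and $k'$ is $F$-finite, every KLT log Fano pair $(\mathbb{P}^1_{k'},\Theta)$ with coefficients in $\Gamma'$ is globally $F$-regular. Since $p_0$ depends only on $I$, this is our desired bound. Applying this to $(E,\Delta_E)$ and then invoking \autoref{lem.GlobFRegExceptionalImpliesBCMReg} to $\pi$, $E$, and $(R,\Delta)$ concludes that $(R,\Delta)$ is \BCMReg{}.

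The main obstacle I anticipate is the DCC tracking of coefficients of $\Delta_E$ under adjunction: one must ensure that the set (not merely each individual value) of possible coefficients of $\Delta_E$ is DCC depending only on $I$, even though the local index of $K_Y+E+\Delta_Y$ along $E$ is a priori unbounded as $R$ and the chosen modification vary. This is already carried out in CGS and the argument imports verbatim, since it is purely about the combinatorics of the different formula and is independent of the characteristic of the ambient ring.
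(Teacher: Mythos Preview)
Your proposal is correct and follows essentially the same route as the paper: construct the Koll\'ar component extraction $\pi\colon Y\to X$ with irreducible exceptional $E$ and $(Y,E+\Delta_Y)$ PLT with $-(K_Y+E+\Delta_Y)$ ample, track the coefficients of the different $\Delta_E$ via the DCC bookkeeping of \cite{CasciniGongyoSchwedeUniformBounds}, invoke the uniform bound $p_0$ for global $F$-regularity of $(\mathbb{P}^1,\Delta_E)$, and conclude via \autoref{lem.GlobFRegExceptionalImpliesBCMReg}. The only nuance the paper adds is that the $F$-finite (rather than perfect) residue field case of the $\mathbb{P}^1$ statement is handled by \cite[Proposition 5.5]{SatoTakagiGeneralHyperplaneSections}, so you should cite both rather than replace one by the other.
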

\begin{proof}
This is essentially the same as \cite{CasciniGongyoSchwedeUniformBounds}.  The bounds $p_0$ were constructed by analyzing global $F$-regularity of $(\bP^1, \Delta)$ or equivalently the strong $F$-regularity of a line arrangement on $\bA^2$, see \cite[Corollary 3.13]{CasciniGongyoSchwedeUniformBounds}.  {The corresponding arguments for the $F$-finite residue field case were completed in \cite[Proposition 5.5]{SatoTakagiGeneralHyperplaneSections}.}
Thus, as in the proof of \autoref{thm.KLTSurfaceImpliesBCMReg}, we may construct a projective morphism $\pi \colon Y \to X$ such that $\pi$ is an isomorphism over $X \setminus \fram$, the exceptional divisor $E$ is irreducible, $-(K_Y+E+\Delta_Y)$ is ample where $\Delta_Y$ is the strict transform of $\Delta$, and $(Y,E+\Delta_Y)$ is purely log terminal {then apply the arguments of \cite{CasciniGongyoSchwedeUniformBounds, SatoTakagiGeneralHyperplaneSections}}. 
\end{proof}

\section{Application to $F$-singularities in families}
\label{sec:reduction_mod_p}

In this section, we generalize the results of \cite[Section 7]{MaSchwedeSingularitiesMixedCharBCM} from the BCM test ideal to the adjoint-like ideal we have defined earlier in this paper.

\begin{setting}\label{set:reduction}
	Let $A$ be a Dedekind domain which is a localization of a finite extension of $\bZ$, with fraction field $K$.  Let $f \colon X\to U:=\Spec(A)$ be a flat family  essentially of finite type and let $\Delta$ be an effective  $\bQ$-divisor with no vertical components.  For $p\in U$, denote the fiber over $p$ by $(X_p,\Delta_p)$, that is to say $\Delta_p=\Delta|_{X_p}$.  In some situations $X$ will be affine: $X=\Spec(R)$.
	\end{setting}

First we give a strengthening of \cite[Theorem 7.9]{MaSchwedeSingularitiesMixedCharBCM} using our new adjunction.  The difference is that we no longer need to assume that the index is not divisible by the characteristic.  Note that if $f \colon X \to U$ is not proper, then we cannot expect strong $F$-regularity of $X_p$ for some $p \in U$ to imply that $X$ is \BCMReg{} over a neighborhood of $p \in U$ (see \cite[Remark 7.7]{MaSchwedeSingularitiesMixedCharBCM}).
What we show is that this is true at the closed points lying along a horizontal point of $X$ which intersects $X_p$.

\begin{proposition}\label{prop:non_empty_open}
	Suppose $X=\Spec(R)$ is affine and  of finite type and flat over $U$, as in \autoref{set:reduction}, and let $\Delta\geq 0$ be a $\bQ$-divisor such that $K_R+\Delta$ is $\bQ$-Cartier.  Choose a prime ideal $Q\subseteq R$ such that $Q\cap A=(0)$.  For each $t\in A$ let $\sqrt{tR+Q}=\cap^{n_t}_{i=1}\frak q_{t,i}$ be a decomposition into minimal primes.  	
 Let
		$$W=\{t\in \m\text{-}\mathrm{Spec} A \mid \{\mathfrak{q}_{t,i}\}_{i=1}^{n_t} \mathrm{\ is\ nonempty\ and \ } (R/t,\Delta_t)_{\mathfrak q_{t,i}}\mathrm{\ is \ strongly\ }F\text{-}\mathrm{regular\ for \ all\ }\mathfrak{q}_{t,i}\} $$
		Then $W$ is open in $\fram$-$\Spec A$, and $(\widehat{R_{\frak q_{t,i}}},\widehat{\Delta}_{\frak q_{t,i}})$ is \BCMReg{} for all $t\in W$ and all $i$.
\end{proposition}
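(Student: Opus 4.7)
The strategy is to adapt the proof of \cite[Theorem 7.9]{MaSchwedeSingularitiesMixedCharBCM}, replacing the cyclic-cover step (which previously forced the index of $K_R+\Delta$ to be coprime to the residual characteristic) with our new inversion of adjunction \autoref{cor:universal_singularities}. Openness of $W$ in $\fram\text{-}\Spec A$ can be deduced from the argument in \cite[Theorem 7.9]{MaSchwedeSingularitiesMixedCharBCM} essentially unchanged, as that part does not rely on the index hypothesis: the locus in the $A$-family of pairs where the fiberwise localization is strongly $F$-regular is open, and the assumption $Q \cap A = (0)$ forces the horizontal subscheme $V(Q)$ to meet fibers in the expected dimension, so that the ensuing closed-image argument carries over.

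For the main assertion, fix $t \in W$ and a minimal prime $\mathfrak{q} = \mathfrak{q}_{t,i}$ of $tR+Q$, and set $S := \widehat{R_{\mathfrak{q}}}$. Since $R$ is flat over $A$, the element $t$ is a non-zerodivisor on $S$. By hypothesis $(R/t)_{\mathfrak{q}}$ is strongly $F$-regular, so its completion $\overline{S} := S/tS = \widehat{(R/t)_{\mathfrak{q}}}$ is again strongly $F$-regular and in particular a normal domain. Consequently $tS$ is prime, and $D := V(t) \subseteq \Spec S$ is a prime Cartier divisor; standard descent along the regular element $t$, together with Cohen-Macaulayness of $S$ (inherited from flatness over the DVR $A_{(t)}$ with Cohen-Macaulay fiber $\overline{S}$), then shows that $S$ is itself a normal local domain. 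Since $K_S + \widehat{\Delta}_{\mathfrak{q}}$ is $\bQ$-Cartier (pulled back from $R$) and $D$ is Cartier, $K_S + D + \widehat{\Delta}_{\mathfrak{q}}$ is $\bQ$-Cartier as well; furthermore $D$ is not contained in the support of $\widehat{\Delta}_{\mathfrak{q}}$ because $\Delta$ has no vertical components.

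Because $D = D^{\nm}$ is Cartier, the different reduces to the restriction: $\diff_D(D + \widehat{\Delta}_{\mathfrak{q}}) = \widehat{(\Delta_t)}_{\mathfrak{q}}$. This pair is strongly $F$-regular by assumption, and hence \BCMReg{} by \cite[Corollary 6.23]{MaSchwedeSingularitiesMixedCharBCM}. Inversion of adjunction \autoref{cor:universal_singularities} then yields that $(S, D + \widehat{\Delta}_{\mathfrak{q}})$ is purely \BCMReg{}, and \autoref{cor.PBCMImpliesBCM} shows that $(S, \widehat{\Delta}_{\mathfrak{q}} + (1-\epsilon)D)$ is \BCMReg{} for small $\epsilon > 0$; removing the effective divisor $(1-\epsilon)D$ via \cite[Lemma 6.11]{MaSchwedeSingularitiesMixedCharBCM} then gives that $(S, \widehat{\Delta}_{\mathfrak{q}})$ is \BCMReg{}. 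The main technical obstacle is the book-keeping for completions---verifying that strong $F$-regularity, normality, and the computation of the different all behave as expected after completing our excellent local rings---but these are all standard under the given hypotheses.
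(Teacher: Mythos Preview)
Your argument for the \BCMReg{} conclusion is essentially the paper's: both localize and complete at $\mathfrak{q}_{t,i}$, observe that $\Div(t)$ is a prime Cartier divisor whose different is just the restriction $\Delta_t$, use \cite[Corollary 6.23]{MaSchwedeSingularitiesMixedCharBCM} to turn strong $F$-regularity of the fiber into \BCMReg{}, and then invoke inversion of adjunction together with \autoref{prop.AdjIsSmallerThanTau} (equivalently \autoref{cor.PBCMImpliesBCM}).

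The openness claim, however, is not adequately justified. You assert that ``the locus in the $A$-family of pairs where the fiberwise localization is strongly $F$-regular is open'', but this is exactly what must be proved, and it is not an a priori fact for a family over a mixed-characteristic base: the closed fibers live in distinct positive characteristics, and there is no direct spreading-out of strong $F$-regularity across different primes. The paper's route---which is also the route in \cite[Theorem 7.9]{MaSchwedeSingularitiesMixedCharBCM}---is to bridge through characteristic zero: having established that $(\widehat{R_{\mathfrak{q}_{p,i}}},\widehat{\Delta})$ is \BCMReg{}, one deduces via the comparison \autoref{thm:test_comparison} that it is KLT; since KLT descends to localizations, $(R_Q,\Delta_Q)$ is KLT; and then Takagi's theorem \cite{TakagiInterpretationOfMultiplierIdeals} that KLT implies strongly $F$-regular type yields the required open subset of $U$. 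Thus openness actually \emph{uses} the \BCMReg{} conclusion from your second paragraph, and in \cite[Theorem 7.9]{MaSchwedeSingularitiesMixedCharBCM} this \BCMReg{} input was exactly where the index hypothesis entered. You should reverse the logical order of your two paragraphs and insert the KLT step explicitly rather than deferring it to the earlier reference.
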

\begin{proof}
	 Suppose that $\{\frak q_{p,i}\}_{i=1}^{n_p}$ is nonempty for some $p\in\m$-$\Spec(A)$, and that $(R/p,\Delta_p)$ is strongly $F$-regular at all $\frak q_{p,i}$. Note that this implies $\Div(p)$ is a prime Cartier divisor after localizing $R$ at $\frak q_{p,i}$. 
In particular, we have $\diff_{\Div(p)}(\Div(p)+\Delta)=\Delta_p$ after localizing and taking completion along $\frak q_{p,i}$.

By \cite[Theorem 4.1.1]{AndreWeaklyFunctorialBigCM}, for any given perfectoid big Cohen--Macaulay $(\widehat{R_{\frak q_{p,i}}})^+$-algebra $B$ there is a compatibly chosen perfectoid big Cohen--Macaulay $\widehat{(R/p)}_{\frak q_{p,i}}^+$-algebra $C$.  By \cite[Corollary 6.23]{MaSchwedeSingularitiesMixedCharBCM}, $((\widehat{R/p})_{\mathfrak q_{p,i}},(\widehat{\Delta_p})_{\mathfrak q_{p,i}})$ is \BCMReg{C}. Now we apply \autoref{thm.AdjointMultiplierInversionOfAdjunction} to obtain that $(\widehat{R_{\frak q_{p,i}}}, \Div(p)+\widehat{\Delta}_{\frak q_{p,i}})$ is purely $\mathrm{BCM}_{B\shortrightarrow C}$-regular.  Accordingly this implies that $(\widehat{R_{\frak q_{p,i}}}, \widehat{\Delta}_{\frak q_{p,i}})$ is $\mathrm{BCM}_B$-regular by \autoref{prop.AdjIsSmallerThanTau}.  In particular this implies that $(\widehat{R_{\frak q_{p,i}}}, \widehat{\Delta}_{\frak q_{p,i}})$ is KLT by \autoref{thm:test_comparison}, and thus $(R, \Delta)_{\frak q_{p,i}}$ is KLT since being KLT is unaffected up to completion. But then $(R,\Delta)_Q$ is KLT since KLT is preserved under localization (even without resolution of singularities).  Finally, a KLT singularity is of strongly $F$-regular type by \cite{TakagiInterpretationOfMultiplierIdeals}, so we obtain the required open subset.  Now the above argument applies to all $p$ in this open subset and all given $B$, which proves the final statement.
\end{proof}

\begin{proposition}\label{prop:SFR_on_fiber}
	Let $\phi\colon X\to U$ be a proper flat family and $\Delta\geq 0$ a $\bQ$-divisor such that $K_X+\Delta$ is $\bQ$-Cartier.  Suppose $(X_p,\Delta_p)$ is strongly $F$-regular for some point $p\in U$.  Then $(X_K,\Delta_K)$ is KLT, and there exists a non-empty open subset $V$ of $U$ such that the  closed fibers of $(X,\Delta)$ over $V$ are strongly $F$-regular. Furthermore, the completion of $(X,\Delta)_{\frak q}$ is \BCMReg{} at all points $\frak q \in X$ which are vertical over $V$. 
	\end{proposition}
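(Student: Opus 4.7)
The plan is to combine a pointwise application of \autoref{prop:non_empty_open} with globalization via properness of $\phi$. First I would establish BCM-regularity of the completion at every closed point $q_0 \in X_p$. Choose an affine open $\Spec R \subseteq X$ containing $q_0$, and select a horizontal prime $Q \subseteq q_0$ of height equal to the fiber dimension $d$ at $q_0$, arranged so that $q_0 \in \mathrm{Min}(pR+Q)$. Such a $Q$ is obtained by lifting a system of parameters of $(R/p)_{q_0}$ to $R$ and taking a minimal prime of the resulting ideal lying inside $q_0$; this prime automatically fails to contain $p$ (by dimension count, since $\mathrm{ht}(q_0)=d+1$) and is therefore horizontal. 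Then $q_0$ is one of the primes $\frak q_{p,i}$ appearing in \autoref{prop:non_empty_open}, and the hypothesis that $(X_p,\Delta_p)$ is strongly $F$-regular yields BCM-regularity of $(\widehat{\mathcal{O}_{X,q_0}},\widehat{\Delta}_{q_0})$ together with an open neighborhood $V_{q_0}$ of $p$ in $U$.

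Next I would globalize to a single open subset $V\subseteq U$. By \autoref{thm:birational_adjoint_comparison}, BCM-regularity of the completion implies KLT, so $(X,\Delta)$ is KLT at every closed point of $X_p$. Since the KLT locus is open in $X$ and closed points are dense in the proper $\kappa(p)$-scheme $X_p$, this open locus contains all of $X_p$ (otherwise its complement would be a nonempty closed subscheme of $X_p$ with no closed point). Properness of $\phi$ then guarantees an open $V\subseteq U$ containing $p$ on which $(X,\Delta)$ is KLT; restricting to the generic point of $U$ gives the first assertion, that $(X_K,\Delta_K)$ is KLT.

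For strong $F$-regularity of closed fibers, I would apply the standard reduction modulo $p$ theorem of \cite{TakagiInterpretationOfMultiplierIdeals} to the KLT pair $(X_K,\Delta_K)$ of finite type over the characteristic zero field $K$, which provides, after possibly shrinking $V$, strong $F$-regularity of $(X_t,\Delta_t)$ for every closed $t\in V$. Finally, BCM-regularity of $(\widehat{\mathcal{O}_{X,\frak q}},\widehat{\Delta}_{\frak q})$ at an arbitrary vertical point $\frak q$ over $V$ is obtained by repeating the first step at $\frak q$: pick a horizontal prime $Q'\subseteq \frak q$ of height $\mathrm{ht}(\frak q)-1$ so that $\frak q \in \mathrm{Min}(\phi(\frak q)R+Q')$, and then use the strong $F$-regularity of $(X_{\phi(\frak q)},\Delta_{\phi(\frak q)})$ at $\frak q$ (known from the previous sentence) together with \autoref{cor:singularities} and \autoref{prop.AdjIsSmallerThanTau}, mirroring the inversion-of-adjunction chain used inside the proof of \autoref{prop:non_empty_open}.

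The main obstacle is the second step, where the pointwise BCM-regularity results at individual closed points of $X_p$ have to be amplified to an honest open subset of $U$; the key device making this work is properness of $\phi$, which lets a closed subset of $X$ disjoint from $X_p$ project to a closed subset of $U$ avoiding $p$, so that the individual open neighborhoods $V_{q_0}$ can be replaced by a single open $V$ without having to handle infinitely many closed points of $X_p$ one at a time.
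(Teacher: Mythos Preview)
Your approach is essentially the same as the paper's: both reduce to \autoref{prop:non_empty_open} together with properness of $\phi$, and then invoke \cite{TakagiInterpretationOfMultiplierIdeals} for the strongly $F$-regular type statement. The paper simply runs the reduction in the opposite direction: it fixes an arbitrary horizontal point $Q$ and uses properness to produce a point of $V(Q)\cap X_p$, rather than starting from a closed point $q_0\in X_p$ and manufacturing a horizontal $Q$ beneath it.

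There is, however, a genuine gap in your globalization step. You assert that the KLT locus is open in $X$, and then use properness to push its closed complement down to $U$. But $X$ is a mixed-characteristic scheme, and openness of the KLT locus is not available without log resolutions, which are not known in this generality. The paper avoids this entirely. Once $(\widehat{\mathcal{O}_{X,q_0}},\widehat{\Delta}_{q_0})$ is \BCMReg{} and hence KLT (the relevant reference is \autoref{thm:test_comparison}, not \autoref{thm:birational_adjoint_comparison}), one uses only that KLT is preserved under \emph{localization}: any generization of $q_0$ is again KLT. Properness of $\phi$ guarantees that every point of $X_K$ specializes inside $X$ to some point of $X_p$, and hence further to a closed point of $X_p$; this already yields $(X_K,\Delta_K)$ KLT with no appeal to openness on the total space. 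Your argument is easily repaired by this substitution; the intermediate claim that $(X,\Delta)$ is KLT over an open neighborhood of $p$ in $U$ is neither needed for the statement nor justified as written.
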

\begin{proof}
Fix a point $Q\in X$ which is surjective to $U$. Then $V(Q)\cap X_{p}$ is non-empty, since $\phi$ is proper.  Therefore at every point $\frak q\in V(Q)\cap X_{p}$, $(X_p,\Delta_p)_{\frak q}$  is strongly $F$-regular. It follows from the same argument as in \autoref{prop:non_empty_open} that $(X,\Delta)_K$ is KLT at $Q_K$, and therefore by varying $Q$ we see that $(X,\Delta)_K$ is KLT everywhere.  Hence it is of strongly $F$-regular type by \cite{TakagiInterpretationOfMultiplierIdeals}. The final claim follows from the same argument as in \autoref{prop:non_empty_open}.
%
\end{proof}

We obtain analogous results for pure BCM-regularity.
\begin{proposition}\label{thm:plt_in_familes_locally}
	Suppose $X=\Spec(R)$ is affine and of finite type and flat over $U$, as in \autoref{set:reduction}, and choose a prime ideal $Q\subseteq R$ such that $Q\cap A=(0)$.  For each $t\in A$ write $\sqrt{tR+Q}=\cap^{n_t}_{i=1}\frak q_{t,i}$ a decomposition into minimal primes.   Suppose $D$ is an integral Weil divisor which is horizontal over $U$, and $\Delta\geq 0$ a $\bQ$-divisor such that $K_R+D+\Delta$ is $\bQ$-Cartier, then the following set
		$$W=\{t\in\mathfrak m\text{-}\mathrm{Spec} A \mid \{\mathfrak{q}_{t,i}\}_{i=1}^{n_t} \mathrm{\ is\ nonempty\ and \ } (R/t,D_t+\Delta_t)_{\mathfrak q_{t,i}}\mathrm{\ is \ purely\ }F\text{-}\mathrm{regular\ for \ all\ }\mathfrak{q}_{t,i}\} $$
		is open in $\frak{m}$-$\Spec(A)$. Furthermore,
		$(\widehat{R_{\frak q_{\tilde{t},i}}},\widehat{D}_{\frak q_{\tilde{t},i}}+\widehat{\Delta}_{\frak q_{\tilde{t},i}})$
		is purely \BCMReg{} for all $t\in W$ and all $i$.
\end{proposition}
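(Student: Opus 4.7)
The plan is to deduce this statement from \autoref{prop:non_empty_open} applied to the subfamily $D = \Spec R/I_D \to U$, combined with the mixed-characteristic inversion of adjunction \autoref{cor:singularities} on the total space.

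First, fix $p \in W$ and $\mathfrak{q} = \mathfrak{q}_{p,i}$; we may assume $\mathfrak{q} \supseteq I_D$ since otherwise $D_p$ does not meet $\mathfrak{q}$ locally and the statement reduces to \autoref{prop:non_empty_open}. Because $(R/p, D_p + \Delta_p)_{\mathfrak{q}}$ is purely $F$-regular, the comparison of adjoint ideals \autoref{thm.TauEqualityCharP} together with \autoref{cor:normal_centers} ensures $D_p$ is normal at $\mathfrak{q}$. Since $D$ is integral and horizontal over the Dedekind domain $A$, it is flat over $A$, so Serre's criterion applied to this flat family with normal fiber at $\mathfrak{q}$ yields that $D$ itself is normal at $\mathfrak{q}$; in particular $D^N = D$ locally there, and similarly $D_p^N = D_p$ locally.

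Next I would verify the compatibility of differents
\[
\diff_{D}(D + \Delta)\big|_{D_p} = \diff_{D_p}(D_p + \Delta_p).
\]
This follows from two iterated adjunctions on the pair $(R, D + \Delta + \Div(p))$, using that $\Div(p)$ is a principal Cartier divisor sharing no components with $D$ or $\Delta$: adjoining first along $D$ and then along the Cartier divisor $D \cdot \Div(p) = D_p$ (whose contribution to the next different vanishes) yields the left-hand side, while adjoining first along $\Div(p)$ gives $\diff_{R/p}(D+\Delta+\Div(p)) = D_p + \Delta_p$ and a further adjunction along $D_p$ yields the right-hand side; both iterations compute the same restriction to $D_p$.

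Combining these ingredients, \autoref{cor:singularities} in characteristic $p > 0$ (via \autoref{thm.TauEqualityCharP}) converts pure $F$-regularity of $(R/p, D_p + \Delta_p)_{\mathfrak{q}}$ into strong $F$-regularity of $(D_p, \diff_{D_p}(D_p + \Delta_p))_{\mathfrak{q}}$, which by the compatibility above is the fiber at $p$ of the flat affine family $(D, \diff_D(D+\Delta)) \to U$ localized at the image of $\mathfrak{q}$ in $R/I_D$. Applying \autoref{prop:non_empty_open} to this family (choosing for instance $Q'$ to be a minimal prime of $(Q+I_D)/I_D$ in $R/I_D$ contained in $\mathfrak{q}/I_D$, and noting $K_D + \diff$ is $\bQ$-Cartier as the pullback of $K_R + D + \Delta$) supplies both openness of the relevant locus in $\fram$-$\Spec A$ and BCM-regularity of $(\widehat{D}_{\mathfrak{q}/I_D}, \widehat{\diff_D(D+\Delta)})$. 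A final application of \autoref{cor:singularities} in the mixed-characteristic inversion direction lifts this to pure BCM-regularity of $(\widehat{R}_{\mathfrak{q}}, \widehat{D}_{\mathfrak{q}} + \widehat{\Delta}_{\mathfrak{q}})$, simultaneously giving the openness of $W$ and the required singularity property. The main obstacle will be the compatibility of differents in the second paragraph and, as a secondary subtlety, the precise choice of prime $Q'$ so that the setup of \autoref{prop:non_empty_open} applies verbatim to the subfamily $D \to U$.
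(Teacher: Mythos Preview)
Your proposal is correct and follows essentially the same route as the paper: reduce to the subfamily $D \to U$ via $F$-adjunction on the fiber, apply \autoref{prop:non_empty_open} to $(D,\diff_D(D+\Delta))$, and then lift back using mixed-characteristic inversion of adjunction (\autoref{cor:singularities} / \autoref{thm.AdjointMultiplierInversionOfAdjunction}). The paper's argument is nearly identical; it phrases the different compatibility as equality of the two iterated adjunctions $\diff_{D_t}(\diff_{X_t}(\cdot)) = \diff_{D_t}(\diff_D(\cdot))$ and, for the openness of $W$, invokes Das's characteristic-$p$ inversion of $F$-adjunction explicitly where you absorb this into your final ``simultaneously'' clause---you should spell that step out, since pure BCM-regularity of the total space does not by itself give pure $F$-regularity of the fiber. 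Your Serre-criterion argument for normality of $D$ near $\mathfrak q$ is a useful clarification that the paper leaves implicit.
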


\begin{proof}
	Suppose  $t\in \fram$-$\Spec(A)$ is such that $\{\mathfrak{q}_{t,i}\}_{i=1}^{n_t}$ is non-empty  and $(R/t,D_t+\Delta_t)_{\frak q_{t,i}}$ is purely $F$-regular for all $i$, which in particular implies that $\Div(t)$ is a prime Cartier divisor near all $\frak q_{t,i}$.
For each $\frak q_{t,i}\notin \Supp(D)$ the statement follows from  \autoref{prop:non_empty_open}.  So we may assume that $\frak q_{t,i}\in\Supp(D)$ for some $i$.  Let $B$ and $C$ be a compatible choice of big Cohen--Macaulay $\widehat{R_{\frak q_{t,i}}}^+$ and $(\widehat{R_D})_{\frak q_{t,i}}^+$- algebras respectively.
	
	By assumption, $(R/t,D_t+\Delta_t)$ is purely $F$-regular at $\frak q_{t,i}$, and so $X_t=\Spec(R/t)$ is a prime Cartier divisor near $\frak q_{t,i}$, and so is $D_t=\Spec(R_D/t)$ near $\frak q_{t,i}$.  It follows that near $\frak q_{t,i}$, we have $\diff_{X_t}(X_t+D+\Delta)=(D+\Delta)|_{X_t}=D_t+\Delta_t$, and $\diff_{D_t}(\diff_D(D+\Delta)+D_t)=\diff_D(D+\Delta)|_{D_t}$.  So our assumption becomes that $(R/t,\diff_{X_t}(X_t+D+\Delta))$ is purely $F$-regular at $\frak q_{t,i}$.
	
	Therefore at $\frak q_{t,i}$, we have \[((R/t)_D,\diff_{D_t}(\diff_{X_t}(X_t+\Delta+D)))=((R/t)_D,\diff_{D_t}(\diff_{D}(X_t+D+\Delta)))\]\[=((R/t)_D,\diff_D(D+\Delta)|_{D_t})\] is strongly $F$-regular.  Now \autoref{prop:non_empty_open} provides  an open subset $V$ of $U$ such that for any choice of $\frak q_{\tilde{t},i}$ which is vertical over a point $\tilde{t}\in V$, the completion of $(R_D,\diff_D(D+\Delta))_{\frak q_{\tilde{t},i}}$ is \BCMReg{}, and its reduction mod $\tilde{t}$ is strongly $F$-regular. By inversion of adjunction \autoref{thm.AdjointMultiplierInversionOfAdjunction}, we know that $(\widehat{R_{\frak q_{\tilde{t},i}}},\widehat{D}_{\frak q_{\tilde{t},i}}+\widehat{\Delta}_{\frak q_{\tilde{t},i}})$ is purely \BCMReg{}. Also by inversion of $F$-adjunction (see \cite[Theorem A]{Das_different_different_different}, whose proof only uses that the ground field is $F$-finite), we also know that $(R/{\tilde{t}},D_{\tilde{t}}+\Delta_{\tilde{t}})_{\frak q_{\tilde{t},i}}$ is purely $F$-regular.
\end{proof}

\begin{proposition}\label{thm:reduction_mod_p_plt}
	Let $\phi \colon X\to U$ be a proper flat family and let $D$ be an integral Weil divisor such that $K_X+D+\Delta$ is $\bQ$-Cartier, and $D$ is horizontal over $U$.  Suppose $(X_p,D_p+ \Delta_p)$ is purely $F$-regular for some $p\in U$.  Then $(X_K,D_K + \Delta_K)$ is PLT and there exists a non-empty open subset $V$ of $U$ such that the closed fibers of $(X,D+\Delta)$ over $V$ are purely $F$-regular. Furthermore, the completion of $(X,D+\Delta)_{\frak q}$ is purely \BCMReg{} at all points $\frak q\in X$ which are vertical over $V$.
\end{proposition}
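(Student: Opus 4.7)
The plan is to mimic the proof of \autoref{prop:SFR_on_fiber}, replacing the role of \autoref{prop:non_empty_open} with the analogous statement for pure BCM-regularity, namely \autoref{thm:plt_in_familes_locally}, and replacing Takagi's theorem that KLT is of strongly $F$-regular type with the parallel fact that PLT is of purely $F$-regular type (\cite{TakagiPLTAdjoint}, extending \cite{TakagiInterpretationOfMultiplierIdeals}; note that this uses that adjoint ideals reduce from characteristic zero to characteristic $p$ correctly, analogously to multiplier ideals).

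First I would fix an arbitrary point $Q \in X$ which is surjective onto $U$ (i.e., horizontal).  Since $\phi \colon X \to U$ is proper, the closed subscheme $V(Q)$ meets the fiber $X_p$; pick any $\frak q \in V(Q)\cap X_p$.  The local restriction $(X_p, D_p + \Delta_p)_{\frak q}$ is still purely $F$-regular, and so \autoref{thm:plt_in_familes_locally}, applied at $\frak q$ on a sufficiently small affine neighborhood, yields that $(\widehat{\O_{X,\frak q}}, \widehat{D}_{\frak q} + \widehat{\Delta}_{\frak q})$ is purely \BCMReg{}.  By \autoref{thm:birational_adjoint_comparison} this forces $(\O_{X,\frak q}, D_{\frak q} + \Delta_{\frak q})$ to be PLT (since the formation of PLT is unaffected by completion).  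Localizing further to the generic point of $V(Q)$ then shows $(X,D+\Delta)_Q$ is PLT (pure log terminality is preserved under localization, as only discrepancies enter the definition).  Varying $Q$ over all horizontal points yields that $(X_K, D_K+\Delta_K)$ is PLT at every point, hence PLT.

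Next, because $(X_K, D_K + \Delta_K)$ is a PLT pair of finite type over $K$, a fraction field of $A$, Takagi's reduction-mod-$p$ theorem \cite{TakagiPLTAdjoint} (combined with the fact that $D$ is horizontal so its reduction mod a closed point of $U$ stays prime generically) provides a non-empty open subset $V \subseteq U$ such that each closed fiber $(X_q, D_q + \Delta_q)$ for $q \in V$ is purely $F$-regular.  The final assertion about completions being purely \BCMReg{} at every point $\frak q \in X$ which is vertical over a closed point $q \in V$ then follows verbatim from the last statement of \autoref{thm:plt_in_familes_locally}, applied to a suitable affine chart containing $\frak q$.

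The step I expect to require the most care is the descent of PLT from the completion $(\widehat{\O_{X,\frak q}}, \widehat{D}_{\frak q} + \widehat{\Delta}_{\frak q})$ to localizations at generic points of horizontal subschemes, as this relies on compatibility of the birational adjoint ideal under localization and completion; once this is in place, the remainder of the argument is just a rerun of \autoref{prop:SFR_on_fiber} with the PLT analogs substituted for the KLT ones.
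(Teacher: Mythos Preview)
Your proposal is correct and follows essentially the same approach as the paper: fix a horizontal point $Q$, use properness to find a point of $V(Q)$ on the fiber $X_p$, apply \autoref{thm:plt_in_familes_locally} to obtain pure \BCMReg{}ity of the completion and hence PLT at that point by \autoref{thm:birational_adjoint_comparison}, localize to conclude PLT at $Q$, vary $Q$ to get PLT on $X_K$, and finally invoke \cite[Corollary 5.4]{TakagiPLTAdjoint} for the open set of purely $F$-regular fibers. Your closing worry about descending PLT through completion and localization is not a genuine obstacle, as the paper also simply notes that PLT is unaffected by completion and preserved under localization.
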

\begin{proof}
	Fix a point $Q\in X$ which is surjective to $U$.  Then	$V(Q)\cap X_{p}$ is non-empty since $\phi$ is proper.  Therefore there is a point in this intersection at which $(X_p,D_p+\Delta_p)$  is purely $F$-regular. It follows from  \autoref{thm:plt_in_familes_locally} that there is a non-empty open $V$ such that for any $\frak q$ vertical over $V$, the completion of $(X,D+\Delta)_{\frak q}$ is purely \BCMReg{}. Therefore by \autoref{thm:birational_adjoint_comparison} and the fact that being PLT is unaffected up to completion, $(X,D + \Delta)$ is PLT at these points, and hence is PLT at $Q$. By varying $Q$ we obtain that $(X_K,D_K+\Delta_K)$ is PLT everywhere. Hence this pair is of purely $F$-regular type by \cite[Corollary 5.4]{TakagiPLTAdjoint}. The final claim follows from the same argument as in \autoref{thm:plt_in_familes_locally}.
\end{proof}

\appendix

\section{Perfectoid big Cohen-Macaulay algebras}
\label{sec.perfectoidBCM}

In this appendix, we use techniques of Andr\'{e} \cite{AndreWeaklyFunctorialBigCM} together with the ``perfections in mixed characteristic" of Bhatt--Scholze \cite[Sections 7 and 8]{BhattScholzepPrismaticCohomology} to prove:
\begin{enumerate}
  \item The existence of weakly functorial perfectoid big Cohen-Macaulay algebras of certain {\it diagrams}: \autoref{thm:weaklyfunctorialdiagram}.
  \item A domination result for certain {\it maps} of perfectoid big Cohen-Macaulay algebras: \autoref{thm:dominateMap}.
\end{enumerate}

We believe these results can also be proved carefully using techniques from \cite{AndreWeaklyFunctorialBigCM} alone (i.e., without referring to \cite{BhattScholzepPrismaticCohomology}). However, we think that the treatment using perfections in mixed characteristic is the simplest in terms of presentation in many situations. Along the way, we also obtain slight generalizations and somewhat neater proofs of results in \cite{AndreWeaklyFunctorialBigCM} and \cite{MaSchwedeSingularitiesMixedCharBCM} which we believe are of independent interest.

Throughout this appendix, we freely use some language on perfectoid rings as in \cite{BhattMorrowScholzeIHES,BhattScholzepPrismaticCohomology}. A ring $S$ is {\it perfectoid} if and only if it is $\pi$-adically complete for some element $\pi\in S$ such that $\pi^p$ divides $p$, the Frobenius on $S/pS$ is surjective, and the kernel of Fontaine's map $\theta$: $W(S^\flat)\to S$ is principal.\footnote{We refer to \cite[Section 3]{BhattMorrowScholzeIHES} for detailed definition of $\theta$: essentially, this is the unique map lifting the natural surjection $S^\flat\to S/p$.} We point out that if $S$ is $p$-torsion free and $\pi=p^{1/p}$, then this is the same as \cite[2.2]{AndreWeaklyFunctorialBigCM} (or \cite[Definition 2.2]{MaSchwedeSingularitiesMixedCharBCM}) by \cite[Lemma 3.10]{BhattMorrowScholzeIHES}. The advantage of this definition is that if $S$ has characteristic $p>0$, then a perfectoid ring is the same as a perfect ring, see \cite[Example 3.15]{BhattMorrowScholzeIHES}. Moreover, there is an equivalence of categories between perfectoid rings and the category of {\it perfect prisms}, see \cite[Theorem 3.10]{BhattScholzepPrismaticCohomology}. We will repeatedly use that perfectoid rings are reduced.

Further, note that if $I$ is an ideal of a perfectoid ring $R$, then $R/I$ need not be $p$-adically separated. We define $I^{-} = \bigcap_n (I+p^n)$, which is the closure of $I$ in the $p$-adic topology. Then the $p$-adic completion of $R/I$ is isomorphic to $R/I^{-}$. If $\{f_i\}_{i=1}^n$ is a sequence of elements in $R$, each admits a compatible system of $p$-power roots $\{f_i^{1/p^\infty}\}$, and $I=(f_1^{1/p^\infty},\dots,f_n^{1/p^\infty})$, then $R/I^-$ is perfectoid.

We next recall a definition from \cite{MaSchwedeSingularitiesMixedCharBCM} which we will use throughout.

\begin{definition}
Let $(R,\m)$ be a complete local domain such that $R/\m$ has characteristic $p>0$. An $R$-algebra $S$ is called a {\it perfectoid seed} if $S$ is perfectoid and it maps to a perfectoid big Cohen-Macaulay $R$-algebra.
\end{definition}

Now if $R$ is a perfectoid ring and $S$ is a derived $p$-adic complete $R$-algebra, then \cite{BhattScholzepPrismaticCohomology} defined the {\it perfectoidization} $S_{\perfd}$ of $S$ using (derived) prismatic cohomology. In general $S_{\perfd}$ only lives in $D^{\geq0}(R)$, but it is an honest perfectoid ring in all the cases that we consider. We will not give the precise definition here but we point out the following facts:
\begin{enumerate}[(1)]
  \item In characteristic $p>0$, $S_{\perfd}$ is the usual perfection $\varinjlim_eF^e_*S$ \cite[Example 8.3]{BhattScholzepPrismaticCohomology}.
  \item If $S$ is a derived $p$-complete quotient of $R$ (e.g., $S=R/J$ for a finitely generated ideal $J\subseteq R$), then $S_{\perfd}$ is a perfectoid ring and is a quotient of $S$ \cite[Theorem 7.4]{BhattScholzepPrismaticCohomology}.
  \item If $R\to S$ is the $p$-adic completion of an integral map, then $S_{\perfd}$ is a perfectoid ring \cite[Theorem 10.11]{BhattScholzepPrismaticCohomology}.
  \item $S_{\perfd}$ can be characterized as the derived limit of $R'$ over all maps from $S$ to perfectoid rings $R'$, and it does not depend on the choice of $R$ \cite[Proposition 8.5]{BhattScholzepPrismaticCohomology}. In particular, if $S_{\perfd}$ is a perfectoid ring then $S\to S_{\perfd}$ is the universal map to a perfectoid ring.
\end{enumerate}

As a consequence of $(2)$, for any finitely generated ideal $J\subseteq R$, we can define an ideal $J_{\perfd}=\ker(R\to (R/J)_{\perfd})$. It turns out that we have a well-behaved almost mathematics theory with respect to $J_{\perfd}$, see \cite[Section 10]{BhattScholzepPrismaticCohomology} (the essential point that lurks behind this is Andr\'{e}'s flatness lemma, see \cite[Theorem 7.12 and Theorem 7.4]{BhattScholzepPrismaticCohomology}).

Now suppose $(A,\m)$ is a Noetherian complete local domain of residue characteristic $p>0$ and $S$ is a perfectoid $A$-algebra. Let $x_1,\dots, x_d$ be a system of parameters of $A$ and $0\neq g\in A$. We say that $S$ is $(g)_{\perfd}$-almost big Cohen-Macaulay with respect to $x_1,\dots,x_d$ (note that $(g)_{\perfd}$ makes sense in $S$ by the above discussion) if:
\begin{itemize}
 \item $x_1,\dots,x_d$ is a $(g)_{\perfd}$-almost regular sequence on $S$, i.e., the ideal $(g)_{\perfd}$ annihilates $\frac{(x_1,\dots, x_i)S:_S x_{i+1}}{(x_1,\dots, x_i)S}$ for each $i$.
 \item $S/(x_1,\dots,x_d)S$ is not $(g)_{\perfd}$-almost zero, i.e., $(g)_{\perfd}\nsubseteq (x_1,\dots,x_d)S$.
\end{itemize}
One fact that we will be using repeatedly is that if $S$ is a perfectoid $A$-algebra that is $(g)_{\perfd}$-almost big Cohen-Macaulay with respect to $x_1, x_2,\dots,x_d$, and if $A$ has mixed characteristic we assume $x_1=p$,\footnote{This assumption is actually not necessary, one can first enlarge $S$ to $S'$ using \cite[Theorem 7.12]{BhattScholzepPrismaticCohomology} such that $g$ admits a compatible system of $p$-power roots so that $(g)_{\perfd}=(g^{1/p^\infty})^-$, and then map $S'$ to a perfectoid big Cohen-Macaulay algebra using Gabber's method \cite[17.5]{GabberRameroFoundationsAlmostRingTheory}. We omit the details since in our context, we can always arrange $x_1=p$ in mixed characteristic.} then $S$ is a perfectoid seed. The proof of this fact follows the same line as in \cite[Proof of Theorem 3.1.1 (1)]{AndreWeaklyFunctorialBigCM}: the point is to first apply the flatness lemma (see \cite[Theorem 7.12]{BhattScholzepPrismaticCohomology}) to assume $x_1,\dots,x_d$ and $g$ all have compatible system of $p$-power roots and then map $S^\flat$ to a perfect algebra $C$ such that $x_1^\flat,\dots,x_d^\flat$ is a regular sequence on $C$, then untilt to obtain $S^\natural:=(S^\flat)^\sharp \to C^\sharp$ such that $x_1,\dots,x_d$ is a regular sequence on $C^\sharp$. Since $S$ is perfectoid, $S=S^\natural$ and we can replace $C^\sharp$ by its $\m$-adic completion so that $\widehat{C}^\m$ is perfectoid and (balanced) big Cohen-Macaulay \cite[Proposition 2.2.1]{AndreWeaklyFunctorialBigCM} and \cite[Corollary 8.5.3]{BrunsHerzog}.



We record the following lemma on perfectoidization which we will use throughout. We would like to thank Bhargav Bhatt for providing us the argument.

\begin{lemma}
\label{lem.perfdptorsionfree}
Let $S$ be a $p$-torsion free ring over a perfectoid ring $R$. Suppose $S_{\perfd}$ is a (perfectoid) ring, then $S_{\perfd}$ is $p$-torsion free.
\end{lemma}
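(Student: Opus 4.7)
The first reduction I would make is to observe that since perfectoid rings are $p$-adically complete, the universal property $\Hom(S_{\perfd}, T) = \Hom(S, T)$ for perfectoid $T$ shows that $S_{\perfd}$ depends only on the $p$-adic completion $\widehat{S}$ of $S$. Because the $p$-adic completion of a $p$-torsion-free ring remains $p$-torsion-free, I may assume $S$ is itself $p$-adically complete and $p$-torsion-free, equivalently $p$-completely flat over $\mathbb{Z}_p$.

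The main step is to invoke the compatibility of Bhatt--Scholze's derived perfectoidization $(-)_{\perfd}^{\mathbb{L}}$ (which a priori lands in $D^{\geq 0}$) with derived reduction modulo $p$: there is a natural equivalence
\[
S_{\perfd}^{\mathbb{L}} \otimes^{\mathbb{L}}_{\mathbb{Z}_p} \mathbb{F}_p \;\simeq\; \bigl(S \otimes^{\mathbb{L}}_{\mathbb{Z}_p} \mathbb{F}_p\bigr)_{\perfd}^{\mathbb{L}}.
\]
By hypothesis $S_{\perfd}^{\mathbb{L}} = S_{\perfd}$ is a classical perfectoid ring. Since $S$ is $p$-completely flat, $S \otimes^{\mathbb{L}}_{\mathbb{Z}_p} \mathbb{F}_p = S/pS$ is concentrated in degree $0$, and in characteristic $p$ the derived perfectoidization coincides with the classical perfection $\varinjlim_e F^e_*(S/pS)$, which is again a discrete ring. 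Hence the left-hand side of the displayed equivalence is concentrated in degree $0$.

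On the other hand, $S_{\perfd} \otimes^{\mathbb{L}}_{\mathbb{Z}_p} \mathbb{F}_p$ is computed by the two-term Koszul complex $[S_{\perfd} \xrightarrow{\cdot p} S_{\perfd}]$, so the vanishing of its $H^{-1}$ is precisely the statement that multiplication by $p$ is injective on $S_{\perfd}$. This is the desired $p$-torsion-freeness.

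The main obstacle is to justify the displayed compatibility. This is essentially built into the prismatic construction of perfectoidization in \cite{BhattScholzepPrismaticCohomology} (cf.\ Sections 7--8, especially the description of $(-)_{\perfd}^{\mathbb{L}}$ as a derived limit over maps to perfectoid rings), but one needs to unwind the definitions to confirm the compatibility applies in our generality --- in particular without any finite-presentation hypothesis on $S$.
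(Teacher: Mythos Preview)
Your proof has a genuine error: the displayed compatibility
\[
S_{\perfd}^{\mathbb{L}} \otimes^{\mathbb{L}}_{\mathbb{Z}_p} \mathbb{F}_p \;\simeq\; \bigl(S \otimes^{\mathbb{L}}_{\mathbb{Z}_p} \mathbb{F}_p\bigr)_{\perfd}^{\mathbb{L}}
\]
is false, already in the simplest case. Take $S = R = \mathbb{Z}_p\langle p^{1/p^\infty}\rangle$, which is perfectoid and $p$-torsion free. Then $S_{\perfd} = S$, and the left side is $S/p \cong \mathbb{F}_p[T^{1/p^\infty}]/(T)$, a nonreduced ring. The right side is the colimit perfection of $S/p$, which kills all nilpotents and gives $\mathbb{F}_p$. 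These are not isomorphic. The underlying phenomenon is that for a perfectoid ring $A$, the quotient $A/p$ is only semiperfect, not perfect, so reducing mod $p$ and then perfectoidizing genuinely loses information compared to perfectoidizing first and then reducing. No amount of unwinding the prismatic definitions will produce the claimed equivalence, because it simply does not hold.

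The paper's argument is quite different and more geometric. It passes to the maximal $p$-torsion-free quotient $S'_{\perfd}$ of $S_{\perfd}$, which is again perfectoid, and uses a structural fact about perfectoid rings (from \cite{CesnaviciusScholzePurityflatCohomology} or Bhatt's notes) to identify the $p$-torsion kernel with $\ker\bigl((S/p)_{\perf} \to (S'_{\perfd}/p)_{\perf}\bigr)$. The injectivity of this map is then verified pointwise: embed $(S/p)_{\perf}$ into a product of perfect fields $\prod K_i$, and for each $K_i$ use the $p$-torsion-freeness of $S$ to find a generization in $\Spec S$ away from $V(p)$, realized by a map to a $p$-torsion-free perfectoid valuation ring $W$ with residue field $K_i$. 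The universal property then factors $S \to W$ through $S'_{\perfd}$, producing the needed factorization $(S/p)_{\perf} \to (S'_{\perfd}/p)_{\perf} \to K_i$.
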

\begin{proof}
Let $S'_{\perfd}$ be the largest $p$-torsion free quotient of $S_{\perfd}$. Then $S'_{\perfd}$ is a perfectoid ring and we know that $\ker(S_{\perfd}\to S'_{\perfd})$ can be identified with $\ker((S_\perfd/p)_{\perf}\cong (S/p)_{\perf}\to (S'_{\perfd}/p)_{\perf})$ by \cite[2.1.3]{CesnaviciusScholzePurityflatCohomology} or \cite[Lecture IV, Proposition 3.2]{BhattPrismaticLectureNotes}. Thus it is enough to prove that $(S/p)_{\perf}\to (S'_{\perfd}/p)_{\perf}$ is injective.

Since $(S/p)_{\perf}$ is perfect, it embeds into a product of perfect fields $\prod K_i$ (each $K_i$ corresponds to the quotient field of each minimal prime of $(S/p)_{\perf}$). Fix such a $K_i$, it is enough to show that each map $(S/p)_{\perf}\to K_i$ factors over $(S'_{\perfd}/p)_{\perf}$. Since $S$ is $p$-torsion free, the minimal prime $P\in \Spec(S/p)$ corresponding to $K_i$ admits a generalization $Q\in \Spec S$ not containing $p$. We can then choose a map $S\to W$ such that $W$ is a valuation ring witnessing this generalization (i.e., $W$ is supported at $Q$ and the maximal ideal $\m$ of $W$ contracts to $P$). We can enlarge $W$ to assume $W$ is perfectoid without changing this property (e.g., we can replace $W$ by the $p$-adic completion of the absolute integral closure of $W$). Since the map $S/P\to W/\m$ factors over $K_i$, we can further replace $W$ by $W\times_{W/\m} K_i$ to assume $W$ is a perfectoid valuation ring whose residue field is $K_i$. Since $W$ is $p$-torsion free and perfectoid, we have factorizations:
$$S\to S_{\perfd}\to S'_{\perfd}\to W \to K_i.$$
Since $K_i$ is a perfect field, this induces
$$(S/p)_{\perf}\to (S'_{\perfd}/p)_{\perf}\to K_i$$
as desired.
\end{proof}

We start by providing a shorter proof of the existence of perfectoid big Cohen-Macaulay $R^+$-algebras.

\begin{theorem}
\label{thm:bigCMR^+}
Let $(R,\m)$ be a Noetherian complete local domain of mixed characteristic $(0,p)$. Then there exist perfectoid big Cohen-Macaulay $R^+$-algebras.
\end{theorem}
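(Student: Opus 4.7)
The plan is to combine Cohen's structure theorem with Andr\'e's flatness lemma, accessed through the perfectoidization functor of \cite{BhattScholzepPrismaticCohomology}. First I would use the structure theorem to realize $R$ as a module-finite extension of a complete unramified regular local ring $A \cong V\llbracket x_2, \ldots, x_d \rrbracket$, where $V$ is a complete mixed characteristic DVR with uniformizer dividing $p$. After a harmless finite (ramified) extension of $V$, which only enlarges $R^+$, I may further arrange that $p$ admits a compatible system of $p$-power roots in $V$. I then form the perfectoid ring
$A_\infty := \widehat{A[p^{1/p^\infty}, x_2^{1/p^\infty}, \ldots, x_d^{1/p^\infty}]}$
(completion taken $p$-adically); the sequence $p, x_2, \ldots, x_d$ is a regular sequence on $A_\infty$, and $A\to A_\infty$ is faithfully flat in a $p$-complete sense.

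Next, I would fix a compatible system of $p$-power roots of $p, x_2, \ldots, x_d$ inside $R^+$, let $R_\infty \subseteq R^+$ be the $A_\infty$-subalgebra they generate, and let $S$ denote its $p$-adic completion. Since $A \to R$ is a finite normal extension, it is \'etale away from the vanishing locus of some nonzero discriminant-like element $g \in A$, so the map $A_\infty \to S$ becomes ind-\'etale after inverting $pg$. Andr\'e's flatness lemma in the form of \cite[Theorem 7.12]{BhattScholzepPrismaticCohomology} together with its consequence \cite[Theorem 10.8]{BhattScholzepPrismaticCohomology} then guarantees that the perfectoidization $S_{\perfd}$ is a genuine perfectoid $\widehat{R^+}$-algebra (using also \autoref{lem.perfdptorsionfree} to keep track of $p$-torsion-freeness) and that it is $(pg)_{\perfd}$-almost faithfully flat over $A_\infty$. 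This forces $p, x_2, \ldots, x_d$ to be a $(g)_{\perfd}$-almost regular sequence on $S_{\perfd}$, so $S_{\perfd}$ is a $(g)_{\perfd}$-almost Cohen-Macaulay perfectoid $R^+$-algebra.

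Finally, to pass from almost to honest Cohen-Macaulay, I would apply Gabber's construction, exactly as recalled in the proof of \autoref{cor:briancon-skoda}: set $B := T_g^{-1}(S_{\perfd})^\diamond$, where $(S_{\perfd})^\diamond = \prod_{\N} S_{\perfd}/\bigoplus_{\N} S_{\perfd}$ and $T_g$ is the multiplicative system of sequences $(g^{\varepsilon_n})_n$ with $\varepsilon_n \in \N[1/p]$ and $\varepsilon_n \to 0$. A routine check shows $B$ is a big Cohen-Macaulay $R^+$-algebra, and replacing $B$ by its $p$-adic completion yields a perfectoid algebra that remains big Cohen-Macaulay by \cite[Example 3.8(7)]{BhattIyengarMaRegularRingsPerfectoid} together with \cite[Proposition 2.2.1]{AndreWeaklyFunctorialBigCM}. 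The main obstacle is the middle step: ensuring that perfectoidization interacts well with the (generically \'etale) extension $A_\infty \to S$ to produce the required almost Cohen-Macaulay property. This is precisely where the Bhatt--Scholze machinery cleanly replaces the elaborate almost-ring theory arguments of \cite{AndreWeaklyFunctorialBigCM}, shortening the proof considerably.
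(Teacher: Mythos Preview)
Your overall strategy—Cohen normalization, form $A_\infty$, apply almost purity to the perfectoidization, then pass from almost to honest big Cohen--Macaulay—is close in spirit to the paper's, but there is a genuine gap concerning the $R^+$-algebra structure, and it is exactly the point of the theorem.

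Taking $R_\infty$ to be (what I believe you intend, though the wording is garbled) the image of $A_\infty \otimes_A R$ inside $R^+$, and $S$ its $p$-adic completion, then $S_{\perfd}$ is a perfectoid $R$-algebra. By the universal property of perfectoidization there is a map $S_{\perfd} \to \widehat{R^+}$ (since $R_\infty \subseteq R^+$ and $\widehat{R^+}$ is perfectoid), but there is no map in the reverse direction: your claim that $S_{\perfd}$ is a $\widehat{R^+}$-algebra is simply false. Consequently Gabber's construction yields a perfectoid big Cohen--Macaulay $R$-algebra, not an $R^+$-algebra. You have reproved Andr\'e's original existence theorem, but not the statement at hand.

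The paper closes this gap by running the construction for \emph{every} module-finite domain extension $S$ of $R$ inside $R^+$: one forms $S^A_{\perfd} := (A_{\infty,0} \otimes_A S)_{\perfd}$, observes via almost purity that each is $(g_S)_{\perfd}$-almost Cohen--Macaulay (the discriminant $g_S$ now depends on $S$) and hence a perfectoid seed, and then takes the completed direct limit $\widehat{\varinjlim}_S S^A_{\perfd}$. This is still a perfectoid seed by \cite[Lemma 4.8]{MaSchwedeSingularitiesMixedCharBCM}, and crucially $\widehat{R^+} = \widehat{\varinjlim}_S S$ maps into it, so $\widehat{R^+}$ is itself a perfectoid seed. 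Note that the paper never invokes Gabber's trick here: the perfectoid-seed formalism (almost Cohen--Macaulay implies seed, and seeds are stable under completed direct limits) is precisely the device that lets the discriminant vary with $S$ while still drawing a conclusion at the limit. Your single-$g$ approach followed by Gabber cannot accommodate this varying discriminant, which is why the direct limit step is not optional.
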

\begin{proof}
By enlarging $R$ if necessary, we may assume $k=R/\m$ is algebraically closed. We fix a complete unramified regular local ring $A\cong W(k)[[x_2,\dots,x_{d}]]$ such that $A\to R$ is a module-finite extension. Then we {\it fix} $$A_{\infty,0}:=\text{$p$-adic completion of } {A[p^{1/p^\infty},x_2^{1/p^\infty},\dots,x_d^{1/p^\infty}]}\subseteq \widehat{R^+},$$
which is a perfectoid algebra. Now for any module-finite domain extension $S$ of $R$ in $R^+$, consider $S^A_{\perfd}:=(A_{\infty,0}\otimes_A S)_{\perfd}$. By the almost purity theorem \cite[Theorem 10.9]{BhattScholzepPrismaticCohomology}, $A_{\infty, 0}/p^n \to S^A_{\perfd}/p^n$ is $(g)_{\perfd}$-almost finite projective where $g$ is the discriminant of $A\to S$. By \autoref{lem.perfdptorsionfree} and \cite[Lemma 4.1.3 (b)]{AndreDirectsummandconjecture}, $S^A_{\perfd}$ is $(g)_{\perfd}$-almost big Cohen-Macaulay with respect to $p, x_2,\dots,x_d$ and hence $S^A_{\perfd}$ is a perfectoid seed. Thus by \cite[Lemma 4.8]{MaSchwedeSingularitiesMixedCharBCM}, $\widehat{\varinjlim}_S S^A_{\perfd}$ is a perfectoid seed. Since $\widehat{R^+}=\widehat{\varinjlim} S\to \widehat{\varinjlim}_S S^A_{\perfd}$\footnote{In fact, as each $S^A_{\perfd}$ maps to $\widehat{R^+}$ by the universal property of the perfection functor, we also have $\widehat{\varinjlim}_S S^A_{\perfd}\to \widehat{R^+}$ and hence $\widehat{R^+}$ is a direct summand of $\widehat{\varinjlim}_S S^A_{\perfd}$.}, $\widehat{R^+}$ is a perfectoid seed and hence it maps to a perfectoid big Cohen-Macaulay algebra.
\end{proof}

The next theorem gives a simpler proof of the existence of weakly functorial perfectoid big Cohen-Macaulay $R^+$-algebras for surjective maps, recovering \cite[Theorem 1.2.1]{AndreWeaklyFunctorialBigCM}. One advantage of this argument is that we do not need to induct on the height of $P$ as in \cite{AndreWeaklyFunctorialBigCM}.

\begin{theorem}
\label{thm:weaklyfunctorialBCM}
Let $(R,\m)$ be a  Noetherian complete local domain of mixed characteristic $(0,p)$ and let $S=R/P$. Then given any perfectoid big Cohen-Macaulay $R^+$-algebra $B$ and a map $R^+\to S^+$, there exists a perfectoid big Cohen-Macaulay $S^+$-algebra $C$ that fits in the following commutative diagram:
\[\xymatrix{
R \ar[r] \ar[d] & S \ar[d] \\
R^+ \ar[r] \ar[d] & S^+ \ar[d]\\
B\ar[r] & C.
}
\]
\end{theorem}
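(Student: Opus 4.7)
The plan is to mimic the proof of \autoref{thm:bigCMR^+}, with the given $B$ playing the role that $A_{\infty,0}$ played there: namely, to build $C$ as a $p$-adic colimit of perfectoidizations attached to finite extensions of $S$ inside $S^+$. Since $B$ already supplies the perfectoid \BCM{} structure over $R^+$, the task reduces to producing a perfectoid seed for $S^+$ equipped with a compatible map from $B$, after which the seed-to-\BCM{} machinery recalled in the preamble of the appendix yields the desired $C$.

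For the setup, let $c = \height P$ and choose $y_1,\dots,y_c \in P$ forming part of a system of parameters $y_1,\dots,y_c,x_1,\dots,x_{d-c}$ of $R$; then $P$ is a minimal prime of $(y_1,\dots,y_c)R$ and $\bar{x}_1,\dots,\bar{x}_{d-c}$ is a system of parameters of $S$. Fix compatible $p$-power roots $y_i^{1/p^n}\in R^+\subseteq B$ and let $J = (y_1^{1/p^\infty},\dots,y_c^{1/p^\infty})\subseteq B$. Then $B_0 := B/J^-$, with $J^-$ the $p$-adic closure, is a perfectoid ring by the criterion recalled at the start of the appendix. Because $B_0$ is reduced and the image of $P$ is nilpotent in $R/(y_1,\dots,y_c)$, the natural map $R \to B_0$ kills $P$, so $B_0$ acquires an $S$-algebra structure compatible with the given $R^+ \to B \to B_0$. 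Moreover, since $y_1,\dots,y_c,x_1,\dots,x_{d-c}$ is regular on $B$, the images $\bar{x}_1,\dots,\bar{x}_{d-c}$ form a regular sequence on $B_0$.

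For each module-finite domain extension $S\subseteq T$ inside $S^+$, set $B_0^T := (B_0 \otimes_S T)_{\perfd}$. By \cite[Theorem 10.8]{BhattScholzepPrismaticCohomology} together with \autoref{lem.perfdptorsionfree}, $B_0^T$ is an honest $p$-torsion-free perfectoid ring, and it is $(g)_{\perfd}$-almost finite projective over $B_0$ for the discriminant $g$ of $S\to T$; consequently $\bar{x}_1,\dots,\bar{x}_{d-c}$ remains a $(g)_{\perfd}$-almost regular sequence on $B_0^T$. Hence each $B_0^T$ is a perfectoid seed over $T$. Passing to the filtered colimit over all such $T\subseteq S^+$ and then $p$-adically completing yields $B_\infty := \widehat{\varinjlim}_T B_0^T$, a perfectoid seed for $\widehat{S^+}$ by the argument of \cite[Lemma 4.8]{MaSchwedeSingularitiesMixedCharBCM}; the compatibility of the transition maps at each finite stage ensures that the composition $B \to B_0 \to B_\infty$ is compatible with $R^+ \to S^+$. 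The seed-to-\BCM{} construction then produces a perfectoid \BCM{} $S^+$-algebra $C$ with the required map $B \to C$.

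The main obstacle will be carefully verifying that each intermediate perfectoidization $B_0^T$ is an honest ring (rather than merely a derived object in $D^{\geq 0}(B_0)$) and that the $(g)_{\perfd}$-almost bookkeeping survives both the tensor products with $T$ and the filtered colimit. The key technical inputs are the compatibility of $(-)_{\perfd}$ with $p$-adic completion and the almost purity theorem, both from \cite[Sections 7 and 10]{BhattScholzepPrismaticCohomology}; with these in hand, the remaining verifications of regularity, the seed property, and passage through the colimit proceed exactly as in the proof of \autoref{thm:bigCMR^+}.
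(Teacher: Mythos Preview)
There is a genuine gap at the step where you claim that $B_0 = B/(y_1^{1/p^\infty},\dots,y_c^{1/p^\infty})^-$ is an $S$-algebra.  Your justification is that ``the image of $P$ is nilpotent in $R/(y_1,\dots,y_c)$'', but this is false in general: you only arranged that $P$ is \emph{a} minimal prime of $(y_1,\dots,y_c)$, not the \emph{unique} one.  If $(y_1,\dots,y_c)$ has another minimal prime $P'$, then the image of $P$ in $R/(y_1,\dots,y_c)$ is not contained in the nilradical, and hence $P$ need not map to zero in the reduced ring $B_0$.  (For a prime $P$ that is not a set-theoretic complete intersection in $R$, no choice of $y_1,\dots,y_c$ will rescue this.)  This is exactly the obstacle the paper's proof confronts: it introduces an auxiliary $g\notin P$ with $gP\subseteq\sqrt{(f_1,\dots,f_c)}$, so that only $g^{1/p^\infty}P$ vanishes in the analogous quotient, and then passes to $g^{-1/p^\infty}(B/(f_1^{1/p^\infty},\dots,f_c^{1/p^\infty})^-)$ to obtain a $g^{1/p^\infty}$-almost big Cohen--Macaulay $S$-algebra.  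Your argument skips this almost-mathematics layer entirely, and without it the construction does not get off the ground.

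There is a second, related issue with compatibility.  Even granting an $S$-algebra structure on $B_0$, you need the map $R^+\to B\to B_\infty$ to factor through the \emph{given} surjection $R^+\to S^+=R^+/P^+$, i.e., every element of $P^+$ (not just of $P$) must die in $B_\infty$.  Your colimit is indexed by finite extensions $T$ of $S$ inside $S^+$, and the $T$-structure on $(B_0\otimes_S T)_\perfd$ comes from the second tensor factor, while the $R^+$-structure comes through $B$; nothing in your construction forces these to agree.  The paper avoids this by working directly with $B/P^+B=\varinjlim_Q B/QB$ (where $Q$ runs over contractions of $P^+$ to finite levels), which builds in the correct compatibility from the start and reduces the problem to showing that each $(B/QB)_\perfd$ is a perfectoid seed.
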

\begin{proof}
Given a map $R^+\to S^+$ is the same as choosing a prime $P^+$ of $R^+$ such that $S^+=R^+/P^+$. We can write $R^+$ as a direct limit of module-finite domain extensions of $R$, and $P^+$ is a thus direct limit of certain primes $Q$ lying over $P$ of $R$. Thus $B/P^+B=\varinjlim_Q B/QB$ and so $B/P^+B$ maps to $\widehat{\varinjlim}_Q(B/QB)_{\perfd}$. Therefore by \cite[Lemma 4.8]{MaSchwedeSingularitiesMixedCharBCM}, it is enough to prove that each $(B/QB)_{\perfd}$ is a perfectoid seed. But then it is enough to show that $(B/PB)_{\perfd}$ is a perfectoid seed (as $Q$ lives on some finite domain extension of $R$ so the argument for such $Q$ is the same as for $P$).

Now we pick $(f_1,\dots,f_c)\in P$ such that $f_1,\dots,f_c$ is part of a system of parameters of $R$ and $P$ is a minimal prime of $(f_1,\dots,f_c)$. Thus there exists $g\notin P$ such that $gP\in \sqrt{(f_1,\dots,f_c)}$. Moreover, if $p\in P$, we take $f_1=p$, and if $p\notin P$, we can assume $p,f_1,\dots,f_c$ is also part of a system of parameters and that $g=pg'$ for $g' \in R$. Extend $f_1,\dots,f_c$ to a full system of parameters $f_1,\dots,f_c, y_1,\dots,y_t$ on $R$ such that the image of $y_1,\dots,y_t$ forms a system of parameters on $S=R/P$ (and if $p\notin P$ we take $y_1=p$). Since $f_1,\dots,f_c$ is a regular sequence on $B$ and they all have a compatible system of $p$-power roots in $B$, we know that $(B/(f_1,\dots,f_c)B)_{\perfd}=B/(f_1^{1/p^\infty},\dots, f_c^{1/p^\infty})^-$. In particular, since $B$ is big Cohen-Macaulay, by the way we choose $f_1,\dots,f_c, y_1,\dots, y_t$, we know that $y_1,\dots,y_t$ is a regular sequence on $(B/(f_1,\dots,f_c)B)_{\perfd}$.

Since $gP\in \sqrt{(f_1,\dots,f_c)}$, we know that $gP=0$ in $(B/(f_1,\dots,f_c)B)_{\perfd}$ and hence $g^{1/p^\infty}P=0$ in $(B/(f_1,\dots,f_c)B)_{\perfd}$ as the latter is reduced. Therefore $$g^{-1/p^\infty}(B/(f_1,\dots,f_c)B)_{\perfd}:=\Hom((g^{1/p^\infty}), (B/(f_1,\dots,f_c)B)_{\perfd})$$ is a $g^{1/p^\infty}$-almost big Cohen-Macaulay $S$-algebra with respect to $y_1,\dots,y_t$: to check the non-triviality condition, note that $B$ is a big Cohen-Macaulay $R$-algebra and $g\notin P$, applying \cite[Proposition 2.5.1]{AndreWeaklyFunctorialBigCM}) to $\pi=g$ we see that $B/(\m B+\sqrt{PB})$ is not $g^{1/p^\infty}$-almost zero, but since
$$\frac{(B/(f_1,\dots,f_c)B)_{\perfd}}{(y_1,\dots,y_t)(B/(f_1,\dots,f_c)B)_{\perfd}}\cong\frac{B}{(y_1,\dots,y_t)B+(f_1^{1/p^\infty},\dots, f_c^{1/p^\infty})^-B}\twoheadrightarrow B/(\m B+\sqrt{PB}),$$ the former is not $g^{1/p^\infty}$-almost zero. Now if $p\notin P$, then by \cite[2.3.1]{AndreWeaklyFunctorialBigCM}, we know that
$$\widetilde{B}:=(g^{-1/p^\infty}(B/(f_1,\dots,f_c)B)_{\perfd})^\natural\to g^{-1/p^\infty}(B/(f_1,\dots,f_c)B)_{\perfd}$$
is a $(pg)^{1/p^\infty}$-almost (and hence $g^{1/p^\infty}$-almost) isomorphism. Here, we implicitly used that $$g^{-1/p^\infty}(B/(f_1,\dots,f_c)B)_{\perfd}$$ is spectral: indeed $(B/(f_1,\dots,f_c)B)_{\perfd}$ is perfectoid and one can check that $\Hom((g^{1/p^\infty}),-)$ preserves spectrality (see \cite[(2.16)]{AndrePerfectoidAbhyankarLemma} or \cite[2.3.2]{AndreWeaklyFunctorialBigCM}).

If $p\in P$, then we set $\widetilde{B}=g^{-1/p^\infty}(B/(f_1,\dots,f_c)B)_{\perfd}$. Hence in both cases, $\widetilde{B}$ is a perfectoid $B/PB$-algebra and is $g^{1/p^\infty}$-almost big Cohen-Macaulay with respect to $y_1,\dots,y_t$. Thus, it is a perfectoid seed. By the universal property of the perfection functor, there exists a map $(B/PB)_{\perfd} \to \widetilde{B}$ which implies that the former ring is also a perfectoid seed.
\end{proof}


We can actually prove the following slight generalization of Andr\'{e}'s result. This will be used in the proof of \autoref{clm.ReesBCMRegularSoIsR}.

\begin{theorem}
\label{thm:GeneralweaklyfunctorialBCM}
Let $(R,\m)\to (S,\n)$ be a local map of  Noetherian complete local domains such that $R$ has mixed characteristic $(0,p)$ and $R$, $S$ have the same residue field. Then given any perfectoid big Cohen-Macaulay $R^+$-algebra $B$, there exists a commutative diagram:
\[\xymatrix{
R \ar[r] \ar[d] & S \ar[d] \\
R^+ \ar[r] \ar[d] & S^+ \ar[d]\\
B\ar[r] & C
}
\]
where $C$ is a perfectoid big Cohen-Macaulay $S^+$-algebra.
\end{theorem}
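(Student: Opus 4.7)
The plan is to reduce to the surjective case already treated in \autoref{thm:weaklyfunctorialBCM} by factoring the local map $R\to S$ through a power series ring. First, using the hypothesis that $R$ and $S$ share the residue field, I would pick generators $s_1,\dots,s_n$ of $\mathfrak{n}$ and form the power series ring $T:=R[[x_1,\dots,x_n]]$ together with the continuous $R$-algebra surjection $\varphi\colon T\twoheadrightarrow S$ sending $x_i\mapsto s_i$; surjectivity follows from completeness and Nakayama since the image contains $R$ and hits $\mathfrak{n}/\mathfrak{n}^2$. I would then fix a compatible embedding $R^+\hookrightarrow T^+$ inside an algebraic closure of $\Frac(T)$.

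Next, I would build an intermediate perfectoid big Cohen-Macaulay $T^+$-algebra $B'$ receiving a compatible map from $B$. Set $\tilde B$ to be the $p$-adic completion of $B[x_1^{1/p^\infty},\dots,x_n^{1/p^\infty}]$. Since adjoining compatible systems of $p$-power roots of variables to a perfectoid ring yields a perfectoid ring, $\tilde B$ is a perfectoid $T$-algebra; and appending $x_1,\dots,x_n$ to any system of parameters of $R$ that is regular on $B$ shows $\tilde B$ is big Cohen-Macaulay over $T$. To upgrade to a $T^+$-algebra, I would mimic the argument of \autoref{thm:bigCMR^+}: for each module-finite normal extension $T\subseteq T'$ inside $T^+$, consider $(\tilde B\otimes_T T')_{\perfd}$, which by the almost purity theorem \cite[Theorem 10.8]{BhattScholzepPrismaticCohomology} is a perfectoid ring that is $(g_{T'})_{\perfd}$-almost finite projective over $\tilde B$, where $g_{T'}$ is the discriminant of $T\to T'$. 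By \autoref{lem.perfdptorsionfree} and the argument of \cite[3.1.1]{AndreWeaklyFunctorialBigCM}, each such ring is a perfectoid seed over $T'$. Taking the $p$-adically completed colimit $\widehat{\varinjlim}_{T'}(\tilde B\otimes_T T')_{\perfd}$ and invoking \cite[Lemma 4.8]{MaSchwedeSingularitiesMixedCharBCM} produces a perfectoid seed over $T^+$ that receives $B$; letting $B'$ be a perfectoid big Cohen-Macaulay $T^+$-algebra this seed maps to gives the desired object, together with a map $B\to B'$.

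Finally, I would apply \autoref{thm:weaklyfunctorialBCM} directly to the surjection $\varphi\colon T\twoheadrightarrow S$, the induced map $T^+\twoheadrightarrow S^+$ (given by a choice of prime of $T^+$ lying over $\ker\varphi$ and compatible with the chosen map $R^+\to S^+$), and the perfectoid big Cohen-Macaulay $T^+$-algebra $B'$. This produces a perfectoid big Cohen-Macaulay $S^+$-algebra $C$ together with a compatible map $B'\to C$, and composing with $B\to B'$ fills in the bottom row of the diagram.

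The main obstacle will be carefully executing the intermediate construction over $T^+$: ensuring that the perfectoidizations $(\tilde B\otimes_T T')_{\perfd}$ are honest perfectoid rings rather than objects only in $D^{\geq 0}$, that the $g^{1/p^\infty}$-almost Cohen-Macaulay property is preserved under the filtered colimit and $p$-adic completion, and that all the resulting algebra maps are simultaneously compatible with the previously fixed embeddings $R^+\hookrightarrow T^+$ and the prime choices used to identify $S^+$ as a quotient of $T^+$.
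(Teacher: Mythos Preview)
Your approach is essentially the same as the paper's: factor $R\to S$ through $T=R[[x_1,\dots,x_n]]\twoheadrightarrow S$, build a perfectoid big Cohen--Macaulay $T^+$-algebra from $B$ by adjoining $p$-power roots of the $x_i$ and then perfectoidizing finite extensions, and finally apply \autoref{thm:weaklyfunctorialBCM} to the surjection $T\to S$. Two points deserve attention, one cosmetic and one substantive.

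First, the paper takes $\widetilde{B}$ to be the $(p,y_2,\dots,y_d,x_1,\dots,x_n)$-adic (i.e.\ $\mathfrak{n}_T$-adic) completion rather than just the $p$-adic completion, and invokes \cite[Proposition~2.2.1]{AndreWeaklyFunctorialBigCM} to see that this is perfectoid and big Cohen--Macaulay over $T$. Your $p$-adic completion would also work, but you should say explicitly why the full parameter sequence remains regular and why $\mathfrak{n}_T\widetilde{B}\neq\widetilde{B}$.

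Second, and more importantly, the obstacle you flag at the end is real and is exactly what the paper's indexing scheme is designed to handle. If you form $(\widetilde{B}\otimes_T T')_{\perfd}$ over finite extensions $T\subseteq T'\subseteq T^+$ and pass to the colimit, the resulting algebra carries two a priori different $R^+$-structures: one through $B\to\widetilde{B}$ and one through the fixed embedding $R^+\hookrightarrow T^+$. For $r\in R'\subseteq R^+$ lying in some $T'$, the elements $r\otimes 1$ and $1\otimes r$ in $\widetilde{B}\otimes_T T'$ need not coincide. The paper avoids this by running a \emph{double} colimit: for each finite $R\subseteq R'\subseteq R^+$ set $T'=R'\otimes_R T\cong R'[[x_1,\dots,x_n]]$ (compatibly embedded in $T^+$), note that $\widetilde{B}$ is already a $T'$-algebra via $R'\to B$, and then perfectoidize $(\widetilde{B}\otimes_{T'}T'')$ for finite $T'\subseteq T''\subseteq T^+$. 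Tensoring over $T'$ rather than $T$ forces the two $R'$-structures to agree, and taking $\widehat{\varinjlim}_{R',T''}$ yields a perfectoid seed on which the $R^+$-structure inherited from $B$ matches the one coming through $T^+$. Once you make this adjustment, your argument is exactly the paper's.
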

\begin{proof}
Since $R$, $S$ have the same residue field, the image of any coefficient ring of $R$ in $S$ is a coefficient ring of $S$. Hence by \cite[Proof of Theorem 1.1]{AvramovFoxbyHerzogStructureofLocalhomomorphisms}, the map $R\to S$ can be factored as $R\to T\to S$ where $T=R[[x_1,\dots,x_n]]$ and $T\to S$ is surjective. By \autoref{thm:weaklyfunctorialBCM}, it is enough to construct the diagram for $(R,\m)\to (T,\n)$ where $\n=\m+(x_1,\dots,x_n)$.

Let $p,y_2,\dots,y_d$ be a system of parameters of $R$ and let $\widetilde{B}$ be the $(p,y_2,\dots,y_d,x_1,\dots,x_n)$-adic completion of $B[x_1^{1/p^\infty},\dots,x_n^{1/p^\infty}]$. Since $p,y_2,\dots,y_d,x_1,\dots,x_n$ is a regular sequence on $B[x_1^{1/p^\infty},\dots,x_n^{1/p^\infty}]$ (as $B$ is big Cohen--Macaulay over $R$), applying \cite[Proposition 2.2.1]{AndreWeaklyFunctorialBigCM} we know that $\widetilde{B}$ is a perfectoid big Cohen--Macaulay $T$-algebra and the following diagram commutes:
\[\xymatrix{
R^+ \ar[r] \ar[d] & R^+\otimes_RT \ar[d] \\
B \ar[r] & \widetilde{B}
}
\]

For every module-finite domain extension $R'$ of $R$ inside $R^+$, we let $T':=R'\otimes_RT\cong R'[[x_1,\dots,x_n]]$ be compatibly chosen inside $T^+$.  For every module-finite domain extension $T''$ of $T'$ inside $T^+$, consider $(\widetilde{B}\otimes_{T'}T'')_{\perfd}$. By \cite[Theorem 10.9]{BhattScholzepPrismaticCohomology}, $(\widetilde{B}\otimes_{T'}T'')_{\perfd}/p^n$ is $(g)_{\perfd}$-almost finite projective over $\widetilde{B}/p^n$ where $g\in T'$ is such that $T'_g\to T''_g$ is finite \'{e}tale. In particular, by \autoref{lem.perfdptorsionfree}, $(\widetilde{B}\otimes_{T'}T'')_{\perfd}$ is a perfectoid seed and hence $B':=\widehat{\varinjlim}_{R',T''}(\widetilde{B}\otimes_{T'}T'')_{\perfd}$ is a perfectoid seed by \cite[Lemma 4.8]{MaSchwedeSingularitiesMixedCharBCM}. As a consequence, $\widetilde{B}\otimes_{(R^+\otimes_RT)}T^+=\varinjlim_{R',T''}\widetilde{B}\otimes_{T'}T''$ maps to a perfectoid seed $B'$, and hence to a perfectoid big Cohen--Macaulay $T$-algebra $C$, so we have the following commutative diagram:
\[\xymatrix{
R^+ \ar[r] \ar[d] & R^+\otimes_RT \ar[d] \ar[r] &  T^+ \ar[d] & & \\
B \ar[r] & \widetilde{B} \ar[r] & \widetilde{B}\otimes_{(R^+\otimes_RT)}T^+ \ar[r] & B' \ar[r] & C
}
\]
It is clear that $C$ is the desired perfectoid big Cohen--Macaulay $T^+$-algebra.
\end{proof}

The next result is a slightly different proof of \cite[Lemma 4.5]{MaSchwedeSingularitiesMixedCharBCM}.

\begin{theorem}
\label{thm:dominateBCM}
Let $(R,\m)$ be a  Noetherian complete local domain of mixed characteristic $(0,p)$ and let $B_1$, $B_2$ be two perfectoid big Cohen-Macaulay $R^+$-algebras. Then $B_1\widehat{\otimes}_{{R^+}}B_2$ maps $\widehat{R^+}$-linearly to another perfectoid big Cohen-Macaulay $R^+$-algebra $B$.
\end{theorem}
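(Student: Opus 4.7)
The plan is to construct $B$ by applying perfectoidization to a suitable completed tensor product of $B_1$ and $B_2$ and showing the result is a perfectoid seed; the strategy closely follows the pattern used in the proofs of \autoref{thm:bigCMR^+} and \autoref{thm:weaklyfunctorialBCM}. That is, produce a perfectoid algebra that is almost Cohen-Macaulay with respect to a fixed system of parameters, deduce it is a perfectoid seed, and then invoke the machinery already set up to obtain a genuine big Cohen-Macaulay $R^+$-algebra $B$ to which $B_1\widehat{\otimes}_{R^+}B_2$ maps.

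Concretely, I would first enlarge $R$ if necessary so that $k=R/\m$ is algebraically closed, fix a Cohen subring $A=W(k)\llbracket x_2,\dots,x_d\rrbracket\subseteq R$ with $A\to R$ module-finite, and let $A_{\infty,0}$ be the $p$-adic completion of $A[p^{1/p^\infty},x_2^{1/p^\infty},\dots,x_d^{1/p^\infty}]$, which is perfectoid and maps into $\widehat{R^+}$ and hence into both $B_1$ and $B_2$. Then form $T:=B_1\widehat{\otimes}_{A_{\infty,0}}B_2$ (the classical $p$-adic completion) and consider its perfectoidization $T_\perfd$. The natural surjection $T\twoheadrightarrow B_1\widehat{\otimes}_{R^+}B_2$ means it suffices to show that $T_\perfd$ is a perfectoid seed: the dominating perfectoid big Cohen-Macaulay $R^+$-algebra $B$ produced from $T_\perfd$ will automatically receive the compatible maps from $B_1$ and $B_2$, and by $p$-completeness of $B$ these assemble into the required map $B_1\widehat{\otimes}_{R^+}B_2\to B$.

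Two properties of $T_\perfd$ must then be verified. First, non-triviality modulo $\m$: since each $B_i/\m B_i$ is nonzero (as $B_i$ is big Cohen-Macaulay over $R$) and both are $k$-algebras which admit compatible maps to a common perfectoid valuation ring extending $k$ (using that $k$ is algebraically closed), the tensor product of the reductions is nonzero, forcing $T_\perfd/\m T_\perfd\neq 0$. Second, almost Cohen-Macaulayness: one shows that $p,x_2,\dots,x_d$ forms a $(g)_\perfd$-almost regular sequence on $T_\perfd$ for some nonzero $g\in A$. Here one uses Andr\'e's flatness lemma / almost purity in the form of \cite[Theorem 10.8]{BhattScholzepPrismaticCohomology} to obtain $(g)_\perfd$-almost flatness of $A_{\infty,0}\to B_i$ modulo $p^n$, and combines this with the fact that each $B_i$ is honestly big Cohen-Macaulay over $A$; a Koszul complex argument then propagates the regularity of the parameter sequence $(g)_\perfd$-almost through the tensor product and, by \autoref{lem.perfdptorsionfree} and the reducedness of perfectoid rings, through the perfectoidization.

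With these two properties in hand, $T_\perfd$ is a $(g)_\perfd$-almost big Cohen-Macaulay perfectoid $R^+$-algebra and hence a perfectoid seed, so by the arguments of \autoref{thm:bigCMR^+} and \cite[Lemma 4.8]{MaSchwedeSingularitiesMixedCharBCM} it maps to a perfectoid big Cohen-Macaulay $R^+$-algebra $B$, completing the construction. The main obstacle will be the almost-regular-sequence property: although each $B_i$ individually satisfies the Cohen-Macaulay condition on the nose, tensoring over $A_{\infty,0}$ generally introduces higher Tor terms that must be annihilated $(g)_\perfd$-almost, and carefully tracking the almost structure through the completion and perfectoidization of the tensor product constitutes the technical crux of the proof.
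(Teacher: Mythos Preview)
Your overall setup is close to the paper's, but there is a genuine gap in the step where you say the maps $B_1\to B$ and $B_2\to B$ ``assemble into the required map $B_1\widehat{\otimes}_{R^+}B_2\to B$.''  The tensor product $T=B_1\widehat{\otimes}_{A_{\infty,0}}B_2$ carries \emph{two} $R^+$-algebra structures, one through each factor, and these agree only on $A_{\infty,0}$, not on all of $R^+$.  Consequently, a map $T\to B$ (or $T_{\perfd}\to B$) does not factor through $B_1\widehat{\otimes}_{R^+}B_2$: the surjection $T\twoheadrightarrow B_1\widehat{\otimes}_{R^+}B_2$ goes the wrong way for your purposes.  (Incidentally, $T$ is already perfectoid, so $T_{\perfd}=T$; the perfectoidization step is superfluous.)

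The paper resolves precisely this issue.  For each module-finite domain extension $R\subseteq S\subseteq R^+$ one sets $S^A_{\perfd}=(A_{\infty,0}\otimes_A S)_{\perfd}$; by almost purity the multiplication map $S^A_{\perfd}\otimes_{A_{\infty,0}}S^A_{\perfd}\to S^A_{\perfd}$ is $(g)_{\perfd}$-almost split mod $p$, so $B_1\widehat{\otimes}_{S^A_{\perfd}}B_2$ is a $(g)_{\perfd}$-almost direct summand of $B_1\widehat{\otimes}_{A_{\infty,0}}B_2$ and hence a perfectoid seed.  Since $R^+\to\varinjlim_S S^A_{\perfd}$, one gets $B_1\otimes_{R^+}B_2\to\varinjlim_S(B_1\otimes_{S^A_{\perfd}}B_2)$, and the completed limit is a perfectoid seed by \cite[Lemma 4.8]{MaSchwedeSingularitiesMixedCharBCM}.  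This direct-limit step over all finite $S$ is what forces the two $R^+$-structures to coincide on the target, and it is missing from your outline.  A minor additional point: you cannot simply ``enlarge $R$'' so that $k$ is algebraically closed, since the statement concerns a fixed $R$; the paper instead replaces the $B_i$ by their $\m$-adic completions so that they become algebras over $A_0=W(\overline{k})[[x_2,\dots,x_d]]$.
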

\begin{proof}
We fix a complete unramified regular local ring $A$ inside $R$ such that $A\to R$ is module-finite. Let $p, x_2,\dots, x_d$ be a regular system of parameters of $A$. Next, as in the first paragraph of the proof of \cite[Lemma 4.5]{MaSchwedeSingularitiesMixedCharBCM}, we can replace $B_1$, $B_2$ by their $\m$-adic completions to assume that $B_1, B_2$ are algebras over $A_0:=W(\overline{k})[[x_2,\dots,x_d]]$ where $k=R/\m$ (the reason we do this step is because $k$ is not necessarily perfect). We next {\it fix} $$A_{\infty,0}:=\text{$p$-adic completion of } {A_0[p^{1/p^\infty},x_2^{1/p^\infty},\dots,x_d^{1/p^\infty}]}\subseteq A_0\widehat{\otimes}_A{R^+},$$
which is a perfectoid algebra. Now for any module-finite domain extension $S$ of $R$ in $R^+$, consider $S^A_{\perfd}:=(A_{\infty,0}\otimes_A S)_{\perfd}$. Note that since $B_1$, $B_2$ are perfectoid, they are algebras over $S^A_{\perfd}$ by the universal property of the perfection functor.

Since $B_1$, $B_2$ are faithfully flat over $A_{\infty, 0}$ mod $p$, $B_1\widehat{\otimes}_{A_{\infty, 0}}B_2$ is also faithfully flat over $A_{\infty, 0}$ mod $p$. Therefore $B_1\widehat{\otimes}_{A_{\infty, 0}}B_2$ is perfectoid and is big Cohen-Macaulay with respect to $p, x_2,\dots, x_{d}$. Consider the following map:
\small
$$B_1\otimes_{A_{\infty, 0}}B_2\cong B_1\otimes_{S^A_{\perfd}} (S^A_{\perfd}\otimes_{A_{\infty, 0}}S^A_{\perfd})\otimes_{S^A_{\perfd}} B_2 \xrightarrow{\mu} B_1\otimes_{S^A_{\perfd}} {S^A_{\perfd}}\otimes_{S^A_{\perfd}}B_2\cong B_1\otimes_{S^A_{\perfd}}B_2.$$
\normalsize
By the almost purity theorem \cite[Theorem 10.9]{BhattScholzepPrismaticCohomology}, $A_{\infty, 0}/p^n \to S^A_{\perfd}/p^n$ is $(g)_{\perfd}$-almost unramified where $g$ is the discriminant of $A\to S$. In particular, the multiplication map $$(S^A_{\perfd}\otimes_{A_{\infty, 0}}S^A_{\perfd})/p\xrightarrow{\mu} S^A_{\perfd}/p$$ is $(g)_{\perfd}$-almost projective, i.e., $S^A_{\perfd}/p$ is a $(g)_{\perfd}$-almost direct summand of $(S^A_{\perfd}\otimes_{A_{\infty, 0}}S^A_{\perfd})/p$. Therefore $B_1\widehat{\otimes}_{S^A_{\perfd}}B_2$ is a $(g)_{\perfd}$-almost direct summand of $B_1\widehat{\otimes}_{A_{\infty, 0}}B_2$ mod $p$. 

We next claim that $B':=B_1\widehat{\otimes}_{S^A_{\perfd}}B_2$ is $(g)_{\perfd}$-almost big Cohen-Macaulay with respect to $x_1:=p, x_2,\dots,x_d$ and hence a perfectoid seed. To see this, first note that $x_{i+1}$ is a $(g)_{\perfd}$-almost nonzerodivisor on $B'/(x_1,\dots,x_i)B'$ for each $i$, because this module almost injects into $(B_1\widehat{\otimes}_{A_{\infty, 0}}B_2)/(x_1,\dots,x_i)(B_1\widehat{\otimes}_{A_{\infty, 0}}B_2)$ (by the $(g)_{\perfd}$-almost direct summand condition) and $x_{i+1}$ is a nonzerodivisor on the latter. Second, if we use $T$ to denote the perfectoid $(B_1\widehat{\otimes}_{A_{\infty, 0}}B_2)$-algebra obtained by adjoining compatible system of $p$-power roots of $g$ (see \cite[Theorem 7.12]{BhattScholzepPrismaticCohomology}), then it is shown in \cite[diagram on page 2836]{MaSchwedeSingularitiesMixedCharBCM} that the natural map
$$S\otimes_AS \to B_1 {\otimes}_{A_{\infty, 0}}B_2 \to (g^{-1/p^\infty}T)^\natural$$
factors through the multiplication map $S\otimes_AS \xrightarrow{\mu} S$. As a consequence we have an induced map $B_1 {\otimes}_{A_{\infty, 0}\otimes_A S}B_2\to (g^{-1/p^\infty}T)^\natural$ and hence by the universal property of the perfectoidizaton functor (see also \cite[Proposition 8.12]{BhattScholzepPrismaticCohomology}) we have an induced map $$B'=B_1\widehat{\otimes}_{S^A_{\perfd}}B_2 \to (g^{-1/p^\infty}T)^\natural \to g^{-1/p^\infty}T.$$
If $B'/(x_1,\dots,x_d)B'$ is $(g)_{\perfd}$-almost zero, then so is $(g^{-1/p^\infty}T)/(x_1,\dots,x_d)(g^{-1/p^\infty}T)$ which contradicts \cite[Claim 4.6]{MaSchwedeSingularitiesMixedCharBCM}. Therefore $B'/(x_1,\dots,x_d)B'$ is not $(g)_{\perfd}$-almost zero. It follows that $B'=B_1\widehat{\otimes}_{S^A_{\perfd}}B_2$ is $(g)_{\perfd}$-almost big Cohen-Macaulay with respect to $p, x_2,\dots,x_d$ as desired.


Finally, as each $S$ maps to $A_{\infty, 0}\otimes_AS$, we know that ${R^+}$ maps to ${\varinjlim}_SS^A_{\perfd}$. Since each $B_1\widehat{\otimes}_{S^A_{\perfd}}B_2$ is a perfectoid seed, by \cite[Lemma 4.8]{MaSchwedeSingularitiesMixedCharBCM} we know that $\widehat{\varinjlim}_S(B_1\widehat{\otimes}_{S^A_{\perfd}}B_2)$ is also a perfectoid seed. But then as $B_1\otimes_{R^+}B_2$ maps to $B_1\otimes_{{\varinjlim}_SS^A_{\perfd}}B_2\cong \varinjlim_SB_1\otimes_{S^A_{\perfd}}B_2$ and the latter maps to $\widehat{\varinjlim}_S(B_1\widehat{\otimes}_{S^A_{\perfd}}B_2)$, we know that $B_1\otimes_{R^+}B_2$ maps to a perfectoid seed and so $B_1{\widehat{\otimes}}_{R^+}B_2$ maps to another perfectoid big Cohen-Macaulay algebra $B$.
\end{proof}

We next prove the existence of weakly functorial perfectoid big Cohen-Macaulay algebras for certain diagrams (with a certain map factorizing through $R^+$). This will be a crucial ingredient in our comparison result with the adjoint ideal from birational geometry. Note that \cite[Theorem F on page 10]{GabberMSRINotes} claims a version of functorial big Cohen-Macaulay algebras, but to the best of our knowledge, that is not enough for our purpose.

\begin{theorem}
\label{thm:weaklyfunctorialdiagram}
Let $(R,\m)$ be a  Noetherian complete local domain of mixed characteristic $(0,p)$. Let $P_1$, $P_2$ be two height one primes of $R$ such that $Q=P_1+P_2$ is a height two prime with $R_Q$ regular. Then there exists a commutative diagram:
\[
  \xymatrix{%
    R \ar@{->}[rr]\ar@{->}[dd] & & R/P_1 \ar@{->}'[d][dd]
    \\
    & R/P_2 \ar@{<-}[ul]\ar@{->}[rr]\ar@{->}[dd] & &  R/Q
    \ar@{<-}[ul]\ar@{->}[dd]
    \\
    B \ar@{->}'[r][rr] & & B_1
    \\
    & B_2 \ar@{->}[rr]\ar@{<-}[ul] & &  C \ar@{<-}[ul]
  }
\]
where $B$, $B_1$, $B_2$ and $C$ are perfectoid big Cohen-Macaulay algebras over $R$, $R/P_1$, $R/P_2$ and $R/Q$ respectively. Moreover, we can take $B$, $B_1$ and $C$ to be algebras over $R^+$, $(R/P_1)^+$, and $(R/Q)^+$ respectively for a given compatibly chosen $R^+\to (R/P_1)^+\to (R/Q)^+$, and $B$ can be given in advance.
\end{theorem}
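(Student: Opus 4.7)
The plan is to build the cube using \autoref{thm:weaklyfunctorialBCM} to produce the bottom perfectoid big Cohen-Macaulay algebras, and a pushout-domination argument in the spirit of \autoref{thm:dominateBCM} to glue them together. Starting from the given $B$, I first apply \autoref{thm:weaklyfunctorialBCM} to each of the surjections $R \twoheadrightarrow R/P_i$ ($i=1,2$) with input $B$ to obtain perfectoid big Cohen-Macaulay $(R/P_i)^+$-algebras $B_i$ equipped with maps $B \to B_i$ lifting $R^+ \to (R/P_i)^+$. This fills in three of the four corners of the bottom face together with the two edges emanating from $B$.

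For the remaining corner $C$, the natural candidate is the $p$-adically completed pushout $B_1 \widehat{\otimes}_B B_2$: by the universal property, the two compositions $B \to B_i \to B_1 \widehat{\otimes}_B B_2$ automatically agree, yielding the required commutativity of the bottom face for free. Moreover, both maps $B \to B_i$ kill $Q^+$ and so factor through the perfectoid ring $D := (B/Q^+B)_{\perfd}$; accordingly the pushout equals $B_1 \widehat{\otimes}_D B_2$. The heart of the proof is to show that this pushout admits a map to an honest perfectoid big Cohen-Macaulay $(R/Q)^+$-algebra $C$.

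To carry out this last step I would adapt the proof of \autoref{thm:dominateBCM}. Choose $f_i \in P_i \setminus P_{3-i}$ with $P_i$ minimal over $(f_i)$ and such that $Q$ is minimal over $(f_1,f_2)$ (possible because $R_Q$ is a regular $2$-dimensional local ring), and extend to a full system of parameters $f_1, f_2, y_1,\ldots,y_{d-2}$ of $R$ in which $y_1,\ldots,y_{d-2}$ descends to a system of parameters on $R/Q$. Fix an unramified complete regular local subring $A \subseteq R$ containing these elements and module-finite over $R$, together with its standard perfectoid thickening $A_{\infty,0}$. Each $B_i$ becomes a $(g_i)_{\perfd}$-almost big Cohen-Macaulay algebra over an appropriate perfectoid thickening of $A/(f_i)$ via the almost purity argument in the proof of \autoref{thm:weaklyfunctorialBCM}. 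Passing to the pushout over $D$ and reducing modulo $(f_1^{1/p^\infty}, f_2^{1/p^\infty})^-$, one obtains a $(g)_{\perfd}$-almost big Cohen-Macaulay $(R/Q)$-algebra with respect to $y_1,\ldots,y_{d-2}$ for an element $g \notin Q$ with $gQ \subseteq \sqrt{(f_1,f_2)}$, chosen exactly as in the proof of \autoref{thm:weaklyfunctorialBCM} applied to $R \to R/Q$. Taking a direct limit over finite extensions of $R/Q$ inside $(R/Q)^+$ and applying the $\natural$-modification followed by \cite[Lemma 4.8]{MaSchwedeSingularitiesMixedCharBCM}, one obtains a perfectoid seed mapping to the desired perfectoid big Cohen-Macaulay $(R/Q)^+$-algebra $C$.

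The main obstacle is the careful bookkeeping of almost mathematics through the pushout $B_1 \widehat{\otimes}_D B_2$. The regularity of $R_Q$ is essential here: it ensures that $(f_1,f_2)$ generates $QR_Q$, which in turn guarantees the almost vanishing $g^{1/p^\infty}\cdot Q = 0$ in $(B/(f_1,f_2)B)_{\perfd}$ and forces the tensor-product pushout to inherit a non-trivial $(g)_{\perfd}$-almost big Cohen-Macaulay structure over $R/Q$. Verifying that the chosen $g$ operates uniformly across both $B_1$ and $B_2$ in the tensor, and that the $\natural$-modification is compatible with the pushout construction, are the technical points requiring attention but follow the templates already established in the proofs of \autoref{thm:weaklyfunctorialBCM} and \autoref{thm:dominateBCM}.
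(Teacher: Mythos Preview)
Your proposal has a genuine gap. The claim that ``both maps $B \to B_i$ kill $Q^+$'' is false: the map $B \to B_1$ kills $P_1^+$ but \emph{not} $P_2^+$ (indeed $B_1$ is big Cohen--Macaulay over $R/P_1$, which has dimension $d-1$, so $Q$ cannot go to zero there), and symmetrically for $B_2$. Hence neither $B_1$ nor $B_2$ is a $D$-algebra for $D = (B/Q^+B)_{\perfd}$, and the rewriting $B_1 \widehat{\otimes}_B B_2 = B_1 \widehat{\otimes}_D B_2$ is not available. While the pushout $B_1 \widehat{\otimes}_B B_2$ still makes sense and is an $R/Q$-algebra, you give no real argument that it is nonzero or a perfectoid seed: the template of \autoref{thm:dominateBCM} concerns two big Cohen--Macaulay algebras over the \emph{same} base and relies on almost-flatness over a common $A_{\infty,0}$, which does not transfer to a tensor of algebras living over different quotients $R/P_1$ and $R/P_2$.

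The paper avoids this entirely by an \emph{asymmetric} construction. It applies \autoref{thm:weaklyfunctorialBCM} only once, to $R \to R/P_1$, producing $B \to B_1$. It then chooses a single element $x \in P_2 \setminus P_1$ with $gP_2 \subseteq (x)$ for some $g \notin Q$ (using regularity of $R_Q$), and builds $B_2$ and $C_0$ \emph{simultaneously} as Gabber-type modifications of $B/(x^{1/p^\infty})^-$ and $B_1/(x^{1/p^\infty})^-$ respectively. Because the same element $x$ is used on both sides, the map $B \to B_1$ descends directly to $B_2 \to C_0$, and the cube commutes by construction; no pushout or domination argument is needed. The remaining work is to upgrade $C_0$ to an $(R/Q)^+$-algebra $C$, which is done by a separate direct-limit argument. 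The key idea you are missing is precisely this: do not build $B_2$ independently, but obtain $B_2 \to C$ as the image of $B \to B_1$ under ``quotient by $x^{1/p^\infty}$ and fix up''.
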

\begin{proof}
We first prove the following claim:
\begin{claim}
There exists $x\in P_2$ and $g\notin Q$ such that $gP_2\subseteq (x)$ 
and $x \notin P_1$.
\end{claim}
\begin{proof}[Proof of Claim]
Since $R_Q$ is regular and $P_2R_Q$ is a principal ideal, we can pick $x\in P_2$ and $h\notin Q$ such that $R_h$ is regular and such that $(x)R_h=P_2R_h$. Note that $x\notin P_1$ since otherwise $P_2R_h\subseteq P_1R_h$ and thus $P_2R_Q\subseteq P_1R_Q$ contradicting $P_1+P_2=Q$ has height two. Therefore there exists $g=h^N$ such that $gP_2\subseteq (x)$.
\end{proof}

Now we prove the theorem. We start with a perfectoid big Cohen-Macaulay $R^+$-algebra $B$. By \autoref{thm:weaklyfunctorialBCM}, we have a commutative diagram:
\[\xymatrix{
R \ar[r] \ar[d] & R/P_1\ar[d] \\
R^+\ar[r] \ar[d] & (R/P_1)^+  \ar[d]\\
B\ar[r] & B_1
}
\]
where $B_1$ is a perfectoid big Cohen-Macaulay $(R/P_1)^+$-algebra. Since {$gP_2\subseteq (x)$}, we know that the image of $P_2$ is $g$-torsion in $B/(x^{1/p^\infty})^-$ (respectively, the image of $Q$ is $g$-torsion in $B_1/(x^{1/p^\infty})^-$). Since $B/(x^{1/p^\infty})^-$ and $B_1/(x^{1/p^\infty})^-$ are perfectoid algebras, they are reduced so any $g$-torsion is $g^{1/p^\infty}$-torsion (note that $g^{1/p^\infty}$ exists in $B$ and $B_1$ since they are algebras over $R^+$ and $(R/P_1)^+$). Thus we have
\[\xymatrix{
R/P_2 \ar[r] \ar[d] & R/Q \ar[d] \\
g^{-1/p^\infty}(B/(x^{1/p^\infty})^-) \ar[r] & g^{-1/p^\infty}(B_1/(x^{1/p^\infty})^-).
}
\]
The bottom row in the above diagram are  $g^{1/p^\infty}$-almost perfectoid and $g^{1/p^\infty}$-almost big Cohen-Macaulay algebras over $R/P_2$ and $R/Q$ respectively\footnote{See \cite[proof of 4.3.3]{AndreWeaklyFunctorialBigCM}, which can be also adapted to characteristic $p>0$, or we can use the same argument as in the proof of \autoref{thm:weaklyfunctorialBCM} (the essential point is \cite[Proposition 2.5.1 (2)]{AndreWeaklyFunctorialBigCM}).}. At this point, we apply Gabber's method to construct perfectoid big Cohen-Macaulay algebras from the $g^{1/p^\infty}$-almost ones\footnote{Alternatively, we can also proceed carefully using the strategy as in \cite[proof of Theorem 3.1]{AndreWeaklyFunctorialBigCM} {\it simultaneously} to $g^{-1/p^\infty}(B/(x^{1/p^\infty})^-)$ and $g^{-1/p^\infty}(B_1/(x^{1/p^\infty})^-)$.}. More precisely, by \cite[second paragraph on page 3]{GabberMSRINotes} (see also \cite[17.5]{GabberRameroFoundationsAlmostRingTheory} for more details), we have $$B_2:=\text{$p$-adic completion of } {S_g^{-1}\left(g^{-1/p^\infty}(B/(x^{1/p^\infty})^-)\right)^\diamond}$$ is a perfectoid big Cohen-Macaulay $R/P_2$-algebra where $T^\diamond=(\prod^\mathbb{N}T)/(\bigoplus^\mathbb{N}T)$ for any commutative ring $T$ and $S_g$ denote the multiplicative system consisting of $(g^{\varepsilon_0}, g^{\varepsilon_1},\dots)$ such that $\varepsilon_i\in \mathbb{N}[1/p]$ and $\varepsilon_i\to 0$. Similarly, $C_0:=\text{$p$-adic completion of } {S_g^{-1}\left(g^{-1/p^\infty}(B_1/(x^{1/p^\infty})^-)\right)^\diamond}$ is a perfectoid big Cohen-Macaulay $R/Q$-algebra. Putting all these together, we have now constructed a commutative
\begin{equation}
\label{eqn:commdiag}
  \xymatrix{%
    R \ar@{->}[rr]\ar@{->}[dd] & & R/P_1 \ar@{->}'[d][dd]
    \\
    & R/P_2 \ar@{<-}[ul]\ar@{->}[rr]\ar@{->}[dd] & &  R/Q
    \ar@{<-}[ul]\ar@{->}[dd]
    \\
    B \ar@{->}'[r][rr] & & B_1
    \\
    & B_2 \ar@{->}[rr]\ar@{<-}[ul] & &  C_0 \ar@{<-}[ul]
  }
\end{equation}
satisfying all the conclusions {\it except} that $C_0$ is not necessarily an algebra over $(R/Q)^+$. The last step is to modify $C_0$ to an $(R/Q)^+$-algebra, the idea is very similar to the proof of \autoref{thm:weaklyfunctorialBCM}.

We set $\overline{R}=R/P_1$ and we abuse notations a bit and still use $Q$ to denote the image of $Q$ in $\overline{R}$. Note that $C_0$ is an algebra over $\overline{R}^+/Q\overline{R}^+$ by construction and the diagram:
\[\xymatrix{
R\ar[r] \ar[d] & \overline{R} \ar[r] \ar[d] & \overline{R}/Q \ar[d]\\
R^+\ar[r] \ar[d] & \overline{R}^+ \ar[r] \ar[d] & \overline{R}^+/Q\overline{R}^+\ar[d]\\
B\ar[r] & B_1 \ar[r] & C_0
}
\]
is commutative. Note that given the map $(R/P_1)^+=\overline{R}^+\to (\overline{R}/Q)^+$ is the same as given a prime $Q^+$ inside $\overline{R}^+$ such that $(\overline{R}/Q)^+=\overline{R}^+/Q^+$, and $Q^+$ contracts to a certain height one prime $Q'$ lying over $Q$ in each finite domain extensions $S$ of $\overline{R}$. Therefore we can write $(\overline{R}/Q)^+=\varinjlim_{Q'}\overline{R}^+/Q'\overline{R}^+$ and thus we have a commutative diagram: 
\[\xymatrix{
\overline{R}^+/Q\overline{R}^+ \ar[r]\ar[d] & (\overline{R}/Q)^+ \ar[d] \ar[rd] \\
C_0 \ar[r] & \varinjlim_{Q'}C_0/Q'C_0 \ar[r] & \widehat{\varinjlim}_{Q'}(C_0/Q'C_0)_{\perfd}.
}
\]
It is enough to show that $\widehat{\varinjlim}_{Q'}(C_0/Q'C_0)_{\perfd}$ is a perfectoid seed. By then by \cite[Lemma 4.8]{MaSchwedeSingularitiesMixedCharBCM}, it is enough to prove that each $(C_0/Q'C_0)_{\perfd}$ is a perfectoid seed. Since $\overline{R}/Q\to S/Q'$ is a finite domain extension, $Q'$ is a minimal prime of $\sqrt{QS}$. So there exists $g'\in S$, $g'\notin Q'$, such that $g'Q'\in \sqrt{QS}$. Since $C_0$ is reduced, $\sqrt{QS}$ maps to zero in $C_0$. It follows that the image of $Q'$ in $C_0$ is $g'$-torsion and thus $g'^{1/p^\infty}$-torsion in $C_0$ (again because $C_0$ is reduced). By the universal property of the perfection functor, we have
$$C_0\twoheadrightarrow (C_0/Q'C_0)_{\perfd} \to (g'^{-1/p^\infty}C_0)^\natural,$$
where $(g'^{-1/p^\infty}C_0)^\natural$ denotes $g'^{-1/p^\infty}C_0$ if it has characteristic $p>0$. The composition map is an almost isomorphism (where almost is measured with respect to $(pg')^{1/p^\infty}$ in mixed characteristic, and is measured with respect to $g'^{1/p^\infty}$ in characteristic $p>0$), thus as the first map is surjective, $(C_0/Q'C_0)_{\perfd} \to (g'^{-1/p^\infty}C_0)^\natural$ is also an almost isomorphism. Since $(g'^{-1/p^\infty}C_0)^\natural$ is almost big Cohen-Macaulay (for the non-triviality condition, again one makes use of \cite[Proposition 2.5.1]{AndreWeaklyFunctorialBigCM} as in the proof of \autoref{thm:weaklyfunctorialBCM}), so is $(C_0/Q'C_0)_{\perfd}$ and hence $(C_0/Q'C_0)_{\perfd}$ is a perfectoid seed as desired.

We have showed that $\widehat{\varinjlim}_{Q'}(C_0/Q'C_0)_{\perfd}$ is a perfectoid seed, therefore it maps to a perfectoid big Cohen-Macaulay algebra $C$. Moreover, by construction we have a commutative diagram:
\[\xymatrix{
R\ar[r] \ar[d] & \overline{R} \ar[r] \ar[d] & \overline{R}/Q \ar[d]\ar[rd]\\
R^+\ar[r] \ar[d] & \overline{R}^+ \ar[r] \ar[d] & \overline{R}^+/Q\overline{R}^+\ar[d]\ar[r] & (\overline{R}/Q)^+\ar[d]\\
B\ar[r] & B_1 \ar[r] & C_0 \ar[r] & C
}
\]
Putting this together with \autoref{eqn:commdiag} we get all the desired conclusions.
\end{proof}

To establish our last result, we need the following definition.

\begin{definition}
Let $(R,\m)\to (S,\n)$ be a local map of Noetherian complete local domains such that $R/\m$ has characteristic $p>0$. We say a map $B'\to C'$ is a {\it perfectoid seed morphism} if $B'$, $C'$ are perfectoid $R$- and $S$-algebras respectively and there exists a commutative diagram
\[\xymatrix{
R\ar[r]\ar[d] & S\ar[d] \\
B' \ar[r] \ar[d] & C'\ar[d]\\
B\ar[r] & C
}
\]
where $B$, $C$ are perfectoid big Cohen-Macaulay algebras over $R$ and $S$ respectively.
\end{definition}

\begin{lemma}
\label{lem:perfectoidseedmorphism}
Let $(R,\m)$ be a  Noetherian complete local domain such that $R/\m$ has characteristic $p>0$. Let $S=R/P$
{
for a prime ideal $P$} and suppose $\{B_\lambda\to C_\lambda\}_\lambda$ is a direct system of perfectoid seed morphisms for $R\to S$. Then $\widehat{\varinjlim}_\lambda B_\lambda \to \widehat{\varinjlim}_\lambda C_\lambda$ is also a perfectoid seed morphism.
\end{lemma}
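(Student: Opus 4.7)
Set $\mathcal{B} := \widehat{\varinjlim}_\lambda B_\lambda$ and $\mathcal{C} := \widehat{\varinjlim}_\lambda C_\lambda$; these are perfectoid rings (as $p$-adic completions of filtered colimits of perfectoid rings), with $\mathcal{B}$ a perfectoid $R$-algebra, $\mathcal{C}$ a perfectoid $S$-algebra, and the induced map $\mathcal{B} \to \mathcal{C}$ compatible with $R \to S$. The task is to produce perfectoid big Cohen-Macaulay algebras $B$ over $R$ and $C$ over $S$ with compatible maps $\mathcal{B} \to B$, $\mathcal{C} \to C$, and $B \to C$ fitting into the required commutative diagram.

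Since each $B_\lambda$ (resp.\ $C_\lambda$) is individually a perfectoid seed (being the $R$-side, resp.\ $S$-side, of a perfectoid seed morphism), \cite[Lemma 4.8]{MaSchwedeSingularitiesMixedCharBCM} applied separately to $\{B_\lambda\}$ and $\{C_\lambda\}$ shows that $\mathcal{B}$ is a perfectoid seed over $R$ and $\mathcal{C}$ is a perfectoid seed over $S$. Fix a perfectoid big Cohen-Macaulay $R$-algebra $B$ together with a map $\mathcal{B} \to B$.

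To construct $C$ compatibly, the plan is to consider the completed tensor product
\[
E := B \mathbin{\widehat{\otimes}}_{\mathcal{B}} \mathcal{C},
\]
which is naturally an $S$-algebra (since $P$ acts trivially on $\mathcal{C}$) and fits into a commutative square with $B$, $\mathcal{B}$, and $\mathcal{C}$ by the universal property of the pushout. The perfectoidization $E_{\perfd}$ is then a perfectoid $S$-algebra receiving both $B$ and $\mathcal{C}$ in a compatible manner. It remains to verify that $E_{\perfd}$ is a perfectoid seed over $S$, because then any perfectoid big Cohen-Macaulay $S$-algebra $C$ receiving $E_{\perfd}$ yields the required diagram, with $B \to C$ obtained as $B \to E \to E_{\perfd} \to C$ and $\mathcal{C} \to C$ obtained analogously.

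The proof that $E_{\perfd}$ is a perfectoid seed over $S$ would follow the strategy of \autoref{thm:weaklyfunctorialBCM}. One chooses $f_1, \dots, f_c \in P$ forming part of a system of parameters of $R$ with $P$ minimal over $(f_1, \dots, f_c)$, picks $g \in R \setminus P$ with $gP \subseteq \sqrt{(f_1, \dots, f_c)}$, and extends $f_1, \dots, f_c$ to a full system of parameters $f_1, \dots, f_c, y_1, \dots, y_t$ whose images $y_1, \dots, y_t$ form a system of parameters of $S$. Since $B$ is big Cohen-Macaulay over $R$, the perfectoidization $(B/(f_1, \dots, f_c)B)_{\perfd}$ is big Cohen-Macaulay over $S$ with respect to $y_1, \dots, y_t$; an almost-mathematics argument analogous to the one in the proof of \autoref{thm:weaklyfunctorialBCM} then shows that $E_{\perfd}$ inherits the $(g)_{\perfd}$-almost big Cohen-Macaulay property over $S$, hence is a perfectoid seed. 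The main technical point will be tracking how perfectoidization behaves under the tensor product $B \mathbin{\widehat{\otimes}}_{\mathcal{B}} \mathcal{C}$ modulo the $f_i$ while preserving the almost Cohen-Macaulay property, which will rely on the almost purity behavior of perfectoidization from \cite[Theorem 10.8]{BhattScholzepPrismaticCohomology}.
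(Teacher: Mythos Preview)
Your proposal has a genuine gap in the crucial step, and more fundamentally, it never actually uses the hypothesis that each $B_\lambda \to C_\lambda$ is a perfectoid seed \emph{morphism} rather than merely a map between perfectoid seeds.

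Concretely: you form $E = B \mathbin{\widehat{\otimes}}_{\mathcal{B}} \mathcal{C}$ and claim that an ``almost-mathematics argument analogous to \autoref{thm:weaklyfunctorialBCM}'' will show $E_{\perfd}$ is a perfectoid seed over $S$. But the argument in \autoref{thm:weaklyfunctorialBCM} works because one is taking a \emph{quotient} of a big Cohen--Macaulay algebra $B$ by part of a regular sequence, so $(B/(f_1,\dots,f_c)B)_{\perfd}$ is visibly almost big Cohen--Macaulay with respect to the remaining parameters. Here, by contrast, you are tensoring $B$ with $\mathcal{C}$ over $\mathcal{B}$, and you have no structural control over $\mathcal{B}$ or $\mathcal{C}$ beyond the fact that they are perfectoid seeds. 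There is no almost-flatness statement available: \cite[Theorem 10.8]{BhattScholzepPrismaticCohomology} concerns finitely presented maps that are \'etale outside $V(J)$, and $\mathcal{B} \to \mathcal{C}$ is nothing of the sort. Without such control, there is no reason for $E_{\perfd}$ to be almost Cohen--Macaulay, or even for $\frn E_{\perfd} \neq E_{\perfd}$.

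Stepping back, note that your argument, if it worked, would prove the much stronger statement that \emph{any} map $\mathcal{B} \to \mathcal{C}$ of perfectoid seeds (compatible with $R \to S$) is a perfectoid seed morphism. The direct-system hypothesis and the fact that each $B_\lambda \to C_\lambda$ already maps to a compatible pair of big Cohen--Macaulay algebras play no role in your outline. That is a red flag.

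The paper's proof is organized around a different idea: it reduces to characteristic $p$ by tilting, and there it invokes Hochster's criterion (\cite[Theorem 4.2]{HochsterBigCohen-Macaulayalgebrasindimensionthree}) that $B' \to C'$ is a seed morphism if and only if there is no bad \emph{double} sequence of partial algebra modifications over it. This criterion is \emph{finitary}, so a bad double sequence over $\varinjlim_\lambda B_\lambda \to \varinjlim_\lambda C_\lambda$ would already be defined over some $B_\lambda \to C_\lambda$, contradicting that the latter is a seed morphism. One then untilts. The essential content is precisely the compatibility of the maps $B_\lambda \to C_\lambda$ with big Cohen--Macaulay targets, encoded via the finiteness of double modification sequences; this is what your argument is missing.
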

\begin{proof}
The idea is basically to run the proofs of \cite[Lemma 4.8]{MaSchwedeSingularitiesMixedCharBCM} and \cite[Lemma 3.2]{DietzBCMSeeds} {\it for maps}. First we want to reduce to the case that $R$ has characteristic $p>0$. We fix a system of parameters $x_1,\dots,x_r, y_1,\dots,y_s$ of $R$ such that $y_1,\dots,y_s$ is a system of parameters for $S=R/P$, and such that if $S$ has mixed characteristic, then we set $y_1=p$ and if $S$ has characteristic $p>0$ and $R$ has mixed characteristic, then we set $x_1=p$. Note that by the refinement of Andr\'{e}'s flatness lemma \cite[Theorem 7.12]{BhattScholzepPrismaticCohomology}, we can enlarge each $B_\lambda$ to $B_\lambda'$ such that $x_1,\dots,x_r, y_1,\dots,y_s$ have compatible system of $p$-power roots in $B_\lambda'$ and that $B_\lambda'/p$ is faithfully flat over $B_\lambda/p$. Thus replacing $B_\lambda\to C_\lambda$ by $B_\lambda'\to C_\lambda':=C_\lambda\widehat{\otimes}_{B_\lambda}B_\lambda'$,\footnote{It is easy to check that $B_\lambda'\to C_\lambda'$ is still a perfectoid seed morphism: if $B_\lambda\to C_\lambda$ maps to $B\to C$, then $B_\lambda'\to C_\lambda'$ maps to $B':= B\widehat{\otimes}_{B_\lambda}B_\lambda'\to C':=C\widehat{\otimes}_{B_\lambda}B_\lambda'$ with $B'/p$ (resp. $C'/p$) faithfully flat over $B/p$ (resp. $C/p$), and so $\widehat{B'}^{\m}\to \widehat{C'}^{\m}$ is a map of balanced big Cohen-Macaulay algebras by \cite[Corollary 8.5.3]{BrunsHerzog}.} we may assume that each $B_\lambda$ admits a compatible system of $p$-power roots of $x_1,\dots,x_r, y_1,\dots,y_s$.

Now if If $B'\to C'$ is any perfectoid seed morphism such that $B'$ admits a compatible system of $p$-power roots of $x_1,\dots,x_r, y_1,\dots,y_s$, then we have
\[\xymatrix{
{B'}^\flat \ar[r] \ar[d] & {C'}^\flat\ar[d] \\
\widehat{B^\flat} \ar[r] & \widehat{C^\flat}
}
\]
where the completion is taken with respect to the ideal $(x_1^\flat,\dots, x_r^\flat, y_1^\flat,\dots,y_s^\flat)$, where $x_i^\flat:=(x_i, x_i^{1/p}, x_i^{1/p^2},\dots)\in B'^\flat$ exists by our assumption. By \cite[Corollary 8.5.3]{BrunsHerzog}, $\widehat{B^\flat} \to \widehat{C^\flat}$ is a map of (balanced) perfectoid big Cohen-Macaulay algebras for the map $$\mathbb{F}_p[[x_1^\flat,\dots, x_r^\flat, y_1^\flat,\dots,y_s^\flat]]\to \mathbb{F}_p[[y_1^\flat,\dots,y_s^\flat]].$$
Thus ${B'}^\flat \to {C'}^\flat$ is a perfectoid seed morphism for the above map.

By the above discussion, $\{B_\lambda^\flat\to C_\lambda^\flat\}_\lambda$ is a direct system of perfectoid seed morphisms in characteristic $p>0$. If we can show $\widehat{\varinjlim}_\lambda^{p^\flat} B_\lambda^\flat \to \widehat{\varinjlim}_\lambda^{p^\flat} C_\lambda^\flat$ is a perfectoid seed morphism in characteristic $p>0$, i.e., we have a commutative diagram
\[\xymatrix{
\widehat{\varinjlim}^{p^\flat}_\lambda B_\lambda^\flat \ar[r]\ar[d] & \widehat{\varinjlim}^{p^\flat}_\lambda C_\lambda^\flat \ar[d]\\
B \ar[r] & C,
}
\]
then we can untilt to get a commutative diagram
\[\xymatrix{
\widehat{\varinjlim}_\lambda B_\lambda \ar[r]\ar[d] & \widehat{\varinjlim}_\lambda C_\lambda \ar[d]\\
B^\sharp \ar[r] \ar[d] & C^\sharp \ar[d] \\
\widehat{B^\sharp}^{\m} \ar[r] & \widehat{C^\sharp}^{\n}
}
\]
where $\widehat{B^\sharp}^{\m}$ and $\widehat{C^\sharp}^{\n}$ are perfectoid big Cohen-Macaulay algebras over $R$ and $S$ respectively (see \cite[proof of Lemma 4.8]{MaSchwedeSingularitiesMixedCharBCM}). Therefore, without loss of generality we can assume that $R$ has characteristic $p>0$.

Now in characteristic $p>0$, it is clear that if we can map $B'\to C'$ to a map $B\to C$ of big Cohen-Macaulay algebras, then we can always replace $B$ and $C$ by their usual perfections (i.e., $\varinjlim_e F^e_*B$) to assume $B$ and $C$ are perfect (equivalently, perfectoid). But then by the proof of \cite[Theorem 4.2]{HochsterBigCohen-Macaulayalgebrasindimensionthree} (see also \cite[section 1.2]{HeitmannMaBigCohenMacaulayAlgebraVanishingofTor}), we know that $B'\to C'$ is a perfectoid seed morphism in characteristic $p>0$ if and only if there is no bad double sequence of partial algebra modifications of $B'$ over $B'\to C'$. Now if there exists a bad double sequence of partial algebra modification of $\varinjlim_\lambda B_\lambda$ over $\varinjlim_\lambda B_\lambda\to \varinjlim_\lambda C_\lambda$, then by the finiteness nature of (partial) algebra modifications, there exists $\lambda$ such that the bad double sequence is defined over $B_\lambda\to C_\lambda$, contradicting that $B_\lambda\to C_\lambda$ is a perfectoid seed. Therefore $\varinjlim_\lambda B_\lambda\to \varinjlim_\lambda C_\lambda$ is a perfectoid seed morphism and so passing to completions, we know that $\widehat{\varinjlim}^{p^\flat}_\lambda B_\lambda\to \widehat{\varinjlim}^{p^\flat}_\lambda C_\lambda$ is also a perfectoid seed morphism.
\end{proof}

We next prove a crucial result for our adjoint ideal, the idea is similar to the proof of \autoref{thm:dominateBCM} and \cite[Theorem 4.9]{MaSchwedeSingularitiesMixedCharBCM}.

\begin{theorem}
\label{thm:dominateMap}
With notation as in \autoref{def:adjoint}, let $\{B_\gamma\to C_\gamma\}_{\gamma\in\Gamma}$ be a set of compatible choices of perfectoid big Cohen-Macaulay $R^+$- and $(R/I_D)^+$-algebras. Then we can find another $\mathcal{B}\to \mathcal{C}$ such that $$\adj_{\mathcal{B}\shortrightarrow \mathcal{C}}(R, D+\Delta)\subseteq \adj_{B_\gamma\shortrightarrow C_\gamma}(R, D+\Delta)$$ for all $\gamma\in \Gamma$.
\end{theorem}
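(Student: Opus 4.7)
The plan is to construct a single perfectoid big Cohen-Macaulay pair $\mathcal{B} \to \mathcal{C}$ over $R^+ \to (R/I_D)^+$ admitting compatible maps $B_\gamma \to \mathcal{B}$ and $C_\gamma \to \mathcal{C}$ for every $\gamma \in \Gamma$. Given this, the containment of adjoint ideals is a formal consequence of functoriality: compatible squares induce a map of cones $I_{B_\gamma \shortrightarrow C_\gamma} \to I_{\mathcal{B} \shortrightarrow \mathcal{C}}$ in $D^b(R)$, so the map $H^d_{\fram}(I_D) \xrightarrow{f^{1/n}} H^d_{\fram}(I_{\mathcal{B}\shortrightarrow\mathcal{C}})$ factors through $H^d_{\fram}(I_{B_\gamma \shortrightarrow C_\gamma})$. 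Its image is therefore a quotient of the image for $B_\gamma \shortrightarrow C_\gamma$, and Matlis duality yields $\adj_{\mathcal{B}\shortrightarrow \mathcal{C}}(R, D+\Delta) \subseteq \adj_{B_\gamma\shortrightarrow C_\gamma}(R, D+\Delta)$.

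To build $\mathcal{B} \to \mathcal{C}$, I would first reduce to the finite case. Order the finite subsets $\Gamma_0 \subseteq \Gamma$ by inclusion; assuming the result for each finite $\Gamma_0$, the resulting pairs $\mathcal{B}_{\Gamma_0} \to \mathcal{C}_{\Gamma_0}$ can be arranged into a direct system of perfectoid seed morphisms by repeatedly dominating as $\Gamma_0$ grows. Applying \autoref{lem:perfectoidseedmorphism} to the completion of the colimit produces a single perfectoid seed morphism dominating every $B_\gamma \to C_\gamma$, whose target perfectoid big Cohen-Macaulay pair serves as $\mathcal{B} \to \mathcal{C}$. A straightforward induction on $|\Gamma_0|$ then reduces further to the case $|\Gamma| = 2$.

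For two compatible pairs $B_1 \to C_1$ and $B_2 \to C_2$, set $\mathcal{B}' := B_1 \widehat{\otimes}_{\widehat{R^+}} B_2$ and $\mathcal{C}' := C_1 \widehat{\otimes}_{\widehat{(R/I_D)^+}} C_2$, each completed and perfectoidized along the lines of \autoref{thm:dominateBCM}. The compatibility of each $B_i \to C_i$ induces a natural map $\mathcal{B}' \to \mathcal{C}'$, and each $B_i$ (resp.\ $C_i$) maps tautologically into $\mathcal{B}'$ (resp.\ $\mathcal{C}'$). By the almost-purity argument of \autoref{thm:dominateBCM} applied to Noether normalizations of both $R$ and $R/I_D$, each of $\mathcal{B}'$ and $\mathcal{C}'$ is a perfectoid seed; to promote this to a perfectoid seed \emph{morphism}, I would apply \autoref{thm:dominateBCM} at the $R^+$-level to obtain a perfectoid big Cohen-Macaulay $\mathcal{B}$ dominating $\mathcal{B}'$, use \autoref{thm:weaklyfunctorialBCM} to extend to a compatible $\mathcal{B} \to \mathcal{C}_0$, and then apply \autoref{thm:dominateBCM} once more at the $(R/I_D)^+$-level to absorb $\mathcal{C}'$ and $\mathcal{C}_0$ into a single compatible $\mathcal{C}$.

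The main obstacle is ensuring that all these absorptions are simultaneously compatible: the $\mathcal{C}_0$ produced by \autoref{thm:weaklyfunctorialBCM} is not a priori compatible with the original maps $C_i \to \mathcal{C}'$, and so one must track the universal property of perfectoidization carefully in the Gabber-type construction to guarantee that the composite $C_i \to \mathcal{C}' \to \mathcal{C}$ agrees with $B_i \to \mathcal{B} \to \mathcal{C}_0 \to \mathcal{C}$. I expect this to be handled by passing to the direct limit of all possible dominating diagrams and invoking \autoref{lem:perfectoidseedmorphism}, so that coherence is enforced in the colimit rather than at each finite stage. This is the map-level analog of the single-algebra argument behind \autoref{thm:dominateBCM}, and is the main technical point of the proof.
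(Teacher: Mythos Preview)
Your overall architecture is the same as the paper's: reduce to finite subsets, form suitable tensor products, show these are perfectoid seed morphisms, and pass to the colimit via \autoref{lem:perfectoidseedmorphism}. The formal deduction of the adjoint-ideal containment from a dominating map is also correct.

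The genuine gap is in your treatment of the two-pair case. You propose to form $\mathcal{B}' = B_1 \widehat{\otimes}_{\widehat{R^+}} B_2$ and $\mathcal{C}' = C_1 \widehat{\otimes}_{\widehat{(R/I_D)^+}} C_2$ and then argue that $\mathcal{B}' \to \mathcal{C}'$ is a perfectoid seed morphism. The paper explicitly flags (in the Remark immediately following this theorem) that this is \emph{not} known: the difficulty is that the almost-purity argument of \autoref{thm:dominateBCM} only shows $B_1 \widehat{\otimes}_{S^A_{\perfd}} B_2$ is almost big Cohen--Macaulay for a \emph{fixed} finite extension $S$ with a compatible Noether normalization $A$, and the various $S^A_{\perfd}$ as $S$ varies do not obviously form a direct system (since $A$ depends on $S$), so one cannot pass to the colimit to reach $R^+$. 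Your sequence of absorptions via \autoref{thm:weaklyfunctorialBCM} and \autoref{thm:dominateBCM} does not circumvent this; as you yourself note, it creates exactly the coherence problem you cannot resolve.

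The paper's workaround is to \emph{abandon} $R^+$-linearity and instead fix, once and for all, a single finite normal extension $S \supseteq R$ containing $f^{1/n}$, together with a carefully chosen regular $A \subseteq S$ such that $A \to S$ and $\overline{A} \to \overline{S} = S/P$ are simultaneously \'etale outside $V(g)$ for some $g$ not in the relevant height-one prime (the construction of such an $A$ uses Gabber's structure theorem or a local Bertini argument and is the main technical input). One then tensors over the fixed rings $S^A_{\perfd}$ and $\overline{S}^{\overline{A}}_{\perfd}$; the resulting $B_\Lambda \to C_\Lambda$ is directly shown to be a perfectoid seed morphism by the almost-purity argument applied simultaneously on both levels. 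The price is that the dominating maps $B_\gamma \to \mathcal{B}$ are only $S^A_{\perfd}$-linear, not $R^+$-linear, so $\mathcal{B}$ carries several competing $R^+$-algebra structures. The key observation is that this does not matter for the adjoint ideal: since $f^{1/n} \in S \subseteq S^A_{\perfd}$, the element $f^{1/n}$ has a single well-defined image in $\mathcal{B}$ regardless of which $B_\gamma$ one routes through, and that is all the definition of $\adj_{\mathcal{B}\shortrightarrow\mathcal{C}}$ requires.
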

\begin{proof}
We assume $R$ has mixed characteristic $(0,p)$ in the proof.\footnote{We believe a similar and simpler argument also works when $R$ has characteristic $p>0$: one needs to replace the citation to \autoref{thm:dominateBCM} by \cite[Theorem 8.4]{DietzBCMSeeds}, and for choosing $S$ and $A$ we use \cite[Theorem 0.14]{HeitmannEtalelocus}. We omit the details since, at least when $R$ is $F$-finite of characteristic $p>0$, the conclusion of \autoref{thm:dominateMap} follows from our more general \autoref{thm.TauEqualityCharP}.} We first fix $S$ a finite normal domain extension of $R$ that contains $f^{1/n}$ and a height one prime ideal $P\subseteq S$ lying over $I_D$. By enlarging $S$ and $P$, we may assume that $S$ is module-finite over $A=V[[x_2,\dots,x_d]]$ where $(V,\pi)$ is a complete DVR such that the discriminant of the map $A\to S$ is not contained in $P$: if $S/P$ has characteristic $p>0$, this follows from \cite[Theorem 4.2.2]{OrgogozoGabber} and in this case $P$ contracts to $(\pi)$ in $A$; if $S/P$ has mixed characteristic, then this essentially follows from \cite[Theorem 0.14]{HeitmannEtalelocus}: more precisely, the proof of \cite[Theorem 0.14]{HeitmannEtalelocus} shows that we can find $x\in P$ such that $S/xS$ is reduced and $p, x$ is part of a system of parameters on $S$, thus we can use Cohen's structure theorem to find $A$ inside $S$ with $p,x_2:=x$ part of a regular system of parameters, and it follows that $P$ contracts to $(x_2)$ and the discriminant of $A\to S$ is not contained in $(x_2)$ because $A_{(x_2)}\to S_Q$ is \'{e}tale for each $Q$ lying over $(x_2)$ (since $S/x_2S$ is reduced and the residue fields of $A_{(x_2)}$ and $S_Q$ have characteristic $0$). 
It follows that in both cases, we can construct a diagram:
\[\xymatrix{
A\ar[r]\ar[d] & \overline{A}=A/zA \ar[d] \\
S\ar[r] & \overline{S}=S/P
}\]
where $z=x_2$ if $p\notin P$, and $z=\pi$ if $p\in P$. By the way we choose $S$ and $A$, we know that there exists $g\in A-zA$, such that $A\to S$ and $\overline{A}\to \overline{S}$ are both finite \'{e}tale.

We next consider an integral extension $V\to V'$ where $(V',\pi)$ has the same uniformizer as $V$ but the residue field $V'/\pi$ is perfect. Then as in the first paragraph of the proof of \cite[Lemma 4.5]{MaSchwedeSingularitiesMixedCharBCM}, we can replace each $B_\gamma$ by its $\m$-adic completion to assume that each $B_\gamma$ is an algebra over $A_0:=V'[[x_2,\dots,x_d]]$ (the reason we do this step is because the residue field of $S$ and $A$ may not be perfect). We next {\it fix} $$A_{\infty,0}:=\text{$p$-adic completion of } {V'[\pi^{1/p^\infty},x_2^{1/p^\infty},\dots,x_d^{1/p^\infty}]}\subseteq A_0\widehat{\otimes}_A{R^+},$$
which is a perfectoid algebra. Similarly we replace each $C_\gamma$ by its $\m$-adic completion and fix $\overline{A}_{\infty,0}$ (note that if $z=\pi$, then $\overline{A}_{\infty,0}$ is just $\overline{A}_{\text{perf}}$). Now we consider $S^A_{\perfd}:=(A_{\infty,0}\otimes_A S)_{\perfd}$ and $\overline{S}^{\overline{A}}_{\perfd}:=(\overline{A}_{\infty,0}\otimes_{\overline{A}} \overline{S})_{\perfd}=(A_{\infty,0}\otimes_{A} \overline{S})_{\perfd}$. Note that since the $B_\gamma$'s (respectively, $C_\gamma$'s) are perfectoid, they are algebras over $S^A_{\perfd}$ (respectively, $\overline{S}^{\overline{A}}_{\perfd}$) by the universal property of the perfection functor. Now, for any finite subset $\Lambda=\{\gamma_1,\dots,\gamma_n\}\subseteq \Gamma$, we consider the map
$$B_\Lambda:=B_{\gamma_1}\widehat{\otimes}_{{S}^{{A}}_{\perfd}}B_{\gamma_2}\widehat{\otimes}_{{S}^{{A}}_{\perfd}}\cdots\widehat{\otimes}_{{S}^{{A}}_{\perfd}}B_{\gamma_n} \to C_\Lambda:=C_{\gamma_1}\widehat{\otimes}_{\overline{S}^{\overline{A}}_{\perfd}}C_{\gamma_2}\widehat{\otimes}_{\overline{S}^{\overline{A}}_{\perfd}}\cdots \widehat{\otimes}_{\overline{S}^{\overline{A}}_{\perfd}}C_{\gamma_n}$$
of perfectoid algebras. By the same argument as in \autoref{thm:dominateBCM} (applied repeatedly), we know that $B_\Lambda$ is $(g)_{\perfd}$-almost big Cohen-Macaulay with respect to $\pi, x_2,\dots,x_d$ and $C_\Lambda$ is $(g)_{\perfd}$-almost big Cohen-Macaulay with respect to $x, x_3,\dots,x_d$ where $x=\pi$ if $p\notin P$ and $x=x_2$ if $p\in P$. Hence using the same strategy as in \cite[Theorem 3.1]{AndreWeaklyFunctorialBigCM} simultaneously to $B_\Lambda$ and $C_\Lambda$ (or use Gabber's method \cite[17.5]{GabberRameroFoundationsAlmostRingTheory} as in the proof of \autoref{thm:weaklyfunctorialdiagram}), we have a commutative diagram:
\[\xymatrix{
B_\Lambda \ar[r] \ar[d] & C_\Lambda \ar[d] \\
B \ar[r] & C
}
\]
where $B\to C$ is a map of perfectoid big Cohen-Macaulay algebras. In other words, $B_\Lambda\to C_\Lambda$ is a perfectoid seed morphism. Now clearly, $$\{B_\Lambda\to C_\Lambda\}_{\Lambda\subseteq \Gamma, |\Lambda|<\infty}$$ is a {\it direct system} of perfectoid seed morphisms (where the transition maps are the obvious ones). Therefore by \autoref{lem:perfectoidseedmorphism}, $\widehat{\varinjlim}_\Lambda B_\Lambda\to \widehat{\varinjlim}_\Lambda C_\Lambda$ is also a perfectoid seed morphism. So there exists a commutative diagram of perfectoid big Cohen-Macaulay algebras
\[\xymatrix{
B_\Lambda \ar[r] \ar[d] & C_\Lambda \ar[d] \\
\mathcal{B} \ar[r] & \mathcal{C}
}
\]
where $\mathcal{B} \to \mathcal{C}$ is a map of perfectoid big Cohen-Macaulay algebras. In particular, $\mathcal{B} \to \mathcal{C}$ dominates all $B_\gamma\to C_\gamma$ for $\gamma\in \Gamma$ via ${S}^{{A}}_{\perfd}$-linear maps. Finally, note that $\mathcal{B}$ and $\mathcal{C}$ are certainly algebras over $R^+$ and $(R/I_D)^+$ respectively, but there are potentially multiple $R^+$- and $(R/I_D)^+$-algebra structures on $\mathcal{B}$ and $\mathcal{C}$, coming from different $B_\gamma$ and $C_\gamma$. Nonetheless, the inclusion $$\adj_{\mathcal{B} \shortrightarrow \mathcal{C}}(R, D+\Delta)\subseteq \adj_{B_\gamma\shortrightarrow C_\gamma}(R, D+\Delta)$$ holds regardless of the different $R^+$- and $(R/I_D)^+$-algebra structures. This is because $f^{1/n}\in S\subseteq  {S}^{{A}}_{\perfd}$ and $f^{1/n}$ is all we need to define $\adj_{\mathcal{B} \shortrightarrow \mathcal{C}}(R, D+\Delta)$.
\end{proof}

\begin{remark}
In connection with \autoref{thm:dominateBCM}, it is natural to ask that, suppose $(R,\m)$ is a complete normal local domain of mixed characteristic $(0,p)$ and $P$ is a height one prime of $R$. Then whether given any two perfectoid seed morphisms of $R^+$- and $(R/P)^+$-algebras $B_1\to C_1$ and $B_2\to C_2$, the map $$B_1\widehat{\otimes}_{R^+}B_2\to C_1\widehat{\otimes}_{(R/P)^+}C_2$$ is also a perfectoid seed morphism? We expect this should be true but we do not have a precise argument at this moment. The subtlety is that, while we know that $B_1\widehat{\otimes}_{S^A_{\perfd}}B_2\to C_1\widehat{\otimes}_{\overline{S}^{\overline{A}}_{\perfd}}C_2$ is a perfectoid seed morphism for certain domain extension $S$ of $R$ inside $R^+$ as in the proof of \autoref{thm:uniformBC}, it is not clear that these $S^A_{\perfd}$ form a direct system (the choice of $A$ depends on $S$) and thus \autoref{lem:perfectoidseedmorphism} does not immediately apply.
\end{remark}

Finally, we prove the result on the uniform choice of $B\to C$ in our definition of adjoint ideal. This can be viewed as an analog of \cite[Proposition 6.10]{MaSchwedeSingularitiesMixedCharBCM}.

\begin{theorem}
\label{thm:uniformBC}
With notations as in \autoref{def:adjoint}, there exists a single compatible choice of perfectoid big Cohen-Macaulay $R^+$- and $(R/I_D)^+$-algebras $\mathcal{B}\to \mathcal{C}$, such that $$\adj_{\mathcal{B}\shortrightarrow \mathcal{C}}(R, D+\Delta)\subseteq \adj_{B\shortrightarrow C}(R, D+\Delta)$$
for all other compatible choices of perfectoid big Cohen-Macaulay $R^+$- and $(R/I_D)^+$-algebras $B\to C$.
\end{theorem}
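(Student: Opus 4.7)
The plan is to reduce the theorem to an application of \autoref{thm:dominateMap} by exploiting the fact that although the class of compatible choices $B \shortrightarrow C$ may be a proper class, the collection of ideals of $R$ they produce forms an honest set.

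First, I would observe that since $R$ is Noetherian, the set $\mathcal{I}$ of all ideals of $R$ is a genuine set. Consequently, the collection
\[
\mathcal{A} := \bigl\{ \adj_{B\shortrightarrow C}^D(R, D+\Delta) \;:\; B \shortrightarrow C \text{ a compatible choice of perfectoid big Cohen-Macaulay }R^+\text{- and }(R/I_D)^+\text{-algebras} \bigr\} \subseteq \mathcal{I}
\]
is also a set. Using the axiom of choice, for each ideal $J \in \mathcal{A}$ I would pick one compatible pair $(B_J, C_J)$ realizing $\adj_{B_J \shortrightarrow C_J}^D(R, D+\Delta) = J$. This yields a set $\Gamma := \{B_J \shortrightarrow C_J\}_{J \in \mathcal{A}}$ of compatible choices indexed by an honest set.

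Next I would apply \autoref{thm:dominateMap} to the set $\Gamma$. This produces a single compatibly chosen pair of perfectoid big Cohen-Macaulay algebras $\mathcal{B} \shortrightarrow \mathcal{C}$ with the property that
\[
\adj_{\mathcal{B} \shortrightarrow \mathcal{C}}^D(R, D+\Delta) \subseteq \adj_{B_J \shortrightarrow C_J}^D(R, D+\Delta) = J \quad \text{for every } J \in \mathcal{A}.
\]
Finally, given any (other) compatibly chosen perfectoid big Cohen-Macaulay $R^+$- and $(R/I_D)^+$-algebras $B \shortrightarrow C$, its associated adjoint ideal $\adj_{B\shortrightarrow C}^D(R, D+\Delta)$ belongs to $\mathcal{A}$ by definition, and therefore equals some $J$ in the collection. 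Combining this with the previous display gives the desired containment $\adj_{\mathcal{B} \shortrightarrow \mathcal{C}}^D(R, D+\Delta) \subseteq \adj_{B\shortrightarrow C}^D(R, D+\Delta)$, which completes the proof.

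The main technical content is already encapsulated in \autoref{thm:dominateMap}; no additional obstacle arises here, as the Noetherian hypothesis on $R$ ensures the set-theoretic gymnastics above are legitimate. The only conceptual subtlety is precisely this set-versus-class issue, which is handled by passing from the (possibly proper class of) compatible choices to the (bona fide) set of ideals they cut out.
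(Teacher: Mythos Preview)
Your proposal is correct and follows essentially the same approach as the paper: reduce to a set-indexed family of compatible choices and then apply \autoref{thm:dominateMap}. The paper indexes its family by elements $\lambda \in R \setminus I$ (where $I$ is the intersection of all adjoint ideals) rather than by the set $\mathcal{A}$ of adjoint ideals themselves, but this is a cosmetic difference. One minor remark: you do not need $R$ Noetherian to know that the ideals of $R$ form a set, since they are subsets of $R$; and your choice step (picking one $(B_J,C_J)$ for each $J\in\mathcal{A}$) still implicitly invokes a form of global choice, since for fixed $J$ the compatible pairs realizing it may form a proper class---the same subtlety is present in the paper's argument.
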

\begin{proof}
Let $I$ be the largest ideal contained in $\adj_{B\shortrightarrow C}(R, D+\Delta)$ for every compatible choice of $B\to C$. By the axiom of 
choice and the collection principle,
 for every element $\lambda\in R-I$, there exists a choice of $B_\lambda \to C_\lambda$ such that $\lambda\notin \adj_{B_\lambda\shortrightarrow C_\lambda}(R, D+\Delta)$. Now apply \autoref{thm:dominateMap} to the set $\{B_\lambda\to C_\lambda\}_{\lambda\in R-I}$, we know that there exists $\mathcal{B}\to \mathcal{C}$, such that $$\adj_{\mathcal{B}\shortrightarrow \mathcal{C}}(R, D+\Delta)\subseteq \adj_{B_\lambda\shortrightarrow C_\lambda}(R, D+\Delta)$$ for all $\lambda$. In particular, $\adj_{\mathcal{B}\shortrightarrow \mathcal{C}}(R, D+\Delta)\subseteq I$ and hence $$\adj_{\mathcal{B}\shortrightarrow \mathcal{C}}(R, D+\Delta)\subseteq \adj_{B\shortrightarrow C}(R, D+\Delta)$$ for all $B\to C$.
\end{proof}

\section{Extended Rees algebras of Koll\'ar component extractions}
\label{sec.ExtendedRees}

Suppose $\pi : Y \to X = \Spec R$ is a projective birational map with $(R, \fram, k)$ normal and local of dimension $\geq 2$ and $Y$ normal.  Further suppose that $\pi$ is obtained by blowing up an $\fram$-primary ideal $I$ and so $\pi$ is an isomorphism away from $V(\fram)$.  Suppose further that $I\cdot \cO_Y = \cO_Y(-mE)$ where $E$ is a prime Weil divisor.  Note that this last assumption can be substantially weakened throughout, but it simplifies many arguments.

We can form the following rings:
\[
\begin{array}{rl}
S & = \bigoplus_{n \geq 0} H^0(Y, \cO_Y(-nE))t^n\\
T & = \bigoplus_{n \in \bZ} H^0(Y, \cO_Y(-nE))t^n
\end{array}
\]
Note that $S$ is a $\bN$-graded ring and $T$ is a $\bZ$-graded ring and we have a canonical inclusion $S \subseteq T$. Also note that the $m$th Veronese subring of $S$ is just the normalization of Rees algebra $R[Is]$ and the $m$th Veronese subring of $T$ is the normalization of the extended Rees algebra $R[Is, s^{-1}]$.  Hence, replacing $m$ by a multiple, and $I$ by $\overline{I^l}$, we may assume that $m$th Veronese of $S$ is exactly the Rees algebra, and the $m$th Veronese of $T$, $T'$
is exactly the extended Rees algebra $R[Is, s^{-1}]$.  It is also clear that $T_{\leq 0} = R[t^{-1}]$ and $\Proj S=Y$.

We let
\[
\begin{array}{rl}
\frn_S & = \fram S + S_{> 0}\\
\frn_T & = \fram T + T_{> 0} + T_{<0}
\end{array}
\]
denote the homogeneous maximal ideals of $S$ and $T$ respectively.  Note in the case when $m = 1$, $\frn_S$ is simply $\fram + It$ and $\frn_T$ is $\m+It+t^{-1}$.

\begin{lemma}
If $ft^j \in S_{>0}$ is homogeneous of degree $j > 0$, then $S[(ft^j)^{-1}] \to T[(ft^j)^{-1}]$ is an isomorphism.
\end{lemma}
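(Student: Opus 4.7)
The plan is to exploit the grading on both rings and reduce to a direct computation, using the fact noted earlier that $T_{\leq 0} = R[t^{-1}]$.

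First I would observe that since $S \hookrightarrow T$ is injective and $ft^j$ is a nonzerodivisor in both (both are domains, $ft^j \neq 0$), the induced map $S[(ft^j)^{-1}] \to T[(ft^j)^{-1}]$ is injective. So the content is surjectivity. Because the element $ft^j$ is homogeneous, both localizations inherit $\bZ$-gradings, and it suffices to show every homogeneous element $g t^n \in T$ lies in the image.

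For $n \geq 0$ there is nothing to check: $g t^n$ already lies in $S$. For $n < 0$, we use the observation $T_{\leq 0} = R[t^{-1}]$, so $g \in R$ (in particular $g$ is regular everywhere on $Y$, hence $\ord_E(g) \geq 0$). Next, choose $k \geq 1$ large enough that $kj + n \geq 0$, which is possible because $j \geq 1$. Compute
\[
(ft^j)^k \cdot g t^n \;=\; f^k g \cdot t^{kj+n}.
\]
To see $f^k g \cdot t^{kj+n} \in S$, I would verify $f^k g \in H^0(Y, \cO_Y(-(kj+n)E))$: since $f \in H^0(Y, \cO_Y(-jE))$ gives $\ord_E(f) \geq j$, and $\ord_E(g) \geq 0$, we get
\[
\ord_E(f^k g) \;\geq\; kj \;\geq\; kj+n,
\]
while along every other prime divisor $F \neq E$ on $Y$ we have $\ord_F(f^k g) \geq 0$ because both $f$ and $g$ are elements of $R$ and $Y$ is isomorphic to $X$ away from $E$. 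Hence $f^k g t^{kj+n} \in S$, which shows $gt^n \in S[(ft^j)^{-1}]$.

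Combining, every homogeneous component of every element of $T$ lies in $S[(ft^j)^{-1}]$, so $T \subseteq S[(ft^j)^{-1}]$ and the two localizations coincide. There is no genuine obstacle here — the argument is purely a degree-shift calculation and uses only that $j > 0$ (so repeated multiplication by $ft^j$ eventually lands in the nonnegative part) together with the identification $T_{\leq 0} = R[t^{-1}]$ that forces the negative-degree coefficients to already be regular on all of $Y$.
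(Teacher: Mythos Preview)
Your proof is correct, but it takes a somewhat different route from the paper's argument. The paper simply observes that $t^{-j} = f \cdot (ft^j)^{-1}$ lies in $S[(ft^j)^{-1}]$, and then uses that $S[(ft^j)^{-1}]$ is normal (being a localization of the normal ring $S$) to conclude that $t^{-1}$, which is integral over $S[(ft^j)^{-1}]$ as a root of $X^j - t^{-j}$, already lies in $S[(ft^j)^{-1}]$; since $T$ is generated over $S$ by $t^{-1}$, this finishes. Your argument is more hands-on: you multiply an arbitrary negative-degree element $gt^n$ by a large power of $ft^j$ and verify directly, via divisorial order computations, that the result lands in $S$. The paper's route is shorter and more conceptual, but yours is more elementary in that it does not invoke normality of $S$ at all---it would work for any ring $S$ of this form, normal or not.
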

\begin{proof}
Note that $S[(ft^j)^{-1}]$ contains $t^{-j} = f\cdot (ft^j)^{-1}$.  Since $S$ is normal, so is $S[(ft^j)^{-1}]$.  Therefore since $t^{-1}$ is in the fraction field, we have that
\[
t^{-1} \in S[(ft^j)^{-1}].
\]
But this proves the lemma.
\end{proof}

\begin{lemma}
Suppose that $0\neq f \in \fram \subseteq [S]_0 \subseteq S$.  Then $S[f^{-1}] = R[f^{-1}][t]$ and in particular, $S[f^{-1}] \to T[f^{-1}]$ is \'etale.
\end{lemma}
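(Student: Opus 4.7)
The plan is to show that over the open set $D(f) = \Spec R[f^{-1}] \subseteq \Spec R$ the exceptional divisor $E$ disappears, so each graded piece $H^0(Y,\cO_Y(-nE))$ becomes all of $R[f^{-1}]$ after inverting $f$. Concretely, since $f \in \fram$ and $\fram$ is maximal, $D(f) \subseteq \Spec R \setminus \{\fram\}$, and on this locus $\pi$ is an isomorphism, so $\pi^{-1}(D(f))$ is disjoint from $E$. Therefore for all $n \geq 0$ we have an inclusion $H^0(Y,\cO_Y(-nE)) \hookrightarrow H^0(\pi^{-1}D(f), \cO) = R[f^{-1}]$, using normality of $R$ to identify sections over $D(f)$ with $R[f^{-1}]$.

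The main point to verify is the reverse containment $R[f^{-1}] \subseteq H^0(Y,\cO_Y(-nE))[f^{-1}]$. Since $I$ is $\fram$-primary, some $\fram^k \subseteq I$, and by assumption $I \cdot \cO_Y = \cO_Y(-mE)$; hence $f^k \in \fram^k$ satisfies $f^k \cdot \cO_Y \subseteq \cO_Y(-mE)$, so $\pi^*f^k$ vanishes along $E$ to order at least $m$. Consequently, for any $r \in R$ and any $n$, choosing $N$ with $Nk \cdot m \geq n$ gives $f^N r \in H^0(Y, \cO_Y(-nE))$, and therefore $r = (f^N r)/f^N$ lies in $H^0(Y,\cO_Y(-nE))[f^{-1}]$. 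This gives the equality $H^0(Y,\cO_Y(-nE))[f^{-1}] = R[f^{-1}]$ for every $n \geq 0$.

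Summing over $n \geq 0$, we obtain
\[
S[f^{-1}] = \bigoplus_{n \geq 0} H^0(Y,\cO_Y(-nE))[f^{-1}]\, t^n = \bigoplus_{n \geq 0} R[f^{-1}]\, t^n = R[f^{-1}][t],
\]
which is the first claim. The exact same argument applied to negative degrees (where $H^0(Y,\cO_Y(-nE)) = R$ for $n \leq 0$, noting $-nE$ is effective then) yields $T[f^{-1}] = R[f^{-1}][t,t^{-1}]$. Hence $S[f^{-1}] \to T[f^{-1}]$ is simply the localization $R[f^{-1}][t] \to R[f^{-1}][t][t^{-1}]$, which is \'etale. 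The only subtlety is tracking the factor of $m$ coming from $I\cdot\cO_Y = \cO_Y(-mE)$, but this is absorbed harmlessly in the choice of $N$.
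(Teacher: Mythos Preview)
Your argument is essentially correct and gives a valid proof, but there is a small arithmetic slip: from $\mathrm{ord}_E(\pi^*f^k)\geq m$ you get $\mathrm{ord}_E(\pi^*f^N)\geq Nm/k$, so the correct condition is $Nm\geq nk$, not $Nkm\geq n$ (alternatively, use $f^{Nk}$ in place of $f^N$). In fact the detour through $I$ is unnecessary: since $f\in\fram$ and $E$ is the reduced fiber over $\fram$, one has $\mathrm{ord}_E(\pi^*f)\geq 1$ directly, so $f^n r\in H^0(Y,\cO_Y(-nE))$ for every $r\in R$.

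Your approach is genuinely different from the paper's. The paper argues as follows: since $f\in\fram$ vanishes along $E$, one has $ft^j\in [S]_j$ for some $j>0$, hence $t^j\in S[f^{-1}]$; then, because $S[f^{-1}]$ is normal and $t$ lies in its fraction field, it follows that $t\in S[f^{-1}]$, giving $S[f^{-1}]=R[f^{-1}][t]$. So the paper exploits normality of the localized Rees ring to pass from $t^j$ to $t$, whereas you compute each graded piece $H^0(Y,\cO_Y(-nE))[f^{-1}]=R[f^{-1}]$ directly. Your route is more elementary in that it never invokes normality of $S$; the paper's is shorter and parallels the preceding lemma (where normality is used the same way to extract $t^{-1}$ from $t^{-j}$).
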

\begin{proof}
Since we are blowing up an $\fram$-primary ideal, we have that $ft^{j} \in [S]_j$ for some $j > 0$ and so $t^j \in S[f^{-1}]$.  Thus since $S[f^{-1}]$ is normal, it contains $t$ as well.  Hence $S[f^{-1}] = R[f^{-1}][t]$.
\end{proof}

Putting these two elements together we have that $S \to T$ is \'etale outside of $\frn_S$.  Now, notice that $T/\frn_S T = k[t^{-1}]$ hence $\frn_S T$ is of codimension $\geq 2$ in $T$.  Therefore $S \to T$ is \'etale outside a set of codimension $2$ on both $S$ and $T$.  In conclusion:

\begin{lemma}
\label{lem.MapBetweenReesAndExtendedRees}
If $\rho : \Spec T \to \Spec S$ is the canonical map, then $\rho^* K_S = K_T$.
\end{lemma}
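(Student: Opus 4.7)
The plan is to deduce the result from the two preceding lemmas, which show that $\rho$ is \'etale away from a closed subset of codimension at least two on both source and target, combined with the general principle that canonical divisors are determined by their codimension-one behavior on normal schemes.

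First I would verify the codimension-two claims explicitly. The two preceding lemmas together show that the non-\'etale locus of $\rho$ is contained in $V(\frn_S) \subseteq \Spec S$ and, correspondingly, in $V(\frn_S T) \subseteq \Spec T$. Since $\dim R \geq 2$ and $S$ is a section ring with $\dim S = \dim R + 1$, the homogeneous maximal ideal $\frn_S$ has codimension $\geq 3$ in $\Spec S$. On the other side, the identification $T/\frn_S T \cong k[t^{-1}]$ noted in the text shows that $V(\frn_S T)$ has dimension one in the $(\dim R + 1)$-dimensional scheme $\Spec T$, hence codimension $\geq 2$. Let $V \subseteq \Spec S$ and $U := \rho^{-1}(V) \subseteq \Spec T$ denote the respective open complements; by what we just said their complements in $\Spec S$ and $\Spec T$ both have codimension at least two.

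Next I would use the \'etaleness of $\rho|_U \colon U \to V$: for an \'etale morphism one has $\omega_U \cong (\rho|_U)^* \omega_V$, equivalently $K_U = (\rho^* K_S)\big|_U$ as Weil divisor classes. Finally, since $S$ and $T$ are normal (their $m$th Veronese subrings are the normalizations of the Rees and extended Rees algebras by construction, and normality of the $m$th Veronese implies normality of the full graded ring), the canonical Weil divisor classes on $\Spec S$ and $\Spec T$ are determined by their restrictions to $V$ and $U$ respectively. The equality of divisor classes on $U$ therefore extends uniquely to $\rho^* K_S = K_T$ on all of $\Spec T$. The only potential subtlety is making sense of the pullback $\rho^* K_S$ for the morphism $\rho$, which is neither finite nor flat in general; but this is unproblematic given \'etaleness in codimension one, since one can simply pull back Weil divisors in the \'etale locus and then take the reflexive (equivalently, $S_2$) closure.
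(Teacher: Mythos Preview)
Your approach is essentially the same as the paper's: use the two preceding lemmas to see that $\rho$ is \'etale away from $V(\frn_S)$, verify this locus has codimension $\geq 2$ on both sides via $T/\frn_S T \cong k[t^{-1}]$, and conclude by the codimension-one determination of canonical divisors on normal schemes.

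One caveat: your parenthetical justification of normality is incorrect. Normality of a Veronese subring does \emph{not} imply normality of the full graded ring; for instance $R = k[t^2, t^3]$ (graded by $\deg t = 1$) is not normal while $R^{(2)} = k[t^2]$ is. The paper simply uses normality of $S$ without comment in the preceding lemmas. The actual reason $S$ and $T$ are normal is that $Y$ is normal: since $H^0(Y, \cO_Y(-nE)) = \{f \in K(Y) : \Div_Y(f) \geq nE\}$, the ring $T$ is cut out inside $K(Y)[t, t^{-1}]$ by valuative conditions coming from the prime divisors on $Y$, and is therefore an intersection of valuation rings, hence integrally closed.
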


We next prove a consequence of graded local duality.

\begin{proposition}
\label{prop.LocalDualMD}
With notation as above, for any divisor $D$ on $Y$ let $M_D$ denote the $T$-module $\bigoplus_{n \in \mathbb{Z}} H^0(Y, \cO_Y(-nE + D))$.  We have:
\[
\bigoplus_{n\in\mathbb{Z}}\Hom_R(H^0(Y, \cO_Y(K_Y + nE - D)), E_R) \cong H_{\n_T}^{d+1}(M_D).
\]
Here $E_R$ is the injective hull of the residue field of $R$.
\end{proposition}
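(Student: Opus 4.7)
The plan is to apply graded local duality for the $\bZ$-graded, ${}^*$-local Noetherian ring $T$ at its ${}^*$-maximal ideal $\n_T$. Since $T$ is a finite integral extension of the normalized extended Rees algebra $T'$, which is essentially of finite type over the complete local ring $R$, the ring $T$ is ${}^*$-local Noetherian of Krull dimension $d+1$ and admits a graded canonical module $\omega_T$. First I would check that $M_D$ is a finitely generated graded $T$-module: the $n\geq 0$ part is finitely generated over $T_{\geq 0}$ by Serre's theorem, using that $-E$ is $\pi$-ample, while the $n<0$ part coincides with $H^0(Y\setminus E,\,\cO_Y(D))\cdot t^n$ and is generated over $T_{\leq 0}=R[t^{-1}]$ by its degree-zero piece via S2-extension from $Y\setminus E$ to $Y$.

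Applying graded local duality degree-by-degree then yields
\[
[H^{d+1}_{\n_T}(M_D)]_n \;\cong\; \Hom_R\!\big([\Hom_T^*(M_D,\omega_T)]_{-n},\,E_R\big)
\]
for each $n\in\bZ$. The remaining work is to identify $\omega_T\cong M_{K_Y}$ and to compute the graded $\Hom$. Under the standard correspondence between finitely generated graded $T$-modules modulo $\n_T$-torsion and coherent sheaves on $Y=\Proj T_{\geq 0}$, the module $M_D$ corresponds to $\cO_Y(D)$; the identification $\omega_T\cong M_{K_Y}$ then follows either from Lipman's description of the canonical module of a normalized blow-up algebra (combined with the fact that $T[t^{-1}]=R[t,t^{-1}]$ is Gorenstein, which fixes the behavior of $\omega_T$ in negative degrees) or directly via Grothendieck duality for $\pi\colon Y\to X$. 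A homogeneous $T$-linear map $\varphi\colon M_D\to M_{K_Y}$ of degree $-n$ sends $H^0(Y,\cO_Y(-kE+D))$ to $H^0(Y,\cO_Y(-(k-n)E+K_Y))$ compatibly with multiplication by $T$, and is therefore multiplication by a (rational) section of $\cO_Y(K_Y-D+nE)$ that is actually regular on $Y$; conversely every such section gives a map. Thus
\[
[\Hom_T^*(M_D,M_{K_Y})]_{-n} \;\cong\; H^0\!\big(Y,\,\cO_Y(K_Y+nE-D)\big),
\]
and combining the two displays proves the proposition.

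The main obstacle will be nailing down $\omega_T$ with the correct degree convention: because $T$ is $\bZ$-graded rather than $\bN$-graded, an off-by-one in the shift would propagate to every graded piece of the final answer, and one must verify the identification works uniformly for the positively graded (Rees-like) and negatively graded (Laurent $R[t^{-1}]$-like) pieces of $T$. A secondary technicality is ensuring the equivalence with sheaves on $Y=\Proj T_{\geq 0}$ captures the graded $\Hom$ accurately in negative degrees, which reduces to the S2 properties used in the finite generation step. As an alternative, one could instead run a Sancho-de-Salas-type long exact sequence on $T$ (analogous to the one used in \autoref{thm:test_comparison} and \autoref{thm:birational_adjoint_comparison}) relating $H^{d+1}_{\n_T}(M_D)_n$ to $H^d_{E_0}(Y,\cO_Y(-nE+D))$ and then invoke Grothendieck–Serre duality on the proper map $Y\to\Spec R$ to obtain the same conclusion.
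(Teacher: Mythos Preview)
Your proposal is essentially correct, but it takes a different route from the paper, and interestingly your ``alternative'' is exactly the paper's proof.

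The paper argues as follows. First, by Grothendieck duality for $\pi\colon Y\to\Spec R$ together with local duality on $R$, one identifies
\[
\Hom_R\big(H^0(Y,\cO_Y(K_Y+nE-D)),E_R\big)\;\cong\;H^d_E\big(Y,\cO_Y(-nE+D)\big).
\]
Second, a Sancho--de--Salas triangle (built from the \v{C}ech complex on generators of $T_{>0}$, as in Lipman's treatment) yields
\[
\bigoplus_{n\in\bZ}H^d_E\big(Y,\cO_Y(-nE+D)\big)\;\cong\;H^{d+1}_{\n_T}(M_D),
\]
after observing that $\m T\subseteq\sqrt{(t^{-1})}$ and that $\mathbf{R}\Gamma_{(t^{-1})}(M_D)$ has no cohomology in degrees $d,d+1$ since $d\geq 2$. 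This is precisely the argument you sketch at the end as an alternative.

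Your primary approach via graded local duality for $T$ is sound in principle, but note that it inverts the paper's logical order: in the paper, the identification $\omega_T\cong M_{K_Y}$ is \emph{deduced from} this proposition (take $D=0$ and Matlis-dualize; this is Proof~\#1 of the subsequent proposition, and Proof~\#2 also uses the S2 corollary of this result). So to avoid circularity you must, as you anticipate, establish $\omega_T\cong M_{K_Y}$ independently---your suggestion via Hyry--Smith/Lipman for $\omega_S$ together with the \'etale-in-codimension-one comparison $\rho^*K_S=K_T$ would do this, though you would also need the S2-ness of $M_{K_Y}$ without appealing to the present proposition. The trade-off: the paper's route is self-contained and feeds directly into computing $\omega_T$; your route is more conceptual once $\omega_T$ is in hand, and makes the graded Hom identification $\Hom_T^*(M_D,M_{K_Y})\cong M_{K_Y-D}$ the essential content.
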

\begin{proof}
By looking at the spectral sequence of low degree terms and then using Grothendieck duality, we have
\[
\begin{array}{rl}
& \Gamma(Y, \cO_Y(K_Y + nE - D)) \\
\cong & \myH^{-d} \myR\Gamma\big(Y, \myR \sHom(\cO_Y(-nE + D), \omega_Y^{\mydot}) \big) \\
\cong & \myH^{-d} \myR \Hom_R(\myR\Gamma(Y, \cO_Y(-nE + D)), \omega_R^{\mydot}).
\end{array}
\]
By Grothendieck local duality we know that
\[
\begin{array}{rl}
& \Hom_R(H^0(Y, \cO_Y(K_Y + nE - D)), E_R) \\
= & \Hom_R(\myH^{-d} \myR \Hom_R(\myR\Gamma(Y, \cO_Y(-nE + D)), \omega_R^{\mydot}), E_R) \\
\cong & \myH^d\myR\Gamma_{\fram}\big(\myR\Gamma(Y, \cO_Y(-nE + D))\big) \\
\cong & \myH^d \myR\Gamma_E(Y, \cO_Y(-nE+D)).
\end{array}
\]
Thus it is enough to show that
\[
\bigoplus_{n\in\mathbb{Z}}H_E^d(Y, \cO_Y(-nE+D))\cong H_{\n_T}^{d+1}(M_D).
\]
Fix a generating set $\{g_1,\dots, g_s\}$ of $S_{>0} = T_{>0}$, so each $g_i=f_it^j$ for some $j$. Since $T_{f_it^j}=S_{f_it^j}$, and by our choice $\{\Spec[S_{f_it^j}]_0\}$ forms an affine cover of $Y$, we know from the \v{C}ech complex of $T$ on $g_1,\dots,g_s$ that we have a triangle (see \cite[page 150]{LipmanCohenMacaulaynessInGradedAlgebras})
\[
\bigoplus_{n\in\mathbb{Z}} \mathbf{R}\Gamma(Y, \cO_Y(-nE+D))[-1]\to \mathbf{R}\Gamma_{T_{>0}}(M_D) \to M_D \xrightarrow{+1}.
\]
Applying $\mathbf{R}\Gamma_{\m T+ (t^{-1})}(-)$ to the triangle, we obtain
\[
\mathbf{R}\Gamma_{\m T+ (t^{-1})}\left(\bigoplus_{n\in\mathbb{Z}}\mathbf{R}\Gamma(Y, \cO_Y(-nE+D))\right)[-1]\to \mathbf{R}\Gamma_{\m T+ (t^{-1})}\mathbf{R}\Gamma_{T_{>0}}(M_D) \to \mathbf{R}\Gamma_{\m T+ (t^{-1})}(M_D) \xrightarrow{+1}.
\]
Next we note that in each $T_{f_it^j}$, $t^{-j}=f_i/(f_it^j)$ and hence $t^{-1}\in \sqrt{\m T_{f_it^j}}$. Therefore
\[
\begin{array}{rl}
 & \mathbf{R}\Gamma_{\m T+ (t^{-1})}\left(\bigoplus_{n\in\mathbb{Z}}\mathbf{R}\Gamma(Y, \cO_Y(-nE + D))\right)\\
=& \mathbf{R}\Gamma_{\m T}\left(\bigoplus_{n\in\mathbb{Z}}\mathbf{R}\Gamma(Y, \cO_Y(-nE + D))\right)\\
=& \bigoplus_{n\in\mathbb{Z}}\mathbf{R}\Gamma_E(Y, \cO_Y(-nE + D)).
\end{array}
\]
On the other hand, we have $\m T\subseteq \sqrt{(t^{-1})}$ and thus $\mathbf{R}\Gamma_{\m T+ (t^{-1})}(T)=\mathbf{R}\Gamma_{(t^{-1})}(T)$. So the above triangle simplifies to
\[
\bigoplus_{n\in\mathbb{Z}}\mathbf{R}\Gamma_E(Y, \cO_Y(-nE+D))[-1]\to \mathbf{R}\Gamma_{\n_T}(M_D) \to \mathbf{R}\Gamma_{(t^{-1})}(M_D)\xrightarrow{+1}.
\]
Taking cohomology and noting that since $d\geq 2$, $h^d(\mathbf{R}\Gamma_{(t^{-1})}(M_D))=h^{d+1}(\mathbf{R}\Gamma_{(t^{-1})}(M_D))=0$, we obtain
\[
h^{d+1}(\bigoplus_{n\in\mathbb{Z}}\mathbf{R}\Gamma_E(Y, \cO_Y(-nE+D))[-1])\cong h^{d+1}(\mathbf{R}\Gamma_{\n_T}(M_D)),
\]
which is precisely saying that
\[
\bigoplus_{n\in\mathbb{Z}}H_E^d(Y, \cO_Y(-nE+D))\cong H_{\n_T}^{d+1}(M_D)
\]
as desired.
\end{proof}

\begin{corollary}
\label{lem.GradedReflexiveDivisorSheaf}
The module $M_D = \bigoplus_{n \in \bZ} H^0(Y, \cO_Y(D-nE))$ defined in \autoref{prop.LocalDualMD} is S2, or equivalently reflexive.
\end{corollary}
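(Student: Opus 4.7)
The approach is to check S2 of $M_D$ as a $T$-module at each homogeneous prime of the graded Noetherian normal domain $T$, with the main work occurring at the graded maximal ideal $\n_T$. First, note that $M_D$ is rank-one torsion-free over $T$ (embedding in the graded function field $\bigoplus_n K(Y) \cdot t^n$), so S2 is equivalent to reflexivity, and S2 holds automatically at primes of height $\leq 1$.

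For a homogeneous prime $\mathfrak{p} \neq \n_T$ of height $\geq 2$, I would first observe that $(t^{-1}) + T_{>0}$ has height $d+1$ in $T$: the quotient $T/((t^{-1}) + T_{>0}) \cong R/H^0(Y, \cO_Y(-E))$ is Artinian since $H^0(Y, \cO_Y(-E)) \supseteq I$ is $\m$-primary. Consequently any such $\mathfrak{p}$ must miss $T_{>0}$ or $(t^{-1})$, and inverting the missed element identifies $(M_D)_{\mathfrak{p}}$ with either sections of the reflexive sheaf $\cO_Y(D)$ on an open subset of the normal scheme $Y$ or with a module of the form $N[t, t^{-1}]$ over $R[t, t^{-1}]$, both S2.

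The key case is $\mathfrak{p} = \n_T$, where we must show $H^0_{\n_T}(M_D) = H^1_{\n_T}(M_D) = 0$. The distinguished triangle
\[
\bigoplus_n \mathbf{R}\Gamma_E(Y, \cO_Y(D-nE))[-1] \to \mathbf{R}\Gamma_{\n_T}(M_D) \to \mathbf{R}\Gamma_{(t^{-1})}(M_D) \xrightarrow{+1}
\]
established in the proof of \autoref{prop.LocalDualMD} yields $H^0_{\n_T}(M_D) = 0$ using $(t^{-1})$-torsion freeness of $M_D$. Since $H^0_E(Y, \cO_Y(D-nE)) = 0$ (rank-one torsion-free sheaves on the normal $Y$ have no sections supported on the codim-one $E$), the long exact sequence then gives an injection $H^1_{\n_T}(M_D) \hookrightarrow M_D[1/t^{-1}]/M_D$. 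To see this image is zero, identify $M_D[1/t^{-1}] \cong N[t, t^{-1}]$ for $N = H^0(X \setminus \{\m\}, \cO_X(\pi_*D))$; a nonzero class $[rt^n]$ then satisfies $\ord_E(r) < n$, and choosing $f \in I$ with $\ord_E(f) = m$ exactly (possible since $I$ generates $\cO_Y(-mE)$), the calculation $\ord_E(f^N r) = mN + \ord_E(r) < n + mN$ shows $(ft^m)^N [rt^n] \neq 0$ for every $N$, so $[rt^n]$ is not $\n_T$-torsion.

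The main obstacle is the order-of-vanishing argument at $\n_T$, which relies on the exact-order section $f \in I$ furnished by the standing hypothesis $I \cdot \cO_Y = \cO_Y(-mE)$; granted this, the verification goes through as described.
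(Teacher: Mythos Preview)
Your argument is correct, and the key computation at $\n_T$ is carried out cleanly: the triangle from the proof of \autoref{prop.LocalDualMD} gives $H^0_{\n_T}(M_D)=0$ immediately, and your order-of-vanishing argument with an $f\in I$ of exact $E$-order $m$ correctly shows no nonzero homogeneous class in $M_D[t]/M_D$ is $\n_T$-power torsion (since $(ft^m)^N\in\n_T^N$), whence $H^1_{\n_T}(M_D)=0$. The treatment at the non-maximal homogeneous primes, while sketched, is also fine once one notes $T_g=S_g$ for positive-degree $g$ so that $M_D$ restricts to the reflexive sheaf $\cO_Y(D)$ over the corresponding chart.

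This is genuinely different from the paper's route. The paper does not verify S2 pointwise at all; instead it reads \autoref{prop.LocalDualMD} as saying that the graded Matlis dual of $H^{d+1}_{\n_T}(M_D)$ is $M_{K_Y-D}$, and then invokes the general fact that the Matlis dual of top local cohomology (i.e.\ a graded canonical module) is always S2. Since $D$ is arbitrary, this handles every $M_D$ simultaneously in two lines. Your approach trades that piece of homological folklore for an explicit depth calculation: it is longer and leans on the standing hypothesis $I\cdot\cO_Y=\cO_Y(-mE)$ to produce the exact-order section, but it is self-contained and makes the mechanism behind S2 at $\n_T$ completely transparent. The paper's argument, by contrast, is slicker and would continue to work without that exact-order hypothesis.
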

\begin{proof}
First note that
$\bigoplus_{n\in\mathbb{Z}} \Hom_R(H^0(Y, \cO_Y(K_Y + nE - D)), E_R)$
is the graded Matlis dual of $\bigoplus_{n\in\mathbb{Z}} H^0(Y, \cO_Y(- nE + K_Y - D)) = M_{K_Y - D}$.  Since $D$ is arbitrary, it suffices to show that $M_{K_Y - D}$ is S2, or in other words, that the graded Matlis dual of $H_{\n_T}^{d+1}(M_D)$ is S2.  But the graded Matlis dual of top local cohomology is always S2.  Let $M_{K_Y - D} \to M'$ denote the S2-ification with respect to $T$.  The cokernel has codimension $\geq 2$ by definition, so their top local cohomologies are isomorphic.  This completes the proof.
\end{proof}


%

\begin{proposition}
\label{prop.CanonicalModuleOfExtendedRees}
With notation as above,
\[
\omega_{T} \cong \bigoplus_{n \in \bZ} H^0(Y, \omega_Y(-nE)).
\]
\end{proposition}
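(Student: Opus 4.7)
The plan is to identify the graded canonical module $\omega_T$ with the $T$-module $M_{K_Y} := \bigoplus_{n\in\bZ} H^0(Y,\omega_Y(-nE))$ by checking the two defining properties of $\omega_T$ in the graded setting, namely the $S_2$ property and the prescribed top local cohomology, and using as input only the two preceding results, \autoref{prop.LocalDualMD} and \autoref{lem.GradedReflexiveDivisorSheaf}.

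First I would observe that $T$ is a Noetherian $\bZ$-graded $R$-algebra: the $m$th Veronese $T' = R[Is,s^{-1}]$ is a finitely generated $R$-algebra, and $T$ is module-finite over $T'$ (being the integral closure, after our reductions). Hence the graded canonical module $\omega_T$ is defined in the usual way as a finitely generated graded $S_2$ $T$-module whose $\n_T$-adic completion is Matlis-dual to $H^{d+1}_{\n_T}(T)$.

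Next I would apply \autoref{prop.LocalDualMD} with $D = K_Y$ to obtain
\[
H^{d+1}_{\n_T}(M_{K_Y}) \;\cong\; \bigoplus_{n\in\bZ} \Hom_R\bigl(H^0(Y,\O_Y(nE)),E_R\bigr),
\]
which is exactly the graded Matlis dual of $T = \bigoplus_n H^0(Y,\O_Y(-nE))\,t^n$ itself (the degree $-n$ piece on the right is the Matlis dual of the degree $n$ piece $[T]_n = H^0(Y,\O_Y(-nE))$). Thus $M_{K_Y}$ carries precisely the top local cohomology that characterizes $\omega_T$. Moreover, \autoref{lem.GradedReflexiveDivisorSheaf} says $M_{K_Y}$ is $S_2$, and it is finitely generated over $T$ because it is a rank-one reflexive graded module on the Noetherian scheme $\Spec T$. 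Combining these, the uniqueness statement of graded local duality yields $\omega_T \cong M_{K_Y}$.

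The main obstacle is articulating graded local duality in a form that reconstructs $\omega_T$ uniquely from $H^{d+1}_{\n_T}(T)$. The cleanest way around this is to pass to the $\n_T$-adic completion $\widehat{T}$, which is a complete Noetherian local ring of dimension $d+1$, and to apply classical Matlis duality there to get $\omega_{\widehat T} \cong \Hom_R\bigl(H^{d+1}_{\n_T}(T),E_R\bigr)$. One then descends the isomorphism to $T$ using that both $\omega_T$ and $M_{K_Y}$ are $S_2$, finitely generated, and graded $T$-modules whose completions are canonically isomorphic (both equal $\omega_{\widehat T}$). Alternatively, one can avoid completion entirely by computing both sides of the asserted isomorphism degree-by-degree as graded Matlis duals, using \autoref{prop.LocalDualMD} with $D = 0$ in parallel with $D = K_Y$.
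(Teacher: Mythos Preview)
Your proposal is correct, but it takes a slightly roundabout path compared to the paper's first proof.  The paper simply applies \autoref{prop.LocalDualMD} with $D = 0$ (not $D = K_Y$): this gives
\[
H^{d+1}_{\frn_T}(T) \;\cong\; \bigoplus_{n\in\bZ}\Hom_R\bigl(H^0(Y,\omega_Y(nE)),E_R\bigr),
\]
and then one takes the graded Matlis dual of both sides and invokes the \emph{definition} $\omega_T = \bigoplus_n \Hom_R\bigl([H^{d+1}_{\frn_T}(T)]_n, E_R\bigr)$ directly, with no need for a uniqueness or $S_2$ characterization.  Your ``alternative'' in the final sentence is exactly this argument; it is simpler than your main line because it avoids having to articulate a uniqueness statement for $\omega_T$ among $S_2$ modules with prescribed top local cohomology (which, while true, you left somewhat vague).

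The paper also gives a second, genuinely different proof: it uses \autoref{lem.MapBetweenReesAndExtendedRees} to identify $\omega_T$ with the reflexification of $\omega_S \cdot T$, recalls the known formula $\omega_S = \bigoplus_{n>0} H^0(Y,\omega_Y(-nE))$, and then checks by hand that $\omega_S \cdot T \to M_{K_Y}$ is an isomorphism in codimension~one (using \autoref{lem.GradedReflexiveDivisorSheaf} for the $S_2$ property of the target).  Your use of \autoref{lem.GradedReflexiveDivisorSheaf} is closer in spirit to this second proof, but you do not need it if you follow the $D=0$ route.
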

We provide multiple proofs.
\begin{proof}[Proof \#1 of \autoref{prop.CanonicalModuleOfExtendedRees}]
We know by definition that
\[
\omega_T=\bigoplus_{n\in \mathbb{Z}} \Hom_R(H_{\n_T}^{d+1}(T)_n, E_R).
\]
Thus by Matlis duality, it is enough to show that $\bigoplus_{n\in\mathbb{Z}}\Hom_R(H^0(Y, \omega_Y(nE)), E_R) \cong H_{\n_T}^{d+1}(T)$ where $E_R$ denotes the injective hull of the residue field of $R$.  But this is just \autoref{prop.LocalDualMD} taking $D = 0$.
\end{proof}
\begin{proof}[Proof \#2 of \autoref{prop.CanonicalModuleOfExtendedRees}]
We know that the reflexification of $\omega_S \cdot T$ is $\omega_T$
by \autoref{lem.MapBetweenReesAndExtendedRees}.  In positive degrees, $\omega_S \cdot T$ already agrees with our desired module.  We see that $\bigoplus_{n \in \bZ} H^0(Y, \omega_Y(-nE))$ is a S2 and hence reflexive $T$-module by \autoref{lem.GradedReflexiveDivisorSheaf}.

Thus we need to show that
\[
\omega_S \cdot T \to \bigoplus_{n \in \bZ} H^0(Y, \omega_Y(-nE))
\]
is an isomorphism in codimension 1.  It is already an isomorphism in degree $> 0$ since $\omega_S = \bigoplus_{n > 0} H^0(Y, \omega_Y(-nE))$ by \cite[2.6.2]{HyrySmithOnANonVanishingConjecture}.  To this end, observe that for all $n \ll 0$, we have that $H^0(\omega_Y(-nE)) = \omega_R$ since $\pi : Y \to \Spec R$ is an isomorphism outside of $E$.  On the other hand, $\omega_S \cdot T$ is simply $H^0(Y, \omega_Y(-E))$ in all negative degrees since we can multiply by $t^{-1}$.  Since $\omega_R / H^0(Y, \omega_Y(-E))$ is supported in dimension 0, it is easy to see that $\omega_S \cdot T \to \bigoplus_{n \in \bZ} H^0(Y, \omega_Y(-nE))$ is an isomorphism outside of codimension 1.  This completes the proof.
\end{proof}

\subsection{Discrepancy computations and extended Rees algebras}
\label{subsec.DiscrepComputationsForExtendedRees}

Suppose $\pi : Y \to X = \Spec R$ is a projective birational map between normal integral schemes with $(R, \fram, k)$ local.  For any (prime) Weil divisor $D_Y$ on $Y$, we can form the associated coherent $\cO_Y$-module $\cO_Y(D_Y)$ and then construct the $T$-module
\[
\Gamma_{*, T}(\cO_Y(D_Y)) = \bigoplus_{n \in \bZ} H^0(Y, \cO_Y(D_Y - nE)).
\]

This is an S2 $T$-module by \autoref{lem.GradedReflexiveDivisorSheaf}.  It corresponds to a (prime) divisor on $\Spec T$, $D_T$.  In other words $T(D_T) = \Gamma_{*, T}(\cO_Y(D))$.  Note our previous work guarantees that $\Gamma_{*, T}(\cO_Y(K_Y)) = T(K_T)$.
Let $\nu : \Spec T \to \Spec R$ be the canonical map.  Suppose we have that $\Delta$ is a $\bQ$-divisor on $X$ such that $K_X + \Delta$ is $\bQ$-Cartier, say that $\Div_X(f) = n(K_X+\Delta)$.  We can write the following two formulae:
\[
\begin{array}{rl}
\pi^*(K_X + \Delta) & = K_Y + \Delta_Y\\
\nu^*(K_X + \Delta) & = K_T + \Delta_T
\end{array}
\]
for some $\bQ$-divisors $\Delta_Y$ and $\Delta_T$.  Since $\cO_Y(n\pi^*(K_X+\Delta)) = {1 \over f} \cO_Y$, $T(n\nu^*(K_X + \Delta)) = {1 \over f} T$ and $\Gamma_*({1 \over f} \cO_Y) = {1 \over f} T$,  we see that $\Delta_T$ and $\Delta_Y$ also correspond to each other.  Note this \emph{requires} our choice of $K_T$ from \autoref{prop.CanonicalModuleOfExtendedRees}.
Explicitly, if
\[
\Delta_Y = \sum a_i D_Y,
\]
then
\[
\Delta_T = \sum a_i D_T
\]
where $D_T$ is the prime divisor corresponding to $D_Y$ as above.

\subsection{Veronese covers}

Consider $T$ as above and consider the $m$th Veronese subalgebra $T'$.  Note $T' = R[It^m, t^{-m}]$ is isomorphic to the standard extended Rees algebra.
\begin{lemma}
\label{lem.HomTT'}
The $T$-module $\Hom_{T'}(T, T')$ is generated by the map $\Psi : T \to T'$ which projects onto the $m$-divisible summands.   Furthermore, we have that $\Psi(\frn_{T}) = \frn_{T'}$.
\end{lemma}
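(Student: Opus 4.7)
The plan is to use the $\mathbb{Z}/m$-grading decomposition
\[
T = \bigoplus_{i=0}^{m-1} T^{(i)}, \qquad T^{(i)} := \bigoplus_{n \equiv i \,(m)} [T]_n,
\]
as $T'$-modules, with $T^{(0)} = T'$. Then $\Psi$ is precisely the $T'$-linear projection onto the $T^{(0)}$-summand. The identity $\Psi(\frn_T) = \frn_{T'}$ is immediate: writing $\frn_T = \fram T + T_{>0} + T_{<0}$, we have $\Psi(\fram T) = \fram T'$ (since $\fram \subseteq T'$ and $\Psi$ is surjective onto $T'$), $\Psi(T_{>0}) = T'_{>0}$, and $\Psi(T_{<0}) = T'_{<0}$, which together give $\frn_{T'}$.

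For generation, a direct computation shows that if $t = \sum_j t_j$ with $t_j \in T^{(j)}$, then $(t\Psi)|_{T^{(i)}}$ coincides with multiplication by $t_{m-i}$. Decomposing $\phi \in \Hom_{T'}(T, T')$ as $\phi = \sum_i \phi_i$ with $\phi_i : T^{(i)} \to T'$, the generation of $\Hom_{T'}(T, T')$ by $\Psi$ is therefore equivalent to showing each $\phi_i$ arises as multiplication by some element of $T^{(m-i)} \subseteq K(T)$. Since $T^{(i)}$ is a rank-one torsion-free $T'$-module (a summand of the domain $T$) and $T^{(i)}$ is finitely generated over the Noetherian $T'$, any such $\phi_i$ decomposes as a finite sum of homogeneous $T'$-linear maps, reducing matters to the homogeneous case: $\phi_i$ is multiplication by a unique $\alpha = \lambda t^d \in K(T)$ with $\lambda \in K(R)$ and $d \equiv m - i \,(m)$, and one must show $\alpha \in T^{(m-i)}$.

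The constraint $\alpha T^{(i)} \subseteq T'$ reads $\lambda I_n \subseteq I_{n+d}$ for every $n \equiv i \,(m)$, where $I_k = H^0(Y, \cO_Y(-kE)) = \{f \in R : v_E(f) \geq k\}$ and $v_E$ denotes the DVR valuation associated to the prime divisor $E$ on the normal variety $Y$. Taking $n$ sufficiently negative (so that $I_n = I_{n+d} = R$) forces $\lambda \in R$. Taking $n$ large and belonging to the value semigroup $v_E(R) \subseteq \mathbb{N}$, one finds $f \in I_n$ with $v_E(f) = n$ exactly, so that $\lambda f \in I_{n+d}$ forces $v_E(\lambda) + n \geq n + d$, i.e., $v_E(\lambda) \geq d$. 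Thus $\lambda \in I_d$ (or $\lambda \in R = I_d$ when $d \leq 0$), which means $\alpha = \lambda t^d \in T^{(m-i)}$, as required.

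The main obstacle is this last step, which requires the numerical semigroup $v_E(R) \subseteq \mathbb{N}$ to be cofinite, so that the asymptotic estimate $v_E(\lambda) \geq d$ indeed emerges. This is established by noting that $v_E \colon K(R)^\times \twoheadrightarrow \mathbb{Z}$ is surjective, being the DVR valuation of a prime Weil divisor on the normal scheme $Y$; writing $x = a/b$ with $a, b \in R$ nonzero and $v_E(x) = 1$ yields two consecutive nonnegative integers $v_E(b)$ and $v_E(b)+1 = v_E(a)$ in $v_E(R)$, and any submonoid of $\mathbb{N}$ containing two consecutive positive integers is automatically cofinite in $\mathbb{N}$. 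This cofiniteness justifies the ``sufficiently large $n$'' step above and completes the identification $\Hom_{T'}(T^{(i)}, T') = T^{(m-i)}$.
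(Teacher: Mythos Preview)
Your proof is correct and takes a genuinely different route from the paper's. The paper observes that $T \cong \bigoplus_{i=0}^{m-1} T'(iE_{T'})$ realizes $T$ as the index-$m$ cyclic cover of $T'$ with respect to the $\bQ$-Cartier divisor $E_{T'}$, and then invokes a result of Carvajal-Rojas on cyclic covers to conclude that the projection $\Psi$ generates $\Hom_{T'}(T,T')$. Your argument, by contrast, is entirely self-contained: after reducing to the identification $\Hom_{T'}(T^{(i)},T') \cong T^{(m-i)}$ via the rank-one structure, you extract the needed inequality $v_E(\lambda) \geq d$ directly from the divisorial valuation $v_E$, with the cofiniteness of the value semigroup $v_E(R\setminus\{0\})$ in $\mathbb{N}$ supplying the key asymptotic input. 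The paper's approach is shorter but relies on an external structural result, whereas yours is more elementary and makes the role of the valuation explicit; your cofiniteness argument (two consecutive values force the numerical semigroup to have finite complement) is a nice substitute for the index condition the paper uses.
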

\begin{proof}
The projection onto the $m$-divisible summands certainly sends $\frn_{T}$ to $\frn_{T'}$ so we only need to show that $\Psi$ generates the $\Hom$-set. Note that since $E_T$ and $E_{T'}$ corresponds to $E$ on $Y$ (more precisely, $E_T=\Div_T(t^{-1})$ and $mE_{T'}=\Div_{T'}(t^{-m})$), $T'(iE_{T'})=\oplus_nH^0(Y, \cO_Y(-nmE+iE))$ and thus $T\cong \oplus_{i=0}^{m-1}T'(iE_{T'})$ by construction. In other words, $T$ is precisely the $m$th cyclic cover of $T'$ associated to the divisor $E_{T'}$. Since the index of $E_{T'}$ is $m$ (because this is the index of $E$ on $Y$, alternatively, one can also check that $T'(iE_{T'})$ is not isomorphic to $T'$ for every $1<i\leq m-1$), by \cite[proof of Proposition 4.21]{CarvajalRojasFiniteTorsors}, we know that $\Psi$ generates the $\Hom_{T'}(T, T')$ as a $T$-module.
\end{proof}

\begin{lemma}
\label{lem.PullbackOfDOnExtendedRees}
Suppose that $T' \to T$ is the inclusion of the $m$th Veronese subalgebra as above with $\kappa : \Spec T \to \Spec T'$ the induced map.  Let $D$ be a Weil divisor on $Y$ and let $D_T$ and $D_{T'}$ denote associated divisors on $T$ and $T'$ respectively.  Then $\kappa^* D_{T'} = D_T$.
\end{lemma}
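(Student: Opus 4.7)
The plan is to verify the equality of Weil divisors $\kappa^* D_{T'} = D_T$ at every height-one prime of $\Spec T$. By $\bZ$-linearity of $\kappa^*$ and of the associations $D \mapsto D_T, D \mapsto D_{T'}$ (both defined via reflexive hulls of $\Gamma_{*,T}, \Gamma_{*,T'}$ applied to $\cO_Y(D)$, which are additive), it suffices to treat prime divisors $D$ on $Y$. Since $E$ is the only exceptional prime, $D$ is either $E$ itself or horizontal over $\Spec R$, i.e., $\pi(D) = V(\mathfrak{p}_D)$ for some height-one prime $\mathfrak{p}_D \subseteq R$.

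When $D = E$: the identity $mE_{T'} = \Div_{T'}(t^{-m})$ noted in the proof of \autoref{lem.HomTT'}, combined with $mE_T = \Div_T(t^{-m})$ (since $E_T = \Div_T(t^{-1})$), yields
\[
m\,\kappa^* E_{T'} \;=\; \kappa^*\!\Div_{T'}(t^{-m}) \;=\; \Div_T(t^{-m}) \;=\; mE_T,
\]
hence $\kappa^* E_{T'} = E_T$; in particular the ramification index of $\kappa$ at $\mathfrak{p}_{E_T}$ is one (because $E_{T'}$ is already ``$1/m$-th'' of a principal divisor).

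When $D$ is prime with $D \neq E$: at any height-one prime $\mathfrak{P} \neq \mathfrak{p}_{E_T}$ of $T$, the cover $\kappa$ is étale (the $T'$-decomposition $T = \bigoplus_{i=0}^{m-1} T^{(i)}$ with $T^{(i)} \cong T'(-iE_{T'})$ shows that the ramification of $\kappa$ is concentrated along $E_{T'}$ only). Since both $T(D_T)$ and $T \otimes_{T'} T'(D_{T'})$ arise from the same sheaf $\cO_Y(D)$ (via $\Gamma_{*,T}$ and via étale base change of $\Gamma_{*,T'}$ respectively), their local generators at $\mathfrak{P}$ can be chosen compatibly, giving equal coefficients. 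At $\mathfrak{P} = \mathfrak{p}_{E_T}$, using that the ramification index is one, the question reduces to checking $v_{\mathfrak{p}_{E_T}}(D_T) = v_{\mathfrak{p}_{E_{T'}}}(D_{T'}) = 0$. For this, I claim the constant section $1 \in H^0(Y, \cO_Y(D)) \subseteq T(D_T)$ generates $T(D_T)_{\mathfrak{p}_{E_T}}$ as a $T_{\mathfrak{p}_{E_T}}$-module: for any $h_n t^n \in T(D_T)$ and any $g \in \mathfrak{p}_D \setminus H^0(Y, \cO_Y(-E))$ (such $g$ exists by prime avoidance, as $H^0(Y, \cO_Y(-E)) = (t^{-1}) \cap R$ is $\mathfrak{m}$-primary of height at least $2$ while $\mathfrak{p}_D$ has height $1$), the product $g \cdot h_n t^n$ lies in $T$ (the pole of $h_n$ along $D$ is absorbed by $g$, and all other valuation conditions are satisfied automatically), while $g \notin \mathfrak{p}_{E_T}$. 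A parallel argument applies to $T'(D_{T'})_{\mathfrak{p}_{E_{T'}}}$.

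The main obstacle I expect is precisely the coefficient computation at $\mathfrak{p}_{E_T}$: although $D$ has no $E$-component on $Y$, it is not a priori evident that this is preserved when passing from $Y$ to the graded module $\bigoplus_n H^0(Y, \cO_Y(D-nE))$ on $\Spec T$, and the prime-avoidance argument above is needed to rule out a spurious $E_T$-component in $D_T$.
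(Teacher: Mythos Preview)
Your treatment of the case $D=E$ is fine and matches the paper. For $D\neq E$, however, the prime avoidance step is incorrect. You claim that $\mathfrak p_D \not\subseteq H^0(Y,\cO_Y(-E))$ because the latter ideal is $\fram$-primary of height at least $2$ while $\mathfrak p_D$ has height $1$. But prime avoidance does not prevent a height-one prime from sitting inside an $\fram$-primary ideal; indeed every prime is contained in $\fram$. Concretely, take $R$ regular and $Y$ the blowup of $\fram$: then $H^0(Y,\cO_Y(-E))=\fram$, so $\mathfrak p_D\subseteq H^0(Y,\cO_Y(-E))$ for every height-one prime $\mathfrak p_D$. Thus no such degree-zero $g$ exists in general, and the argument for $v_{E_T}(D_T)=0$ breaks.

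The idea is easily repaired: allow $g$ to have positive degree. Pick any nonzero $g_0\in\mathfrak p_D$, set $k:=v_E(g_0)$, and take $g:=g_0t^k\in T$. Then $v_{E_T}(g)=v_E(g_0)-k=0$, so $g\notin\mathfrak p_{E_T}$, while for $h_nt^n\in T(D_T)$ one has $g_0h_n\in H^0(Y,\cO_Y(-(n+k)E))$ and hence $g\cdot h_nt^n\in T$. This shows $T(D_T)_{\mathfrak p_{E_T}}\subseteq T_{\mathfrak p_{E_T}}$, and together with $1\in T(D_T)$ gives the desired vanishing. The parallel statement for $T'$ follows the same way.

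For comparison, the paper avoids this valuation bookkeeping entirely: for $D\neq E$ it inverts $t^{m}$, which does not meet $D_{T'}$ since $t^{-m}\notin\Gamma_{*,T'}(\cO_Y(-D))$, and observes that $T'[t^{m}]\cong R[t^{-m},t^{m}]\subseteq R[t,t^{-1}]\cong T[t^{m}]$. In these Laurent polynomial rings the extension of $\mathfrak p_D$ stays prime with residue extension of degree $m$ and ramification index $1$, so $\kappa^*D_{T'}=D_T$ immediately. Your route is more hands-on but, once corrected, reaches the same conclusion.
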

\begin{proof}
It suffices to consider the case when $D$ is a prime divisor.  In the case that $D = E$, this follows immediately from the observation that $T$ is an index-1 cover of the $\bQ$-Cartier Weil divisor $D_{T'}$ as in \autoref{lem.HomTT'}.  Otherwise we may assume that $D$ is the strict transform of a divisor on $\Spec R$.  In particular, $t^{-m}$ is not in $Q' = \Gamma_{*, T'}(\cO_Y(-D)) \subseteq T'$.  But now notice that
\[
R[t^{-m}, t^{m}] \cong T'[t^{m}] \subseteq T[t^{m}] \cong R[t, t^{-1}]
\]
In particular, we then see that the extension of $Q' R[t^{-m}, t^{m}]$ is still prime in $R[t, t^{-1}]$.  This completes the proof.
\end{proof}

\bibliographystyle{skalpha}
\bibliography{MainBib}
\end{document}